\documentclass[12pt, final]{article}
\usepackage{amsmath,amssymb,amsfonts,amsthm,bbm}
\usepackage[authoryear]{natbib}
\usepackage{verbatim}
\usepackage{setspace}
\usepackage{graphicx}
\usepackage{subcaption}
\usepackage{comment}
\usepackage{enumitem}
\usepackage{algorithm}
\usepackage{algpseudocode}
\usepackage[
pagebackref,
colorlinks=true,
pdfpagemode=UseNone,
citecolor=OliveGreen,
linkcolor=BrickRed,
urlcolor=BrickRed,
pdfstartview=FitH,
linktocpage=true]{hyperref}
\usepackage[dvipsnames]{xcolor}
\usepackage[capitalise]{cleveref}

\newif\ifStat
\newif\ifNote
\newif\ifComments
\newif\ifThmitalic

\Stattrue
\Notefalse
\Commentsfalse
\Thmitalictrue

\ifComments
		\newcommand{\Tnote}[1]{{\color{blue} [Tselil: #1]}}
		
		\newcommand{\RMK}[1]{{\color{blue}{[Comment: #1]}}}
		\newcommand{\TODO}[1]{{\color{red}{[#1]}}}
		\newcommand{\rmk}[1]{{\color{blue}{[#1]}}}
\else
		\newcommand{\Tnote}[1]{}
		\newcommand{\RMK}[1]{}
		\newcommand{\TODO}[1]{}
		\newcommand{\rmk}[1]{}
\fi

\ifStat
\renewcommand{\hat}{\widehat}
\renewcommand{\tilde}{\widetilde}
\renewcommand{\bar}{\overline}
\renewcommand{\top}{{\sf T}}
\else

\fi
\newcommand{\iidsim}{\overset{\scriptsize\iid}{\sim}}

\usepackage[top=1in, bottom=1in, left=1in, right=1in]{geometry}

\makeatletter
\newcommand*{\rom}[1]{\expandafter\@slowromancap\romannumeral #1@}
\makeatother

\newcommand{\rank}{\mathrm{rank}}

\newcommand{\supp}{\mathrm{supp}}

\newcommand{\sgn}{\mathrm{sgn}}

\newcommand{\var}{\mathrm{Var}}

\newcommand{\op}{\mathrm{op}}

\newcommand{\iid}{\mathrm{i.i.d.}}
\newcommand{\beq}{\begin{equation}}
\newcommand{\eeq}{\end{equation}}

\DeclareMathOperator*{\argmax}{\mbox{argmax}}

\newcommand{\wto}{\stackrel{w}{\to}}
\newcommand{\pto}{\stackrel{p}{\to}}

\newcommand{\norm}[1]{\left\|{#1}\right\|}

\newcommand{\R}{\mathbb{R}}

\newcommand{\E}{\mathbb{E}}

\newcommand{\veps}{\varepsilon}

\renewcommand{\P}{\mathbb{P}}

\renewcommand{\d}{\textup{d}}

\newcommand{\bS}{\mathrm{\bf S}}

\newcommand{\zz}{\text{\boldmath $z$}}
\newcommand{\ww}{\text{\boldmath $w$}}

\newcommand{\vv}{\text{\boldmath $v$}}

\newcommand{\rr}{\text{\boldmath $r$}}

\newcommand{\bone}{\mathrm{\bf 1}}
\newcommand{\bzero}{\mathrm{\bf 0}}

\newcommand{\bmu}{\text{\boldmath $\mu$}}

\newcommand{\bSigma}{\text{\boldmath $\Sigma$}}

\newcommand{\cC}{\mathcal{C}}

\newcommand{\cV}{\mathcal{V}}

\newcommand{\cU}{\mathcal{U}}

\newcommand{\cF}{\mathcal{F}}
\newcommand{\cT}{\mathcal{T}}
\newcommand{\cW}{\mathcal{W}}

\def\cuP{\mathscr{P}}

\def\normal{{\sf N}}
\def\id{{\boldsymbol I}}

\newcommand{\hw}{\widehat \ww}

\ifNote

\theoremstyle{plain}
\newtheorem{thm}{Theorem}
\newtheorem{claim}{Claim}
\newtheorem{defn}{Definition}
\newtheorem{lem}[thm]{Lemma}
\newtheorem*{lemma*}{Lemma}
\newtheorem{cor}[thm]{Corollary}
\newtheorem{prop}[thm]{Proposition}
\newtheorem{ass}{Assumption}
\newtheorem{fact}{Fact}
\theoremstyle{definition}
\newtheorem{exm}{Example}
\newtheorem{rem}{Remark}
\newtheorem{conj}{Conjecture}

\else\ifThmitalic

\newtheorem{thm}{Theorem}[section]

\newtheorem{defn}{Definition}[section]
\newtheorem{lem}[thm]{Lemma}
\newtheorem*{lemma*}{Lemma}
\newtheorem{cor}[thm]{Corollary}
\newtheorem{prop}[thm]{Proposition}
\newtheorem{ass}{Assumption}

\theoremstyle{definition}

\newtheorem{rem}{Remark}[section]

\else

\fi
\fi

\usepackage{mathrsfs}
\usepackage{amsmath}
\usepackage{hyperref}
\usepackage{cleveref}
\usepackage{todonotes}
\usepackage{booktabs}
\usepackage{placeins}
\allowdisplaybreaks

\title{Efficient and Adaptive Estimation of Portfolio Weights with Spectral Risk Measures}

\author{Kangjie Zhou\thanks{Center for Data Science for Enterprise and Society, Cornell University} \,\, and \,\, Ming Yuan\thanks{Department of Statistics, Columbia University}
}

\date{(\today)}

\begin{document}

	\maketitle
	
\begin{abstract}
Spectral risk measures, including conditional value-at-risk (CVaR), generate a family of convex criteria for portfolio estimation. We study how the criterion should be chosen when different criteria share the same population minimizer, and whether the efficient criterion can itself be learned from data. We develop a general asymptotic theory for empirical spectral-risk minimization under estimated linear constraints. Under normal scale-mixture elliptical returns, all spectral risk measures identify the same population efficient portfolio, but their empirical minimizers have different sampling distributions. Their asymptotic covariance decomposes into a common component and a positive-semidefinite component scaled by a functional of the spectral measure, reducing efficiency to an optimization over probability measures. We characterize the efficiency-optimal spectral measure and show that single-level CVaR is generally inefficient. We then construct a fully data-adaptive estimator that learns the radial distribution and optimal spectral measure from the same observations used for portfolio estimation, yet has the same first-order distribution as the infeasible oracle. Simulations and an empirical application illustrate the method.
\end{abstract}	

\section{Introduction}
\label{sec:intro}

A basic principle of statistical estimation is that the choice of estimating criterion can matter even when the population target does not. Two loss functions may have the same population minimizer but produce empirical minimizers with different sampling distributions. When the criterion belongs to a rich family, this raises two natural questions: can the criterion itself be chosen to minimize the sampling variability of the resulting estimator, and, if the efficient criterion depends on the unknown data-generating distribution, can it be learned from the same data without sacrificing first-order efficiency? This paper studies these questions for an infinite-dimensional family of convex criteria generated by spectral risk measures. Portfolio optimization provides a particularly useful setting because the same risk measures that encode attitudes toward losses also define empirical procedures for estimating portfolio weights.

Conditional value-at-risk (CVaR), also known as expected shortfall, is a standard measure of downside risk in financial regulation, risk management, and portfolio optimization. Spectral risk measures extend CVaR by aggregating losses across confidence levels and form a broad class of law-invariant coherent risk measures \citep{artzner1999coherent,acerbi2002spectral,adam2008spectral,brandtner2013conditional}. Their convexity and variational representations make the corresponding portfolio problems computationally tractable \citep{rockafellar2000optimization,rockafellar2002conditional,luthi2005convex,rockafellar2006optimality,ruszczynski2006optimization,mansini2007conditional}. Yet empirical portfolio rules based on these criteria face a fundamental statistical difficulty: the return distribution is unknown. The risk objective is evaluated from a finite sample, constraints such as target expected return contain estimated quantities, and optimization can amplify relatively small sampling errors into large changes in portfolio weights.

For mean-variance portfolios, this estimation risk has been studied extensively; see, for example, \cite{jobson1980estimation,britten1999sampling,okhrin2006distributional,kan2008distribution}. For CVaR and general spectral-risk portfolios, much of the existing literature instead concerns computation or estimation of the risk functional itself. Our object of inference is the \emph{optimizer} of the empirical risk criterion, including the effect of estimating the constraints from the same observations. This distinction leads to three statistical contributions.

First, we develop a general asymptotic theory for empirical spectral-risk minimization under estimated linear equality and inequality constraints. The result does not require elliptical returns. It accommodates mixtures of CVaRs and the nonsmoothness of their empirical objectives, while jointly accounting for sampling error in the objective and in stochastic constraints. Under regular binding equalities, the empirical optimizer is asymptotically Gaussian. With weakly binding inequalities, the limiting law is characterized by a stochastic quadratic program. Thus, empirical spectral-risk portfolio selection can be treated as a constrained $\rm M$-estimation problem with an estimated feasible set.

Second, we use this theory to study efficiency when several criteria share a common estimand. Elliptically contoured return models are particularly revealing for this purpose. They preserve the location--scale geometry of the Gaussian model while allowing heavy-tailed radial variation, and have long been used to extend portfolio and asset-pricing arguments beyond normality \citep{owen1983class}. Recent work also emphasizes their role in portfolio choice under parameter uncertainty \citep{kan2025optimal}. For explicit covariance calculations and feasible implementation, we work with the normal scale-mixture subclass, which includes the Gaussian, multivariate $t$, and other widely used heavy-tailed models.

This setting exhibits what we call \emph{population equivalence but statistical non-equivalence}. Under ellipticity and a fixed target expected return, spectral-risk minimization subject to the budget constraint identifies the same population efficient portfolio as variance. The spectral measure therefore does not change the estimand. It changes the empirical criterion used to estimate that estimand. We show that the asymptotic covariance of the resulting weights has the structural form
\[
    \Sigma_{\mathrm{SRM}}(m)=\Sigma_{\mathrm{common}}+I(m;c)\,\Sigma_{\perp},
    \qquad \Sigma_{\perp}\succeq0,
\]
where neither $\Sigma_{\mathrm{common}}$ nor $\Sigma_{\perp}$ depends on the spectral measure $m$, and $c$ is a scalar index determined by the population return target and the mean--covariance geometry of the portfolio problem. All dependence on the estimating criterion is therefore summarized by the scalar functional $I(m;c)$. Consequently, minimizing the covariance matrix in Loewner order reduces to minimizing a scalar functional over probability measures. This reduction turns the choice of spectral risk measure into an infinite-dimensional efficiency problem. Throughout the paper, \emph{efficiency} refers to asymptotic covariance comparisons within the class of empirical spectral-risk estimators (and, where stated, comparison with the usual sample mean-variance estimator); we do not claim semiparametric efficiency over all regular estimators of the portfolio weights.

We characterize the unique optimizer within each admissible compactly supported spectral class. The optimal measure is absolutely continuous in the interior of its support, implying that no single interior CVaR level is efficient within this class. We further show that, as the support restriction is relaxed, the infimum of the spectral-risk covariance is no larger than the covariance of sample mean-variance optimization. In the Gaussian benchmark the optimized spectral rule approaches the mean-variance efficiency bound. Under heavy-tailed scale mixtures the inequality can be strict, as illustrated by our Laplace example and simulations. Thus, even when the mean-variance portfolio is the correct population target, its usual sample analogue need not be the most efficient way to estimate it.

Third, we make the choice of estimating criterion itself data-adaptive. The spectral measure plays a role analogous to an infinite-dimensional tuning parameter indexing a family of empirical criteria, with an important distinction: under ellipticity, changing this tuning parameter does not change the population estimand. It can therefore be selected entirely for statistical efficiency. The oracle choice depends on the unknown radial mixing distribution, which we estimate nonparametrically from estimated Mahalanobis radii. We then use the estimated distribution to learn the optimal spectral measure and minimize the resulting empirical criterion. Thus the same observations are used in the three-stage pipeline:
\[
    \widehat P_{\lambda}
    \;\longrightarrow\;
    \widehat m
    \;\longrightarrow\;
    \widehat{\ww}_{\widehat m},
\]
where $\widehat{P}_{\lambda}$, $\widehat{m}$, and $\widehat{\ww}_{\widehat m}$ are the estimations of the radial mixing distribution, optimal spectral measure, and oracle criterion, respectively. Despite this additional layer of adaptation, the final estimator has the same first-order limiting distribution as the infeasible oracle rule that knows the optimal spectral measure in advance. The main efficiency and adaptivity conclusions can therefore be summarized schematically as
\[
    \Sigma_{\mathrm{SRM}}(m) = \Sigma_{\mathrm{common}}+I(m;c)\Sigma_{\perp},
    \qquad
    m_{*}^c = \mbox{minimizer of } I(m;c),
\]
and
\[
    \hat{\ww}_{\hat m} \mbox{ is consistent and asymptotically normal, with asymptotic variance } \Sigma_{\mathrm{SRM}} (m_{*}^c).
\]
No sample splitting is required. In this sense, learning both the infinite-dimensional nuisance distribution and the efficiency-optimal estimating criterion is first-order costless for estimation of the portfolio weights. Importantly, the selected spectral measure is not interpreted as an estimate of an investor's latent risk preferences: it is a statistically selected loss criterion within a family that shares the same population minimizer. We also give a concrete constrained nonparametric maximum-likelihood estimator of the radial distribution and establish the Wasserstein consistency needed for this oracle-adaptivity result.

Our results complement several related literatures. Statistical work on CVaR includes nonparametric estimation \citep{scaillet2004nonparametric,chen2008nonparametric,kato2012weighted}, regression and forecast evaluation \citep{dimitriadis2019joint,patton2019dynamic}, robustness under heavy tails \citep{he2023robust}, and high-dimensional sparse procedures \citep{zhang2025high,wu2025linear}. These contributions primarily concern estimation of risk functionals or conditional expected shortfall. Our focus is instead the sampling distribution and efficiency of the optimizer of a constrained empirical spectral-risk problem. We also complement the sampling-error literature for mean-variance portfolios by treating the empirical loss criterion itself as an object that can be selected for statistical efficiency.

The theory leads directly to an implementable procedure. Simulations compare sample mean-variance optimization, fixed-level CVaR, the infeasible oracle spectral rule, and its feasible adaptive counterpart. An empirical study using U.S. industry portfolios evaluates weight stability, turnover, downside risk, and out-of-sample performance. The empirical analysis is intended primarily to assess whether the theoretically motivated criterion produces stable and competitive portfolio estimates outside the exact model, rather than to assert universal dominance in realized investment performance.

Technically, our analysis combines the variational representation of CVaR \citep{rockafellar2000optimization}, the spectral/mixture representation of the law-invariant coherent, comonotonic-additive class \citep{acerbi2002spectral,kusuoka2001law}, empirical-process arguments \citep{wellner2013weak}, sample-average approximation under stochastic constraints \citep[Section 5]{shapiro2021lectures}, and epi-convergence in distribution \citep{knight1999epi}. The general theory is stated without elliptical structure; that structure enters only in the efficiency analysis and construction of the adaptive rule.

The remainder of the paper is organized as follows. \Cref{sec:setup} introduces empirical spectral-risk portfolio estimators. \Cref{sec:low-dim} develops the general asymptotic theory under estimated linear constraints and gives the canonical target-return portfolio as a corollary. \Cref{sec:elliptic} specializes the theory to normal scale-mixture elliptical returns and derives the covariance decomposition underlying our efficiency analysis. \Cref{sec:optimal_srm} characterizes the efficiency-optimal spectral measure and compares it with fixed-level CVaR and MVO. \Cref{sec:feasible_rule} develops the data-adaptive estimator and establishes oracle first-order equivalence. \Cref{sec:num_exp} presents simulation and empirical evidence, and \Cref{sec:conclusion} concludes. Proofs and supplemental technical arguments appear in the appendices. Although the main text focuses on the canonical target-return problem without a risk-free asset, parallel results with unrestricted borrowing and lending at the risk-free rate can also be derived and we present them in Appendix~\ref{app:risk_free}.

\section{Spectral Risk Measures and Portfolio Estimation}
\label{sec:setup}

This section introduces the class of portfolio estimators studied in the paper and highlights the key statistical issue. Spectral risk measures are often motivated as alternatives to variance because they focus on downside risk and satisfy economically appealing coherence properties. In our setting, they play an additional role: they define a family of empirical estimating criteria for portfolio weights. Under elliptical returns and a fixed target expected return, spectral risk measures share the same population efficient portfolio as mean-variance optimization under mild conditions, while their empirical minimizers can have different sampling distributions. Thus, a spectral risk measure can play two distinct roles: as an economic description of risk and, in the present paper, as an empirical estimating criterion whose choice affects the statistical efficiency with which portfolio weights are estimated.

Let $\ww = (w_1, \ldots, w_N)^\top \in \R^N$ denote portfolio weights across $N$ risky assets, and let $\rr = (r_1, \ldots, r_N)^\top \in \R^N$ denote their excess returns. The portfolio return is
\[
r_{\ww} = \ww^\top \rr,
\]
and we take $-r_{\ww}$ as the associated loss. Throughout the paper, $\bmu=\E[\rr]$ denotes the vector of expected returns.

\subsection{Spectral Risk Measures}

For a loss random variable $X$ and confidence level $\alpha \in (0,1)$, the value-at-risk is the $\alpha$-quantile
\[
\operatorname{VaR}_{\alpha}(X)
=
\sup \left\{ y \in \R : \P(X \le y) \le \alpha \right\}.
\]
When the distribution of $X$ is continuous, the conditional value-at-risk, also known as expected shortfall, can be written as
\begin{equation}
	\label{eq:cvar_by_var}
	\operatorname{CVaR}_{\alpha}(X)
	= \E \left[ X \vert X \ge \operatorname{VaR}_{\alpha}(X) \right]
    =
	\frac{1}{1-\alpha}
	\int_{\alpha}^{1} \operatorname{VaR}_{p}(X) \, \d p .
\end{equation}

We focus on law-invariant coherent risk measures that also satisfy comonotonic additivity. Law invariance means that $\rho(X)$ depends only on the distribution of $X$. Coherence requires monotonicity, subadditivity, positive homogeneity, and translation equivariance, as in \cite{artzner1999coherent}. For the law-invariant, coherent, comonotonic-additive class considered here, the spectral representation can be written as a single mixture of CVaRs: there exists a probability measure $m$ on $[0,1]$ such that
\[
\rho(X)
=
\int_0^1 \operatorname{CVaR}_{\alpha}(X) \, \d m(\alpha).
\]
We refer to such $\rho$ as a spectral risk measure. When it is useful to emphasize its dependence on the spectral measure, we write
\[
\rho_m(X)
=
\int_0^1 \operatorname{CVaR}_{\alpha}(X) \, \d m(\alpha).
\]

Equivalently, using \cref{eq:cvar_by_var} and Fubini's theorem,
\[
\rho_m(X)
=
\int_0^1 \varphi(p) \operatorname{VaR}_{p}(X) \, \d p,
\]
where the spectral density is
\[
\varphi(p)
=
\int_0^p \frac{1}{1-\alpha} \, \d m(\alpha).
\]
Conversely, any nondecreasing density $\varphi$ on $[0,1]$ satisfying
\[
\int_0^1 \varphi(p)\,\d p=1
\]
induces a spectral risk measure. Thus, the probability measure $m$ or, equivalently, the spectral density $\varphi$, determines how losses at different quantiles enter the risk criterion.

\subsection{Population Portfolios and Elliptical Returns}

For a fixed target expected return $\mu_0$, the population spectral-risk portfolio is defined by
\begin{equation}
	\label{eq:general_risk_optimization}
	\operatorname{minimize}_{\ww \in \R^N}
	\quad
	\rho_m(-r_{\ww}),
	\qquad
	\mbox{subject to}
	\quad
	\ww^\top \bone = 1,
	\quad
	\ww^\top \bmu = \mu_0 .
\end{equation}
We denote its solution by $\ww_*(\mu_0)$. Varying $\mu_0$ traces out the efficient frontier associated with the risk criterion.

A central case in this paper is the class of elliptically contoured return distributions. We work with the normal scale-mixture subclass, represented as
\[
\rr
=
\bmu + \lambda \bSigma^{1/2} \zz,
\]
where $\zz \sim \normal(\bzero,\id_N)$, $\lambda \ge 0$ is independent of $\zz$, and $\E[\lambda^2]<\infty$. Here $\bSigma$ is a scatter matrix; under the normalization $\E[\lambda^2]=1$, it coincides with the covariance matrix. Normal scale mixtures include the Gaussian and multivariate $t$ distributions and preserve the defining location--scale structure of linear portfolio returns. This structure yields the population equivalence below, while the conditional-Gaussian representation permits the explicit covariance calculations and radial estimation developed later. See \cite{cambanis1981theory,fang1990symmetric,anderson2003introduction} for treatments of elliptical distributions and their stochastic representations.

For any portfolio $\ww$ satisfying $\ww^\top\bmu=\mu_0$, law invariance, translation equivariance, positive homogeneity, and the symmetry of $Z \sim \normal(0,1)$ imply
\begin{equation}
	\label{eq:elliptical_population_equivalence}
	\rho_m(-r_{\ww})
	=
	-\mu_0
	+
	\norm{\bSigma^{1/2}\ww}_2 \, \rho_m(\lambda Z),
\end{equation}
where $Z \sim \normal(0,1)$ is independent of $\lambda$. Hence, provided $\rho_m(\lambda Z)>0$, minimizing $\rho_m(-r_{\ww})$ subject to the target-return and budget constraints is equivalent to minimizing $\ww^\top\bSigma\ww$ subject to the same constraints. As a consequence, $\ww_* (\mu_0)$ only depends on $\mu_0$, and is independent of the probability measure $m$.

Equation~\eqref{eq:elliptical_population_equivalence} separates the population portfolio problem from the statistical estimation problem. For a fixed target return, changing the spectral measure does not change the population optimizer; it changes only the empirical criterion used to estimate that optimizer. Thus, under ellipticity, the spectral measure indexes a family of estimators of a common portfolio target. This population equivalence but statistical non-equivalence is the basis for the efficiency analysis developed later in the paper.

Outside the elliptical family, the equivalence in \cref{eq:elliptical_population_equivalence} generally fails. Spectral-risk minimization can then change both the population objective and the statistical properties of the estimator. The general asymptotic theory in \cref{sec:low-dim} does not require elliptical structure; ellipticity enters only when we obtain explicit covariance comparisons and formulate the criterion-selection problem in \cref{sec:elliptic,sec:optimal_srm}.

\subsection{Empirical Spectral-Risk Portfolios}

In practice, the return distribution is unknown and must be estimated from observed data. Given historical observations $\rr_1,\ldots,\rr_T \iidsim P_{\rr}$, let
\[
\hat{\bmu}
=
\frac{1}{T}\sum_{i=1}^T \rr_i
\]
denote the empirical mean. For a fixed portfolio $\ww$, let $-\hat r_{\ww,T}$ denote a random variable distributed according to the empirical distribution of portfolio losses:
\[
\operatorname{Law} \left( -\hat r_{\ww,T} \right)
=
\frac{1}{T}\sum_{i=1}^T
\delta_{-\ww^\top \rr_i}.
\]
The empirical spectral-risk portfolio rule is
\begin{equation}
	\label{eq:empirical_rho_risk_optimization}
	\operatorname{minimize}_{\ww \in \R^N}
	\quad
	\rho_m \left( -\hat r_{\ww,T} \right),
	\qquad
	\mbox{subject to}
	\quad
	\ww^\top \bone=1,
	\quad
	\ww^\top \hat{\bmu}=\mu_0 .
\end{equation}
We denote its solution by $\hw(\mu_0)$.

When $m$ is a point mass at a single level $\alpha$, the objective in \cref{eq:empirical_rho_risk_optimization} reduces to empirical CVaR. More generally, $m$ determines how empirical CVaRs at different confidence levels are combined and hence how different parts of the observed loss distribution contribute to the estimating criterion.

Under elliptical returns, the population solution of \cref{eq:general_risk_optimization} is invariant to $m$, whereas the empirical solution of \cref{eq:empirical_rho_risk_optimization} is not. The spectral measure can therefore be chosen to reduce the sampling variability of $\hw(\mu_0)$ without changing the population efficient portfolio being estimated. Later sections derive the sampling distribution of $\hw(\mu_0)$, reduce its covariance dependence on $m$ to a scalar functional, and characterize the efficiency-optimal spectral measure.

\subsection{Spectral Measures as Estimating Criteria}
\label{sec:spectral_estimating_criteria}

The distinction between the population objective and the empirical estimating criterion is worth making explicit. Let
\[
\mathcal W(\mu_0)
=
\left\{
\ww\in\R^N:
\ww^\top\bone=1,\;
\ww^\top\bmu=\mu_0
\right\}
\]
denote the population constraint set. From previous discussions, we know that under the elliptical target-return setting,
\[
\arg\min_{\ww\in\mathcal W(\mu_0)}
\rho_m(-r_{\ww})
=
\ww_*(\mu_0)
\qquad
\text{for every admissible }m,
\]
whereas the empirical minimizer in \cref{eq:empirical_rho_risk_optimization} has a sampling distribution that depends on $m$. Thus, $m$ indexes a family of estimators of a common parameter.

This perspective is analogous to choosing among estimating criteria or tuning parameters for statistical efficiency, with an important distinction: under ellipticity, changing $m$ does not change the population estimand. The spectral measure can therefore be selected for estimation efficiency. This also clarifies the interpretation of the data-adaptive procedure developed in \cref{sec:feasible_rule}. The selected spectral measure is not intended to estimate an investor's latent risk preferences; it is a statistically selected loss criterion used to estimate a portfolio whose population target is invariant to $m$.

\subsection{Variational Representation and Computation}
\label{sec:prelim}

The empirical problem \cref{eq:empirical_rho_risk_optimization} is convex and can be solved using the variational representation of CVaR due to \cite{rockafellar2000optimization}.

\begin{thm}[\cite{rockafellar2000optimization}]
	\label{thm:var_rep}
	Let $X$ be a random variable with continuous and strictly increasing distribution function. Then, for any $\alpha \in (0,1)$,
	\begin{align*}
		\operatorname{CVaR}_{\alpha}(X)
		&=
		\min_{z \in \R}
		\left\{
		z
		+
		\frac{1}{1-\alpha}
		\E\left[(X-z)_+\right]
		\right\}, \\
		\operatorname{VaR}_{\alpha}(X)
		&=
		\arg \min_{z \in \R}
		\left\{
		z
		+
		\frac{1}{1-\alpha}
		\E\left[(X-z)_+\right]
		\right\}.
	\end{align*}
	Moreover, $\operatorname{CVaR}_{\alpha}(\cdot)$ is convex.
\end{thm}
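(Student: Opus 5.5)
The plan is to study the convex function $F(z) = z + \frac{1}{1-\alpha}\E[(X-z)_+]$ directly: show that it is minimized exactly at $z = \operatorname{VaR}_\alpha(X)$, and that its minimum value equals the conditional-tail expression in \cref{eq:cvar_by_var}. Throughout we assume $\E|X|<\infty$, which is implicit in the statement for CVaR to be finite.

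The first step is to show $F$ is convex and differentiable. For each fixed outcome, $z \mapsto (X-z)_+$ is convex, and expectation preserves convexity, so $F$ is convex. Its right and left derivatives are
\[
F'_\pm(z) = 1 - \frac{1}{1-\alpha}\P(X > z) \quad\text{or}\quad 1 - \frac{1}{1-\alpha}\P(X\ge z),
\]
which we obtain by dominated convergence, using $|(X-z-h)_+-(X-z)_+|\le |h|$. Since the distribution function $F_X$ is continuous, the two one-sided derivatives coincide. Hence $F$ is differentiable with
\[
F'(z) = \frac{F_X(z)-\alpha}{1-\alpha}.
\]

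The second step identifies the minimizer. Because $F_X$ is continuous and strictly increasing, $F'$ is strictly increasing, and it vanishes exactly at the unique $z^*$ with $F_X(z^*)=\alpha$. Moreover, $F'<0$ to the left of $z^*$ and $F'>0$ to the right, so $z^*$ is the unique global minimizer. By the definition of VaR in the paper (the sup of $y$ with $F_X(y)\le\alpha$), strict monotonicity and continuity give $\operatorname{VaR}_\alpha(X)=z^*$. This proves the argmin claim.

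The third step computes the minimum value. Writing $q=z^*$, we have
\[
F(q) = q + \frac{\E[(X-q)\mathbbm{1}\{X\ge q\}]}{1-\alpha}.
\]
Since $\P(X\ge q)=1-\alpha$, this equals $\E[X\mid X\ge q]$, the first expression in \cref{eq:cvar_by_var}. For the integral form, we represent $X \overset{d}{=} F_X^{-1}(U)$ with $U$ uniform on $(0,1)$. Then $\E[X\mathbbm{1}\{U\ge\alpha\}]=\int_\alpha^1 \operatorname{VaR}_p(X)\,\d p$, and the event $\{X \ge q\}$ agrees with $\{U\ge\alpha\}$ almost surely, so the two forms agree.

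For convexity of $\operatorname{CVaR}_\alpha$ as a functional, we use the variational formula. Fix $t\in[0,1]$ and let $z_1,z_2$ be minimizers for $X_1,X_2$. Since $(\cdot)_+$ is convex,
\[
(tX_1+(1-t)X_2 - tz_1-(1-t)z_2)_+ \le t(X_1-z_1)_+ + (1-t)(X_2-z_2)_+.
\]
Evaluating the objective for $tX_1+(1-t)X_2$ at $z=tz_1+(1-t)z_2$ then bounds its minimum by $t\operatorname{CVaR}_\alpha(X_1)+(1-t)\operatorname{CVaR}_\alpha(X_2)$.

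One subtlety here is that the mixture may not have a continuous, strictly increasing distribution function. We handle this by defining $\operatorname{CVaR}_\alpha$ via the minimum formula in general, which is the Rockafellar–Uryasev convention, since the formula still attains its minimum at some quantile. Equivalently, we can invoke the integral representation $\frac{1}{1-\alpha}\int_\alpha^1\operatorname{VaR}_p\,\d p$, which holds for all integrable $X$ by the same computation with ties handled via $F'_\pm$.

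I expect the main obstacle to be exactly this point: justifying the identity, and hence convexity, outside the continuous strictly increasing class. This requires the one-sided derivative argument showing that any $z$ with $F'_-(z)\le 0\le F'_+(z)$ is an $\alpha$-quantile, and that the minimum value is insensitive to which quantile is chosen.
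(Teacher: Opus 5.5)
Your proof is correct. The paper gives no proof of this result; it imports it from \cite{rockafellar2000optimization}, and your argument is essentially that source's. There the minimizer is identified through the first-order condition $F'(z)=(F_X(z)-\alpha)/(1-\alpha)$, and convexity follows from joint convexity of $(X,z)\mapsto z+(1-\alpha)^{-1}\E[(X-z)_+]$ followed by partial minimization, just as you do.

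The extension beyond the continuous class that you flag is worth making precise, because the paper actually relies on it. In \Cref{sec:prelim} and in the proof of \Cref{thm:normality_general_risk}, the variational formula is applied to the empirical loss $-\hat r_{\ww,T}$, whose distribution is discrete.

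Your step-3 computation uses $\P(X\ge q)=1-\alpha$, which fails at an atom. The conditional-expectation form of CVaR does not extend at all. So ``the same computation'' is not quite the right justification.

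What does carry over is the calculation in quantile coordinates. The condition $F'_-(q)\le0\le F'_+(q)$ is equivalent to $\P(X<q)\le\alpha\le\P(X\le q)$. For the left-continuous quantile function $F_X^{-1}$, this forces $F_X^{-1}(p)\ge q$ for $p>\alpha$ and $F_X^{-1}(p)\le q$ for $p<\alpha$. Hence, for every $\alpha$-quantile $q$,
\[
\E[(X-q)_+]=\int_0^1\bigl(F_X^{-1}(p)-q\bigr)_+\,\d p=\int_\alpha^1F_X^{-1}(p)\,\d p-(1-\alpha)q .
\]
Thus, for every integrable $X$, the minimum equals $(1-\alpha)^{-1}\int_\alpha^1\operatorname{VaR}_p(X)\,\d p$, since the lower and upper quantile functions agree for a.e.\ $p$. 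The minimum is attained on the whole quantile interval. This general identity is what both your convexity step and the paper's empirical-CVaR manipulations require.
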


Applying \cref{thm:var_rep} to each CVaR component gives
\begin{align*}
	\rho_m(X)
	&=
	\int_0^1
	\operatorname{CVaR}_{\alpha}(X)
	\, \d m(\alpha) \\
	&=
	\min_{z \in M[0,1]}
	\left\{
	\int_0^1 z(\alpha)\,\d m(\alpha)
	+
	\int_0^1
	\frac{1}{1-\alpha}
	\E\left[(X-z(\alpha))_+\right]
	\d m(\alpha)
	\right\},
\end{align*}
where $M[0,1]$ denotes the set of nondecreasing measurable functions on $[0,1]$. Consequently,
\begin{align*}
	\rho_m(-\hat r_{\ww,T})
	=
	\min_{z \in M[0,1]}
	\bigg\{
	&\int_0^1 z(\alpha)\,\d m(\alpha) \\
	&+
	\frac{1}{T}\sum_{i=1}^T
	\int_0^1
	\frac{1}{1-\alpha}
	\left(-\ww^\top\rr_i-z(\alpha)\right)_+
	\d m(\alpha)
	\bigg\}.
\end{align*}
Thus, empirical spectral-risk minimization is a joint convex optimization problem over the portfolio weights $\ww$ and the auxiliary quantile function $z(\cdot)$. When $m$ has finite support, this is a finite-dimensional convex program. When $m$ has a continuous component, it can be implemented by discretizing its support, as in the data-adaptive procedure developed in \cref{sec:feasible_rule}.

The population counterpart is
\begin{align*}
	\rho_m(-r_{\ww})
	=
	\min_{z \in M[0,1]}
	\bigg\{
	&\int_0^1 z(\alpha)\,\d m(\alpha) \\
	&+
	\int_0^1
	\frac{1}{1-\alpha}
	\E\left[
	\left(-\ww^\top\rr-z(\alpha)\right)_+
	\right]
	\d m(\alpha)
	\bigg\}.
\end{align*}

The variational representation therefore converts empirical spectral-risk portfolio selection into a convex empirical minimization problem indexed by the spectral measure $m$. The next section develops the asymptotic theory for its optimizer under estimated constraints; subsequent sections exploit the dependence on $m$ to study efficiency and data-adaptive criterion selection.

\section{General Asymptotic Theory}
\label{sec:low-dim}

This section develops the large-sample theory for empirical spectral-risk portfolio rules. The central object is the sampling error of the empirical optimizer $\hw$, obtained by replacing the unknown return distribution with the empirical distribution of historical returns. We work in the classical fixed-dimensional asymptotic regime in which the number of assets $N$ is fixed and the sample size $T$ tends to infinity.

The results in this section apply beyond the elliptical model. Under regularity conditions, empirical spectral-risk portfolios are consistent and admit root-$T$ limiting distributions determined by the curvature of the population objective and the sampling variability of the empirical risk scores and estimated constraints. Under regular equality constraints the limit is Gaussian, whereas weakly binding inequalities can produce non-Gaussian limits characterized by stochastic quadratic programs. The explicit comparison between spectral-risk estimators and sample mean-variance optimization under elliptically contoured returns is developed in \Cref{sec:elliptic,sec:optimal_srm}.

The main result accommodates three features that arise simultaneously in spectral-risk portfolio estimation: a nonsmooth empirical CVaR representation, a continuum mixture of such criteria, and a feasible set whose coefficients may be estimated from the same sample. It also allows weakly binding estimated inequalities, which can generate non-Gaussian stochastic quadratic-program limits. The later efficiency analysis uses simpler equality-constrained special cases, but the general result makes clear that the underlying inference framework is not tied to ellipticity or to the particular target-return problem studied subsequently.

Throughout this section, for an integrable function $f$, write
\[
\P f = \E[f(\rr)],
\qquad
\P_T f
=
\hat{\E}_T[f(\rr)]
=
\frac{1}{T}\sum_{i=1}^T f(\rr_i).
\]
Let
\[
\mathbb{G}_T
=
\sqrt{T}(\P_T-\P)
\]
denote the associated empirical process, and let $\mathbb{G}$ denote the limiting $\P$-Brownian bridge. Thus, for square-integrable $f$,
\[
\mathbb{G}_T f
=
\frac{1}{\sqrt{T}}\sum_{i=1}^T
\left(f(\rr_i)-\E[f(\rr)]\right),
\]
and for square-integrable $f$ and $g$,
\[
\operatorname{Cov}(\mathbb{G}f,\mathbb{G}g)
=
\P(fg)-\P f\,\P g .
\]

\subsection{Scores and Curvature of Empirical Spectral Risk}

We first derive the local derivatives of the population CVaR objective. These derivatives provide the score and curvature terms that enter the first-order expansion of empirical spectral-risk portfolio estimators.

For a fixed observation $\rr\in\R^N$, define
\[
L_{\alpha}(\ww,z;\rr)
=
z+\frac{1}{1-\alpha}\left(-\ww^\top\rr-z\right)_+,
\]
the variational loss associated with $\alpha$-CVaR. For $(\ww,z)\in\R^N\times\R$, let
\[
R_{\alpha}(\ww,z)
=
\E\left[L_{\alpha}(\ww,z;\rr)\right],
\]
and define
\[
z(\ww,\alpha)
=
\arg\min_{z\in\R}R_{\alpha}(\ww,z).
\]
By \Cref{thm:var_rep},
\[
z(\ww,\alpha)
=
\operatorname{VaR}_{\alpha}(-r_{\ww})
\]
whenever the cumulative distribution function of $-r_{\ww}$ is continuous and strictly increasing.

The following condition ensures sufficient local smoothness of the CVaR objective for first-order asymptotic analysis. It is stated for general return distributions and does not impose ellipticity.

\begin{ass}
	\label{ass:regularity_density}
	For the random variable $r_{\ww}=\ww^\top\rr$, assume that, for every $\ww\neq\bzero$:
	\begin{itemize}
		\item[(i)]
		The mapping
		$(\ww,z)\mapsto\nabla_{(\ww,z)}\P(-r_{\ww}\ge z)$
		is continuous, and
		$(\ww,z)\mapsto p_{-r_{\ww}}(z)$,
		the density of $-r_{\ww}$ evaluated at $z$, is positive and continuous.
		Further, $\E[\rr\rr^\top]<\infty$.
		
		\item[(ii)]
		The functions
		\[
		\E\left[\rr\,\bone\{-r_{\ww}\ge z\}\right],
		\qquad
		\E\left[\rr\,\big\vert\, -r_{\ww}=z\right],
		\qquad
		\E\left[\rr\rr^\top\,\big\vert\, -r_{\ww}=z\right]
		\]
		are continuous in $(\ww,z)$.
	\end{itemize}
\end{ass}

\begin{rem}
	\Cref{ass:regularity_density} imposes smoothness on the one-dimensional portfolio returns $r_{\ww}=\ww^\top\rr$. It allows the multivariate return distribution to be non-Gaussian and requires no higher-order moments beyond those stated above. It is satisfied by many commonly used smooth return distributions, including the elliptically contoured models studied in \Cref{sec:elliptic}. The restriction $\ww\neq\bzero$ is necessary because $r_{\ww}$ is degenerate at $\ww=\bzero$ and therefore has no density.
\end{rem}

\begin{thm}[Derivatives of the CVaR variational objective]
	\label{thm:population_derivatives}
	Under \Cref{ass:regularity_density}, the population objective
	$R_{\alpha}(\ww,z)$ is twice continuously differentiable in $(\ww,z)$
	at every $\ww\neq\bzero$. Its gradient is
	\begin{equation}
		\label{eq:pop_gradient}
		\begin{split}
			\nabla_{\ww}R_{\alpha}(\ww,z)
			&=
			-\frac{1}{1-\alpha}
			\E\left[\rr\,\bone\{-r_{\ww}\ge z\}\right] \\
			&=
			-\frac{1}{1-\alpha}
			\P(-r_{\ww}\ge z)
			\E\left[\rr\,\big\vert\, -r_{\ww}\ge z\right],
			\\
			\partial_zR_{\alpha}(\ww,z)
			&=
			1-\frac{1}{1-\alpha}\P(-r_{\ww}\ge z),
		\end{split}
	\end{equation}
	and its Hessian is
	\begin{equation}
		\label{eq:pop_hessian}
		\begin{split}
			\nabla_{\ww}^2R_{\alpha}(\ww,z)
			&=
			\frac{1}{1-\alpha}
			p_{-r_{\ww}}(z)
			\E\left[
			\rr\rr^\top
			\,\big\vert\,
			-r_{\ww}=z
			\right],
			\\
			\nabla_{\ww}\partial_zR_{\alpha}(\ww,z)
			&=
			\frac{1}{1-\alpha}
			p_{-r_{\ww}}(z)
			\E\left[
			\rr
			\,\big\vert\,
			-r_{\ww}=z
			\right],
			\\
			\partial_z^2R_{\alpha}(\ww,z)
			&=
			\frac{1}{1-\alpha}p_{-r_{\ww}}(z).
		\end{split}
	\end{equation}
\end{thm}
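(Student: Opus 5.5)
The proof has two stages. First we compute the gradient by differentiating under the expectation. Then we obtain the Hessian by differentiating the gradient formulas, which are expectations of indicators, using the density of the scalar variable $-r_{\ww}$.

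\emph{Gradient.} Write $L_\alpha(\ww,z;\rr)=z+\frac{1}{1-\alpha}(-\ww^\top\rr-z)_+$. For each $\rr$ this is convex and Lipschitz in $(\ww,z)$, with Lipschitz constant bounded by $1+\frac{1}{1-\alpha}(\|\rr\|+1)$, which is integrable because $\E[\rr\rr^\top]<\infty$. Away from the set $\{-\ww^\top\rr=z\}$ it is differentiable, with
\[
\nabla_{\ww}L_\alpha=-\tfrac{1}{1-\alpha}\rr\,\bone\{-\ww^\top\rr\ge z\},
\qquad
\partial_z L_\alpha=1-\tfrac{1}{1-\alpha}\bone\{-\ww^\top\rr\ge z\}.
\]
For $\ww\neq\bzero$ the set $\{-r_{\ww}=z\}$ has probability zero, since $-r_{\ww}$ has a density by \Cref{ass:regularity_density}(i). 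Dominated convergence applied to difference quotients then justifies interchanging derivative and expectation, which gives \eqref{eq:pop_gradient}. The second expression for $\nabla_{\ww}R_\alpha$ is just the definition of conditional expectation.

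\emph{Hessian.} The $z$-derivatives come from the one-dimensional identity $\P(-r_{\ww}\ge z)=\int_z^\infty p_{-r_{\ww}}(t)\,\d t$. Differentiating gives $\partial_z\P(-r_{\ww}\ge z)=-p_{-r_{\ww}}(z)$, which yields $\partial_z^2R_\alpha$. For the other blocks we need $\partial_z$ and $\nabla_{\ww}$ of $G(\ww,z):=\E[\rr\,\bone\{-r_{\ww}\ge z\}]$. We condition on $-r_{\ww}=t$ and write
\[
G(\ww,z)=\int_z^\infty \E[\rr\mid -r_{\ww}=t]\,p_{-r_{\ww}}(t)\,\d t .
\]
This gives $\partial_z G=-p_{-r_{\ww}}(z)\E[\rr\mid -r_{\ww}=z]$, which produces $\nabla_{\ww}\partial_zR_\alpha$ and is consistent with symmetry of the mixed partials.

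For $\nabla_{\ww}G$ we perturb $\ww\to\ww+\boldsymbol h$. The event $\{-\ww^\top\rr\ge z\}$ becomes $\{-\ww^\top\rr\ge z+\boldsymbol h^\top\rr\}$. The difference of the two indicators is supported on a thin slab around $\{-r_{\ww}=z\}$ whose width is $\boldsymbol h^\top\rr$. Disintegrating along $t=-r_{\ww}$ with the density $p_{-r_{\ww}}$, we get to first order
\[
G(\ww+\boldsymbol h,z)-G(\ww,z)\approx -p_{-r_{\ww}}(z)\,\E[\rr\rr^\top\mid -r_{\ww}=z]\,\boldsymbol h .
\]
Multiplying by $-\frac{1}{1-\alpha}$ gives the stated $\nabla^2_{\ww}R_\alpha$. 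A cleaner way to make this rigorous avoids slab estimates directly. Define $H(s,\boldsymbol u):=\E[\rr\,\bone\{-\ww^\top\rr-\boldsymbol u^\top\rr\ge s\}]$. Then $\nabla_{\ww}G$ is a directional derivative in $\boldsymbol u$ at $\boldsymbol u=\bzero$, and we can apply the already-established $z$-derivative formula to the family indexed by $\ww+\boldsymbol u$, together with the continuity in (ii). In practice this means we use the assumed continuity of $\nabla_{(\ww,z)}\P(-r_{\ww}\ge z)$ to guarantee that $G$ is differentiable, and we identify its derivative via the slab computation. Continuity of all second derivatives then follows directly from the continuity of $p_{-r_{\ww}}(z)$ and of the conditional moments in \Cref{ass:regularity_density}(ii). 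This gives twice continuous differentiability at every $\ww\neq\bzero$.

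The main obstacle is the $\ww$-derivative of $G$. It requires differentiating the expectation of a vector-valued function over a moving half-space, and that is exactly where the conditional second moment $\E[\rr\rr^\top\mid -r_{\ww}=z]$ appears. I would handle it via the slab/disintegration argument. The first-order error would be controlled by the continuity of $t\mapsto p_{-r_{\ww}}(t)\E[\rr\rr^\top\mid -r_{\ww}=t]$, with a truncation on $\|\rr\|$ to handle unbounded $\boldsymbol h^\top\rr$, and the truncated tail would be controlled by the second-moment assumption.
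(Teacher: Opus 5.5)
Your gradient step (integrable Lipschitz envelope, differentiability off the null set $\{-r_{\ww}=z\}$, dominated convergence) and your $z$-derivatives are fine. Getting $\nabla_{\ww}\partial_zR_\alpha$ from the continuous $\partial_z\nabla_{\ww}R_\alpha$ by Schwarz's theorem is also acceptable.

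The gap is the $\ww$-derivative of $G(\ww,z)=\E[\rr\,\bone\{-r_{\ww}\ge z\}]$, which is the one genuinely new block of \eqref{eq:pop_hessian}. None of your three suggestions establishes it.
The ``cleaner way'' through $H(s,\boldsymbol u)$ only yields $\partial_s H$, i.e.\ threshold derivatives along the family $\ww+\boldsymbol u$. It says nothing about $\nabla_{\boldsymbol u}H$, because tilting $\ww$ moves the boundary by the $\rr$-dependent amount $\boldsymbol u^\top\rr$, not by a constant.
Assumption~\ref{ass:regularity_density}(i) asserts differentiability only of the scalar $\P(-r_{\ww}\ge z)$ (the case $g\equiv1$). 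It therefore cannot guarantee that the vector $G$ is differentiable, and (ii) gives only continuity of $G$.
Finally, the fixed-$\ww$ slab argument does not close with the control you name. Exactly,
\[
G(\ww+\boldsymbol h,z)-G(\ww,z)=\int_{\R}p_{-r_{\ww}}(t)\,\E\bigl[\rr\,\bigl(\bone\{\boldsymbol h^\top\rr\le t-z<0\}-\bone\{0\le t-z<\boldsymbol h^\top\rr\}\bigr)\mid -r_{\ww}=t\bigr]\,\d t .
\]
At each level $t$ near $z$ the integrand is an indicator-weighted conditional expectation, so it depends on the conditional law of $\boldsymbol h^\top\rr$ given $-r_{\ww}=t$. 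You may freeze the conditioning level at $z$, which would give $-p_{-r_{\ww}}(z)\E[\rr\rr^\top\mid -r_{\ww}=z]\boldsymbol h$, only if these conditional laws vary continuously in $t$. Continuity in $t$ of $p_{-r_{\ww}}(t)\E[\rr\rr^\top\mid -r_{\ww}=t]$ at fixed $\ww$ does not give that, and truncating $\norm{\rr}_2$ does not help.

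The missing idea is to move the hyperplane rather than integrate over a slab at fixed $\ww$. This is where the joint continuity in $\ww$ from Assumption~\ref{ass:regularity_density}(ii) enters.
Put $\ww_\theta=\ww+\theta\boldsymbol h$ and mollify, $G_\veps(\ww)=\E[\rr\,\Phi((-r_{\ww}-z)/\veps)]$, so that $G_\veps\to G$ by dominated convergence. Differentiate in $\theta$ under the expectation; the derivative is bounded by $\veps^{-1}\norm{\rr}_2^2\norm{\boldsymbol h}_2$. Then disintegrate, for each $\theta$, along $-r_{\ww_\theta}$ itself. This gives
\[
G_\veps(\ww+\boldsymbol h)-G_\veps(\ww)=-\int_{\R}\frac{1}{\veps}\phi\Bigl(\frac{v}{\veps}\Bigr)\int_0^1 p_{-r_{\ww_\theta}}(z+v)\,\E\bigl[\rr\rr^\top\mid -r_{\ww_\theta}=z+v\bigr]\,\d\theta\,\d v\;\boldsymbol h .
\]
The inner $\theta$-integral is continuous in $v$, by joint continuity of $(\ww,v)\mapsto p_{-r_{\ww}}(v)\E[\rr\rr^\top\mid -r_{\ww}=v]$ on the compact segment. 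It is also integrable, since $\E\norm{\rr}_2^2<\infty$.
Letting $\veps\to0$ therefore gives
\[
G(\ww+\boldsymbol h,z)-G(\ww,z)=-\int_0^1 p_{-r_{\ww_\theta}}(z)\,\E\bigl[\rr\rr^\top\mid -r_{\ww_\theta}=z\bigr]\,\d\theta\;\boldsymbol h .
\]
Continuity of $\ww\mapsto p_{-r_{\ww}}(z)\E[\rr\rr^\top\mid -r_{\ww}=z]$ then yields Fr\'echet differentiability of $G$ with the claimed derivative.
This is exactly the paper's Lemma~\ref{lem:general_diff}, applied with $g(\rr)=\rr$ (and $g\equiv1$ for the mixed block) and $h(\rr,\ww)=-r_{\ww}-z$. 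With it in place, the rest of your outline goes through.
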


The preceding result also gives the derivative of CVaR as a function of the portfolio weights alone.

\begin{thm}[Derivatives of portfolio CVaR]
	\label{thm:direct_differentiate_cvar}
	Under \Cref{ass:regularity_density}, for any $\alpha\in(0,1)$ and
	$\ww\neq\bzero$,
	\begin{align*}
		\nabla_{\ww}\operatorname{CVaR}_{\alpha}(-r_{\ww})
		&=
		-\frac{1}{1-\alpha}
		\E\left[
		\rr\,\bone\{-r_{\ww}\ge z(\ww,\alpha)\}
		\right],
		\\
		\nabla_{\ww}^2\operatorname{CVaR}_{\alpha}(-r_{\ww})
		&=
		\frac{1}{1-\alpha}
		p_{-r_{\ww}}\bigl(z(\ww,\alpha)\bigr)
		\operatorname{Cov}\left(
		\rr\,\big\vert\,
		-r_{\ww}=z(\ww,\alpha)
		\right),
	\end{align*}
	where
	$z(\ww,\alpha)=\operatorname{VaR}_{\alpha}(-r_{\ww})$.
\end{thm}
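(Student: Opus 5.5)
The plan is to obtain both formulas by an envelope (Danskin-type) argument applied to the variational representation of \Cref{thm:var_rep}, then differentiate a second time using the implicit function theorem for $z(\ww,\alpha)$. Fix $\alpha\in(0,1)$ and $\ww\neq\bzero$. By \Cref{thm:var_rep}, $\operatorname{CVaR}_\alpha(-r_{\ww})=R_\alpha(\ww,z(\ww,\alpha))$. Here $z(\ww,\alpha)$ is the unique minimizer of $z\mapsto R_\alpha(\ww,z)$, which is unique because $p_{-r_{\ww}}>0$ makes $R_\alpha(\ww,\cdot)$ strictly convex. It is characterized by the first-order condition $\partial_z R_\alpha(\ww,z)=0$, i.e.\ $\P(-r_{\ww}\ge z)=1-\alpha$, using \eqref{eq:pop_gradient} from \Cref{thm:population_derivatives}.

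First, I apply the implicit function theorem to $F(\ww,z)=\partial_zR_\alpha(\ww,z)$. By \Cref{thm:population_derivatives}, $R_\alpha$ is $C^2$ near $(\ww,z(\ww,\alpha))$, and $\partial_z F=\partial_z^2R_\alpha=p_{-r_{\ww}}(z)/(1-\alpha)>0$. Hence $\ww\mapsto z(\ww,\alpha)$ is $C^1$ in a neighborhood of $\ww$, which stays away from $\bzero$, with
\[
\nabla_{\ww}z(\ww,\alpha)=-\frac{\nabla_{\ww}\partial_zR_\alpha}{\partial_z^2R_\alpha}=-\E\left[\rr\,\big\vert\,-r_{\ww}=z(\ww,\alpha)\right].
\]

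Second, the chain rule gives $\nabla_{\ww}\operatorname{CVaR}_\alpha=\nabla_{\ww}R_\alpha+\partial_zR_\alpha\,\nabla_{\ww}z$. The second term vanishes by the first-order condition, so we get $\nabla_{\ww}R_\alpha(\ww,z(\ww,\alpha))$. By \eqref{eq:pop_gradient} this is the claimed gradient.

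Third, I differentiate this gradient map $G(\ww)=\nabla_{\ww}R_\alpha(\ww,z(\ww,\alpha))$ once more with the chain rule:
\[
\nabla^2_{\ww}\operatorname{CVaR}_\alpha=\nabla^2_{\ww}R_\alpha+\nabla_{\ww}\partial_zR_\alpha\,(\nabla_{\ww}z)^\top.
\]
Write $\bar{\rr}=\E[\rr\mid -r_{\ww}=z]$ and $p=p_{-r_{\ww}}(z)$ at $z=z(\ww,\alpha)$. Plugging in \eqref{eq:pop_hessian} and the formula for $\nabla_{\ww}z$ gives
\[
\frac{p}{1-\alpha}\left(\E[\rr\rr^\top\mid -r_{\ww}=z]-\bar{\rr}\bar{\rr}^\top\right),
\]
which is exactly $\frac{p}{1-\alpha}\operatorname{Cov}(\rr\mid -r_{\ww}=z)$. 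The result is symmetric, consistent with it being a Hessian.

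The only delicate point is justifying the implicit function step and the envelope identity. This requires $R_\alpha$ to be jointly $C^2$ in a full neighborhood of $(\ww,z(\ww,\alpha))$, together with strict positivity of $\partial_z^2R_\alpha$. Both are supplied by \Cref{thm:population_derivatives} and \Cref{ass:regularity_density}(i), since all points near $\ww$ are nonzero. I also need that the minimizer in \Cref{thm:var_rep} coincides with $\operatorname{VaR}_\alpha(-r_{\ww})$. That theorem's hypotheses hold because $-r_{\ww}$ has a positive continuous density, so its CDF is continuous and strictly increasing. Everything else is routine calculus.
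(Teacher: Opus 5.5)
Your proposal is correct and follows the paper's proof essentially step for step: you write $\operatorname{CVaR}_\alpha(-r_{\ww})=R_\alpha(\ww,z(\ww,\alpha))$, remove the $\partial_zR_\alpha$ term via the first-order condition (the envelope theorem), obtain $\nabla_{\ww}z$ from the implicit function theorem, and differentiate once more to get $\nabla^2_{\ww}R_\alpha-(\partial_z^2R_\alpha)^{-1}\nabla_{\ww}\partial_zR_\alpha(\nabla_{\ww}\partial_zR_\alpha)^\top$, which \Cref{thm:population_derivatives} turns into the conditional covariance. Your additional remarks on uniqueness of the minimizer and on verifying the hypotheses of \Cref{thm:var_rep} are correct and make explicit what the paper leaves implicit.
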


For the spectral risk measure
\[
\rho_m(X)
=
\int_0^1
\operatorname{CVaR}_{\alpha}(X)\,
\d m(\alpha),
\]
define
\[
U_m(\ww;\rr)
=
\int_0^1
L_{\alpha}\left(
\ww,z(\ww,\alpha);\rr
\right)
\d m(\alpha).
\]
Then
\[
\E[U_m(\ww;\rr)]
=
\rho_m(-r_{\ww}).
\]
The observation-level score
$\nabla_{\ww}U_m(\ww;\rr)$ is the basic random quantity whose empirical average drives the first-order sampling error of the spectral-risk portfolio estimator.

\subsection{Empirical Spectral-Risk Minimization under Estimated Linear Constraints}

We now formulate the general constrained problem. It allows both deterministic and data-dependent linear equality and inequality constraints and contains the target-return portfolio studied later as a special case.

Let $k_c^{(e)}$, $k_c^{(i)}$, $k_s^{(e)}$, and $k_s^{(i)}$ denote the numbers of equality (indexed by $(e)$) and inequality (indexed by $(i)$) constraints that are either deterministic (indexed by $c$), or stochastic/data-dependent (indexed by $s$). Let
\[
F_c^{(e)}\in\R^{k_c^{(e)}\times N},
\qquad
F_c^{(i)}\in\R^{k_c^{(i)}\times N}
\]
be deterministic constraint matrices, and let
\[
F_s^{(e)}:\R^N\to\R^{k_s^{(e)}\times N},
\qquad
F_s^{(i)}:\R^N\to\R^{k_s^{(i)}\times N}
\]
be square-integrable matrix-valued functions. The population problem is
\begin{equation}
	\label{eq:pop_rho_risk_general_linear}
	\begin{split}
		\mbox{minimize}_{\ww\in\R^N}
		\quad
		&\rho_m(-r_{\ww}),
		\\
		\mbox{subject to}
		\quad
		&F_c^{(e)}\ww=b_c^{(e)},
		\qquad
		F_c^{(i)}\ww\le b_c^{(i)},
		\\
		&\E[F_s^{(e)}(\rr)]\ww=b_s^{(e)},
		\qquad
		\E[F_s^{(i)}(\rr)]\ww\le b_s^{(i)}.
	\end{split}
\end{equation}
Let $\cC$ denote its feasible set and let $\ww_*$ denote a solution. Its empirical counterpart is
\[
\begin{split}
	\mbox{minimize}_{\ww\in\R^N}
	\quad
	&\rho_m(-\hat r_{\ww,T}),
	\\
	\mbox{subject to}
	\quad
	&F_c^{(e)}\ww=b_c^{(e)},
	\qquad
	F_c^{(i)}\ww\le b_c^{(i)},
	\\
	&\hat{\E}_T[F_s^{(e)}(\rr)]\ww=b_s^{(e)},
	\qquad
	\hat{\E}_T[F_s^{(i)}(\rr)]\ww\le b_s^{(i)}.
\end{split}
\]
Let $\hat{\cC}_T$ denote its feasible set and let $\hw$ be any measurable solution.

The formulation includes the target-return portfolio by taking the budget constraint as deterministic and the expected-return constraint as stochastic. It also accommodates no-short-sale restrictions, position bounds, and estimated factor or characteristic exposures. Gross-exposure constraints are included as well, since $\|\ww\|_1\le c$ can be represented, for fixed $N$, by the finite collection of linear inequalities $\veps^\top\ww\le c$ for $\veps\in\{\pm1\}^N$.

\begin{ass}
	\label{ass:measure_support}
	There exists $u\in(0,1/2)$ such that
	\[
	\supp(m)\subset[u,1-u].
	\]
\end{ass}

The endpoint restriction serves both a technical and a statistical regularization role. It excludes extreme quantile levels at which empirical CVaR is based on very few observations and its local behavior becomes unstable. The asymptotic theory below keeps $u$ fixed, while the comparison with the full spectral-risk class in \Cref{sec:optimal_srm} studies what happens as the restriction is relaxed by taking $u\downarrow0$. Allowing $u=u_T\downarrow0$ jointly with $T\to\infty$ is beyond the scope of the present analysis.

\begin{ass}[Identification and constraint regularity]
	\label{ass:identification_constraint}
	The population problem \eqref{eq:pop_rho_risk_general_linear} has a unique solution
	$\ww_*\neq\bzero$. For some $\delta>0$, the near-optimal level set
	\[
	\left\{
	\ww\in\cC:
	\rho_m(-r_{\ww})
	\le
	\rho_m(-r_{\ww_*})+\delta
	\right\}
	\]
	is compact.
	
	Let $(F_c^{(i,*)},b_c^{(i,*)})$ and
	$(F_s^{(i,*)},b_s^{(i,*)})$ denote the inequality constraints that bind at $\ww_*$. The matrix formed by the gradients of all equality constraints and all binding inequality constraints,
	\[
	A_*
	=
	\begin{bmatrix}
		F_c^{(e)}
		\\
		\E[F_s^{(e)}(\rr)]
		\\
		F_c^{(i,*)}
		\\
		\E[F_s^{(i,*)}(\rr)]
	\end{bmatrix},
	\]
	has full row rank. Let
	$(\eta_c^{(e)},\eta_c^{(i)},\eta_s^{(e)},\eta_s^{(i)})$
	denote the resulting unique Lagrange multipliers. Among the binding inequalities, write the subscript $+$ for those with strictly positive multipliers and the subscript $0$ for those with zero multipliers.
\end{ass}

Define the linear subspace generated by the equality and strongly active constraints as
\[
\begin{split}
	\cT_*
	=
	\big\{
	d\in\R^N: \,
	&F_c^{(e)}d=0,\quad
	\E[F_s^{(e)}(\rr)]d=0,
	\\
	&F_{c,+}^{(i,*)}d=0,\quad
	\E[F_{s,+}^{(i,*)}(\rr)]d=0
	\big\}.
\end{split}
\]

\begin{ass}[Second-order regularity]
	\label{ass:second_order_regularity}
	The population objective has positive curvature on $\cT_*$:
	\[
	\inf_{d\in\cT_*:\,\|d\|_2=1}
	d^\top
	\nabla_{\ww}^2\rho_m(-r_{\ww_*})
	d
	>0.
	\]
\end{ass}

The compact level-set condition provides global identification of the optimizer, the full-row-rank condition is the linear independence constraint qualification, and the curvature condition guarantees uniqueness of the local quadratic approximation. The curvature condition is stronger than necessary when weakly binding inequalities are present, but provides a transparent sufficient condition for the result below.

At the $T^{-1/2}$ scale, the empirical objective contributes a Gaussian linear perturbation to the population quadratic expansion, while estimated constraints contribute random shifts to the local feasible set. The limiting estimator is therefore characterized by the following random quadratic program.

\begin{defn}[Limiting stochastic quadratic program]
	\label{defn:asym_dist_low_dim}
	For $h\in\R^N$, define
	\begin{equation}
		\label{eq:defn_M_func}
		\begin{split}
			M(h)
			=
			&\,
			h^\top
			\mathbb{G}
			\left(
			\nabla_{\ww}U_m(\ww_*;\rr)
			+
			F_s^{(e)}(\rr)^\top\eta_s^{(e)}
			+
			F_{s,+}^{(i,*)}(\rr)^\top
			\eta_{s,+}^{(i,*)}
			\right)
			\\
			&+
			\frac12
			h^\top
			\nabla_{\ww}^2\rho_m(-r_{\ww_*})
			h.
		\end{split}
	\end{equation}
	Let $h_{c,s}$ be the unique random solution to
	\[
	\begin{split}
		\mbox{minimize}_{h\in\R^N}
		\quad
		&M(h),
		\\
		\mbox{subject to}
		\quad
		&F_c^{(e)}h=0,
		\qquad
		F_{c,+}^{(i,*)}h=0,
		\qquad
		F_{c,0}^{(i,*)}h\le0,
		\\
		&\E[F_s^{(e)}(\rr)]h
		=
		-\mathbb{G}(F_s^{(e)})\ww_*,
		\\
		&\E[F_{s,+}^{(i,*)}(\rr)]h
		=
		-\mathbb{G}(F_{s,+}^{(i,*)})\ww_*,
		\\
		&\E[F_{s,0}^{(i,*)}(\rr)]h
		\le
		-\mathbb{G}(F_{s,0}^{(i,*)})\ww_*.
	\end{split}
	\]
\end{defn}

Weakly active inequalities enter the random local feasible set but not the linear perturbation of the objective because their population Lagrange multipliers are zero. When no inequality constraint binds with zero multiplier, the limiting problem is a Gaussian linear perturbation of a strictly convex quadratic program and the resulting limit is Gaussian. With weakly binding inequalities, the limit may instead be non-Gaussian.

\begin{thm}[Empirical spectral-risk minimization under estimated constraints]
	\label{thm:normality_general_risk}
	Under Assumptions~\ref{ass:regularity_density},
	\ref{ass:measure_support},
	\ref{ass:identification_constraint}, and
	\ref{ass:second_order_regularity}, we have
	\[
	\hw\pto\ww_*
	\qquad
	\text{as }T\to\infty.
	\]
	If, in addition,
	\[
	\sup_{(\ww,u)\in K\times\R}
	p_{-r_{\ww}}(u)<\infty
	\]
	for every compact set
	$K\subset\R^N\backslash\{\bzero\}$, then
	\[
	\sqrt{T}(\hw-\ww_*)
	\wto
	h_{c,s},
	\]
	where $h_{c,s}$ is defined in
	\Cref{defn:asym_dist_low_dim}. Furthermore,
	\begin{align*}
		\sqrt{T}
		\left(
		\hw^\top\bmu-\ww_*^\top\bmu
		\right)
		&\wto
		h_{c,s}^\top\bmu,
		\\
		\sqrt{T}
		\left(
		\rho_m(-r_{\hw})
		-
		\rho_m(-r_{\ww_*})
		\right)
		&\wto
		h_{c,s}^\top
		\nabla_{\ww}\rho_m(-r_{\ww_*}).
	\end{align*}
\end{thm}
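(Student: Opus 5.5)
The plan is to treat $\hw$ as a constrained M-estimator in the enlarged variable $(\ww,z(\cdot))$, but to reduce to $\ww$ alone by profiling. Write $\hat\rho_T(\ww)=\rho_m(-\hat r_{\ww,T})$. From the variational representation, $\hat\rho_T(\ww)=\min_{z}\P_T\int_0^1 L_\alpha(\ww,z(\alpha);\rr)\,\d m(\alpha)$, and the minimizer in $z$ is the empirical quantile process. Two facts are then used throughout. First, $\hat\rho_T$ and $\rho_m(-r_{\ww})$ are convex in $\ww$, since CVaR is convex by \Cref{thm:var_rep} and $m$ is a probability measure. Second, by \Cref{ass:measure_support}, only levels $\alpha\in[u,1-u]$ enter, so all quantiles involved are interior.

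\textbf{Consistency.} The class $\{L_\alpha(\ww,z;\cdot):\alpha\in[u,1-u],\ \ww\in K,\ |z|\le C\}$ is built from hinge functions of linear forms. It is therefore VC-type and Glivenko--Cantelli with an envelope of order $\|\rr\|$, which is integrable because $\E[\rr\rr^\top]<\infty$. It follows that $\hat\rho_T\to\rho_m(-r_{\cdot})$ uniformly on compacts; the empirical quantiles stay bounded since the levels avoid $0$ and $1$.

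The feasible sets converge as well: $\hat\E_T[F_s(\rr)]\to\E[F_s(\rr)]$ almost surely, and by LICQ (full row rank of $A_*$) each point of $\cC$ near $\ww_*$ can be approximated by points of $\hat\cC_T$. Convexity together with the compact near-optimal level set then gives $\hw\pto\ww_*$, by the standard convex argmin argument: compare $\hat\rho_T$ on the boundary of a small ball around $\ww_*$ with its value at a feasible point near $\ww_*$.

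\textbf{Local expansion.} Set $\ww=\ww_*+h/\sqrt T$ and expand
\[
T\bigl(\hat\rho_T(\ww_*+h/\sqrt T)-\hat\rho_T(\ww_*)\bigr)
= \sqrt T\, h^\top\nabla_\ww\rho_m(-r_{\ww_*}) + h^\top\mathbb{G}_T\nabla_\ww U_m(\ww_*;\rr)+\tfrac12 h^\top\nabla^2_\ww\rho_m(-r_{\ww_*})h+o_p(1),
\]
uniformly over compact sets of $h$.

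The steps to justify this are as follows.
\begin{enumerate}
\item Use the envelope theorem: since $z$ is optimized, perturbing $z$ around the population VaR $z(\ww_*,\alpha)$ contributes only at second order. This follows from $\partial_z R_\alpha=0$ at the VaR and the quadratic curvature of $R_\alpha$ in $z$, which is $p_{-r_{\ww}}(z)/(1-\alpha)$ by \Cref{thm:population_derivatives}.
\item Apply a stochastic equicontinuity bound for the process $\mathbb{G}_T\bigl[L_\alpha(\ww_*+h/\sqrt T,z;\cdot)-L_\alpha(\ww_*,z;\cdot)-h^\top\nabla_\ww L_\alpha/\sqrt T\bigr]$, uniformly in $\alpha\in[u,1-u]$ and $z$ near the VaR. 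Its $L^2$ modulus is controlled by $\P(|{-r_{\ww}}-z|\le \|h\|\|\rr\|/\sqrt T)$, and this probability tends to zero under the uniform density bound. This bound is exactly where the extra hypothesis $\sup p_{-r_{\ww}}<\infty$ is used.
\item Identify the second-order term via \Cref{thm:direct_differentiate_cvar}, integrating over $m$ by dominated convergence.
\end{enumerate}
I expect this step to be the main obstacle. The difficulty is that the expansion must hold with the empirical quantile process profiled out and uniformly in $\alpha$. I would first prove it for fixed $\alpha$ via Knight's identity for the hinge loss, and then integrate over $m$ using uniform-in-$\alpha$ bounds.

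\textbf{Constraints and the limit.} The KKT conditions at $\ww_*$ give
\[
\nabla_\ww\rho_m(-r_{\ww_*})=-A_*^\top\eta.
\]
On the local feasible set, the empirical constraints read $\hat\E_T[F_s]h/\sqrt T=b_s-\hat\E_T[F_s]\ww_*$. For active rows this equals $-\mathbb{G}_T(F_s)\ww_*/\sqrt T$ up to $o_p(T^{-1/2})$. Substituting, the drift term becomes:
\begin{itemize}
\item $-\sqrt T\,\eta^\top A_* h$ on equality and strongly active rows, which is $\eta^\top\mathbb{G}_T(F_s)\ww_*$ plus the product term $h^\top\mathbb{G}_T(F_s)^\top\eta$;
\item zero on weakly active rows, whose multipliers vanish, while those rows remain as inequalities.
\end{itemize}
Inactive constraints are irrelevant with probability tending to one. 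Dropping terms not depending on $h$, the local criterion therefore converges to $M(h)$ on the random polyhedron of \Cref{defn:asym_dist_low_dim}.

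Because $\mathbb{G}_T\Rightarrow\mathbb{G}$ jointly over the finitely many relevant functions, the criteria epi-converge in distribution. \Cref{ass:second_order_regularity} makes $M$ strictly convex on the relevant set, so the minimizer $h_{c,s}$ is unique. Tightness of $\sqrt T(\hw-\ww_*)$ follows from convexity plus this curvature. The argmin theorem for convex epi-limits \citep{knight1999epi} then yields $\sqrt T(\hw-\ww_*)\wto h_{c,s}$. The last two displays follow from the continuous mapping theorem and the delta method: $\rho_m(-r_{\ww})$ is $C^1$ near $\ww_*$ by \Cref{thm:direct_differentiate_cvar} and dominated convergence over $m$.
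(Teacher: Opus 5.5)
Your proposal is correct. Its consistency and constraint-handling parts are essentially the paper's: uniform convergence via a Glivenko--Cantelli class with bounded empirical quantiles, convergence of $\hat\cC_T$ under LICQ, substituting the KKT identity into the $\sqrt{T}$ drift on the rescaled feasible set, and Knight's argmin theorem under random constraints. For strongly active inequalities you should say explicitly, as the paper does, that the extra term $\sqrt{T}\eta_+^\top(\text{slack})\ge0$ diverges unless the slack vanishes; this is what turns those rows into equalities in the limit program.

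Where you genuinely differ is the local expansion. The paper works in two stages.
\begin{itemize}
\item It first replaces the empirical quantile $\hat z_T(\ww,\alpha)$ by the population quantile $z(\ww,\alpha)$. It shows, via the Donsker property of $\{\bone_{-r_{\ww}\le t}:(\ww,t)\in K\times\R\}$ (where the global density bound enters) and a Skorokhod argument, that the replacement error at scale $T$ converges uniformly to $-\mathbb{Q}(\ww,z(\ww,\alpha))^2/\{2(1-\alpha)p_{-r_{\ww}}(z(\ww,\alpha))\}$. This is an $O_p(1)$ quantity continuous in $\ww$, so it cancels in $\hat M_T(h)$.
\item It then applies a generic Lipschitz-in-parameter expansion to the profiled population loss $U_m(\ww;\rr)$, whose gradient already contains the chain rule through $z(\ww,\alpha)$.
\end{itemize}
You instead expand jointly in $(h,g)$ around $(\ww_*,z(\ww_*,\alpha))$ and minimize the quadratic in $g$ pointwise in $\alpha$. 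The Schur complement yields both the correction $-\partial_z L_\alpha\,\nabla_{\ww}\partial_z R_\alpha/\partial_z^2 R_\alpha$, which turns $\nabla_{\ww}L_\alpha$ into $\nabla_{\ww}U_\alpha$, and the conditional-covariance Hessian of \Cref{thm:direct_differentiate_cvar}.

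For this to work, ``contributes only at second order'' must be read precisely. At scale $T$ the $z$-perturbation is $O_p(1)$, not negligible. Its $h$-free part $-(\mathbb{G}_T\partial_z L_\alpha)^2/(2\partial_z^2 R_\alpha)$ cancels in the difference, while its $h$-dependent part is essential. You also need the profiled minimizers $\sqrt{T}\{\hat z_T(\ww_*+h/\sqrt{T},\alpha)-z(\ww_*,\alpha)\}$ to be $O_p(1)$ uniformly over $\alpha\in[u,1-u]$ and $\norm{h}_2\le R$. This follows from three facts:
\begin{itemize}
\item convexity in $z$;
\item the curvature $p_{-r_{\ww_*}}(z(\ww_*,\alpha))/(1-\alpha)$ is bounded below on $[u,1-u]$;
\item $\sup_\alpha|\mathbb{G}_T\partial_z L_\alpha|=O_p(1)$.
\end{itemize}

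In exchange, your route stays within the convex quantile-regression toolkit (Knight's identity plus a convexity lemma). It really only needs density control near $(\ww_*,z(\ww_*,\alpha))$ rather than over $K\times\R$, and it makes the origin of the CVaR Hessian transparent. The paper's route is more modular: it isolates the quantile-replacement effect in one uniform statement and delegates the rest to off-the-shelf M-estimation theory.
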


Theorem~\ref{thm:normality_general_risk} is the general statistical result of this section. It combines three features that are essential here: a nonsmooth empirical spectral-risk objective, a feasible set whose coefficients may be estimated from the same data, and inequality constraints whose local activity can produce non-Gaussian limits. Existing empirical-process and sample-average-approximation results provide important ingredients for the argument; the theorem assembles them in a form tailored to inference on the optimizer of an empirical spectral-risk problem. The portfolio-specific result below is obtained directly by choosing the corresponding deterministic and stochastic constraint functions. In the equality-constrained setting used in the remainder of the main paper, the stochastic quadratic program reduces to a Gaussian KKT system.

\subsection{Target-Return Portfolios}

We now specialize the general result to the canonical target-return portfolio problem used throughout the paper:
\[
\ww_*(\mu_0)
=
\arg\min_{\ww\in\R^N}
\rho_m(-r_{\ww}),
\qquad
\mbox{subject to}
\quad
\ww^\top\bone=1,
\quad
\ww^\top\bmu=\mu_0,
\]
and its empirical counterpart
\[
\hw(\mu_0)
=
\arg\min_{\ww\in\R^N}
\rho_m(-\hat r_{\ww,T}),
\qquad
\mbox{subject to}
\quad
\ww^\top\bone=1,
\quad
\ww^\top\hat{\bmu}=\mu_0.
\]

Write $\ww_*=\ww_*(\mu_0)$. The budget constraint guarantees
$\ww_*\neq\bzero$. Let $(\eta_c,\eta_s)\in\R^2$ denote the Lagrange multipliers associated with the budget and expected-return constraints, so that
\[
\nabla_{\ww}\rho_m(-r_{\ww_*})
+
\eta_c\bone
+
\eta_s\bmu
=
0.
\]
Let
\[
\bmu^{(1)}
=
[\,\bmu\ \ \bone\,],
\]
and define
\[
\begin{split}
	H_*
	&=
	\nabla_{\ww}^2\rho_m(-r_{\ww_*}),
	\\
	V_*(\rr)
	&=
	\nabla_{\ww}U_m(\ww_*;\rr)
	+
	\eta_s\rr,
	\\
	r_*(\rr)
	&=
	\ww_*^\top\rr,
	\\
	K_*
	&=
	\begin{bmatrix}
		H_* & \bmu^{(1)}
		\\
		\bmu^{(1)\top} & \bzero
	\end{bmatrix}.
\end{split}
\]

The target-return problem is an immediate specialization of
Theorem~\ref{thm:normality_general_risk}: the budget constraint is deterministic, while the expected-return constraint is estimated from the same observations as the risk criterion. Since there are no inequality constraints, the limiting stochastic program is an equality-constrained quadratic program and hence has a Gaussian solution. Solving its KKT system yields the following corollary.

\begin{cor}[Target-return portfolio]
	\label{prop:normality_risky_constraints}
	Suppose $\ww_*$ is unique, Assumptions~\ref{ass:regularity_density} and \ref{ass:measure_support} hold, the population objective is level-bounded on its constraint set,  and $K_*$ is nonsingular. Let $\hw(\mu_0)$ be any measurable solution of the empirical problem. Then
	\[
	\hw(\mu_0)
	\pto
	\ww_*(\mu_0).
	\]
	If, in addition,
	\[
	\sup_{(\ww,u)\in K\times\R}
	p_{-r_{\ww}}(u)<\infty
	\]
	for every compact set
	$K\subset\R^N\backslash\{\bzero\}$, then
	\[
	\sqrt{T}
	\left(
	\hw(\mu_0)-\ww_*(\mu_0)
	\right)
	\wto
	\normal\left(
	0,\Sigma(\mu_0,\rho_m)
	\right),
	\]
	where
	\[
	\Sigma(\mu_0,\rho_m)
	=
	\var\left(
	\begin{bmatrix}
		\id_N & \bzero
	\end{bmatrix}
	K_*^{-1}
	\begin{bmatrix}
		V_*(\rr)
		\\
		r_*(\rr)
		\\
		0
	\end{bmatrix}
	\right).
	\]
\end{cor}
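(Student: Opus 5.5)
The plan is to derive the corollary directly from Theorem~\ref{thm:normality_general_risk}. We instantiate the general problem \eqref{eq:pop_rho_risk_general_linear} with one deterministic equality constraint, $F_c^{(e)}=\bone^\top$ and $b_c^{(e)}=1$. We add one stochastic equality constraint, $F_s^{(e)}(\rr)=\rr^\top$ and $b_s^{(e)}=\mu_0$. There are no inequality constraints. Square integrability of $F_s^{(e)}$ follows from $\E[\rr\rr^\top]<\infty$ in Assumption~\ref{ass:regularity_density}. The tasks are then to check the hypotheses of the theorem and to solve the limiting program of Definition~\ref{defn:asym_dist_low_dim} explicitly.

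\textbf{Hypotheses.} The budget constraint gives $\ww_*\neq\bzero$. Uniqueness and level-boundedness on the closed affine constraint set give the compact near-optimal level set, which settles Assumption~\ref{ass:identification_constraint} apart from rank. For the rank and curvature conditions we use nonsingularity of $K_*$, with $A_*=\bmu^{(1)\top}$ and $\cT_*=\ker \bmu^{(1)\top}$.
\begin{itemize}
\item If $\bmu^{(1)}$ had dependent columns, say $\bmu^{(1)}a=0$ with $a\neq 0$, then $K_*(0,a)^\top=0$. Hence $A_*$ has full row rank.
\item Curvature requires a little care, since nonsingularity of $K_*$ alone gives only that $H_*$ is nonsingular on $\cT_*$. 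Positive semidefiniteness of $H_*$ supplies the rest: $\rho_m(-r_{\ww})$ is convex in $\ww$, so $H_*\succeq0$, as is also visible from Theorem~\ref{thm:direct_differentiate_cvar} integrated against $m$. Suppose $d\in\cT_*$ with $d^\top H_*d=0$. Then $H_*d=0$, and $K_*(d,0)^\top=0$ forces $d=0$. So $H_*$ is positive definite on $\cT_*$, which is Assumption~\ref{ass:second_order_regularity}.
\item The displayed density bound is assumed verbatim.
\end{itemize}
The theorem therefore gives consistency and $\sqrt T(\hw-\ww_*)\wto h_{c,s}$.

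\textbf{Solving the limiting program.} With only equality constraints, $h_{c,s}$ minimizes $M(h)=h^\top \mathbb{G}(\nabla_{\ww}U_m(\ww_*;\rr)+\eta_s\rr)+\tfrac12 h^\top H_*h$, which equals $h^\top\mathbb{G}V_*+\tfrac12 h^\top H_* h$. The constraints are
\[
\bone^\top h=0, \qquad \bmu^\top h=-\mathbb{G}(\rr^\top)\ww_*=-\mathbb{G}(r_*).
\]
The KKT conditions, with multipliers $(\nu_s,\nu_c)$ ordered to match the columns of $\bmu^{(1)}$, read
\[
K_*\begin{bmatrix}h\\ \nu_s\\ \nu_c\end{bmatrix}=-\begin{bmatrix}\mathbb{G}V_*\\ \mathbb{G}r_*\\ 0\end{bmatrix}.
\]
Strict convexity on the feasible affine set and nonsingularity of $K_*$ make this system equivalent to optimality, so the solution is unique. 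Hence $h_{c,s}=-[\id_N\ \bzero]K_*^{-1}\mathbb{G}(V_*,r_*,0)^\top$. This is a linear map of a centered Gaussian vector whose covariance is $\var((V_*(\rr),r_*(\rr),0))$. The sign does not affect the covariance, which gives $\Sigma(\mu_0,\rho_m)$ as stated.

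\textbf{Main obstacle.} The only non-mechanical point is the curvature step, and it works only because convexity of the spectral risk makes $H_*\succeq0$, as argued above. I also need to confirm that the ordering $(\bmu,\bone)$ in $\bmu^{(1)}$ matches the order of the right-hand-side entries $(r_*,0)$, which it does.
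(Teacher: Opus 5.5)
Your proposal is correct and follows essentially the same route as the paper. Both specialize Theorem~\ref{thm:normality_general_risk} with $F_c^{(e)}=\bone^\top$ and $F_s^{(e)}(\rr)=\rr^\top$, use convexity ($H_*\succeq0$) together with nonsingularity of $K_*$ to obtain full column rank of $\bmu^{(1)}$ and positive definiteness of $H_*$ on $\ker\bmu^{(1)\top}$, and then solve the Gaussian KKT system through $K_*^{-1}$. The only difference is that you write out the null-vector argument with $(0,a)^\top$ for the rank condition, which the paper merely asserts.
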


The covariance has the usual constrained-$\rm M$-estimation interpretation. The inverse KKT matrix $K_*^{-1}$ propagates sampling noise through the local curvature of the objective and the linear constraints, while $V_*(\rr)$ and $r_*(\rr)$ collect the random perturbations from the empirical risk criterion and the estimated expected-return constraint, respectively.

The same general theorem also covers portfolios with unrestricted borrowing and lending at the risk-free rate. In terms of excess returns, the risky-asset portfolio is then subject only to the estimated target-return constraint. Because the argument directly parallels the analysis above, the corresponding asymptotic result and the explicit covariance calculations under normal scale-mixture elliptical returns are collected in Appendix~\ref{app:risk_free}.

\section{Estimation Risk under Elliptically Contoured Returns}
\label{sec:elliptic}

We now specialize the general asymptotic theory to elliptically contoured returns. This case is central to the paper because it separates two issues that are often conflated in portfolio choice: the population efficient frontier and the statistical efficiency of its empirical estimator. The population equivalence between spectral-risk and mean-variance optimization follows from the common location--scale form of linear projections and therefore holds for the broader elliptical family. For the explicit covariance calculations below, we use the normal scale-mixture representation, whose conditional-Gaussian structure permits closed-form expressions.

The detailed covariance expressions are most easily understood through a structural decomposition. The asymptotic covariance of a spectral-risk estimator will take the form
\[
\Sigma_{\mathrm{SRM}}(m)
=
\Sigma_{\mathrm{common}}
+
I(m;c)\Sigma_{\perp},
\qquad
\Sigma_{\perp}\succeq\bzero,
\]
where $\Sigma_{\mathrm{common}}$ captures sampling error common to all spectral measures and $\Sigma_{\perp}$ governs the criterion-dependent component of sampling variability. The corresponding sample mean-variance estimator has asymptotic variance
\[
\Sigma_{\mathrm{MV}}
=
\Sigma_{\mathrm{common}}
+
J(c)\Sigma_{\perp}.
\]
Thus, the matrix comparison ultimately reduces to the scalar comparison $I(m;c)$ versus $J(c)$. The results below derive this decomposition and identify the scalar index $c$.

Throughout this section, we assume that the return vector admits the representation
\begin{equation}
	\rr
	=
	\bmu+\lambda\bSigma^{1/2}\zz,
	\label{eq:elliptical_representation_section4}
\end{equation}
where $\zz\sim\normal(\bzero,\id_N)$, $\lambda\ge0$ is independent of $\zz$, and $\E[\lambda^2]<\infty$. The random variable $\lambda$ captures radial variation and allows for heavy tails beyond the Gaussian model. Throughout \Cref{sec:elliptic,sec:optimal_srm}, we retain $\E[\lambda^2]$ explicitly, so $\bSigma$ is treated as a scatter matrix. The scale decomposition $(\lambda,\bSigma)$ is not separately identified without a normalization. We impose $\E[\lambda^2]=1$ only in \Cref{sec:feasible_rule}, where identification of the radial distribution is required for the data-adaptive construction; under that normalization $\bSigma=\var(\rr)$.

\begin{ass}
	\label{ass:elliptic_return}
	Let $\rr_i\iidsim P_{\rr}$. We assume that $P_{\rr}$ admits the representation \eqref{eq:elliptical_representation_section4}, where $\bmu$ is the mean vector, $\bSigma\succ\bzero$ is the scatter matrix, and $\E[\lambda^2]<\infty$.
\end{ass}

By \cref{eq:elliptical_population_equivalence}, for any portfolio satisfying $\ww^\top\bmu=\mu_0$,
\[
\rho_m(-r_{\ww})
=
-\mu_0
+
\|\bSigma^{1/2}\ww\|_2\,\rho_m(\lambda Z),
\]
where $Z\sim\normal(0,1)$ is independent of $\lambda$. Hence, provided $\rho_m(\lambda Z)>0$, the population spectral-risk portfolio coincides with the corresponding mean-variance efficient portfolio. We therefore focus below on differences in the sampling distributions of their empirical estimators.


Consider the target-return problem
\[
\ww^\top\bone=1,
\qquad
\ww^\top\bmu=\mu_0.
\]
Let $\ww_*(\mu_0)$ and $\hw(\mu_0)$ denote the population and empirical spectral-risk portfolios defined in \cref{eq:general_risk_optimization,eq:empirical_rho_risk_optimization}. Under elliptical returns, $\ww_*(\mu_0)$ is the usual mean-variance efficient portfolio.

Define
\[
\bmu^{(1)}
=
[\,\bmu\ \ \bone\,]
\in\R^{N\times2},
\qquad
\mu_0^{(1)}
=
\begin{bmatrix}
	\mu_0\\
	1
\end{bmatrix},
\qquad
E_{11}
=
\begin{bmatrix}
	1&0\\
	0&0
\end{bmatrix}.
\]
For later use, define
\[
\begin{split}
	P_{\mu}
	&=
	\bSigma^{-1}\bmu^{(1)}
	\left(
	\bmu^{(1)\top}\bSigma^{-1}\bmu^{(1)}
	\right)^{-1}
	E_{11}
	\left(
	\bmu^{(1)\top}\bSigma^{-1}\bmu^{(1)}
	\right)^{-1}
	\bmu^{(1)\top}\bSigma^{-1},
	\\
	P_{\perp}
	&=
	\bSigma^{-1}
	-
	\bSigma^{-1}\bmu^{(1)}
	\left(
	\bmu^{(1)\top}\bSigma^{-1}\bmu^{(1)}
	\right)^{-1}
	\bmu^{(1)\top}\bSigma^{-1}.
\end{split}
\]
The matrix $P_{\perp}$ is positive semidefinite and represents the covariance geometry in directions orthogonal, in the $\bSigma$-induced geometry, to the binding budget and expected-return constraints.

\begin{prop}
	\label{prop:no_risk_free_lowdim}
	Under Assumptions~\ref{ass:regularity_density}--\ref{ass:elliptic_return}, suppose additionally that
	$\rank(\bmu^{(1)})=2$. Then
	\begin{equation}
		\label{eq:w_star_no_risk_free}
		\ww_*(\mu_0)
		=
		\bSigma^{-1}\bmu^{(1)}
		\left(
		\bmu^{(1)\top}\bSigma^{-1}\bmu^{(1)}
		\right)^{-1}
		\mu_0^{(1)}.
	\end{equation}
	Further,
	\[
	\hw(\mu_0)\xrightarrow{p}\ww_*(\mu_0),
	\]
	and
	\[
	\sqrt{T}
	\left(
	\hw(\mu_0)-\ww_*(\mu_0)
	\right)
	\xrightarrow{d}
	\normal\left(
	0,\Sigma(\mu_0,\rho_m)
	\right),
	\]
	where
	\[
	\begin{split}
		\Sigma(\mu_0,\rho_m)
		=
		&\,
		\E[\lambda^2]\,
		\ww_*(\mu_0)^\top\bSigma\ww_*(\mu_0)\,
		P_{\mu}
		\\
		&+
		\frac{
			\ww_*(\mu_0)^\top\bSigma\ww_*(\mu_0)
		}{
			\rho_m(\lambda Z)^2
		}
		\left(
		\int_{[0,1]^2}
		\frac{
			A(\alpha_1,\alpha_2)
		}{
			(1-\alpha_1)(1-\alpha_2)
		}
		\d m(\alpha_1)\d m(\alpha_2)
		\right)
		P_{\perp},
	\end{split}
	\]
	with
	\[
	\begin{split}
		A(\alpha_1,\alpha_2)
		=
		&\,
		\E\left[
		\lambda^2
		\Phi\left(
		-\frac{1}{\lambda}
		q_{\alpha_1\vee\alpha_2}(\lambda Z)
		\right)
		\right]
		\\
		&-
		(1-\alpha_1)\eta_s
		\E\left[
		\lambda^2
		\Phi\left(
		-\frac{1}{\lambda}
		q_{\alpha_2}(\lambda Z)
		\right)
		\right]
		\\
		&-
		(1-\alpha_2)\eta_s
		\E\left[
		\lambda^2
		\Phi\left(
		-\frac{1}{\lambda}
		q_{\alpha_1}(\lambda Z)
		\right)
		\right]
		\\
		&+
		(1-\alpha_1)(1-\alpha_2)
		\eta_s^2
		\E[\lambda^2],
	\end{split}
	\]
	and
	\[
	\eta_s
	=
	1
	-
	\frac{
		\rho_m(\lambda Z)
	}{
		\|\bSigma^{1/2}\ww_*(\mu_0)\|_2
	}
	[\,1\ \ 0\,]
	\left(
	\bmu^{(1)\top}\bSigma^{-1}\bmu^{(1)}
	\right)^{-1}
	\mu_0^{(1)}.
	\]
	Here $\alpha_1 \vee \alpha_2 = \max \{ \alpha_1, \alpha_2 \}$, $Z\sim\normal(0,1)$ is independent of $\lambda$, $\Phi$ is the standard normal distribution function, and $q_\alpha(\lambda Z)$ denotes the $\alpha$-quantile of $\lambda Z$.
\end{prop}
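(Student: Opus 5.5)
The plan has three steps: identify $\ww_*(\mu_0)$ from the population equivalence \eqref{eq:elliptical_population_equivalence}, check the hypotheses of \Cref{prop:normality_risky_constraints}, and then compute the covariance in that corollary explicitly using the conditional-Gaussian structure. Write $s=\|\bSigma^{1/2}\ww\|_2$ and $\rho=\rho_m(\lambda Z)$.

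\emph{Step 1: population solution.} For every $\ww\neq\bzero$, translation equivariance and positive homogeneity give $\rho_m(-r_{\ww})=-\ww^\top\bmu+s\,\rho$. On the feasible set, minimizing this is therefore the same as minimizing $\ww^\top\bSigma\ww$ subject to $\bmu^{(1)\top}\ww=\mu_0^{(1)}$. The standard Lagrangian solution of that problem is \eqref{eq:w_star_no_risk_free}, and uniqueness uses $\rank(\bmu^{(1)})=2$.

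\emph{Step 2: hypotheses of \Cref{prop:normality_risky_constraints}.} Differentiating the formula above gives
\[
\nabla_{\ww}\rho_m=-\bmu+\frac{\rho}{s}\,\bSigma\ww,
\qquad
H_*=\frac{\rho}{s}\Bigl(\bSigma-\frac{\bSigma\ww_*\ww_*^\top\bSigma}{s^2}\Bigr).
\]
\begin{itemize}
\item Stationarity $\nabla_{\ww}\rho_m+\eta_c\bone+\eta_s\bmu=0$, solved using \eqref{eq:w_star_no_risk_free}, yields the stated $\eta_s$.
\item Since $\bSigma\ww_*\in\operatorname{span}(\bmu,\bone)$, every tangent direction $d$ satisfies $d^\top\bSigma\ww_*=0$. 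Hence $H_*$ acts as $(\rho/s)\bSigma\succ\bzero$ on the tangent space, which makes $K_*$ nonsingular.
\item The objective grows like $\|\ww\|$ on the feasible set, so it is level-bounded there.
\item \Cref{ass:regularity_density} and the uniform density bound follow because $-r_{\ww}$ has the law of a location shift of $s\lambda Z$, whose density is a smooth normal scale mixture.
\end{itemize}
The main obstacle is exactly this verification of regularity for an arbitrary mixing law. I expect to need $\P(\lambda=0)=0$, together with dominated convergence on $\E[\lambda^{-1}\phi(\cdot/\lambda)]$, to get continuity and boundedness of the density. I would first try to show this is implied by the assumptions as stated; if it is not, I would state it explicitly as a requirement.

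\emph{Step 3: block inverse of $K_*$.} The rank-one correction in $H_*$ lies in $\operatorname{span}(\bmu^{(1)})$, so it does not change the projected inverse. I therefore expect the top-left block of $K_*^{-1}$ to be $(s/\rho)P_\perp$. I expect the top-right block to be $B=\bSigma^{-1}\bmu^{(1)}(\bmu^{(1)\top}\bSigma^{-1}\bmu^{(1)})^{-1}$. This gives
\[
\sqrt T(\hw-\ww_*)\approx(s/\rho)P_\perp\mathbb{G}V_*+B\,(\mathbb{G}r_*,0)^\top
\]
up to sign.

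\emph{Step 4: decomposing the score.} Write $\zz=(\bSigma^{1/2}\ww_*/s)Z+\zz_\perp$, where $Z$ and $\zz_\perp$ are independent. Both $P_\perp\bmu$ and $P_\perp\bSigma\ww_*$ vanish. Also, $-r_*=-\mu_0-s\lambda Z$, and by symmetry of $Z$ the event $\{-r_*\ge z(\ww_*,\alpha)\}$ equals $\{\lambda Z\ge q_\alpha(\lambda Z)\}$ up to the sign convention. It follows that
\[
P_\perp V_*=P_\perp\bSigma^{1/2}\zz_\perp\,\lambda\Bigl(\eta_s-\int\frac{\bone\{\lambda Z\ge q_\alpha\}}{1-\alpha}\,\d m(\alpha)\Bigr).
\]
\begin{itemize}
\item This term is uncorrelated with $r_*=\mu_0+s\lambda Z$, because $\zz_\perp$ has mean zero and is independent of $(\lambda,Z)$.
\item Using $P_\perp\bSigma P_\perp=P_\perp$ and $\E[\lambda^2\bone\{\lambda Z\ge q_{\alpha_1}\}\bone\{\lambda Z\ge q_{\alpha_2}\}]=\E[\lambda^2\Phi(-q_{\alpha_1\vee\alpha_2}/\lambda)]$, we get $\operatorname{Var}(P_\perp V_*)=P_\perp\cdot\int\!\!\int A(\alpha_1,\alpha_2)/((1-\alpha_1)(1-\alpha_2))\,\d m\,\d m$. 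Multiplying by $(s/\rho)^2$ gives the second term of $\Sigma(\mu_0,\rho_m)$.
\item The constraint term contributes $\operatorname{Var}(r_*)\,Be_1e_1^\top B^\top=s^2\E[\lambda^2]P_\mu$, which is the first term.
\end{itemize}
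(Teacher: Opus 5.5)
Your proposal is correct and follows essentially the paper's route. You identify $\ww_*$ through the location--scale identity and invoke \Cref{prop:normality_risky_constraints}. You note that the rank-one part of $H_*$ lies in $\operatorname{span}(\bmu,\bone)$, so the limiting KKT solution splits into a $P_\perp$ term and a constraint term. You then use independence of the Gaussian component orthogonal to $\bSigma^{1/2}\ww_*$ to get uncorrelatedness and the $A(\alpha_1,\alpha_2)$ formula. Projecting the score by $P_\perp$ before taking covariances merely skips the paper's intermediate $\vv\vv^\top$ term.

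The loose end you flag needs no extra hypothesis, because \Cref{ass:regularity_density} is already assumed. The density of $-r_{\ww}$ is $\E[(s\lambda)^{-1}\phi((t+\ww^\top\bmu)/(s\lambda))]$ with $s=\|\bSigma^{1/2}\ww\|_2$. It is maximized at its center, where it equals $\phi(0)\E[1/\lambda]/s$. Existence and continuity (hence finiteness) of the density therefore force $\lambda>0$ a.s.\ and $\E[1/\lambda]<\infty$. This gives the bound $\phi(0)\E[1/\lambda]/\inf_{\ww\in K}\|\bSigma^{1/2}\ww\|_2$ on every compact $K\subset\R^N\setminus\{\bzero\}$. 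You also use $\rho_m(\lambda Z)>0$ implicitly; it holds because $\operatorname{CVaR}_\alpha$ of a nondegenerate centered variable is positive for $\alpha\in(0,1)$.
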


The covariance in \cref{prop:no_risk_free_lowdim} has two components. The first reflects sampling error from estimating expected returns and is common across spectral-risk criteria. The second reflects sampling error from estimating the risk criterion and depends on the spectral measure $m$. Under the stated support restriction, the spectral-risk covariance requires only the second radial moment; the sample mean-variance covariance below additionally requires a finite fourth radial moment.

\paragraph{Comparison with Sample mean-variance Optimization.}

The sample mean-variance estimator solves
\begin{equation}
	\label{eq:empirical_mvo_risky}
	\operatorname{minimize}_{\ww\in\R^N}
	\quad
	\frac12\ww^\top\hat{\bS}\ww,
	\qquad
	\mbox{subject to}
	\quad
	\ww^\top\hat{\bmu}=\mu_0,
	\quad
	\ww^\top\bone=1,
\end{equation}
where
\[
\hat{\bmu}
=
\frac1T\sum_{i=1}^T\rr_i,
\qquad
\hat{\bS}
=
\frac1T\sum_{i=1}^T
(\rr_i-\hat{\bmu})
(\rr_i-\hat{\bmu})^\top.
\]
Its solution is
\[
\hw_{\rm MV}(\mu_0)
=
\hat{\bS}^{-1}\hat{\bmu}^{(1)}
\left(
\hat{\bmu}^{(1)\top}
\hat{\bS}^{-1}
\hat{\bmu}^{(1)}
\right)^{-1}
\mu_0^{(1)},
\]
where
\[
\hat{\bmu}^{(1)}
=
[\,\hat{\bmu}\ \ \bone\,].
\]

\begin{prop}
	\label{prop:asymptotic_variance_mv}
	Under Assumptions~\ref{ass:regularity_density}--\ref{ass:elliptic_return}, suppose additionally that
	$\rank(\bmu^{(1)})=2$ and $\E[\lambda^4]<\infty$. Then
	\[
	\hw_{\rm MV}(\mu_0)
	\xrightarrow{p}
	\ww_*(\mu_0),
	\]
	and
	\[
	\sqrt{T}
	\left(
	\hw_{\rm MV}(\mu_0)-\ww_*(\mu_0)
	\right)
	\xrightarrow{d}
	\normal\left(
	0,\Sigma_{\rm MV}(\mu_0)
	\right),
	\]
	where
	\[
	\begin{split}
		\Sigma_{\rm MV}(\mu_0)
		=
		&\,
		\E[\lambda^2]\,
		\ww_*(\mu_0)^\top\bSigma\ww_*(\mu_0)\,
		P_{\mu}
		\\
		&+
		\left(
		\frac{\E[\lambda^4]}{\E[\lambda^2]^2}
		\ww_*(\mu_0)^\top\bSigma\ww_*(\mu_0)
		+
		\E[\lambda^2](\gamma_s^{(e)})^2
		\right)
		P_{\perp},
	\end{split}
	\]
	with
	\[
	\gamma_s^{(e)}
	=
	[\,1\ \ 0\,]
	\left(
	\bmu^{(1)\top}\bSigma^{-1}\bmu^{(1)}
	\right)^{-1}
	\mu_0^{(1)}.
	\]
\end{prop}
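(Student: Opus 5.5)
The plan is a delta-method argument for the closed-form map $(\hat{\bmu},\hat{\bS})\mapsto\hw_{\rm MV}(\mu_0)$, followed by a projection computation showing that the covariance splits along $P_\mu$ and $P_\perp$.

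\textbf{Consistency and the delta method.} Write $g(\bmu,\bSigma_0)=\bSigma_0^{-1}\bmu^{(1)}(\bmu^{(1)\top}\bSigma_0^{-1}\bmu^{(1)})^{-1}\mu_0^{(1)}$, where $\bSigma_0=\var(\rr)=\E[\lambda^2]\bSigma$. Since $g$ is invariant to scaling of $\bSigma_0$, \cref{eq:w_star_no_risk_free} gives $g(\bmu,\bSigma_0)=\ww_*(\mu_0)$. The map $g$ is smooth near $(\bmu,\bSigma_0)$ because $\rank(\bmu^{(1)})=2$ and $\bSigma\succ\bzero$. The law of large numbers and the continuous mapping theorem then give consistency. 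With $\E[\lambda^4]<\infty$, the multivariate CLT gives a joint Gaussian limit for $\sqrt T(\hat{\bmu}-\bmu,\hat{\bS}-\bSigma_0)$. Under the normal scale-mixture representation, odd moments vanish, so the two blocks are asymptotically independent. The $\hat{\bS}$ block has the elliptical fourth-moment covariance with kurtosis factor $\kappa=\E[\lambda^4]/\E[\lambda^2]^2$:
\[
\operatorname{Cov}\bigl(\mathbb{G}(\rr\rr^\top)_{ij},\mathbb{G}(\rr\rr^\top)_{kl}\bigr)=\kappa(\Sigma_{0,ik}\Sigma_{0,jl}+\Sigma_{0,il}\Sigma_{0,jk}+\Sigma_{0,ij}\Sigma_{0,kl})-\Sigma_{0,ij}\Sigma_{0,kl}.
\]
The main work is to apply the delta method to $g$.

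\textbf{Linearization via the KKT system.} Rather than differentiating the closed form, I will linearize the KKT system $\bSigma_0\ww-\bmu^{(1)}\gamma=0$, $\bmu^{(1)\top}\ww=\mu_0^{(1)}$. Here $\gamma=(\bmu^{(1)\top}\bSigma_0^{-1}\bmu^{(1)})^{-1}\mu_0^{(1)}$, and its first entry is $\gamma_s^{(e)}/\E[\lambda^2]$. Perturbing with $(\delta\bmu,\delta\bS)$ gives
\[
\bSigma_0\,\delta\ww-\bmu^{(1)}\delta\gamma=-\delta\bS\,\ww_*+[\delta\bmu\ \ 0]\gamma,\qquad \bmu^{(1)\top}\delta\ww=-[\delta\bmu\ \ 0]^\top\ww_*.
\]
Set $Q=\bSigma_0^{-1}\bmu^{(1)}(\bmu^{(1)\top}\bSigma_0^{-1}\bmu^{(1)})^{-1}$ and $\Pi=\bSigma_0^{-1}-Q\bmu^{(1)\top}\bSigma_0^{-1}$, the latter being $P_\perp/\E[\lambda^2]$. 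Then
\[
\delta\ww=-Q e_1\,(\delta\bmu^\top\ww_*)+\Pi\bigl(-\delta\bS\,\ww_*+\gamma_1\delta\bmu\bigr).
\]

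\textbf{Computing the covariance.} Because $\hat{\bmu}$ and $\hat{\bS}$ are asymptotically independent, the two blocks contribute separately.

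For the $\delta\bmu$ terms, $\var(\delta\bmu)=\bSigma_0$. The cross term between the $Q$-part and the $\Pi$-part is proportional to $\Pi\bSigma_0\ww_*\propto\Pi\bmu^{(1)}\gamma=0$, so it vanishes. The remaining pieces are:
\begin{itemize}
\item $\ww_*^\top\bSigma_0\ww_*\,Qe_1e_1^\top Q^\top$, which equals $\E[\lambda^2]\ww_*^\top\bSigma\ww_*P_\mu$ after the $\E[\lambda^2]$ factors cancel;
\item $\gamma_1^2\Pi\bSigma_0\Pi=\gamma_1^2\Pi$, which becomes $\E[\lambda^2](\gamma_s^{(e)})^2P_\perp$.
\end{itemize}

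For the $\delta\bS$ term, I need $\var(\delta\bS\,\ww_*)$. From the fourth-moment formula this equals
\[
\kappa\bigl(\bSigma_0\,\ww_*^\top\bSigma_0\ww_*+\bSigma_0\ww_*\ww_*^\top\bSigma_0\bigr)+(\kappa-1)\bSigma_0\ww_*\ww_*^\top\bSigma_0.
\]
Sandwiching by $\Pi$ kills every term containing $\bSigma_0\ww_*$, since that vector lies in $\operatorname{span}(\bmu^{(1)})$. What remains is $\kappa\,\ww_*^\top\bSigma_0\ww_*\,\Pi$, which equals $\kappa\,\ww_*^\top\bSigma\ww_*P_\perp$.

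\textbf{Main obstacle.} I expect the hard step to be this fourth-moment bookkeeping: the constant $\kappa$ must be exactly right, and the $\bSigma_0\ww_*$ directions must be shown to be annihilated by $\Pi$. Once both are verified, adding the three contributions gives the stated $\Sigma_{\rm MV}(\mu_0)$.
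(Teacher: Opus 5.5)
Your route is essentially the paper's: a delta-method argument in which your linearized KKT solution (with $Qe_1=b_*$ and $\Pi=P_\perp/\E[\lambda^2]$) reproduces exactly the paper's influence function $\psi_{\rm MV}$. The covariance is then computed with the same ingredients: $P_\perp\bSigma\ww_*=0$, $P_\perp\bSigma P_\perp=P_\perp$, vanishing third moments, and elliptical fourth moments. One small correction: since $(\bmu^{(1)\top}\bSigma_0^{-1}\bmu^{(1)})^{-1}=\E[\lambda^2](\bmu^{(1)\top}\bSigma^{-1}\bmu^{(1)})^{-1}$, the first entry of $\gamma$ is $\gamma_1=\E[\lambda^2]\gamma_s^{(e)}$, not $\gamma_s^{(e)}/\E[\lambda^2]$, and only this value makes $\gamma_1^2\Pi=\E[\lambda^2](\gamma_s^{(e)})^2P_\perp$ come out as you state. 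Likewise, your fourth-moment formula should be written for $\mathbb{G}(\varepsilon\varepsilon^\top)$ with $\varepsilon=\rr-\bmu$, not for $\mathbb{G}(\rr\rr^\top)$.
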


Both $\hw(\mu_0)$ and $\hw_{\rm MV}(\mu_0)$ estimate the same population portfolio $\ww_*(\mu_0)$. Their comparison is therefore a comparison of estimators rather than population objectives.

To express the comparison compactly, define for $c\in\R$
\[
J(c)
=
\frac{\E[\lambda^4]}{\E[\lambda^2]^2}
+
c^2\E[\lambda^2],
\]
and, for $m\in\cuP([0,1])$ satisfying \cref{ass:measure_support},
\[
\begin{split}
	I(m;c)
	=
	\frac{1}{\rho_m(\lambda Z)^2}
	\int_{[0,1]^2}
	\frac{
		A(\alpha_1,\alpha_2;c)
	}{
		(1-\alpha_1)(1-\alpha_2)
	}
	\d m(\alpha_1)\d m(\alpha_2),
\end{split}
\]
where
\[
\begin{split}
	A(\alpha_1,\alpha_2;c)
	=
	&\,
	\cT\bigl(1-(\alpha_1\vee\alpha_2)\bigr)
	\\
	&-
	\left(
	1-c\rho_m(\lambda Z)
	\right)
	\left\{
	(1-\alpha_1)\cT(1-\alpha_2)
	+
	(1-\alpha_2)\cT(1-\alpha_1)
	\right\}
	\\
	&+
	(1-\alpha_1)(1-\alpha_2)
	\left(
	1-c\rho_m(\lambda Z)
	\right)^2
	\E[\lambda^2],
\end{split}
\]
and
\[
\cT(\alpha)
=
\E\left[
\lambda^2
\Phi\left(
\frac{1}{\lambda}
q_\alpha(\lambda Z)
\right)
\right].
\]

Finally, define the scalar population index
\[
\begin{split}
	c(\mu_0)
	&=
	\frac{
		[\,1\ \ 0\,]
		\left(
		\bmu^{(1)\top}
		\bSigma^{-1}
		\bmu^{(1)}
		\right)^{-1}
		\mu_0^{(1)}
	}{
		\|\bSigma^{1/2}\ww_*(\mu_0)\|_2
	}.
\end{split}
\]
The scalar $c(\mu_0)$ summarizes the interaction between the target return and the population mean--scatter geometry of the portfolio problem. Once $c(\mu_0)$ is fixed, the dependence of first-order efficiency on the spectral criterion is entirely through $I(m;c(\mu_0))$.

\begin{cor}[Scalar reduction of the covariance efficiency problem]
	\label{cor:structural_efficiency_risky}
	Under the conditions of Propositions~\ref{prop:no_risk_free_lowdim} and
	\ref{prop:asymptotic_variance_mv}, define
	\[
	\Sigma_{\mathrm{common}}(\mu_0)
	=
	\E[\lambda^2]\,
	\ww_*(\mu_0)^\top\bSigma\ww_*(\mu_0)\,
	P_{\mu},
	\]
	and
	\[
	\Sigma_{\perp}(\mu_0)
	=
	\ww_*(\mu_0)^\top\bSigma\ww_*(\mu_0)\,
	P_{\perp}.
	\]
	Then
	\[
	\Sigma(\mu_0,\rho_m) := \Sigma_{\mathrm{SRM}}(m)
	=
	\Sigma_{\mathrm{common}}(\mu_0)
	+
	I\bigl(m;c(\mu_0)\bigr)
	\Sigma_{\perp}(\mu_0),
	\]
	whereas
	\[
	\Sigma_{\rm MV}(\mu_0) := \Sigma_{\mathrm{MV}}
	=
	\Sigma_{\mathrm{common}}(\mu_0)
	+
	J\bigl(c(\mu_0)\bigr)
	\Sigma_{\perp}(\mu_0).
	\]
	Consequently, for any two admissible spectral measures $m_1$ and $m_2$,
	\[
	I\bigl(m_1;c(\mu_0)\bigr)
	\le
	I\bigl(m_2;c(\mu_0)\bigr)
	\quad\Longrightarrow\quad
	\Sigma_{\mathrm{SRM}}(m_1)
	\preceq
	\Sigma_{\mathrm{SRM}}(m_2).
	\]
	Thus, minimizing asymptotic covariance in Loewner order reduces to the scalar optimization problem
	\[
	\min_m I\bigl(m;c(\mu_0)\bigr).
	\]
\end{cor}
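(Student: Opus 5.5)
The plan is to read off the decomposition directly from Propositions~\ref{prop:no_risk_free_lowdim} and \ref{prop:asymptotic_variance_mv}, and then derive the Loewner-order consequence from positive semidefiniteness of $P_\perp$. First, the $P_\mu$ terms in both propositions are literally $\E[\lambda^2]\,\ww_*^\top\bSigma\ww_*\,P_\mu=\Sigma_{\mathrm{common}}(\mu_0)$. Neither depends on $m$, and this term is identical for spectral-risk and mean-variance estimation.

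For the mean-variance part, I would use $\gamma_s^{(e)}=c(\mu_0)\,\|\bSigma^{1/2}\ww_*\|_2$, which follows from the definition of $c(\mu_0)$. Then $\E[\lambda^2](\gamma_s^{(e)})^2=c(\mu_0)^2\E[\lambda^2]\,\ww_*^\top\bSigma\ww_*$. Factoring out $\ww_*^\top\bSigma\ww_*$ gives the coefficient $J(c(\mu_0))$ multiplying $\Sigma_\perp(\mu_0)$, as claimed.

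For the spectral-risk part, I would check that the coefficient in Proposition~\ref{prop:no_risk_free_lowdim} equals $I(m;c(\mu_0))$. This requires two identifications.
\begin{itemize}
\item The formula for $\eta_s$ gives $\eta_s=1-\rho_m(\lambda Z)\,c(\mu_0)$, using the same expression for $c(\mu_0)$.
\item The quantile terms must match. By symmetry of $\lambda Z$, $q_\alpha(\lambda Z)=-q_{1-\alpha}(\lambda Z)$. Hence
\[
\E\bigl[\lambda^2\Phi(-q_\alpha(\lambda Z)/\lambda)\bigr]=\E\bigl[\lambda^2\Phi(q_{1-\alpha}(\lambda Z)/\lambda)\bigr]=\cT(1-\alpha).
\]
\end{itemize}
Substituting both into $A(\alpha_1,\alpha_2)$ reproduces $A(\alpha_1,\alpha_2;c(\mu_0))$ term by term. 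Dividing by $\rho_m(\lambda Z)^2$ and integrating against $m\otimes m$ yields $I(m;c(\mu_0))$, so $\Sigma_{\mathrm{SRM}}(m)=\Sigma_{\mathrm{common}}+I\,\Sigma_\perp$.

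For the ordering claim, note that $P_\perp=\bSigma^{-1/2}(\id-\Pi)\bSigma^{-1/2}$. Here $\Pi$ is the orthogonal projection onto the column span of $\bSigma^{-1/2}\bmu^{(1)}$, which is well defined because $\rank(\bmu^{(1)})=2$. Hence $P_\perp\succeq0$, and since $\ww_*^\top\bSigma\ww_*\ge0$, also $\Sigma_\perp\succeq0$. Then
\[
\Sigma_{\mathrm{SRM}}(m_2)-\Sigma_{\mathrm{SRM}}(m_1)=\bigl(I(m_2;c)-I(m_1;c)\bigr)\Sigma_\perp\succeq0 .
\]
The reduction to $\min_m I(m;c(\mu_0))$ follows immediately.

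The main obstacle is the quantile-sign bookkeeping in matching $A(\alpha_1,\alpha_2)$ with $A(\alpha_1,\alpha_2;c)$. In particular, the index $\alpha_1\vee\alpha_2$ must become $\cT(1-(\alpha_1\vee\alpha_2))$, which relies on continuity and symmetry of the law of $\lambda Z$. Everything else is algebraic substitution.
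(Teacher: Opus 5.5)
Your proposal is correct and follows essentially the same route as the paper, whose written argument appears for the risk-free analogue, Corollary~\ref{cor:compare_variance_risk_free}: identify $\eta_s=1-c(\mu_0)\rho_m(\lambda Z)$ and $\gamma_s^{(e)}=c(\mu_0)\|\bSigma^{1/2}\ww_*\|_2$, match the coefficients of $P_\mu$ and $P_\perp$ term by term, and conclude from $\Sigma_\perp\succeq 0$. Your explicit use of the symmetry $q_{1-\alpha}(\lambda Z)=-q_\alpha(\lambda Z)$ (valid because Assumption~\ref{ass:regularity_density} forces $\lambda>0$ a.s.) and of the projection form of $P_\perp$ fills in steps the paper leaves implicit.
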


Corollary~\ref{cor:structural_efficiency_risky} is the main statistical implication of the explicit covariance calculations. Sampling error associated with the estimated return constraint is common across criteria, while the criterion-dependent component has a common positive-semidefinite covariance geometry and differs only through the scalar functional $I(m;c)$. Thus, the population portfolio geometry is compressed into the scalar index $c(\mu_0)$, while criterion selection is governed by the functional optimization problem
\[
\min_m I\bigl(m;c(\mu_0)\bigr).
\]
The corresponding extension with a risk-free asset has the same structural form and is collected in Appendix~\ref{app:risk_free}. This scalar reduction is the starting point for the efficiency analysis in the next section.

\section{Optimal Spectral Risk Measures}
\label{sec:optimal_srm}

Corollary~\ref{cor:structural_efficiency_risky} shows that, under elliptically contoured returns, spectral-risk minimization and mean-variance optimization estimate the same population target, while the sampling covariance of the spectral-risk estimator depends on the spectral measure $m$. More specifically, once the population portfolio problem is summarized by the scalar index $c=c(\mu_0)$, minimizing asymptotic covariance in Loewner order reduces to minimizing the scalar functional $I(m;c)$.

We now study this criterion-selection problem. For fixed $u\in(0,1/2)$, let $m_{*,u}^c$ denote the efficiency-optimal measure over the admissible class $\mathscr{P}([u,1-u])$. We first characterize this fixed-$u$ optimizer and then compare the endpoint-relaxed bound
\[
\inf_{m\in\mathscr{P}([0,1])} I(m;c)
\]
with the covariance contribution $J(c)$ of sample mean-variance optimization. Section~\ref{sec:feasible_rule} subsequently develops a data-adaptive estimator $\hat m$ of the fixed-$u$ oracle $m_{*,u}^c$. Throughout, ``optimal'' and ``efficient'' refer to statistical efficiency of the estimating criterion within the spectral-risk class, not to an economically optimal risk preference or to a semiparametric information bound.

\subsection{The Efficiency-Optimal Spectral Measure}
\label{sec:opt_spec_meas}

For fixed $u$ and $c$, consider
\[
\inf_{m\in\mathscr{P}([u,1-u])} I(m;c),
\]
where $I(m;c)$ is defined in \Cref{sec:elliptic}. Under the common-estimand property established above, the optimizer should be interpreted as an efficiency-optimal estimating criterion rather than as an estimate of an investor's latent preference over tail losses.

For later analysis, it is useful to rewrite $I(m;c)$ as
\begin{equation}
	\label{eq:simplified_I_m_c}
	\begin{split}
		I(m;c)
		=
		\frac{1}{\rho_m(\lambda Z)^2}
		\bigg(
		&\int_{[0,1]^2}
		\frac{\cT\bigl(1-(\alpha_1\vee\alpha_2)\bigr)}
		{(1-\alpha_1)(1-\alpha_2)}
		\d m(\alpha_1)\d m(\alpha_2)
		\\
		&-
		2\left(1-c\rho_m(\lambda Z)\right)
		\int_0^1
		\frac{\cT(1-\alpha)}
		{1-\alpha}
		\d m(\alpha)
		\\
		&+
		\left(1-c\rho_m(\lambda Z)\right)^2
		\E[\lambda^2]
		\bigg).
	\end{split}
\end{equation}
The risk of the standardized shock can be written as
\[
\rho_m(\lambda Z)
=
\int_0^1
\frac{\cU(1-\alpha)}
{1-\alpha}
\d m(\alpha),
\]
where
\[
\cU(\alpha)
=
\E\left[
\lambda
\phi\left(
\frac{1}{\lambda}
q_{\alpha}(\lambda Z)
\right)
\right],
\]
and $\phi$ denotes the standard normal density.

For fixed $u\in(0,1/2)$, compactness of $\mathscr{P}([u,1-u])$ under weak convergence and continuity of $I(\cdot;c)$ imply existence of a minimizer. Under the mild additional condition $\lambda>0$ almost surely, the minimizer is unique.

\begin{prop}
	\label{prop:unique_minimizer}
	If $\lambda>0$ a.s., then for every $c\in\R$ there exists a unique minimizer
	\[
	m_{*,u}^c
	=
	\arg\min_{m\in\mathscr{P}([u,1-u])} I(m;c).
	\]
\end{prop}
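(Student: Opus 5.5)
The plan is to rewrite $I(m;c)$ as a strictly convex quadratic in a suitably renormalized measure. Existence will then follow from compactness, and uniqueness from strict convexity.

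\emph{Step 1: a mean-square representation.} For $\alpha\in[u,1-u]$ set $g_\alpha(\lambda,Z)=\bone\{\lambda Z\ge q_{1-\alpha}(\lambda Z)\}/(1-\alpha)$ and $g_m=\int g_\alpha\,\d m(\alpha)$. By symmetry of $\lambda Z$ we have $q_{1-\alpha}=-q_\alpha$. Conditioning on $\lambda$ gives
\[
\E\bigl[\lambda^2\bone\{\lambda Z\ge q_{1-\alpha_1}\}\bone\{\lambda Z\ge q_{1-\alpha_2}\}\bigr]=\cT\bigl(1-(\alpha_1\vee\alpha_2)\bigr)
\]
and $\E[\lambda^2 g_\alpha]=\cT(1-\alpha)/(1-\alpha)$. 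Expanding the square and comparing with \eqref{eq:simplified_I_m_c} term by term should give
\[
I(m;c)=\E\Bigl[\lambda^2\Bigl(\frac{g_m-1}{\rho_m(\lambda Z)}+c\Bigr)^2\Bigr].
\]
This mirrors the fact that the $A(\alpha_1,\alpha_2;c)$ were covariances of score components in \cref{prop:no_risk_free_lowdim}. I expect this bookkeeping to be the only delicate algebra.

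\emph{Step 2: reparametrization and existence.} Since $\lambda>0$ a.s., $\cU>0$ on $[u,1-u]$, and $\cU$ is continuous there. Hence $L(m):=\rho_m(\lambda Z)=\int\cU(1-\alpha)(1-\alpha)^{-1}\d m$ is linear in $m$ and bounded below by a positive constant on $\mathscr{P}([u,1-u])$. Map $m\mapsto n=m/L(m)$. This is a bijection onto the convex set
\[
\mathcal N=\{n\ge 0 \text{ on }[u,1-u]:\ L(n)=1\},
\]
with inverse $n\mapsto n/n([u,1-u])$. Under this map $1/L(m)=n([u,1-u])$, so
\[
\psi_n:=\frac{g_m-1}{L(m)}=\int g_\alpha\,\d n(\alpha)-n([u,1-u]),
\]
which is linear in $n$. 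Therefore $I=\E[\lambda^2(\psi_n+c)^2]$ is a convex quadratic functional of $n\in\mathcal N$.

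For existence, the kernel $\cT(1-(\alpha_1\vee\alpha_2))/((1-\alpha_1)(1-\alpha_2))$ and the maps $\alpha\mapsto\cT(1-\alpha)/(1-\alpha)$ and $\alpha\mapsto \cU(1-\alpha)/(1-\alpha)$ are bounded and continuous on $[u,1-u]$. (Continuity of $\cT,\cU$ follows by dominated convergence, using $\E[\lambda^2]<\infty$ and continuity of $q_\alpha$.) Since $L(m)$ is bounded away from zero, $I(\cdot;c)$ is weakly continuous on the weakly compact set $\mathscr{P}([u,1-u])$, so a minimizer exists.

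\emph{Step 3: uniqueness via strict convexity.} Suppose two distinct minimizers exist. Their images $n_1\neq n_2$ in $\mathcal N$ would give equal values of the convex quadratic. Strict convexity, and hence a contradiction, follows once $n\mapsto\psi_n$ is injective into $L^2(\lambda^2\,\d\P)$ on differences.

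So take a signed measure $\nu=n_1-n_2$ on $[u,1-u]$ with $\psi_\nu=0$ a.s. Put $\nu'(\d\alpha)=\nu(\d\alpha)/(1-\alpha)$. Then $\psi_\nu=F(\lambda Z)-\nu([u,1-u])$, where $F(t)=\nu'(\{\alpha:q_{1-\alpha}\le t\})$. Because $\lambda>0$ a.s., $\lambda Z$ has a positive continuous density on all of $\R$. Hence $\alpha\mapsto q_{1-\alpha}$ is a continuous strictly decreasing bijection onto its range, and $F(t)$ is constant for Lebesgue-a.e.\ $t$.

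Being monotone-limit regular (right-continuous up to the orientation reversal), $F$ must be constant everywhere. Thus $\nu'$, as the pushforward distribution function, vanishes, so $\nu=0$. The main obstacle is this injectivity step, precisely where $\lambda>0$ a.s.\ is used: an atom of $\lambda$ at $0$ would make $\lambda Z$ have an atom and flatten the quantile map, breaking injectivity. I would verify it carefully together with Step 1.
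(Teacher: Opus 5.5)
Your route is the paper's. The paper writes $I(m;c)=\int_0^1[c+(\mathcal A\nu)(t)]^2\,\d\cT(t)$ with $\nu=\mu/\rho_\mu$ and $\mathcal A\nu=H_\nu-\nu(K)$. Once Step 1 is corrected as below, this is exactly your $\E[\lambda^2(\psi_n+c)^2]$ under the change of variables $t=1-F_{\lambda Z}(\lambda Z)$, where $F_{\lambda Z}$ is the distribution function of $\lambda Z$; this map pushes $\lambda^2\,\d\P$ forward to $\d\cT$. Likewise, the paper's injectivity argument (``$\d\cT$ charges every open interval'' plus left-continuity of $H_\delta$) is your Lebesgue-a.e.\ plus right-continuity argument transported to $(0,1)$.

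But Step 1, the step linking your quadratic to $I$, is false as written. The event $\{\lambda Z\ge q_{1-\alpha}(\lambda Z)\}$ has probability $\alpha$, not $1-\alpha$. Conditioning on $\lambda$ and using $q_{1-\alpha}=-q_\alpha$ gives $\E[\lambda^2\bone\{\lambda Z\ge q_{1-\alpha}\}]=\E[\lambda^2\Phi(q_\alpha/\lambda)]=\cT(\alpha)$. So your $g_\alpha$ produces the kernel $\cT(\alpha_1\wedge\alpha_2)$ and the linear term $\cT(\alpha)/(1-\alpha)$, which is a different functional from \eqref{eq:simplified_I_m_c}. Already for $\lambda\equiv1$ and $\alpha_1=\alpha_2=\alpha$, your left-hand side equals $\alpha$ while $\cT(1-\alpha)=1-\alpha$.

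The indicator must be the $\operatorname{CVaR}_\alpha$ tail of the loss $\lambda Z$, i.e.\ $g_\alpha=\bone\{\lambda Z\ge q_\alpha(\lambda Z)\}/(1-\alpha)$. Equivalently you can use its mirror image $\bone\{\lambda Z\le q_{1-\alpha}\}/(1-\alpha)$, which is the score event $\{-r_{\ww_*}\ge z(\ww_*,\alpha)\}$. Then $\E[\lambda^2\bone\{\lambda Z\ge q_\beta\}]=\E[\lambda^2\Phi(-q_\beta/\lambda)]=\cT(1-\beta)$, and your mean-square identity is correct. Your injectivity argument only uses that the thresholds are continuous and strictly monotone in $\alpha$, not their orientation, so the rest goes through after this fix, with two small repairs.

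First, $\lambda>0$ a.s.\ does not give $\lambda Z$ a finite continuous density at $0$; that needs $\E[1/\lambda]<\infty$. What you actually use, and what does hold, is that the distribution function of $\lambda Z$ is continuous and strictly increasing and its law is equivalent to Lebesgue measure.

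Second, before concluding $\nu'=0$ you must show the a.e.\ constant is zero. Your $F(t)=\nu'(\cdots)$ vanishes for $t<q_u$, since no threshold lies below $q_u$. Because $\P(\lambda Z<q_u)=u>0$, the condition $\psi_\nu=0$ forces $\nu([u,1-u])=0$; the paper's use of the interval $(1-u,1)$ plays this role. Only then does $F\equiv0$ give $\nu'=0$ on the threshold intervals, and these generate the Borel sets of $[u,1-u]$.
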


The uniqueness in Proposition~\ref{prop:unique_minimizer} is important for the data-adaptive construction in \Cref{sec:feasible_rule}. It gives criterion selection a single population target, so uniform convergence of the estimated efficiency functional yields convergence of $\hat m$ to $m_{*,u}^c$, rather than only convergence to an argmin set.

The next result characterizes the interior structure of the efficiency-optimal measure.

\begin{thm}
	\label{thm:continuous_optimal_measure}
	If $\E[1/\lambda]<\infty$, then for every $c\in\R$, the unique optimizer $m_{*,u}^c$ is absolutely continuous with respect to Lebesgue measure on the open interval $(u,1-u)$.
\end{thm}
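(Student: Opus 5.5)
We will prove absolute continuity from the first-order optimality conditions of Proposition~\ref{prop:unique_minimizer}'s minimizer, using the kernel structure of \eqref{eq:simplified_I_m_c}. First reparametrize by $\beta=1-\alpha\in[u,1-u]$ and introduce the finite positive measure $\nu$ defined by $\d\nu(\beta)=\d m(\alpha)/(1-\alpha)$. In these variables, $\rho_m(\lambda Z)=\int\cU(\beta)\,\d\nu(\beta)$, the linear term is $\int\cT(\beta)\,\d\nu(\beta)$, and the quadratic term is $\iint \cT(\beta_1\wedge\beta_2)\,\d\nu\,\d\nu$. Thus $I(m;c)$ is a smooth function $\Gamma(Q,L,R)$ of one quadratic functional $Q$ and two linear functionals $L,R$ of $\nu$.

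Let $m_*=m_{*,u}^c$. Perturbing $m_*$ in the direction $(1-t)m_*+t\delta_{\alpha}$ and differentiating at $t=0^+$ gives a first-variation function, written in $\beta$ as
\[
\Psi(\beta)=\frac{1}{\beta}\Bigl(2a\,h(\beta)+b\,\cT(\beta)+d\,\cU(\beta)\Bigr),
\qquad
h(\beta)=\int\cT(\beta\wedge\beta')\,\d\nu_*(\beta').
\]
Here $a,b,d$ are constants depending only on $m_*$, and $a=\partial\Gamma/\partial Q=\rho_{m_*}(\lambda Z)^{-2}>0$. Optimality of $m_*$ over $\mathscr P([u,1-u])$ forces $\Psi\ge\kappa$ on $[u,1-u]$, with equality on $\supp(\nu_*)$, for some constant $\kappa$. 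Multiplying through by $\beta$, the function $\tilde\Psi(\beta)=2a h(\beta)+b\cT(\beta)+d\cU(\beta)-\kappa\beta$ is $\ge0$ on $[u,1-u]$ and vanishes on the support. Every interior support point is therefore a local minimum of $\tilde\Psi$.

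The hypothesis $\E[1/\lambda]<\infty$ is used to obtain the needed regularity of $\cT$ and $\cU$. The density of $\lambda Z$ is $p(x)=\E[\lambda^{-1}\phi(x/\lambda)]$, which is finite, positive and continuous precisely under $\E[1/\lambda]<\infty$. Hence $\beta\mapsto q_\beta(\lambda Z)$ is $C^1$ on $[u,1-u]$ with derivative $1/p(q_\beta)$. Dominated convergence then gives that $\cT$ and $\cU$ are $C^1$ on $[u,1-u]$, with
\[
\cT'(\beta)=\frac{\E[\lambda\phi(q_\beta/\lambda)]}{p(q_\beta)}>0 .
\]
Pushing one derivative further, I expect $\cT'$, $\cU'$ to be Lipschitz on the compact interval $[u,1-u]$, again using $\E[1/\lambda]<\infty$ to bound the $\lambda^{-1}$ factors. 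Verifying these bounds is the routine but somewhat tedious part.

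The key computation concerns the one-sided derivatives of $h$, since $\beta\mapsto\cT(\beta\wedge\beta')$ has a concave kink at $\beta'$:
\[
h'_+(\beta)=\cT'(\beta)\,\nu_*((\beta,1]),\qquad h'_-(\beta)=\cT'(\beta)\,\nu_*([\beta,1]).
\]
At an interior support point $\beta$, being a local minimum requires $\tilde\Psi'_-(\beta)\le 0\le\tilde\Psi'_+(\beta)$. The other terms of $\tilde\Psi$ are differentiable, so this gives
\[
2a\,\cT'(\beta)\,\nu_*(\{\beta\})=\tilde\Psi'_-(\beta)-\tilde\Psi'_+(\beta)\le 0 .
\]
Since $a>0$ and $\cT'>0$, this rules out atoms in the interior. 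Consequently $\tilde\Psi'(\beta)=0$ at every interior support point, which means
\[
G(\beta):=\nu_*((\beta,1])=f(\beta),\qquad f=-\frac{b\cT'+d\cU'-\kappa}{2a\cT'},
\]
on $\supp(\nu_*)\cap(u,1-u)$, where $f$ is Lipschitz on compact subintervals.

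The main step, and the one I expect to need the most care, is to pass from agreement on the support to absolute continuity, so as to exclude singular continuous parts as well. Fix $u<\beta_1<\beta_2<1-u$. If $\nu_*((\beta_1,\beta_2])>0$, let $\beta_1'$ and $\beta_2'$ be the infimum and supremum of $\supp(\nu_*)\cap[\beta_1,\beta_2]$. Both are support points because the support is closed, and there are no atoms. Then
\[
\nu_*((\beta_1,\beta_2])=G(\beta_1')-G(\beta_2')=f(\beta_1')-f(\beta_2')\le \mathrm{Lip}(f)\,|\beta_2-\beta_1| .
\]
So $G$ is locally Lipschitz on $(u,1-u)$, and $\nu_*$ is absolutely continuous there. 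Since $\d m_*=(1-\alpha)\,\d\nu_*$ with a smooth positive factor, $m_*$ is absolutely continuous on $(u,1-u)$, as claimed. If $f$ turns out only continuous rather than Lipschitz, I would instead write $f=f_1-f_2$ with $f_i$ absolutely continuous and monotone, and bound $\nu_*$ of small sets by the total variation of $f$ over short intervals.
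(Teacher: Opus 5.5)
Your argument has the same structure as the paper's proof:
\begin{itemize}
\item first-order conditions for the kernel $\cT(\beta_1\wedge\beta_2)$;
\item the jump $h'_-(\beta)-h'_+(\beta)=\cT'(\beta)\,\nu_*(\{\beta\})$, which excludes interior atoms;
\item the identity $G=f$ on the support, turned into a modulus bound for $\nu_*$.
\end{itemize}
Your derivation of $\tilde\Psi'(\beta)=0$ at every interior support point, directly from differentiability once atoms are excluded, is slightly cleaner than the paper's accumulation-point remark.

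The step that does not go through is the one you call routine. The hypothesis $\E[1/\lambda]<\infty$ does not make $\cT'$ Lipschitz on $[u,1-u]$. Put $g(x)=\E[\lambda\phi(x/\lambda)]$. Then $\cT'(\beta)=g(q_\beta)/p(q_\beta)$ and $\cU'(\beta)=-q_\beta$. The density $p$ may have a cusp at $0$, which corresponds to the interior point $\beta=1/2$. Indeed, $\phi(0)-\phi(x/\lambda)\ge\phi(0)-\phi(1)$ whenever $\lambda\le|x|$, so
\[
p(0)-p(x)\ \ge\ \bigl(\phi(0)-\phi(1)\bigr)\,\E\bigl[\lambda^{-1}\bone\{\lambda\le|x|\}\bigr].
\]
If $\lambda$ has density $\tfrac32\sqrt t$ on $(0,1]$, then $\E[1/\lambda]=3$, and the right-hand side is a positive constant times $|x|^{1/2}$. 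So $1/\cT'$ is only H\"older-$\tfrac12$ at $\beta=1/2$. Moreover, $f=-\tfrac{b}{2a}+\tfrac{(\kappa+d\,q_\beta)\,p(q_\beta)}{2a\,g(q_\beta)}$, so $f$ inherits this cusp whenever $\kappa\neq0$. The Lipschitz bound your main line needs therefore cannot be derived from the hypothesis. The paper's proof has the same weak spot: it asserts that its $V$, which has the same structure, is Lipschitz ``by definition''.

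Your fallback is the correct repair, but it must rest on absolute continuity of $f$, not mere continuity. A continuous Cantor-type $f$ is fully compatible with $G=f$ on a singular support. Absolute continuity does hold here:
\begin{itemize}
\item $\cU'=-q$ is $C^1$.
\item $g$ is $C^1$ with $g'(x)=-x\,p(x)$.
\item By Fubini,
\[
p(x)-p(y)=\int_y^x\E\bigl[\lambda^{-2}\phi'(t/\lambda)\bigr]\,\d t,
\qquad
\int_{\R}\E\bigl[\lambda^{-2}|\phi'(t/\lambda)|\bigr]\,\d t=\E[1/\lambda]\int_{\R}|\phi'|<\infty,
\]
so $p$ is absolutely continuous.
\item Since $q$ is $C^1$ and increasing, $p\circ q$ is absolutely continuous on $[u,1-u]$.
\item $f=-\tfrac{b}{2a}+\tfrac{(\kappa+d\,q)(p\circ q)}{2a\,(g\circ q)}$, with $g\circ q$ positive and $C^1$, so $f$ is absolutely continuous there.
\end{itemize}
Now apply your infimum/supremum device to each member of a finite disjoint family $(x_i,y_i]$ inside a compact subinterval of $(u,1-u)$. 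Let $x_i',y_i'$ be the infimum and supremum of $\supp(\nu_*)\cap[x_i,y_i]$. Then $\sum_i\nu_*((x_i,y_i])\le\sum_i|f(x_i')-f(y_i')|$, and the intervals $[x_i',y_i']$ are non-overlapping with $\sum_i(y_i'-x_i')\le\sum_i(y_i-x_i)$. This sum is small by absolute continuity of $f$. Hence $G$ is absolutely continuous, and so are $\nu_*$ and $m_*$ on $(u,1-u)$, with no Lipschitz claim needed.
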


Thus, the optimizer has no singular component in the interior of the admissible interval. In particular, a point mass $m=\delta_\alpha$ with $\alpha\in(u,1-u)$, corresponding to a single-level CVaR criterion, cannot be efficiency-optimal.

\begin{cor}[Inefficiency of a fixed CVaR level]
	\label{cor:single_cvar_inefficient}
	Under the assumptions of \Cref{thm:continuous_optimal_measure}, no point mass $m=\delta_\alpha$ with $\alpha\in(u,1-u)$ minimizes $I(m;c)$ over $\mathscr{P}([u,1-u])$. Consequently, no fixed interior CVaR level attains the minimum asymptotic covariance within the admissible spectral-risk class.
\end{cor}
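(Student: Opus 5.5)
The corollary follows directly from \Cref{thm:continuous_optimal_measure} together with the uniqueness in \Cref{prop:unique_minimizer}. We argue by contradiction.

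Fix $c\in\R$ and $\alpha\in(u,1-u)$, and suppose that $\delta_\alpha$ minimizes $I(\cdot;c)$ over $\mathscr{P}([u,1-u])$. The hypothesis $\E[1/\lambda]<\infty$ forces $\lambda>0$ almost surely, since otherwise $1/\lambda=\infty$ on a set of positive probability. Hence \Cref{prop:unique_minimizer} applies: the minimizer $m_{*,u}^c$ is unique, and therefore $\delta_\alpha=m_{*,u}^c$.

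By \Cref{thm:continuous_optimal_measure}, $m_{*,u}^c$ restricted to $(u,1-u)$ is absolutely continuous with respect to Lebesgue measure. It therefore assigns zero mass to the singleton $\{\alpha\}$, because $\{\alpha\}$ has Lebesgue measure zero and lies inside $(u,1-u)$. But $\delta_\alpha(\{\alpha\})=1$, which is a contradiction. So no point mass at an interior level minimizes $I(\cdot;c)$.

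For the covariance statement, we use \Cref{cor:structural_efficiency_risky} with $c=c(\mu_0)$. It gives
\[
\Sigma_{\mathrm{SRM}}(\delta_\alpha)-\Sigma_{\mathrm{SRM}}(m_{*,u}^c)=\bigl(I(\delta_\alpha;c)-I(m_{*,u}^c;c)\bigr)\Sigma_\perp .
\]
The scalar factor $I(\delta_\alpha;c)-I(m_{*,u}^c;c)$ is strictly positive: it is nonnegative because $m_{*,u}^c$ is a minimizer, and it cannot be zero, since then $\delta_\alpha$ would also be a minimizer, contradicting the first part.

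It remains to check $\Sigma_\perp(\mu_0)\neq\bzero$. Here $\Sigma_\perp=\ww_*^\top\bSigma\ww_*\,P_\perp$, and:
\begin{itemize}
\item $\ww_*^\top\bSigma\ww_*>0$, because $\ww_*\neq\bzero$ (it satisfies the budget constraint) and $\bSigma\succ\bzero$.
\item $P_\perp$ is $\bSigma^{-1/2}$ times the orthogonal projector onto the complement of the column span of $\bSigma^{-1/2}\bmu^{(1)}$, times $\bSigma^{-1/2}$. That projector has rank $N-2$, so $P_\perp\neq\bzero$ whenever $N\ge3$.
\end{itemize}
Consequently $\Sigma_{\mathrm{SRM}}(\delta_\alpha)$ is strictly larger than $\Sigma_{\mathrm{SRM}}(m_{*,u}^c)$ in Loewner order. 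The only step that needs care is verifying that $\E[1/\lambda]<\infty$ supplies the hypothesis $\lambda>0$ a.s.; the rest is bookkeeping.
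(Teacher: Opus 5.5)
Your argument is correct and is essentially the paper's: the corollary is read off directly from \Cref{thm:continuous_optimal_measure}, with uniqueness from \Cref{prop:unique_minimizer} available because $\E[1/\lambda]<\infty$ forces $\lambda>0$ a.s. An interior atom is then incompatible with absolute continuity on $(u,1-u)$.

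Your translation to covariances through \Cref{cor:structural_efficiency_risky} is a useful step the paper leaves implicit. Your caveat $N\ge 3$ is genuinely needed: for $N=2$ one has $P_\perp=\bzero$, the two constraints pin down $\hw(\mu_0)$ independently of $m$, and every criterion then has the same asymptotic covariance.
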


The result is statistical rather than preference-based. A high CVaR level concentrates the empirical criterion on a relatively small number of tail observations and can therefore produce a noisy empirical criterion. A spectral mixture combines information across quantile levels and can trade off tail emphasis against sampling variability. Accordingly, the relatively low CVaR levels receiving substantial weight in the examples below should not be interpreted as preferred tail probabilities; they contribute information that stabilizes estimation of a population portfolio common to the spectral class.

\subsection{Comparison with Sample mean-variance Optimization}

We next compare the efficiency achievable by spectral-risk criteria with the sample mean-variance benchmark. Recall from \Cref{sec:elliptic} that the criterion-dependent covariance contributions are summarized by $I(m;c)$ for spectral-risk estimation and
\[
J(c)
=
\frac{\E[\lambda^4]}{\E[\lambda^2]^2}
+
c^2\E[\lambda^2]
\]
for sample mean-variance optimization.

The following theorem shows that the best attainable covariance bound over the closure of the spectral-risk class is no larger than the MVO benchmark.

\begin{thm}
	\label{thm:SRM_optimality}
	For any $c\in\R$,
	\[
	\inf_{m\in\mathscr{P}([0,1])} I(m;c)
	\le
	J(c).
	\]
	Consequently, for any $\veps>0$, there exists $u\in(0,1/2)$ such that
	\[
	\inf_{m\in\mathscr{P}([u,1-u])} I(m;c)
	\le
	J(c)+\veps.
	\]
\end{thm}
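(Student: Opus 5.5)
The plan is to rewrite $I(m;c)$ in terms of the spectral density $\varphi(p)=\int_0^p (1-\alpha)^{-1}\d m(\alpha)$ and then exhibit a family of admissible densities along which $I$ converges to $J(c)$. Applying Fubini to \eqref{eq:simplified_I_m_c} turns each term into an integral against $\varphi$. For instance, $\int_0^1 \cT(1-\alpha)(1-\alpha)^{-1}\d m(\alpha)$ and the double integral become $\int\varphi\,\d(\cdot)$ and $\iint\varphi\varphi\,\d(\cdot)$ against the measures generated by $\cT$; similarly $\rho_m(\lambda Z)=\int_0^1\varphi(p)\,q_p(\lambda Z)\,\d p$. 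I expect the result to be a representation
\[
I(m;c)=\frac{\E\bigl[\Psi_\varphi(\lambda,Z)^2\bigr]}{\rho_m(\lambda Z)^2},
\qquad
\Psi_\varphi(\lambda,Z)=\lambda\,\psi_\varphi(\lambda,Z)-\bigl(1-c\rho_m(\lambda Z)\bigr)\lambda Z .
\]
Here $\psi_\varphi$ is the conditional-Gaussian projection of the influence function $x\mapsto\int_0^x\varphi(F_{\lambda Z}(t))\,\d t$ of $\rho_m$. This is the scalar score that produced the $P_\perp$ term in \cref{prop:no_risk_free_lowdim}. In the same language, $J(c)$ is the analogous second moment when $\psi$ is replaced by the variance score $x\mapsto x^2/(2\E[\lambda^2])$, i.e.\ when $\varphi(p)$ is proportional to $q_p(\lambda Z)$. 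Verifying this last claim from the formula in \cref{prop:asymptotic_variance_mv} is the first concrete check.

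The key observation is that both $I(m;c)$ and the target expression are invariant under positive rescaling of $\varphi$: numerator and denominator are both homogeneous of degree two in $m$. Moreover, adding a constant to $\varphi$ changes the influence function by a linear term in $x$, which, after the conditional projection, is absorbed into the $\lambda Z$ coefficient together with $c$. The normalization $\int\varphi=1$ and the monotonicity of $\varphi$ therefore cost nothing. The only real obstruction is $\varphi\ge 0$, which fails for $\varphi\propto q_p$ when $p<1/2$.

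I would therefore take the truncated family
\[
\varphi_{K,a}(p)\propto a+\max\{-K,\min\{q_p(\lambda Z),K\}\},
\qquad a\ge K .
\]
This family is nondecreasing, nonnegative, and bounded, so it defines a genuine spectral measure on $[0,1]$ whose mass at $1$ is finite. I would then show two things. First, with $a$ tuned to the matching constant, $I(m_{K,a};c)\to J(c)$ as $K\to\infty$, using $q_p(\lambda Z)\in L^2(\d p)$, which follows from $\E[\lambda^2]<\infty$, and dominated convergence on the numerator and denominator. Second, if the required shift is not exactly attainable, the infimum is bounded by $J(c)$ via continuity in $a$.

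I expect the main obstacle to be this bookkeeping of the constant shift against the $c$-dependent term. Because $c\rho_m(\lambda Z)$ appears inside the numerator, it is not a priori clear that the shift exactly reproduces the $c^2\E[\lambda^2]$ contribution of $J(c)$. I would handle this by viewing $I$, for fixed shape, as a ratio of quadratics in $(a,1)$ and minimizing explicitly over $a$. Note that $J(c)$ requires $\E[\lambda^4]<\infty$; if it is infinite the inequality is trivial.

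For the second statement, take $m$ with $I(m;c)\le J(c)+\veps/2$ from the first part. Then restrict and renormalize it to $[u,1-u]$, moving the mass in $[0,u)$ to $u$ and the mass in $(1-u,1]$ to $1-u$. The corresponding densities converge in $L^2(\d p)$ as $u\downarrow0$, so continuity of numerator and denominator gives $I(m_u;c)\to I(m;c)$. Choosing $u$ small then yields the claimed bound $J(c)+\veps$.
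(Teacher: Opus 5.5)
Your proposal is correct and is essentially the paper's proof. Your spectral density $\varphi$ is the paper's $H_\mu$ reflected, $H_\mu(t)=\varphi(1-t)$, which gives $I(m;c)=\int_0^1[c+(H_\mu(t)-1)/\rho_\mu]^2\,\d\cT(t)$. Your unattainable target $\varphi\propto q_p$ is its $s_0=-q/\E[\lambda^2]$, and your clipped, shifted family with $a=K$ is exactly its $H_n=1+a_n/n$. For that family the shift you worried about cancels identically, $(H_n-1)/\rho_n=a_n/b_n$, because $\int_0^1 q_p\,\d p=0$ and the clipped quantile is odd about $p=1/2$; so you need neither an optimization over $a$ nor a continuity argument in $a$.

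With $a=K$ these measures are already supported on $[u_K,1-u_K]$ with $u_K=\P(\lambda Z\le -K)>0$, which gives the second statement directly. Your mass-moving argument also works for your bounded $\varphi$, but in general it would need convergence in $L^2(\d\cT)$ rather than $L^2(\d p)$, since the numerator integrates against $\d\cT$.
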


The theorem concerns an infimum and does not assert that an admissible fixed-$u$ spectral measure always attains covariance weakly below the MVO covariance. Rather, it establishes that the best covariance bound over the closure of the spectral-risk class is no larger than the MVO benchmark. If an admissible compactly supported measure $m$ satisfies $I(m;c)<J(c)$, its portfolio estimator strictly improves on MVO in Loewner order. When equality holds only at the endpoint-relaxed infimum, admissible spectral measures can instead approximate the MVO benchmark arbitrarily closely.

This comparison is especially meaningful because both estimators target the same population portfolio. Any strict gain is therefore a gain in statistical efficiency rather than a consequence of changing the population investment objective. Together with \Cref{thm:continuous_optimal_measure,cor:single_cvar_inefficient}, Theorem~\ref{thm:SRM_optimality} yields a simple message: efficient estimation generally requires aggregating information across quantile levels, and doing so can recover or improve upon the sample-MVO efficiency benchmark.

The theorem also clarifies the role of the endpoint restriction in \Cref{ass:measure_support}. For fixed $u$, $m_{*,u}^c$ is the efficiency-optimal criterion within $\mathscr{P}([u,1-u])$. Letting $u\downarrow0$ enlarges the admissible class and yields the endpoint-relaxed comparison with MVO.

\subsection{Gaussian and Laplace Benchmarks}
\label{sec:optimal_examples}

We illustrate the efficiency-optimal spectral measure in two benchmark models. The Gaussian case admits a closed-form solution and provides a natural efficiency benchmark. The Laplace model illustrates how heavy tails alter the optimal spectral weights and can generate strict improvements relative to sample mean-variance optimization.

\paragraph{Gaussian returns.}

When returns are Gaussian, $\lambda\equiv1$. Hence
\[
\cT(\alpha)=\alpha,
\qquad
\cU(\alpha)=\phi(q_\alpha(Z)).
\]
In this case,
\begin{align*}
	I(m;c)
	&=
	c^2
	+
	\frac{1}{\rho_m(Z)^2}
	\left(
	\int_{[0,1]^2}
	\frac{\cT\bigl(1-(\alpha_1\vee\alpha_2)\bigr)}
	{(1-\alpha_1)(1-\alpha_2)}
	\d m(\alpha_1)\d m(\alpha_2)
	-
	1
	\right)
	\\
	&=
	c^2
	+
	\frac{1}{\rho_m(Z)^2}
	\int_{[0,1]^2}
	\frac{\alpha_1\wedge\alpha_2}
	{1-(\alpha_1\wedge\alpha_2)}
	\d m(\alpha_1)\d m(\alpha_2),
\end{align*}
where $\alpha_1 \wedge \alpha_2 = \min \{ \alpha_1, \alpha_2 \}$, and
\[
\rho_m(Z)
=
\int_0^1
\frac{\cU(1-\alpha)}
{1-\alpha}
\d m(\alpha)
=
\int_0^1
\frac{\phi(q_{1-\alpha}(Z))}
{1-\alpha}
\d m(\alpha).
\]
The efficiency-optimal measure is therefore independent of $c$.

\begin{prop}
	\label{prop:optimal_measure_gaussian}
	Under Gaussian returns, the minimizer over $\mathscr{P}([u,1-u])$ is
	\[
	m_{*,u}
	=
	m_{*,u}(\{u\})\delta_u
	+
	f_*(\alpha)\bone_{(u,1-u)}(\alpha)\d\alpha
	+
	m_{*,u}(\{1-u\})\delta_{1-u},
	\]
	where
	\begin{align*}
		f_*(\alpha)
		=
		\frac{u(1-\alpha)}
		{\cU(1-u)\cU(1-\alpha)},
		\qquad 
		m_{*,u}(\{u\})
		=
		(1-u)
		\left[
		1+
		\frac{u\cU'(1-u)}
		{\cU(1-u)}
		\right],
	\end{align*}
	and
	\begin{align*}
		m_{*,u}(\{1-u\})
		=
		u
		\left[
		1+
		\frac{u\cU'(1-u)}
		{\cU(1-u)}
		\right].
	\end{align*}
	Furthermore,
	\[
	I(m_{*,u};c)
	=
	c^2
	+
	\left[
	1-2u+
	\frac{2\cU(1-u)^2}{u}
	+
	2\cU(1-u)\cU'(1-u)
	\right]^{-1}.
	\]
\end{prop}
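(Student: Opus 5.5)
The plan is to exploit the scale invariance of the Gaussian form of $I(m;c)$ and reduce the problem to a weighted least-squares problem for the tail function of $m$. Write $K(a)=a/(1-a)$ and $g(\alpha)=\cU(1-\alpha)/(1-\alpha)$. Then
\[
I(m;c)-c^2=\frac{Q(m)}{L(m)^2},\qquad Q(m)=\int\!\!\int K(\alpha_1\wedge\alpha_2)\,\d m\,\d m,\qquad L(m)=\int g\,\d m .
\]
This ratio is homogeneous of degree zero in $m$. So the plan is to minimize it over all nonnegative finite measures $\nu$ on $[u,1-u]$ and then normalize the minimizer to a probability measure. This also explains why the optimizer does not depend on $c$.

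\textbf{Step 1: rewrite $Q$ and $L$ in terms of the tail function.} Set $G(t)=\nu([t,1-u])$, and put $G(t)=\nu([u,1-u])$ for $t\le u$. Since $K(0)=0$, we have $K(a\wedge b)=\int_0^1 \bone\{t\le a\}\bone\{t\le b\}\,\d K(t)$. Fubini then gives
\[
Q(\nu)=K(u)G(u)^2+\int_u^{1-u}G(t)^2K'(t)\,\d t,
\]
which shows in particular that $Q$ is a positive-definite (strictly convex) quadratic in $G$. Integrating $\int g\,\d\nu$ by parts with $\d\nu=-\d G$ gives
\[
L(\nu)=g(u)G(u)+\int_u^{1-u}g'(t)G(t)\,\d t .
\]
Here $G$ ranges over nonnegative, nonincreasing, left-continuous functions. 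An atom of $\nu$ at $1-u$ corresponds to $G(1-u)>0$, and an atom at $u$ corresponds to the jump of $G$ just to the right of $u$.

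\textbf{Step 2: solve by Cauchy--Schwarz.} Ignore the monotonicity constraint for now. Cauchy--Schwarz in the weighted space
\[
L^2\big(\delta_u K(u)+K'(t)\,\d t\big)
\]
gives
\[
\frac{Q}{L^2}\ \ge\ \Big[\frac{g(u)^2}{K(u)}+\int_u^{1-u}\frac{g'(t)^2}{K'(t)}\,\d t\Big]^{-1},
\]
with equality iff $G(u)\propto g(u)/K(u)$ and $G(t)\propto g'(t)/K'(t)=(1-t)^2g'(t)$ on $(u,1-u]$.

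\textbf{Step 3: read off the measure.} Using the Gaussian identity $\cU'(a)=-q_a(Z)$ and the corresponding derivative of $g$, I would do the following.
\begin{itemize}
\item Differentiate the candidate $G$ to get the interior density $f_*=-G'$.
\item The atom at $1-u$ is $G(1-u)$.
\item The atom at $u$ is the jump $G(u)-G(u+)$.
\item Normalize by the total mass $G(u)$.
\end{itemize}
These should reproduce the stated formulas: $f_*(\alpha)=u(1-\alpha)/(\cU(1-u)\cU(1-\alpha))$, with atoms proportional to $(1-u)$ and $u$ times $1+u\cU'(1-u)/\cU(1-u)$. Uniqueness then follows either from strict convexity of the quadratic in $G$ or directly from \Cref{prop:unique_minimizer}.

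\textbf{Main obstacle.} The hard step is checking that the unconstrained Cauchy--Schwarz optimizer is admissible. That means $G$ must be nonnegative and nonincreasing, so that $f_*\ge0$ and both atoms are $\ge0$. In particular this requires $1+u\cU'(1-u)/\cU(1-u)\ge0$, i.e.\ $u\,q_{1-u}\le\phi(q_{1-u})$. This is the Mills-ratio inequality $\Phi(-x)\le\phi(x)/x$ at $x=q_{1-u}$, and I would verify it that way. If some piece failed, I would fall back on the KKT/potential characterization $\int K(\alpha\wedge\beta)\,\d\nu(\beta)\ge\gamma g(\alpha)$, with equality on the support, to check optimality directly.

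\textbf{Final value.} It remains to evaluate the reciprocal bracket in closed form. I would integrate $(1-t)^2g'(t)^2$, using $\phi'(x)=-x\phi(x)$ and an integration by parts that produces the boundary terms $2\cU(1-u)^2/u$ and $2\cU(1-u)\cU'(1-u)$. This should yield exactly the stated expression for $I(m_{*,u};c)-c^2$.
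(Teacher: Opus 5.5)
Your proposal is correct, and it takes a genuinely different route from the paper. The paper starts from the stated formula and verifies the pointwise first-order condition $\int K(\alpha\wedge\beta)\,\d m_{*,u}(\beta)=C\,g(\alpha)$ on $[u,1-u]$ with $C=u/\cU(1-u)$. It differentiates the potential to get $(1-\alpha)^{-2}m_{*,u}([\alpha,1-u])$, matches this with $C g'(\alpha)$, and fixes the integration constant at $\alpha=u$. It then invokes convexity for sufficiency and ``direct calculation'' for the optimal value.

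Your tail-function reformulation instead turns the problem into a weighted $L^2$ problem. Cauchy--Schwarz then \emph{derives} the candidate, gives global optimality and uniqueness from its equality case without any KKT argument, and yields the optimal value directly as $\bigl[g(u)^2/K(u)+\int_u^{1-u}g'(t)^2/K'(t)\,\d t\bigr]^{-1}$. Your candidate $G(t)\propto(1-t)^2g'(t)=\cU(1-t)-(1-t)\cU'(1-t)$, normalized by $G(u)=1$, is exactly the paper's expression for $m_{*,u}([\alpha,1-u])$. The bracket does reduce to $1-2u+2\cU(1-u)^2/u+2\cU(1-u)\cU'(1-u)$. To see this, combine the atom term $\cU(1-u)^2/(u(1-u))$ with the boundary terms of the integration by parts, using $w(p)=\cU(p)-p\cU'(p)$ and $w'(p)=p/\cU(p)$.

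Your route also makes the admissibility check explicit: nonnegativity of both atoms via the Mills-ratio inequality, and monotonicity of $G$ via $\cU''=-1/\cU$. The paper's proof never verifies that the proposed atoms are nonnegative, although this is needed for $m_{*,u}\in\mathscr{P}([u,1-u])$.

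The paper's approach is shorter once the formula is known. It is also the same first-order template as in the proof of \Cref{thm:continuous_optimal_measure}, which remains usable when the nonnegativity constraint binds. Your Cauchy--Schwarz argument succeeds precisely because the unconstrained optimizer happens to be admissible here, which you correctly identify as the crux.
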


As $u\to0$,
\[
I(m_{*,u};c)\to1+c^2=J(c).
\]
Thus, in the Gaussian case, the best fixed-$u$ spectral-risk estimator approaches the asymptotic covariance of sample mean-variance optimization as the endpoint restriction is relaxed, but does not yield a strict first-order improvement. This provides a useful benchmark: under Gaussianity, the return distribution is fully characterized by its first two moments, precisely the quantities used by the Markowitz criterion. The optimized spectral criterion therefore recovers the same limiting first-order covariance benchmark rather than improving upon it.

For a single CVaR level $m=\delta_\alpha$,
\[
I(\delta_\alpha;0)
=
\frac{\alpha(1-\alpha)}
{\phi(q_{1-\alpha}(Z))^2}.
\]
This quantity is minimized at $\alpha=1/2$ and increases sharply as $\alpha$ approaches the endpoints. Panels (a)--(b) of Figure~\ref{fig:optimal_spectral_examples} illustrate the efficiency-optimal spectral measure and the corresponding single-CVaR comparison.

\paragraph{Laplace returns.}

We next consider a heavy-tailed elliptical model in which $\lambda$ follows the Rayleigh distribution with density
\[
p_\lambda(x)
=
x\exp(-x^2/2),
\qquad x\ge0.
\]
Then $\lambda Z$ has the standard Laplace density
\[
p_{\lambda Z}(x)
=
\frac12\exp(-|x|).
\]
Under this parameterization,
$
\E[\lambda^2]=2$,
and
$\E[\lambda^4]=8$,
consistent with the scatter normalization retained in Sections~\ref{sec:elliptic}--\ref{sec:optimal_srm}. Hence
\[
J(c)=2+2c^2.
\]

Unlike the Gaussian case, the efficiency-optimal measure $m_{*,u}^c$ generally depends on $c$ and does not admit a closed-form expression. We compute it numerically by discretizing $[u,1-u]$ and solving the corresponding finite-dimensional optimization problem over the probability vector $m$. Panel (c) of Figure~\ref{fig:optimal_spectral_examples} displays the resulting densities for $c\in\{1/4,1/2,3/4,1\}$ and $u=0.001$. The optimal measure places substantial mass near small values of $\alpha$, and its shape changes with $c$, reflecting dependence of the efficiency-optimal criterion on the target portfolio through the scalar index $c$. The discontinuity around $\alpha=1/2$ reflects the kink in the Laplace density at zero.

For a single CVaR level $m=\delta_\alpha$,
\[
I(\delta_\alpha;c)
=
\frac{1}{\rho_m(\lambda Z)^2}
\left(
\frac{\cT(1-\alpha)}
{(1-\alpha)^2}
-
2\left(1-c\rho_m(\lambda Z)\right)
\frac{\cT(1-\alpha)}
{1-\alpha}
+
2\left(1-c\rho_m(\lambda Z)\right)^2
\right),
\]
where
\[
\rho_m(\lambda Z)
=
\frac{\cU(1-\alpha)}
{1-\alpha}.
\]

In this Laplace example, CVaR at $\alpha=1/2$ coincides with the MVO benchmark. Indeed,
\[
\cT(1/2)
=
\frac12\E[\lambda^2]
=
1,
\qquad
\cU(1/2)
=
\frac{1}{\sqrt{2\pi}}\E[\lambda]
=
\frac12,
\]
which implies
\[
I(\delta_{0.5};c)
=
2+2c^2
=
J(c).
\]
Other CVaR levels can be substantially less efficient, especially for small $c$.

Panel (d) of Figure~\ref{fig:optimal_spectral_examples} compares $I(m_{*,u}^c;c)$ with $I(\delta_\alpha;c)$ for $\alpha\in\{0.1,0.5,0.9\}$ and with the MVO benchmark $J(c)$. Over the range of $c$ shown, the efficiency-optimal spectral measure has a smaller asymptotic variance contribution than sample mean-variance optimization, while the performance of single-level CVaR depends strongly on $\alpha$.

\begin{figure}[!ht]
	\centering
	
	\begin{subfigure}[t]{0.48\textwidth}
		\centering
		\includegraphics[width=\linewidth]{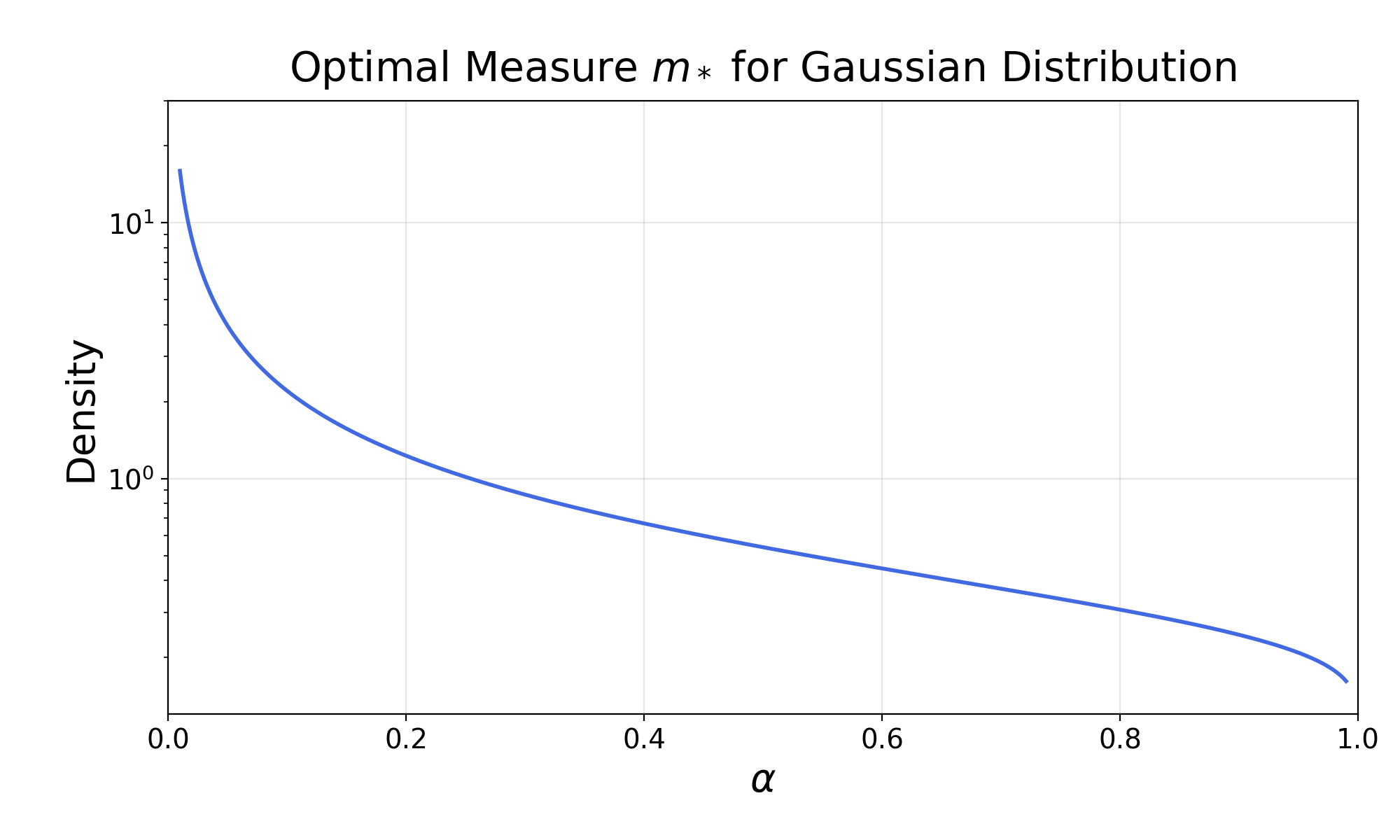}
		\caption{Efficiency-optimal spectral measure under Gaussian returns
			for $u=0.01$. The $y$-axis is logarithmic.}
		\label{fig:gaussian_density}
	\end{subfigure}
	\hfill
	\begin{subfigure}[t]{0.48\textwidth}
		\centering
		\includegraphics[width=\linewidth]{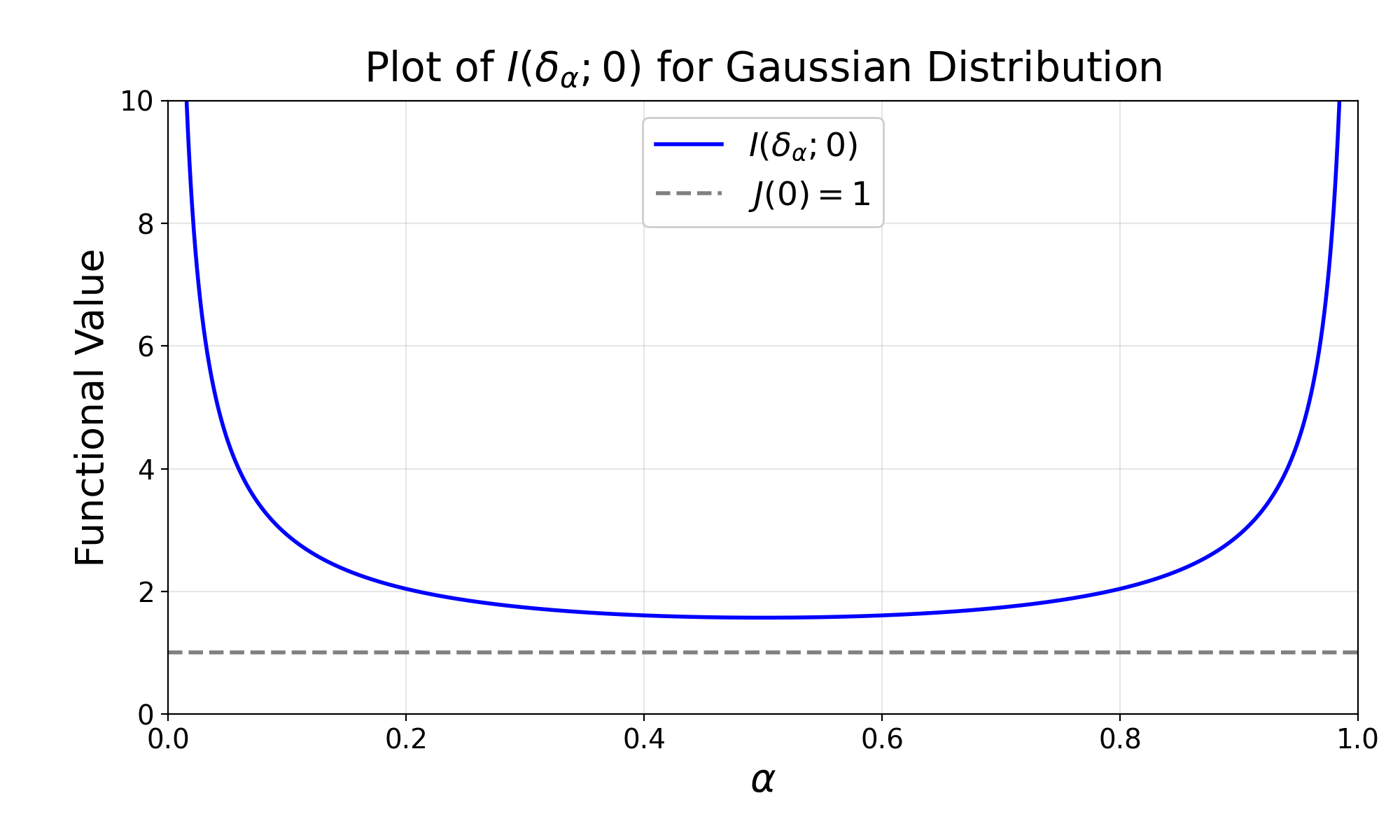}
		\caption{Single-level CVaR under Gaussian returns. The dashed line
			is the endpoint-relaxed benchmark $J(0)=1$.}
		\label{fig:gaussian_cvar}
	\end{subfigure}
	
	\vspace{0.3cm}
	
	\begin{subfigure}[t]{0.48\textwidth}
		\centering
		\includegraphics[width=\linewidth]{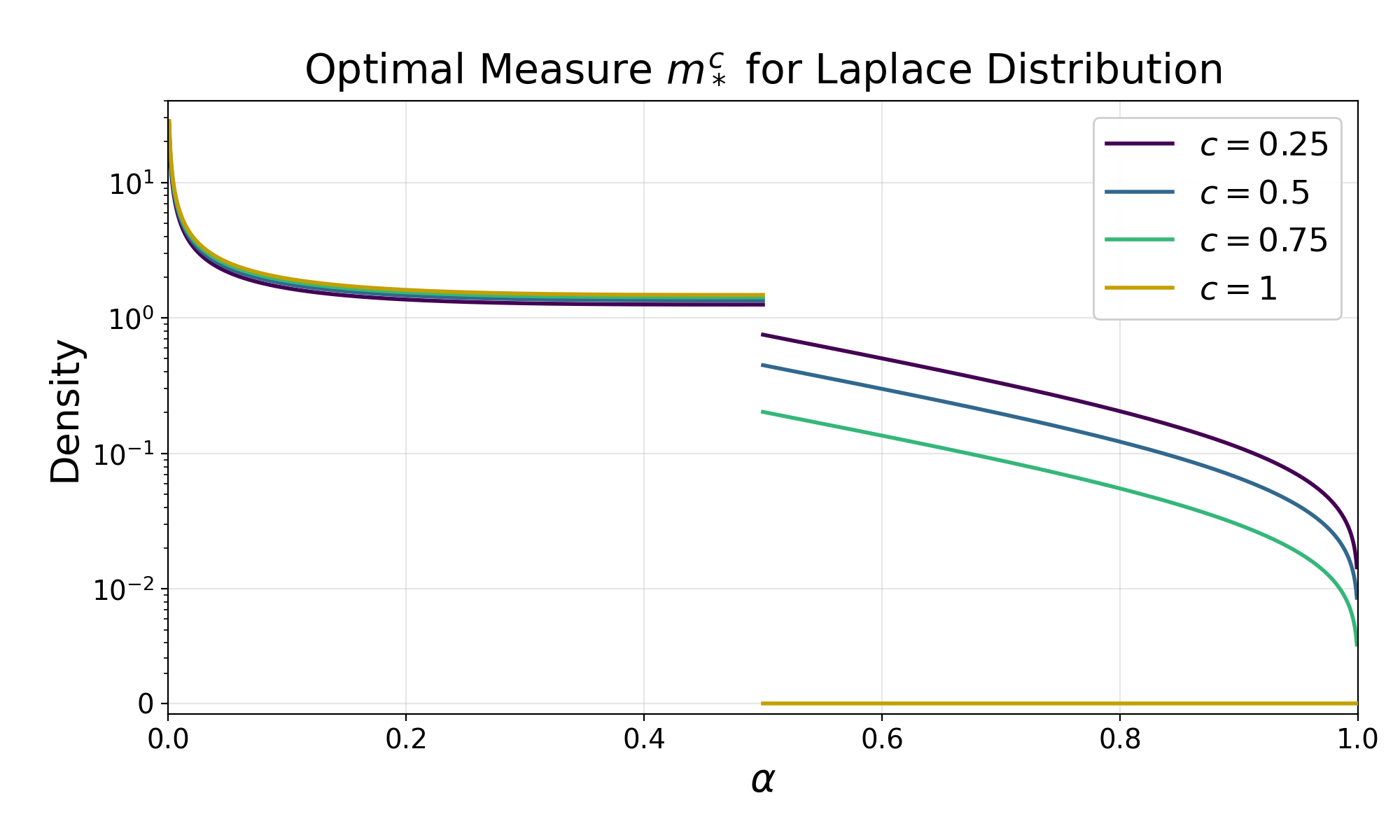}
		\caption{Efficiency-optimal spectral measures under Laplace returns
			for $c\in\{1/4,1/2,3/4,1\}$ and $u=0.001$. At $c=1$, the density is
			zero for $\alpha>1/2$. The $y$-axis is logarithmic.}
		\label{fig:laplace_density}
	\end{subfigure}
	\hfill
	\begin{subfigure}[t]{0.48\textwidth}
		\centering
		\includegraphics[width=\linewidth]{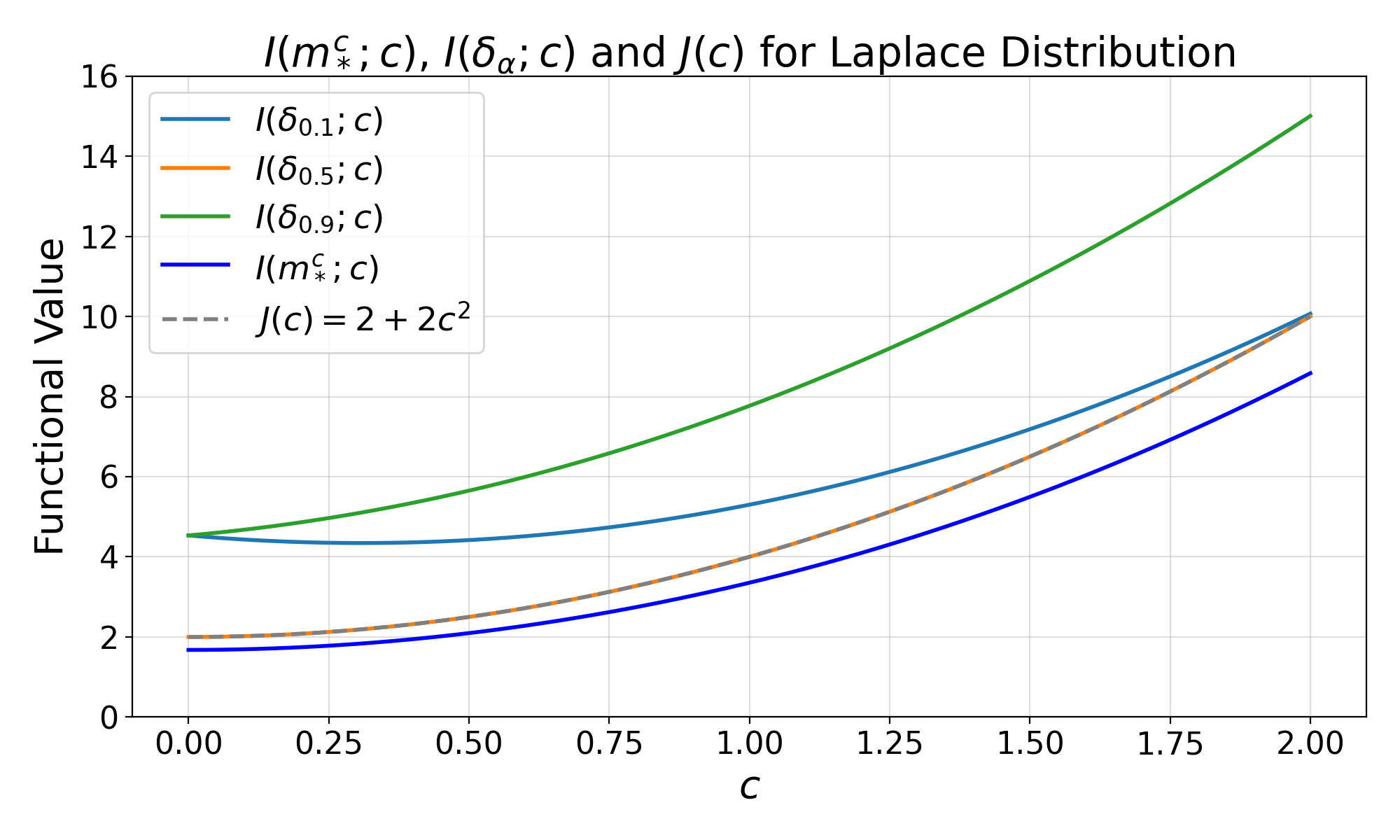}
		\caption{Laplace efficiency comparison: optimized spectral measure,
			single-level CVaR at $\alpha\in\{0.1,0.5,0.9\}$, and MVO.}
		\label{fig:laplace_efficiency}
	\end{subfigure}
	
	\caption{
		Efficiency-optimal spectral measures and asymptotic covariance
		comparisons under Gaussian and Laplace returns. Panels (a)--(b)
		show the Gaussian benchmark; panels (c)--(d) show how heavy-tailed
		radial variation changes the efficiency-optimal criterion and can
		produce strict improvements over sample mean-variance optimization.
	}
	\label{fig:optimal_spectral_examples}
\end{figure}

These benchmarks illustrate the main implication of our theory. Under Gaussian returns, the efficiency-optimal spectral criterion approaches the sample mean-variance first-order covariance benchmark as the endpoint restriction is relaxed. For heavy-tailed elliptical models such as the Laplace example, a suitably chosen spectral risk measure can estimate the same population efficient portfolio with lower sampling variability than sample mean-variance optimization. The next section turns this oracle characterization into a fully data-adaptive procedure that learns the radial distribution, selects the estimating criterion, and estimates the portfolio weights from the same observations.

\section{Data-Adaptive Efficient Portfolio Estimation}
\label{sec:feasible_rule}

The preceding analysis characterizes a fixed-$u$ oracle estimating criterion:
$m_{*,u}^c$, the spectral measure that minimizes the first-order covariance
of the portfolio weights over $\mathscr{P}([u,1-u])$ when the radial
distribution is known. This is distinct from the endpoint-relaxed infimum
over $\mathscr{P}([0,1])$ used for the theoretical comparison with MVO.
We now make the criterion itself data-adaptive. Starting from the observed
returns, we estimate the radial distribution $P_\lambda$, use that estimate
to learn the efficiency-optimal spectral measure, and then estimate the
portfolio weights by minimizing the selected empirical risk criterion.
Thus, the estimating criterion and the portfolio weights are learned from
the same data.

The central result of this section is an oracle-adaptivity theorem.
Although the nuisance distribution is infinite-dimensional and the selected
spectral measure is random and constructed from the same observations used
in the final optimization, the data-adaptive portfolio estimator has the
same first-order distribution as its oracle counterpart. Thus, the
fixed-$u$ oracle efficiency bound characterized in
\Cref{sec:optimal_srm} can be attained without knowing the radial
distribution and without sample splitting.

Throughout this section, we assume in addition that $\bSigma$ is positive
definite and that $\lambda>0$ almost surely. From this point onward, we
impose the normalization
\[
\E[\lambda^2]=1.
\]
Under this normalization, $\bSigma=\var(\rr)$ and the radial distribution
$P_\lambda$ is separately identified from the scatter matrix.

\subsection{Estimating the Radial Distribution}
\label{sec:estimate_radial_dist}

Suppose $P_{\rr}$ satisfies \Cref{ass:elliptic_return} and the
representation \eqref{eq:elliptical_representation_section4}. Under the
normalization $\E[\lambda^2]=1$, the mean vector $\bmu$ and covariance
matrix $\bSigma$ can be consistently estimated by
\[
\hat{\bmu}
=
\frac{1}{T}\sum_{i=1}^T\rr_i,
\qquad
\hat{\bSigma}
=
\frac{1}{T}\sum_{i=1}^T
(\rr_i-\hat{\bmu})(\rr_i-\hat{\bmu})^\top.
\]

For the first three subsections, we leave the estimator of the radial
distribution abstract. Let $\hat P_\lambda$ be any random probability
measure on $(0,\infty)$ satisfying
\begin{equation}
	\label{eq:radial_high_level_condition}
	W_2(\hat P_\lambda,P_\lambda)
	\pto 0.
\end{equation}
This high-level condition is all that is required for the criterion-selection
and oracle-adaptivity results below. In
\Cref{sec:radial_npmle}, we construct a concrete nonparametric estimator
based on estimated Mahalanobis radii and verify
\eqref{eq:radial_high_level_condition}.

\subsection{Learning the Efficiency-Optimal Spectral Measure}
\label{sec:plugin_optimal_measure}

Fix a target return $\mu_0$ and write
\[
c=c(\mu_0),
\]
where $c(\mu_0)$ is the scalar population index defined in
\Cref{sec:elliptic}. The fixed-$u$ oracle criterion is
\[
m_{*,u}^c
=
\arg\min_{m\in\mathscr{P}([u,1-u])}
I(m;c).
\]
By \Cref{prop:unique_minimizer}, this optimizer is unique when
$\lambda>0$ almost surely.

We estimate the ingredients of $I(m;c)$ by plug-in. Let
$\hat q_\alpha$ denote the $\alpha$-quantile of $\hat\lambda Z$, where
$\hat\lambda\sim\hat P_\lambda$ is independent of
$Z\sim\normal(0,1)$. Define
\[
\hat{\cT}(\alpha)
=
\hat{\E}_{\lambda}
\left[
\lambda^2
\Phi\left(
\frac{\hat q_\alpha}{\lambda}
\right)
\right],
\qquad
\hat{\cU}(\alpha)
=
\hat{\E}_{\lambda}
\left[
\lambda
\phi\left(
\frac{\hat q_\alpha}{\lambda}
\right)
\right],
\]
where $\hat{\E}_{\lambda}$ denotes expectation under $\hat P_\lambda$.

The scalar $c(\mu_0)$ is estimated by its empirical analogue. Using the
notation of \Cref{sec:elliptic}, define
\[
\hat{\bmu}^{(1)}
=
[\,\hat{\bmu}\ \ \bone\,],
\qquad
\mu_0^{(1)}
=
\begin{bmatrix}
	\mu_0\\
	1
\end{bmatrix},
\]
and let
\[
\hat{\ww}_{\rm MV}(\mu_0)
=
\hat{\bSigma}^{-1}\hat{\bmu}^{(1)}
\left(
\hat{\bmu}^{(1)\top}
\hat{\bSigma}^{-1}
\hat{\bmu}^{(1)}
\right)^{-1}
\mu_0^{(1)}.
\]
We then set
\[
\hat c
=
\frac{
	[\,1\ \ 0\,]
	\left(
	\hat{\bmu}^{(1)\top}
	\hat{\bSigma}^{-1}
	\hat{\bmu}^{(1)}
	\right)^{-1}
	\mu_0^{(1)}
}{
	\|\hat{\bSigma}^{1/2}
	\hat{\ww}_{\rm MV}(\mu_0)\|_2
}.
\]

Define $\hat I(m;\hat c)$ by replacing
$P_\lambda$, $c$, $\cT$, and $\cU$ in $I(m;c)$ by
$\hat P_\lambda$, $\hat c$, $\hat{\cT}$, and $\hat{\cU}$,
respectively. The data-adaptive spectral measure is any measurable
minimizer
\[
\hat m
\in
\arg\min_{m\in\mathscr{P}([u,1-u])}
\hat I(m;\hat c).
\]
Finite-sample uniqueness of this optimization problem is not required for
the asymptotic results below; population uniqueness of $m_{*,u}^c$,
together with uniform convergence of the estimated criterion, identifies
the limiting target.

\begin{prop}
	\label{prop:plugin_optimal_measure}
	Assume that $\bmu^{(1)}=[\,\bmu\ \ \bone\,]$ has full column rank and
	that \eqref{eq:radial_high_level_condition} holds. Then
	\[
	\hat c\pto c
	\]
	and
	\[
	\sup_{m\in\mathscr{P}([u,1-u])}
	\left|
	\hat I(m;\hat c)-I(m;c)
	\right|
	\pto0.
	\]
	Consequently,
	\[
	d_{\rm BL}(\hat m,m_{*,u}^c)
	\pto0,
	\]
	where $d_{\rm BL}$ denotes the bounded-Lipschitz metric on probability
	measures on $[u,1-u]$. Equivalently, $\hat m$ converges weakly in
	probability to $m_{*,u}^c$.
\end{prop}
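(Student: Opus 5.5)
The proof has three parts: consistency of $\hat c$, uniform convergence of the plug-in criterion, and an argmin argument on the compact space $\mathscr{P}([u,1-u])$.

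\textbf{Consistency of $\hat c$.} By the law of large numbers, $\hat{\bmu}\pto\bmu$ and $\hat{\bSigma}\pto\bSigma$, since $\E[\lambda^2]<\infty$ gives second moments. The map
\[
(\bmu,\bSigma)\mapsto \bigl(\bmu^{(1)\top}\bSigma^{-1}\bmu^{(1)}\bigr)^{-1}\mu_0^{(1)}
\]
is continuous at the truth, because $\bSigma\succ\bzero$ and $\bmu^{(1)}$ has full column rank. The same holds for $\ww_{\rm MV}$ and $\|\bSigma^{1/2}\ww_{\rm MV}\|_2$. The latter is positive since the budget constraint forces $\ww_*\neq\bzero$. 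The continuous mapping theorem then gives $\hat c\pto c$.

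\textbf{Uniform convergence of $\hat{\cT}$ and $\hat{\cU}$.} I will show
\[
\sup_{\alpha\in[u,1-u]}|\hat{\cT}(\alpha)-\cT(\alpha)|\pto0,
\qquad
\sup_{\alpha\in[u,1-u]}|\hat{\cU}(\alpha)-\cU(\alpha)|\pto0.
\]
I argue along subsequences, so it suffices to treat a deterministic sequence $P_n$ with $W_2(P_n,P_\lambda)\to0$.

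First, $W_2$ convergence implies weak convergence of $\lambda_nZ$ to $\lambda Z$. The limit has a continuous, strictly increasing distribution function because $\lambda>0$ a.s. Hence its quantiles converge uniformly on the compact set $[u,1-u]$.

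Second, the integrands $g_q(x)=x^2\Phi(q/x)$ and $x\phi(q/x)$ are continuous in $(x,q)$ on $(0,\infty)\times\R$. They extend continuously to $x=0$, and they are bounded by $x^2$ and $x/\sqrt{2\pi}$ respectively. Convergence of second moments gives uniform integrability of $\lambda_n^2$. Together with the equicontinuity of $q\mapsto g_q$ over compact $q$-ranges, this yields $\E_{P_n}[g_{q_n}(\lambda)]\to\E_{P_\lambda}[g_q(\lambda)]$ uniformly in $\alpha$.

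I also need $\rho_m(\lambda Z)=\int \cU(1-\alpha)/(1-\alpha)\,\d m$ to be bounded below uniformly in $m$. On $[u,1-u]$ the weight $1/(1-\alpha)$ lies in $[1,1/u]$, and $\cU>0$ is continuous there, so $\inf_m\rho_m\ge \inf_{\alpha}\cU(1-\alpha)>0$.

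\textbf{Uniform convergence of $\hat I$.} Write $I(m;c)$ via \eqref{eq:simplified_I_m_c} as a continuous function of the following quantities:
\begin{itemize}
\item the double integral $\iint \cT(1-\alpha_1\vee\alpha_2)/((1-\alpha_1)(1-\alpha_2))\,\d m\,\d m$;
\item the single integrals $\int\cT(1-\alpha)/(1-\alpha)\,\d m$ and $\rho_m$;
\item the scalars $c$ and $\E[\lambda^2]=1$.
\end{itemize}
Each integral differs from its plug-in version by at most $u^{-2}\sup|\hat{\cT}-\cT|$ or $u^{-1}\sup|\hat{\cU}-\cU|$, uniformly in $m$. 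These integrals stay in a compact set, and the denominator $\rho_m^2$ stays away from zero. So the outer rational map is uniformly continuous there, which gives
\[
\sup_m|\hat I(m;\hat c)-I(m;c)|\pto0.
\]
This step, controlling the moving quantiles $\hat q_\alpha$ inside the expectations uniformly in $\alpha$, is the main technical point. The weak-convergence-of-quantiles plus uniform-integrability argument above handles it.

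\textbf{Argmin step.} $\mathscr{P}([u,1-u])$ is compact under $d_{\rm BL}$. The criterion $I(\cdot;c)$ is continuous in $m$: its integrands are bounded and continuous on $[u,1-u]^2$, except for the $\vee$, which is still continuous. By Proposition~\ref{prop:unique_minimizer} it has the unique minimizer $m_{*,u}^c$.

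Fix $\veps>0$ and set
\[
\delta=\inf\{I(m;c)-I(m_{*,u}^c;c):d_{\rm BL}(m,m_{*,u}^c)\ge\veps\}.
\]
By compactness and uniqueness, $\delta>0$. On the event $\sup_m|\hat I-I|<\delta/2$, we have $I(\hat m;c)<I(m_{*,u}^c;c)+\delta$, so $d_{\rm BL}(\hat m,m_{*,u}^c)<\veps$. This event has probability tending to one, which proves $d_{\rm BL}(\hat m,m_{*,u}^c)\pto0$.
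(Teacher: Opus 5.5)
Your proposal is correct and follows essentially the same route as the paper. Both proofs get $\hat c\pto c$ by continuous mapping and reduce via subsequences to deterministic $W_2$-convergent sequences. Both obtain uniform convergence of $\hat\cT$ and $\hat\cU$ on $[u,1-u]$ from uniform quantile convergence, uniform integrability of $\lambda_n^2$, and Lipschitz control of the integrands in $q$. Both transfer this to $\sup_m|\hat I(m;\hat c)-I(m;c)|$ through a uniformly continuous rational map whose denominator $\rho_m(\lambda Z)$ is bounded away from zero, and then conclude with the well-separated-minimum argument based on Proposition~\ref{prop:unique_minimizer}. The one item you leave implicit is that $\hat I$ also uses the plug-in second moment $\hat\E_\lambda[\lambda^2]$ in place of $\E[\lambda^2]=1$; its convergence follows from the same $W_2$ convergence you already use.
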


\subsection{Oracle Adaptivity of the Data-Adaptive Rule}
\label{sec:oracle_adaptivity}

For any $m\in\mathscr{P}([u,1-u])$, define
\[
\hat{\ww}_m(\mu_0)
=
\arg\min_{\ww\in\R^N}
\rho_m(-\hat r_{\ww,T}),
\qquad
\mbox{subject to}
\quad
\ww^\top\hat{\bmu}=\mu_0,
\quad
\ww^\top\bone=1.
\]
We allow $m$ to be random. The estimator
$\hat{\ww}_{\hat m}(\mu_0)$ is the data-adaptive portfolio rule, whereas
$\hat{\ww}_{m_{*,u}^c}(\mu_0)$ is the infeasible oracle rule that uses
the efficiency-optimal spectral measure determined by the population
radial distribution.

By \Cref{cor:structural_efficiency_risky}, the oracle asymptotic covariance
is
\[
\Sigma\left(
\mu_0,\rho_{m_{*,u}^c}
\right).
\]
The next theorem shows that estimating the radial distribution and
selecting the spectral measure from the same observations used in the
final portfolio optimization has no first-order effect on the portfolio
weights.

\begin{thm}[Oracle adaptivity]
	\label{thm:feasible_asymptotic_normality}
	Suppose the conditions of
	\Cref{prop:normality_risky_constraints} hold, and suppose
	\Cref{ass:elliptic_return} holds with $\lambda>0$ almost surely and
	$\E[\lambda^2]=1$. If \eqref{eq:radial_high_level_condition} holds, then
	\[
	\hat{\ww}_{\hat m}(\mu_0)
	\pto
	\ww_*(\mu_0),
	\]
	and
	\[
	\sqrt{T}
	\left(
	\hat{\ww}_{\hat m}(\mu_0)-\ww_*(\mu_0)
	\right)
	\wto
	\normal\left(
	0,
	\Sigma\left(
	\mu_0,\rho_{m_{*,u}^c}
	\right)
	\right).
	\]
\end{thm}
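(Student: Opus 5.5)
The plan is to show that the empirical objective indexed by $m$ depends continuously on $m$, uniformly over the relevant $\ww$-neighbourhood. Then the data-adaptive estimator $\hat{\ww}_{\hat m}(\mu_0)$ and the oracle $\hat{\ww}_{m_{*,u}^c}(\mu_0)$ differ by $o_p(T^{-1/2})$, and the limit law follows from \Cref{prop:normality_risky_constraints} applied to the fixed measure $m_{*,u}^c$. Write $m_*=m_{*,u}^c$. By \Cref{prop:plugin_optimal_measure}, $d_{\rm BL}(\hat m,m_*)\pto0$. The key structural fact is that $m\mapsto\rho_m$ is linear, so the randomness of $\hat m$ enters the criterion only through integrals $\int g\,\d m$ of quantities indexed by $\alpha\in[u,1-u]$.

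\textbf{Consistency.} Consider the class of criteria $\{\rho_m(-\hat r_{\ww,T}):m\in\mathscr{P}([u,1-u])\}$. Since $\operatorname{CVaR}_\alpha$ of the empirical loss is Lipschitz in $\alpha$ on $[u,1-u]$ (with a constant depending on $\max_i\|\rr_i\|$, of order $T^{1/2}$ at worst), I will instead use the uniform law $\sup_{\alpha\in[u,1-u],\,\ww\in K}|\operatorname{CVaR}_\alpha(-\hat r_{\ww,T})-\operatorname{CVaR}_\alpha(-r_\ww)|\pto0$ on compacts $K$. This follows from the variational form and a Glivenko--Cantelli argument for $(\ww,z,\alpha)\mapsto L_\alpha$. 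It gives uniform convergence of the empirical criterion to $\rho_m(-r_\ww)$ jointly in $(m,\ww)$. In addition, $\rho_{\hat m}(-r_\ww)\to\rho_{m_*}(-r_\ww)$ uniformly on $K$, because $\alpha\mapsto\operatorname{CVaR}_\alpha(-r_\ww)$ is equicontinuous on $[u,1-u]$ and $\hat m\to m_*$ weakly. The constraint set converges since $\hat\bmu\pto\bmu$. Under ellipticity all $m$ share the minimizer $\ww_*(\mu_0)$, and the level-bounded and convexity arguments used for \Cref{prop:normality_risky_constraints} then yield $\hat{\ww}_{\hat m}(\mu_0)\pto\ww_*(\mu_0)$.

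\textbf{Asymptotic equivalence.} I will follow the epi-convergence route of \Cref{thm:normality_general_risk}. Reparametrize $\ww=\ww_*+h/\sqrt T$ over the estimated affine constraint set. The local objective
\[
T\bigl(\rho_m(-\hat r_{\ww_*+h/\sqrt T,T})-\rho_m(-\hat r_{\ww_*,T})\bigr)
\]
is expanded as $h^\top\mathbb{G}_T\nabla_\ww U_m(\ww_*;\rr)+\frac12h^\top H_*(m)h+o_p(1)$, uniformly in $m\in\mathscr{P}([u,1-u])$ and $\|h\|\le C$. Uniformity in $m$ reduces, by linearity in $m$, to uniformity in $\alpha\in[u,1-u]$ of the single-CVaR expansion. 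That expansion comes from a stochastic equicontinuity bound for the VC class $\{\rr\mapsto\bone\{-\ww^\top\rr\ge z\}\rr\}$ together with the smoothness of \Cref{thm:population_derivatives}, which is uniform on the compact $\alpha$-range by \Cref{ass:regularity_density}. Next I will show that $\alpha\mapsto\mathbb{G}_T\nabla_\ww L_\alpha(\ww_*,z(\ww_*,\alpha);\cdot)$ converges as a process in $\ell^\infty([u,1-u])$ to a continuous Gaussian limit. Then $\int\mathbb{G}_T(\cdot)\,\d\hat m-\int\mathbb{G}_T(\cdot)\,\d m_*\pto0$, by weak convergence of $\hat m$ and asymptotic equicontinuity in $\alpha$. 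Likewise $H_*(\hat m)\pto H_*(m_*)$ and the multiplier $\eta_s(\hat m)\to\eta_s(m_*)$, both by continuity in $\alpha$. Hence the local objectives for $\hat m$ and $m_*$ differ by $o_p(1)$ uniformly on compacts in $h$. Both problems share the same linear constraints, $h^\top\bone=0$ and $h^\top\hat\bmu=-\sqrt T\,\ww_*^\top(\hat\bmu-\bmu)$, and have a strictly convex quadratic limit because $K_*$ is nonsingular. The argmin-continuity theorem for convex processes \citep{knight1999epi} then gives $\sqrt T(\hat{\ww}_{\hat m}-\hat{\ww}_{m_*})\pto0$. Combined with \Cref{prop:normality_risky_constraints}, this yields the stated normal limit with covariance $\Sigma(\mu_0,\rho_{m_*})$.

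\textbf{Main obstacle.} The hardest step is the uniform-in-$\alpha$ local expansion. We need $\sqrt T$-tightness of the first step, i.e. $\|\hat{\ww}_{\hat m}-\ww_*\|=O_p(T^{-1/2})$, with a randomly selected $\hat m$ that depends on the same data. I plan to handle this by proving the expansion simultaneously for all $m$, which makes the dependence between $\hat m$ and the sample irrelevant. Rate-tightness then follows from uniform curvature: since $H_*(m)$ is continuous in $m$ and positive definite on $\{d:d^\top\bone=0,\ d^\top\bmu=0\}$ at $m_*$, the same holds in a neighbourhood of $m_*$, and the usual convex peeling argument gives the $T^{-1/2}$ rate.
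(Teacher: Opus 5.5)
Your proposal is correct and follows essentially the paper's argument. It uses a local expansion uniform in $\alpha\in[u,1-u]$ and $\|h\|_2\le R$, whose empirical-process part converges in $\ell^\infty$ to a continuous-path limit, so integrating against the weakly convergent, data-dependent $\hat m$ goes through by continuous mapping; this is followed by Knight's epi-convergence theorem on the shared random affine constraint set. The paper proves directly that the $\hat m$-objective has the oracle limit rather than comparing with $\hat{\ww}_{m_*}$, and convexity makes your separate peeling/rate step unnecessary.

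One point must be made explicit: your displayed expansion omits the drift $\sqrt T h^\top\nabla_{\ww}\rho_m(-r_{\ww_*})$. It is harmless only because, under ellipticity, $\nabla_{\ww}\rho_m(-r_{\ww_*})\in\operatorname{span}\{\bmu,\bone\}$ for every $m$. On the constraint set the drift therefore equals $\eta_s(m)h^\top\mathbb{G}_T(\rr)$ plus the $h$-free term $\sqrt T\eta_s(m)\ww_*^\top\mathbb{G}_T(\rr)$. The $\hat m$- and $m_*$-versions of that $h$-free term need not differ by $o_p(1)$, so it must be subtracted before you claim the two local objectives differ by $o_p(1)$.
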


Theorem~\ref{thm:feasible_asymptotic_normality} shows that
\[
\sqrt{T}
\left(
\hat{\ww}_{\hat m}(\mu_0)-\ww_*(\mu_0)
\right)
\quad\text{and}\quad
\sqrt{T}
\left(
\hat{\ww}_{m_{*,u}^c}(\mu_0)-\ww_*(\mu_0)
\right)
\]
have the same weak limit, even though the radial distribution, the
estimating criterion, and the portfolio weights are all learned using
the same observations. No sample splitting is required.

This conclusion is stronger than consistency of $\hat m$. The mechanism
is a combination of uniform stability of the efficiency criterion and
stochastic equicontinuity of the empirical portfolio objective over
spectral measures in the admissible class.
Proposition~\ref{prop:plugin_optimal_measure} gives
\[
\hat m \to m_{*,u}^c
\]
weakly in probability through uniform convergence of $\hat I$, while
the proof of Theorem~\ref{thm:feasible_asymptotic_normality} establishes
the stronger local equivalence required for oracle adaptivity: replacing
$m_{*,u}^c$ by the random $\hat m$ perturbs the root-$T$ local empirical
optimization problem by only $o_p(1)$. Consistency of the selected
criterion alone would not imply the oracle first-order limit.

\begin{cor}[Data-adaptive efficient estimation]
	\label{cor:data_adaptive_efficiency}
	Under the conditions of
	\Cref{thm:feasible_asymptotic_normality}, for fixed
	$u\in(0,1/2)$,
	\[
	\sqrt{T}
	\left(
	\hat{\ww}_{\hat m}(\mu_0)-\ww_*(\mu_0)
	\right)
	\wto
	\normal\left(
	0,\Sigma_u^*(\mu_0)
	\right),
	\]
	where
	\[
	\Sigma_u^*(\mu_0)
	:=
	\Sigma\left(
	\mu_0,\rho_{m_{*,u}^c}
	\right)
	\]
	is the minimum asymptotic covariance, in Loewner order, among the
	admissible empirical spectral-risk estimators indexed by
	$m\in\mathscr{P}([u,1-u])$. Hence, learning the radial distribution and
	selecting the estimating criterion from the same data does not alter the
	first-order covariance bound attainable by the fixed-$u$ oracle
	spectral-risk estimator.
\end{cor}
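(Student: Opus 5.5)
The corollary combines Theorem~\ref{thm:feasible_asymptotic_normality} with the Loewner-order reduction in Corollary~\ref{cor:structural_efficiency_risky}, so the proof has two parts: the distributional statement and the optimality of $\Sigma_u^*(\mu_0)$.

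\emph{Distributional statement.} Under the stated conditions, Theorem~\ref{thm:feasible_asymptotic_normality} already gives
\[
\sqrt{T}\bigl(\hat{\ww}_{\hat m}(\mu_0)-\ww_*(\mu_0)\bigr)\wto \normal\bigl(0,\Sigma(\mu_0,\rho_{m_{*,u}^c})\bigr).
\]
This limit is $\normal(0,\Sigma_u^*(\mu_0))$ by the definition of $\Sigma_u^*(\mu_0)$. No further argument is needed for this part.

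\emph{Minimality in Loewner order.} Fix any admissible $m\in\mathscr{P}([u,1-u])$. We need Propositions~\ref{prop:no_risk_free_lowdim} and \ref{prop:asymptotic_variance_mv}, hence Corollary~\ref{cor:structural_efficiency_risky}, to apply to the estimator indexed by $m$. This is a point to check, since the hypotheses here are those of Corollary~\ref{prop:normality_risky_constraints} together with ellipticity, rather than the full list in those propositions.

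\begin{itemize}
\item Assumption~\ref{ass:measure_support} holds automatically for $m$ supported on $[u,1-u]$.
\item Assumption~\ref{ass:regularity_density} for normal scale mixtures with $\lambda>0$ a.s.\ is the regularity the paper has already invoked in Section~\ref{sec:elliptic}.
\item The formula for $\Sigma_{\mathrm{SRM}}(m)$ in Corollary~\ref{cor:structural_efficiency_risky} requires only $\E[\lambda^2]<\infty$; the fourth-moment condition enters only the MVO comparison, which is not needed here.
\item We also need $\rho_m(\lambda Z)>0$, so that $\ww_*(\mu_0)$ does not depend on $m$ and $I(m;c)$ is well defined. For $m$ supported on $[u,1-u]$ with $u<1/2$, this reduces to $\int \cU(1-\alpha)/(1-\alpha)\,\d m(\alpha)>0$, which holds because $\cU>0$ when $\lambda>0$ a.s.
\end{itemize}

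With these checks, $\Sigma_{\mathrm{SRM}}(m)=\Sigma_{\mathrm{common}}(\mu_0)+I(m;c)\Sigma_\perp(\mu_0)$ with $c=c(\mu_0)$. By Proposition~\ref{prop:unique_minimizer}, $m_{*,u}^c$ minimizes $I(\cdot;c)$ over $\mathscr{P}([u,1-u])$, so $I(m_{*,u}^c;c)\le I(m;c)$. Since $\Sigma_\perp(\mu_0)=\ww_*^\top\bSigma\ww_*\,P_\perp\succeq 0$, it follows that
\[
\Sigma_u^*(\mu_0)=\Sigma_{\mathrm{SRM}}(m_{*,u}^c)\preceq\Sigma_{\mathrm{SRM}}(m).
\]
Both matrices are attained limits of admissible estimators, so $\Sigma_u^*(\mu_0)$ is the minimum over the class.

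The final sentence of the corollary is then immediate: the adaptive estimator attains the same covariance as the fixed-$u$ oracle estimator, and that covariance is the class minimum.

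\emph{Main obstacle.} The only real difficulty is bookkeeping: making sure that, for every admissible $m$ and not just $m_{*,u}^c$, the hypotheses behind Corollary~\ref{cor:structural_efficiency_risky} hold, in particular the nonsingularity of the KKT matrix $K_*$ and positivity of $\rho_m(\lambda Z)$. I would first try to derive nonsingularity of $K_*$ from the explicit Hessian under ellipticity. That Hessian should be a positive multiple of the restriction of $\bSigma$ to $\cT_*$, which is positive definite there, together with the rank condition $\rank(\bmu^{(1)})=2$. If this fails for some $m$, I would state the minimality claim over the subclass of $m$ for which $K_*$ is nonsingular.
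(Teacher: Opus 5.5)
Your proposal is correct and is essentially the paper's own argument (the paper gives no separate proof and treats the corollary as immediate): the limit law is \Cref{thm:feasible_asymptotic_normality} verbatim, and Loewner minimality follows from three facts, namely the decomposition $\Sigma_{\mathrm{SRM}}(m)=\Sigma_{\mathrm{common}}(\mu_0)+I(m;c)\Sigma_{\perp}(\mu_0)$ of \Cref{cor:structural_efficiency_risky}, the fact that $m_{*,u}^c$ minimizes $I(\cdot;c)$ over $\mathscr{P}([u,1-u])$, and $\Sigma_{\perp}(\mu_0)\succeq\bzero$. Your bookkeeping check succeeds for every admissible $m$, so no restriction to a subclass is needed: because $\bSigma\ww_*\in\operatorname{span}\{\bmu,\bone\}$, one has $d^\top H_* d=\rho_m(\lambda Z)\,d^\top\bSigma d/\|\bSigma^{1/2}\ww_*\|_2>0$ for every nonzero $d$ with $\bmu^\top d=\bone^\top d=0$, and together with $\rank(\bmu^{(1)})=2$ this makes $K_*$ nonsingular for all $m\in\mathscr{P}([u,1-u])$.
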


\subsection{Practical Implementation}
\label{sec:radial_npmle}

The oracle-adaptivity theory above requires only a radial-distribution
estimator satisfying
\eqref{eq:radial_high_level_condition}; it is not tied to a particular
first-stage method. We now construct one such estimator using a
constrained nonparametric maximum-likelihood estimator. An important
feature is that the mixing distribution is estimated from Mahalanobis
radii computed with the same estimated mean and covariance matrix. The
resulting radii are therefore neither oracle radii nor independent, and
the consistency theorem below explicitly accommodates this plug-in
dependence.

It is convenient to work with
\[
V=\lambda^2,
\qquad
P_V=\operatorname{Law}(V).
\]
The normalization $\E[\lambda^2]=1$ implies
\[
\int_0^\infty v\,\d P_V(v)=1.
\]
Define the estimated squared Mahalanobis radii
\begin{equation}
	\label{eq:estimated_squared_radius}
	\hat D_i
	=
	(\rr_i-\hat{\bmu})^\top
	\hat{\bSigma}^{-1}
	(\rr_i-\hat{\bmu}),
	\qquad
	i=1,\ldots,T.
\end{equation}
Their oracle counterparts satisfy
\[
D_i
=
(\rr_i-\bmu)^\top
\bSigma^{-1}
(\rr_i-\bmu)
=
V_i\|\zz_i\|_2^2,
\]
where
\[
V_i\sim P_V,
\qquad
\zz_i\sim\normal(\bzero,\id_N),
\]
independently. Consequently,
\[
\frac{D_i}{V_i}\sim\chi_N^2,
\]
independently of $V_i$, and the conditional density of $D_i$ given
$V_i=v$ is
\begin{equation}
	\label{eq:scaled_chisquare_kernel}
	k_N(x\mid v)
	=
	\frac{x^{N/2-1}}
	{2^{N/2}\Gamma(N/2)v^{N/2}}
	\exp\left(-\frac{x}{2v}\right),
	\qquad
	x>0,\quad v>0.
\end{equation}

Estimating $P_V$ is therefore a one-dimensional nonparametric mixture
problem. We use a constrained version of the classical NPMLE of
\citep{kiefer1956consistency,lindsay1983geometry}. Let
$0<a_T<1<b_T<\infty$ be deterministic sequences with
$a_T\to0$ and $b_T\to\infty$, and define
\[
\mathcal P_T
=
\left\{
P\in\mathscr{P}([a_T,b_T]):
\int_{a_T}^{b_T}v\,\d P(v)=1
\right\}.
\]
The estimator $\hat P_V$ maximizes the mixture log-likelihood:
\begin{equation}
	\label{eq:mixing_npmle}
	\hat P_V
	\in
	\argmax_{P\in\mathcal P_T}
	\left\{
	\frac1T
	\sum_{i=1}^T
	\log\left(
	\int_0^\infty
	k_N(\hat D_i\mid v)\,
	\d P(v)
	\right)
	\right\}.
\end{equation}
For $\hat D_i>0$, the kernel
$k_N(\hat D_i\mid v)$ is strictly positive and continuous in $v$ on
the compact interval $[a_T,b_T]$. Hence the objective in
\eqref{eq:mixing_npmle} is continuous under weak convergence, and
compactness of $\mathcal P_T$ guarantees existence of a maximizer.

Define the estimator of $P_\lambda$ by the square-root transformation
\begin{equation}
	\label{eq:lambda_npmle}
	\hat P_\lambda(A)
	=
	\hat P_V
	\left(
	\{v:\sqrt v\in A\}
	\right)
\end{equation}
for every Borel set $A\subset(0,\infty)$.

\begin{thm}[Consistency of the radial NPMLE]
	\label{thm:radial_npmle_consistency}
	Suppose \Cref{ass:elliptic_return} holds,
	$\bSigma\succ\bzero$, $\lambda>0$ almost surely, and
	\[
	\E[\lambda^2]=1,
	\qquad
	\E[\lambda^4]<\infty,
	\qquad
	\E[|\log\lambda|]<\infty.
	\]
	Further, suppose
	\[
	a_T\to0,
	\qquad
	b_T\to\infty,
	\qquad
	\sqrt{T}\,a_T\to\infty.
	\]
	Then any measurable maximizer $\hat P_V$ of
	\eqref{eq:mixing_npmle} satisfies
	\[
	W_1(\hat P_V,P_V)
	\pto0.
	\]
	Consequently, the estimator $\hat P_\lambda$ defined in
	\eqref{eq:lambda_npmle} satisfies
	\[
	W_2(\hat P_\lambda,P_\lambda)
	\pto0.
	\]
\end{thm}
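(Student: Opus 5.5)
The argument follows the classical Kiefer--Wolfowitz/Wald route for NPMLE consistency, with two changes: the sieve constraint set $\mathcal P_T$ expands with $T$, and the radii $\hat D_i$ are plug-in quantities. I would pass from $\hat D_i$ to the oracle radii $D_i$ and then run a likelihood-ratio argument. Since $N$ is fixed and $\E[\lambda^4]<\infty$, we have $\hat{\bmu}-\bmu=O_p(T^{-1/2})$ and $\hat{\bSigma}-\bSigma=O_p(T^{-1/2})$. This gives the uniform multiplicative bound $|\hat D_i-D_i|\le O_p(T^{-1/2})(D_i+\sqrt{D_i}+1)$, i.e. $\hat D_i=D_i(1+\veps_{i,T})$ with $\max_i|\veps_{i,T}|=O_p(T^{-1/2})$ up to the additive mean-shift term. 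That term is controlled on the event $D_i\ge \delta$; the small fraction of indices with $D_i<\delta$ can be handled separately.

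The first step is to compare the log-likelihoods $\ell_T(P;\hat D)$ and $\ell_T(P;D)$ uniformly over $P\in\mathcal P_T$. From the explicit kernel \eqref{eq:scaled_chisquare_kernel},
\[
\log k_N(\hat D_i\mid v)-\log k_N(D_i\mid v)=(N/2-1)\log(\hat D_i/D_i)-(\hat D_i-D_i)/(2v).
\]
For $v\ge a_T$ the second term is at most $|\hat D_i-D_i|/(2a_T)=O_p(T^{-1/2}a_T^{-1})(1+D_i)$. Averaging over $i$ and using $\sqrt T a_T\to\infty$ and $\E D_i<\infty$, I obtain $\sup_{P\in\mathcal P_T}|\ell_T(P;\hat D)-\ell_T(P;D)|=o_p(1)$. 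This is exactly where the rate condition on $a_T$ enters. The logarithmic term is handled by the bound $\max_i|\log(\hat D_i/D_i)|\to0$ on $\{D_i\ge\delta\}$, together with a crude bound on the remaining indices.

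The second step is an approximate-maximizer argument with oracle data. Let $P_T^\circ\in\mathcal P_T$ be a truncation of $P_V$ to $[a_T,b_T]$, renormalized and with its mean corrected to $1$. The condition $\E|\log\lambda|<\infty$ together with $\E\lambda^4<\infty$ gives $\ell_T(P_T^\circ;D)-\ell_T(P_V;D)\ge -o_p(1)$: the log-density of the mixture is controlled because $\log\int k_N(x\mid v)\,dP(v)$ involves $\E\log v$. Hence $\ell_T(\hat P_V;D)\ge \ell_T(P_V;D)-o_p(1)$.

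For the converse, I would use the standard Wald/Pfanzagl argument on the compactified space $\mathscr P([0,\infty])$ under vague topology. For any $P$ outside a weak neighbourhood of $P_V$, strict concavity of $\log$ together with identifiability of scale mixtures of $\chi^2_N$ (via Mellin transforms) gives $\E\log\bigl(f_P(D)/f_{P_V}(D)\bigr)<0$. A finite-cover argument with $\log\bigl((f_P+f_{P_V})/2\bigr)$, which is integrable above since $\sup_v k_N(x\mid v)\lesssim x^{-1}$ and $\E|\log D|<\infty$, then yields $\hat P_V\to P_V$ vaguely in probability. Mass escaping to $0$ or $\infty$ is ruled out: to $0$ because the kernel then vanishes on a set of positive $D$-probability, and to $\infty$ by the same Wald bound. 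This upgrades vague convergence to weak convergence.

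The final step upgrades to $W_1$ and then to $W_2$. Weak convergence plus $\int v\,d\hat P_V=1=\int v\,dP_V$, with $v\ge0$, gives uniform integrability of $v$, hence $W_1(\hat P_V,P_V)\to0$. For $\lambda=\sqrt V$ we have $\int\lambda^2\,d\hat P_\lambda=1=\int\lambda^2\,dP_\lambda$ and $\hat P_\lambda\to P_\lambda$ weakly by continuity of $\sqrt\cdot$. Convergence of second moments plus weak convergence is equivalent to $W_2$ convergence. I expect the main obstacle to be the second step, namely the Wald-type upper bound uniformly over a growing sieve. The delicate part is the integrable envelope for $\sup_{v}\log k_N(D\mid v)$, which is what makes $\E|\log\lambda|<\infty$ necessary.
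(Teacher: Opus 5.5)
Your proposal is correct and follows essentially the paper's route. The shared ingredients are:
\begin{itemize}
\item a mean-one sieve approximation of $P_V$;
\item transfer from $\hat D_i$ to $D_i$ through the $|\hat D_i-D_i|/(2a_T)$ bound, which is where $\sqrt{T}a_T\to\infty$ enters;
\item a Wald-type separation argument on the compactified space $\mathscr{P}([0,\infty])$, with identifiability via the Mellin transform of $D$;
\item the first-moment constraint, used to pass from weak convergence to $W_1$ and then $W_2$.
\end{itemize}
Two details: the paper clips $V$ to $[\sqrt{a_T},\sqrt{b_T}]$ before renormalizing, which leaves room for the mean correction inside $[a_T,b_T]$. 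It also compares likelihood contrasts $\hat L_T(P)-\hat L_T(Q)$ rather than levels, so the $P$-independent term $(N/2-1)\log(\hat D_i/D_i)$ cancels and your separate treatment of indices with small $D_i$ is unnecessary.
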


Thus the NPMLE satisfies exactly the high-level condition required by
\Cref{prop:plugin_optimal_measure,thm:feasible_asymptotic_normality}.

Theorem~\ref{thm:radial_npmle_consistency} also addresses an important
feature of the data-adaptive procedure: the Mahalanobis radii are
computed using $\hat{\bmu}$ and $\hat{\bSigma}$ rather than the
unobserved population parameters. The estimated radii are dependent
through these common estimates, yet this plug-in dependence is
asymptotically negligible for recovery of the one-dimensional mixing
distribution. In particular, consistency does not require leave-one-out
estimation of the location and covariance parameters.

The optimization problem \eqref{eq:mixing_npmle} is infinite-dimensional.
Carath\'{e}odory's theorem implies that there exists a maximizer
$\hat P_V$ supported on at most $T+1$ atoms, and hence the corresponding
$\hat P_\lambda$ also has finite support. For computation, we approximate
the continuous-support NPMLE on a finite grid
\[
\mathcal V_T
=
\{v_1,\ldots,v_K\}
\subset[a_T,b_T].
\]
The resulting finite-dimensional problem is
\begin{align}
	\label{eq:grid_mixing_npmle}
	\max_{\pi_1,\ldots,\pi_K}
	&\quad
	\frac1T
	\sum_{i=1}^T
	\log\left(
	\sum_{j=1}^K
	\pi_j k_N(\hat D_i\mid v_j)
	\right)
	\nonumber\\
	\mbox{subject to}
	&\quad
	\pi_j\ge0,
	\qquad
	\sum_{j=1}^K\pi_j=1,
	\qquad
	\sum_{j=1}^K\pi_jv_j=1.
\end{align}
The objective is concave in the weights
$\{\pi_j\}_{j=1}^K$ and the constraints are linear, so
\eqref{eq:grid_mixing_npmle} is a finite-dimensional convex optimization
problem. Theorem~\ref{thm:radial_npmle_consistency} concerns the
continuous-support NPMLE in \eqref{eq:mixing_npmle}; in the numerical
implementation we use a sufficiently fine grid to approximate that
estimator.

\subsection{Implementation Summary}
\label{sec:adaptive_algorithm}

For clarity, the complete data-adaptive procedure can be summarized as
follows:
\begin{enumerate}
	\item Estimate the return mean and covariance matrix using
	$\hat{\bmu}$ and $\hat{\bSigma}$.
	
	\item Compute the estimated Mahalanobis radii
	$\hat D_1,\ldots,\hat D_T$ from
	\eqref{eq:estimated_squared_radius}.
	
	\item Estimate the radial distribution $P_\lambda$, for example by
	solving the constrained mixture problem
	\eqref{eq:grid_mixing_npmle} and applying the square-root
	transformation \eqref{eq:lambda_npmle}.
	
	\item Use $\hat P_\lambda$ to compute
	$\hat{\cT}$, $\hat{\cU}$, and the plug-in scalar $\hat c$, and form
	the estimated efficiency functional $\hat I(m;\hat c)$.
	
	\item Select the spectral measure
	\[
	\hat m
	\in
	\arg\min_{m\in\mathscr{P}([u,1-u])}
	\hat I(m;\hat c).
	\]
	
	\item Estimate the portfolio weights by solving
	\[
	\hat{\ww}_{\hat m}(\mu_0)
	=
	\arg\min_{\ww\in\R^N}
	\rho_{\hat m}(-\hat r_{\ww,T}),
	\qquad
	\mbox{subject to}
	\quad
	\ww^\top\hat{\bmu}=\mu_0,
	\quad
	\ww^\top\bone=1.
	\]
\end{enumerate}

Under the conditions above, Steps 1--5 learn the efficiency-optimal
estimating criterion without changing the first-order distribution of
the final portfolio estimator relative to the infeasible fixed-$u$
oracle rule. The next section examines the finite-sample behavior of
this procedure in simulations and empirical data.

\section{Numerical Experiments}
\label{sec:num_exp}

We use simulations and an empirical portfolio application to examine the main implications of the theory. The simulations address three questions. First, how accurately do the asymptotic covariance formulae approximate the finite-sample distribution of estimated portfolio weights? Second, does learning the spectral measure from the same observations used for portfolio estimation impose a detectable first-order cost relative to the oracle rule? Third, do the efficiency gains predicted under heavy-tailed returns translate into lower finite-sample weight estimation error? We then apply the data-adaptive procedure to U.S. industry portfolios and examine the stability and out-of-sample behavior of the resulting portfolio estimates.

\subsection{Simulation Evidence}
\label{sec:numerical}

We report the canonical risky-asset target-return problem in the main discussion. For completeness, the numerical tables and figures also report the corresponding risk-free-asset design, which provides additional evidence that the conclusions are not specific to the budget constraint.

We generate i.i.d. returns according to
\[
\rr_i
=
\bmu+\lambda_i\bSigma^{1/2}\zz_i,
\qquad
\zz_i\iidsim\normal(\bzero,\id_N),
\qquad
i=1,\ldots,T,
\]
where $\{\lambda_i\}_{i=1}^T$ are i.i.d. draws from $P_\lambda$ and are independent of $\{\zz_i\}_{i=1}^T$. We set $N=5$ and choose
\begin{equation}
	\label{eq:simulation_parameters}
	\begin{split}
		&\bSigma_{jk}=0.5^{|j-k|},
		\qquad
		j,k=1,\ldots,N,
		\\
		&\widetilde{\bmu}
		=
		(0.2,0.4,0.6,0.8,1)^\top,
		\qquad
		\bmu
		=
		\frac{\widetilde{\bmu}}
		{(\widetilde{\bmu}^\top\bSigma^{-1}\widetilde{\bmu})^{1/2}}.
	\end{split}
\end{equation}
This normalization gives
\[
\bmu^\top\bSigma^{-1}\bmu=1.
\]

For the canonical target-return problem, we choose $\mu_0^r\approx0.65$, which gives
\[
c(\mu_0^r)=\frac12.
\]
For the supplementary risk-free design, we set $\mu_0^f=1$, corresponding to the scalar index $c^f=1$.

We consider three radial distributions. For $Z_i\sim\normal(0,1)$ independent of $\lambda_i$,
\[
\lambda_i
=
\begin{cases}
	1,
	& \lambda_iZ_i\text{ is Gaussian},\\[2mm]
	\sqrt{E_i},\quad E_i\sim\operatorname{Exp}(1),
	& \lambda_iZ_i\text{ is Laplace},\\[2mm]
	2/\sqrt{S_i},\quad S_i\sim\chi_6^2,
	& \lambda_iZ_i\text{ is scaled Student }t_6.
\end{cases}
\]
The Laplace marginal has scale $1/\sqrt{2}$, while the Student marginal is distributed as $\sqrt{2/3}\,t_6$. In all three designs,
\[
\E[\lambda^2]=1,
\]
so $\bSigma$ is the covariance matrix. Moreover,
\[
\E[\lambda^4]=1
\]
under Gaussian returns and
\[
\E[\lambda^4]=2
\]
under both heavy-tailed designs.

\paragraph{Accuracy of the first-order approximation.}

We first examine the asymptotic covariance and Gaussian approximation. We use
\[
T\in\{500,1000,2000\}
\]
and $5000$ Monte Carlo replications for each combination of sample size and radial distribution. We set $u=0.01$ and approximate both the oracle and adaptive spectral measures on $100$ equally spaced points in $[u,1-u]$. The theoretical covariance is evaluated at the same discretized oracle measure used to compute the oracle portfolio weights.

For each simulated sample, we compute the oracle estimator
$\hat{\ww}_{m_{*,u}^c}(\mu_0^r)$ and the Adaptive SRM estimator
$\hat{\ww}_{\hat m}(\mu_0^r)$. We also retain the corresponding risk-free calculations as a supplementary comparison. The radial distribution is estimated using the discrete NPMLE in \Cref{sec:radial_npmle}. For $P_V$, we use a logarithmically spaced grid on
\[
[0.1T^{-1/3},5T^{1/3}],
\]
augmented by the point $1$, with $131$, $151$, and $181$ grid points for the three sample sizes, respectively. The same observations are used to estimate the radial distribution, select the spectral measure, and estimate the final portfolio weights, and the expected-return constraint uses $\hat{\bmu}$ for both the oracle and adaptive rules.

Let $\Sigma_*$ denote the theoretical asymptotic covariance for the corresponding design, and let $\hat\Sigma_T^{\mathrm O}$ and $\hat\Sigma_T^{\mathrm A}$ denote the Monte Carlo covariance matrices of the corresponding $\sqrt{T}$-scaled portfolio-weight errors. The superscripts $\mathrm O$ and $\mathrm A$ denote the Oracle SRM and Adaptive SRM rules, respectively. We report relative operator-norm errors
\[
\Delta_T^{\mathrm O}
=
\frac{
	\|\hat\Sigma_T^{\mathrm O}-\Sigma_*\|_{\op}
}{
	\|\Sigma_*\|_{\op}
},
\qquad
\Delta_T^{\mathrm A}
=
\frac{
	\|\hat\Sigma_T^{\mathrm A}-\Sigma_*\|_{\op}
}{
	\|\Sigma_*\|_{\op}
}.
\]

\begin{table}[htbp]
	\centering
	\caption{Relative operator-norm errors of Monte Carlo covariance estimates, based on $5000$ replications for each design and $u=0.01$. The superscripts $\mathrm O$ and $\mathrm A$ denote the Oracle SRM and Adaptive SRM rules, respectively.}
	\label{tab:simulation_covariance_error}
	\small
	\begin{tabular*}{0.95\textwidth}{@{\extracolsep{\fill}}llrrrr@{}}
		\toprule
		& & \multicolumn{2}{c}{Risky assets only}
		& \multicolumn{2}{c}{Risk-free asset available}\\
		\cmidrule(lr){3-4}\cmidrule(lr){5-6}
		Distribution & $T$
		& $\Delta_T^{\mathrm O}$ & $\Delta_T^{\mathrm A}$
		& $\Delta_T^{\mathrm O}$ & $\Delta_T^{\mathrm A}$\\
		\midrule
		Gaussian & $500$  & 0.0534 & 0.0539 & 0.0285 & 0.0287\\
		& $1000$ & 0.0422 & 0.0421 & 0.0422 & 0.0425\\
		& $2000$ & 0.0383 & 0.0383 & 0.0319 & 0.0321\\
		\addlinespace
		Laplace  & $500$  & 0.0257 & 0.0246 & 0.0187 & 0.0188\\
		& $1000$ & 0.0565 & 0.0551 & 0.0488 & 0.0490\\
		& $2000$ & 0.0331 & 0.0334 & 0.0281 & 0.0283\\
		\addlinespace
		Student $t_6$
		& $500$  & 0.0626 & 0.0586 & 0.0408 & 0.0390\\
		& $1000$ & 0.0316 & 0.0327 & 0.0338 & 0.0322\\
		& $2000$ & 0.0229 & 0.0224 & 0.0137 & 0.0136\\
		\bottomrule
	\end{tabular*}
\end{table}

Table~\ref{tab:simulation_covariance_error} shows close agreement between the simulated covariance matrices and their theoretical counterparts. At $T=2000$, all relative errors are below $4\%$. The errors need not decrease monotonically with $T$, since they include Monte Carlo error from estimating an entire covariance matrix using a finite number of replications.

More importantly for the oracle-adaptivity result in \Cref{thm:feasible_asymptotic_normality}, the Oracle SRM and Adaptive SRM covariance estimates are nearly indistinguishable. The relative operator-norm difference between their covariance matrices is below $0.7\%$ in every experiment. Thus, at these sample sizes, estimating the radial distribution and selecting the spectral measure from the same observations produces essentially no detectable first-order covariance penalty relative to the oracle rule.

\begin{figure}[!ht]
	\centering
	\includegraphics[width=0.95\textwidth]{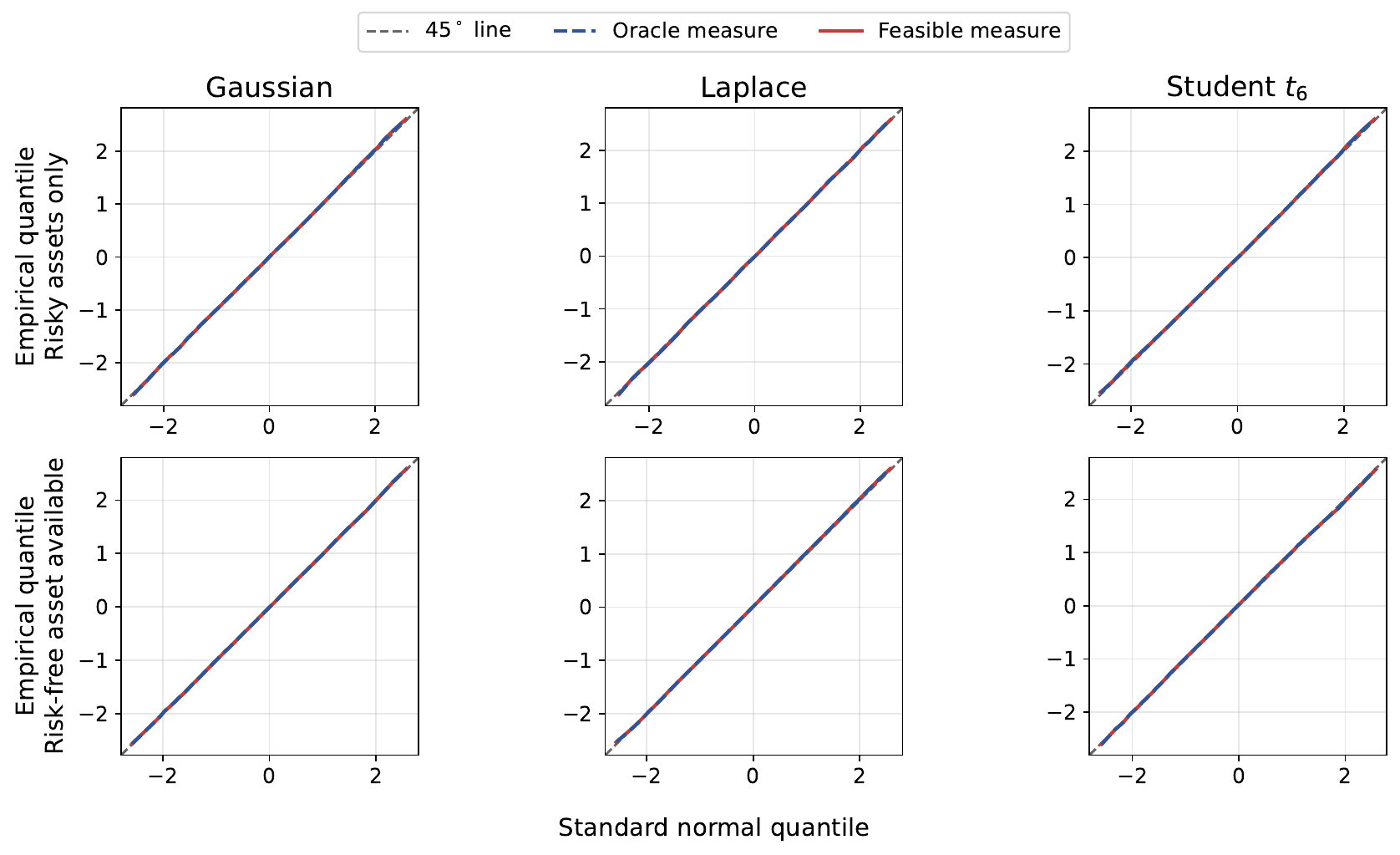}
	\caption{Normal Q-Q plots for the portfolio-weight errors at $T=2000$, based on $5000$ Monte Carlo replications and $u=0.01$. The $\sqrt{T}$-scaled errors are projected onto the positive-eigenvalue subspace of the theoretical covariance matrix and standardized by the square roots of the corresponding eigenvalues; the resulting coordinates are pooled within each panel. The blue dashed and red solid curves correspond to Oracle SRM and Adaptive SRM, respectively, and the gray dashed line is the $45^\circ$ reference line. The top row reports the canonical risky-asset problem and the bottom row the supplementary risk-free design.}
	\label{fig:simulation_normal_qq}
\end{figure}

Figure~\ref{fig:simulation_normal_qq} examines the Gaussian approximation at $T=2000$. In each design, the standardized empirical quantiles closely follow the normal reference line, while the Oracle SRM and Adaptive SRM curves nearly coincide. Together with Table~\ref{tab:simulation_covariance_error}, these results support both the asymptotic covariance formula and the oracle-equivalence conclusion of \Cref{thm:feasible_asymptotic_normality}.

\paragraph{Finite-sample efficiency.}

The preceding experiments validate the first-order distribution but do not directly measure the gain in portfolio-weight accuracy relative to MVO or fixed-level CVaR. We therefore compare sample MVO, $\operatorname{CVaR}_{0.5}$, $\operatorname{CVaR}_{0.9}$, Oracle SRM, and Adaptive SRM under the same three radial models. We use
\[
T\in\{100,250,500,1000\}
\]
and discretize the optimized spectral measures on $31$ equally spaced points in $[0.01,0.99]$.

For each method $a$, define relative weight mean-squared error by
\begin{equation}
	\label{eq:relative_weight_mse}
	R_T(a)
	=
	\frac{
		\E\|\hat{\ww}_a-\ww_*\|_2^2
	}{
		\E\|\hat{\ww}_{\mathrm{MVO}}-\ww_*\|_2^2
	}.
\end{equation}
Thus, $R_T(a)<1$ indicates lower estimation error than sample MVO for the common population portfolio.

The expectations for MVO, fixed CVaR, and Oracle SRM are estimated from $300$ paired Monte Carlo samples. Because re-estimating the radial distribution and re-optimizing the spectral measure in every sample is substantially more computationally intensive, Adaptive SRM uses the first $100$ paired samples; its MVO denominator in \eqref{eq:relative_weight_mse} is calculated from exactly those same samples.

\begin{figure}[!ht]
	\centering
	\includegraphics[width=0.98\textwidth]{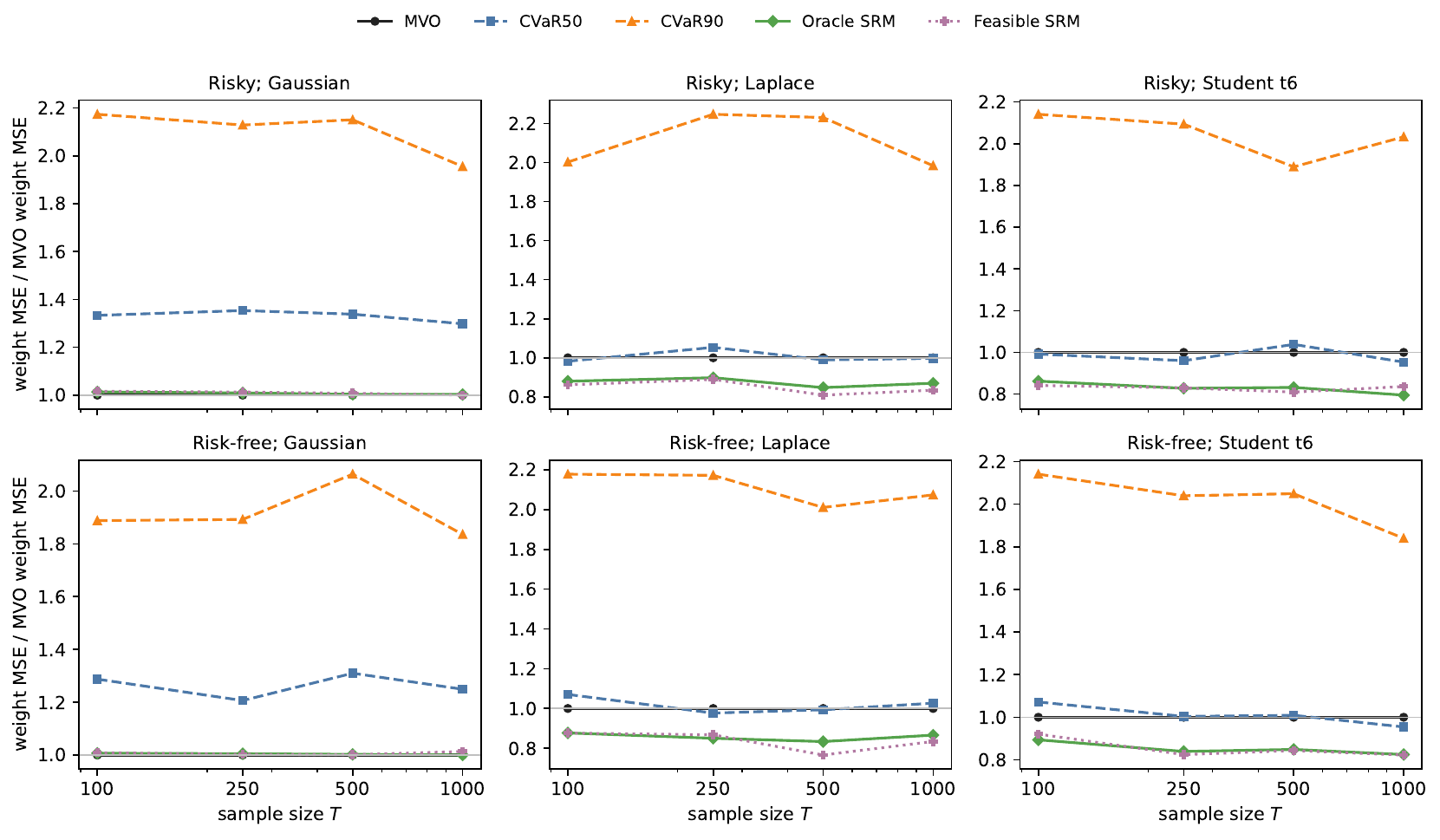}
	\caption{Finite-sample weight MSE relative to sample MVO. Values below one indicate lower weight-estimation error. The upper row reports the canonical risky-asset problem and the lower row the supplementary risk-free design. MVO, fixed-CVaR, and Oracle-SRM results use $300$ replications; Adaptive SRM uses $100$ paired replications.}
	\label{fig:simulation_finite_sample_efficiency}
\end{figure}

Figure~\ref{fig:simulation_finite_sample_efficiency} shows a pattern closely aligned with the efficiency analysis in \Cref{sec:optimal_srm}. Under Gaussian returns, Oracle SRM is essentially indistinguishable from MVO, while the fixed-CVaR rules are less efficient. Under Laplace and Student $t_6$ returns, the optimized spectral criteria reduce finite-sample weight MSE. At $T=500$, Oracle SRM reduces MSE by approximately $15$--$17\%$ in both portfolio designs, with Adaptive SRM delivering reductions of similar magnitude. In contrast, $\operatorname{CVaR}_{0.9}$ has roughly twice the MSE of MVO. The complete comparison at $T=500$ is reported in \Cref{tab:simulation_finite_sample_mse}.

Because the Adaptive SRM entries are based on only $100$ replications, small numerical differences between Oracle SRM and Adaptive SRM should not be interpreted as evidence that the adaptive rule is systematically more efficient than its oracle counterpart. The theoretical prediction is first-order equivalence; the observed differences are consistent with finite-sample and Monte Carlo variation.

\begin{table}[htbp]
	\centering
	\caption{Weight MSE relative to sample MVO at $T=500$. The MVO benchmark equals one.}
	\label{tab:simulation_finite_sample_mse}
	\small
	\begin{tabular*}{0.92\textwidth}{@{\extracolsep{\fill}}llrrrr@{}}
		\toprule
		Distribution & Problem
		& $\operatorname{CVaR}_{0.5}$
		& $\operatorname{CVaR}_{0.9}$
		& Oracle SRM
		& Adaptive SRM\\
		\midrule
		Gaussian & Risky only & 1.338 & 2.150 & 1.003 & 1.008\\
		& Risk-free  & 1.310 & 2.064 & 1.003 & 1.001\\
		\addlinespace
		Laplace  & Risky only & 0.989 & 2.231 & 0.847 & 0.809\\
		& Risk-free  & 0.994 & 2.011 & 0.834 & 0.766\\
		\addlinespace
		Student $t_6$
		& Risky only & 1.038 & 1.889 & 0.832 & 0.810\\
		& Risk-free  & 1.009 & 2.049 & 0.849 & 0.844\\
		\bottomrule
	\end{tabular*}
\end{table}

\paragraph{Implementation sensitivity.}

We examine sensitivity to two numerical choices in the spectral-measure optimization. Varying the support truncation over
\[
u\in\{0.01,0.025,0.05\}
\]
and the spectral grid over $21$, $41$, and $81$ points changes the optimized asymptotic ratio
\[
\frac{I(m_{*,u}^c;c)}{J(c)}
\]
by at most $0.0022$ within each heavy-tailed design. The ratio ranges from $0.841$ to $0.844$ under Laplace returns and from $0.799$ to $0.839$ under Student $t_6$ returns across the two portfolio designs; under Gaussian returns it remains within $0.014$ of one. Thus, the main efficiency comparisons are insensitive to the numerical discretization choices considered here.

\paragraph{Robustness to serial dependence.}

Finally, we introduce serial dependence through
\[
\rr_t-\bmu
=
0.30(\rr_{t-1}-\bmu)
+
\sqrt{1-0.30^2}\,
\lambda_t\bSigma^{1/2}\zz_t
\]
with Laplace radial innovations. This experiment lies outside the i.i.d. theory. Nevertheless, at $T=250$ and $500$, Oracle SRM has relative MSE between $0.905$ and $0.958$ across the two portfolio designs, while $\operatorname{CVaR}_{0.9}$ ranges from $1.77$ to $1.90$. We interpret this only as evidence that the efficiency pattern is not immediately destroyed by moderate dependence; formal inference for dependent returns would require an extension of the theory.

\subsection{Empirical Portfolio Performance}
\label{sec:empirical}

We next examine whether Adaptive SRM produces stable portfolio estimates and competitive out-of-sample performance in a long panel of real returns. The purpose of the exercise is not to test ellipticity as a literal model of monthly returns. Instead, we use the elliptical model as a working approximation for selecting the estimating criterion and then evaluate the resulting portfolios without imposing that model on their realized returns.

\paragraph{Data and portfolio design.}

We use value-weighted returns on the 30 U.S. industry portfolios from the Kenneth French Data Library \citep{french2026data}. The portfolios assign NYSE, AMEX, and NASDAQ stocks to industries using four-digit SIC codes and are reconstituted annually. Monthly observations are available from July 1926 through July 2026. We subtract the matched one-month Treasury bill return from each industry return and use a rolling window of 240 months for estimation. The out-of-sample evaluation begins in January 2000 and contains 319 monthly returns through July 2026.

Portfolios are re-estimated at the beginning of January, April, July, and October using only observations available before the rebalancing month. At each date, we set the target expected excess return $\mu_0$ equal to the cross-sectional average of the 30 estimated industry means. This makes the equal-weight portfolio feasible by construction, ensures that all optimized rules face the same attainable target in every estimation window, and avoids introducing an additional target-return tuning parameter into the comparison. Portfolio holdings are allowed to drift between quarterly rebalancing dates, and turnover is measured relative to the resulting pre-trade weights.

We compare six rules. The first is Adaptive SRM developed in \Cref{sec:feasible_rule}. The second is sample mean-variance optimization. The third replaces the sample covariance matrix by the linear shrinkage estimator of \cite{ledoit2004well}. The fourth and fifth minimize empirical CVaR at levels $0.50$ and $0.90$, respectively. The sixth is the equal-weight portfolio, a deliberately simple benchmark that is often difficult for estimated optimal portfolios to outperform \citep{demiguel2009optimal}. Short sales are allowed for the optimized rules, consistent with the equality-constrained theory.

\paragraph{Implementation of Adaptive SRM.}

Within each rolling window, we estimate $\bmu$ and $\bSigma$ by the sample mean and covariance matrix and form the squared Mahalanobis radii in \eqref{eq:estimated_squared_radius}. We approximate the constrained NPMLE \eqref{eq:grid_mixing_npmle} on a 55-point logarithmic grid augmented to contain $v=1$, imposing the constraint
\[
\int v\,\d P(v)=1
\]
exactly. We then calculate $\hat c$ and minimize $\hat I(m;\hat c)$ over probability masses on
\[
\{0.05,0.15,\ldots,0.95\}.
\]
Finally, we minimize the associated mixture of empirical CVaRs by linear programming. The same observations are used to estimate the radial distribution, select the spectral measure, and estimate the portfolio weights, as permitted by \Cref{thm:feasible_asymptotic_normality}.

All 107 rolling NPMLE and spectral-measure problems converged. Across rebalancing dates, $\hat c$ ranges from $-0.10$ to $0.95$, with median $0.52$, while the estimated $\E[\lambda^4]$ ranges from $1.19$ to $1.36$. In particular, the estimated fourth radial moment exceeds its Gaussian value of one in every estimation window.

\begin{figure}[!ht]
	\centering
	\begin{subfigure}[t]{0.49\textwidth}
		\centering
		\includegraphics[width=\linewidth]{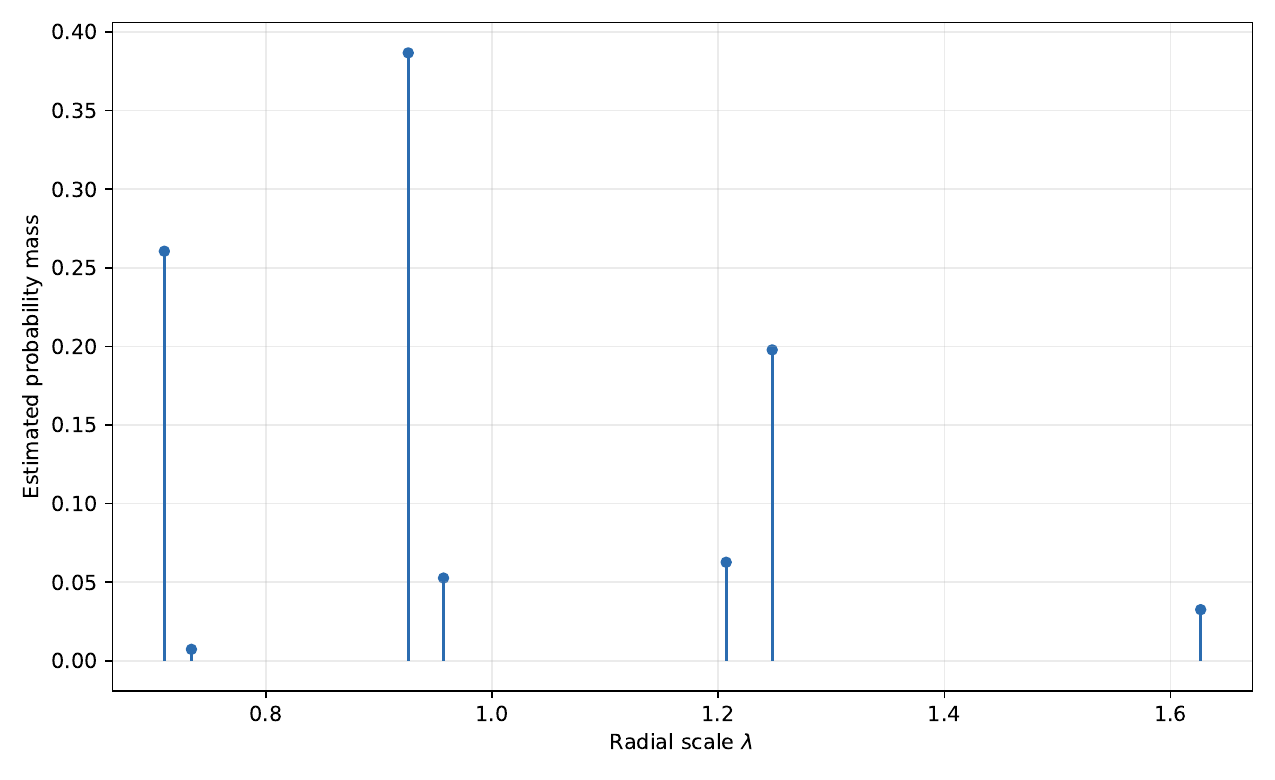}
		\caption{Estimated radial distribution.}
	\end{subfigure}
	\hfill
	\begin{subfigure}[t]{0.49\textwidth}
		\centering
		\includegraphics[width=\linewidth]{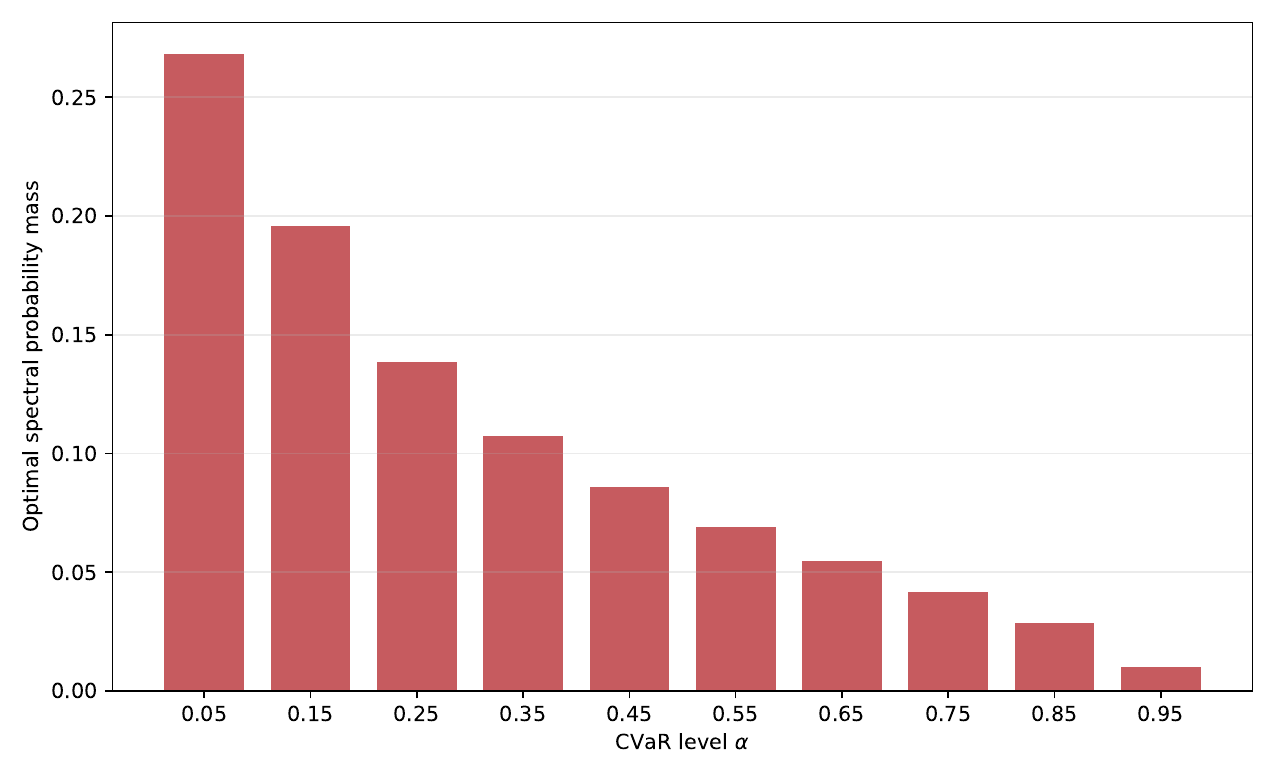}
		\caption{Estimated efficiency-optimal spectral measure.}
	\end{subfigure}
	\caption{Components of Adaptive SRM in the final estimation window. The left panel shows the non-negligible masses of the constrained NPMLE of $P_\lambda$. The right panel shows the resulting probability masses over CVaR levels. The selected criterion combines the full range of levels and places the largest weights on lower values of $\alpha$, rather than selecting a single tail probability.}
	\label{fig:empirical_fitted_components}
\end{figure}

\paragraph{Out-of-sample results.}

\Cref{tab:empirical_performance} reports annualized mean excess return and volatility, the annualized Sharpe ratio, monthly expected shortfall below the fifth percentile, maximum drawdown, annualized turnover, average gross exposure, and the average largest absolute portfolio weight. Mean, volatility, expected shortfall, and maximum drawdown are reported in percentage points. Transaction costs are not deducted, so turnover should be interpreted as an implementation and stability diagnostic rather than as a performance adjustment.

\begin{table}[htbp]
	\centering
	\caption{Out-of-sample performance, January 2000--July 2026.}
	\label{tab:empirical_performance}
	\resizebox{\textwidth}{!}{
		\begin{tabular}{lrrrrrrrr}
			\toprule
			& Mean & Vol. & Sharpe & ES$_{5\%}$ & Max DD
			& Turnover & Gross & Max $|w|$\\
			\midrule
			Equal weight    & 8.97 & 16.56 & 0.54 & 10.80 & 52.10 & 0.12 & 1.00 & 0.03\\
			Sample MVO      & 6.47 & 11.99 & 0.54 & 7.64  & 27.73 & 0.86 & 3.07 & 0.37\\
			Shrinkage MVO   & 6.55 & 11.63 & 0.56 & 7.52  & 27.56 & 0.68 & 2.56 & 0.31\\
			CVaR$_{0.50}$   & 6.75 & 12.31 & 0.55 & 7.29  & 31.44 & 1.59 & 3.09 & 0.39\\
			CVaR$_{0.90}$   & 4.58 & 13.56 & 0.34 & 8.32  & 35.45 & 2.12 & 4.39 & 0.42\\
			Adaptive SRM    & 7.00 & 12.14 & 0.58 & 7.61  & 26.94 & 0.94 & 3.01 & 0.37\\
			\bottomrule
		\end{tabular}
	}
\end{table}

The stability diagnostics are broadly favorable for Adaptive SRM. Its average gross exposure is slightly below that of sample MVO, while its turnover is only modestly higher. In contrast, the fixed-level CVaR rules are considerably more sensitive to the choice of tail probability: CVaR$_{0.50}$ has annual turnover almost twice that of sample MVO, while CVaR$_{0.90}$ has the highest turnover and gross exposure among the optimized rules.

Downside-risk measures tell a similar story. Adaptive SRM has a maximum drawdown of $26.94\%$, compared with $27.73\%$ for sample MVO, and monthly expected shortfall of $7.61\%$, compared with $7.64\%$ for MVO. CVaR$_{0.50}$ produces somewhat lower expected shortfall but a larger maximum drawdown and substantially higher turnover, while CVaR$_{0.90}$ performs less favorably on both risk and stability measures.

Turning to conventional return-performance measures, Adaptive SRM earns an annualized mean excess return of $7.00\%$ with volatility of $12.14\%$, corresponding to a Sharpe ratio of $0.58$. Sample MVO produces $6.47\%$, $11.99\%$, and $0.54$, respectively. Linear shrinkage reduces MVO volatility and turnover and attains a Sharpe ratio of $0.56$. These differences are modest, as would be expected given that the theoretical contribution concerns estimation efficiency rather than guaranteed dominance in realized returns.

The fixed-CVaR comparison illustrates the statistical importance of criterion selection. Although different CVaR levels identify the same population portfolio under the elliptical working model, their empirical minimizers can have substantially different sampling behavior. CVaR$_{0.90}$, in particular, has the lowest mean return and Sharpe ratio and the highest gross exposure and turnover among the optimized rules.

The comparison is reasonably stable across broad subsamples. Adaptive SRM's Sharpe ratios are $0.22$, $1.11$, and $0.50$ during 2000--2009, 2010--2019, and 2020--2026, respectively, compared with $0.15$, $1.08$, and $0.48$ for sample MVO. A paired circular block bootstrap with 12-month blocks gives an Adaptive-SRM-minus-MVO Sharpe-ratio difference of $0.037$, with a $95\%$ percentile interval of $[-0.002,0.078]$. The corresponding volatility difference is $0.15$ percentage points, with interval $[-0.01,0.30]$, and the monthly expected-shortfall difference is $-0.03$ percentage points, with interval $[-0.31,0.25]$.

The data therefore do not establish statistically decisive out-of-sample dominance. Rather, they show that Adaptive SRM remains competitive with sample and shrinkage MVO while avoiding the instability associated with an arbitrarily chosen high CVaR level. Industry portfolios are diversified test assets rather than directly traded securities, the exercise ignores transaction costs, and the rolling return distribution is neither independent nor exactly elliptical. Moreover, the theoretical ordering concerns first-order covariance of estimated weights, whereas realized returns, Sharpe ratios, and drawdowns also reflect model misspecification and time variation. We therefore interpret the empirical results as evidence that the adaptive procedure is operational and empirically competitive, rather than as a universal dominance claim.

Taken together, the numerical results support three implications of the theory. First, the first-order covariance and Gaussian approximations are accurate at moderate sample sizes. Second, learning the radial distribution and selecting the spectral criterion imposes little detectable cost relative to the oracle rule, consistent with oracle adaptivity. Third, criterion selection matters materially for weight estimation under the heavy-tailed designs considered here, while providing little first-order gain over MVO under Gaussianity. The empirical application further shows that the resulting adaptive rule can be implemented in rolling portfolio problems and remains competitive with conventional portfolio estimators.

\section{Conclusion}
\label{sec:conclusion}

This paper studies a statistical question that arises when a family of
empirical criteria shares a common population minimizer: how should the
criterion itself be chosen to estimate that common target efficiently,
and can the efficient criterion be learned from the data without
sacrificing first-order efficiency? Spectral-risk portfolio estimation
provides a natural setting for these questions. Under elliptically
contoured returns, spectral risk measures identify the same population
efficient portfolio for a fixed target return, while their empirical
minimizers can have substantially different sampling distributions.
The spectral measure can therefore be viewed not only as part of a risk
criterion, but also as an object that can be selected for statistical
efficiency.

We first develop a general asymptotic theory for empirical spectral-risk
minimization under estimated linear constraints. The theory accommodates
the nonsmooth empirical CVaR representation, mixtures across CVaR levels,
and sampling error in the feasible set, and allows weakly binding
inequalities to generate non-Gaussian stochastic quadratic-program
limits. For the canonical target-return problem under elliptical returns,
the resulting covariance comparison has a particularly simple structure:
\[
\Sigma_{\mathrm{SRM}}(m)
=
\Sigma_{\mathrm{common}}
+
I(m;c)\Sigma_{\perp},
\qquad
\Sigma_{\perp}\succeq0.
\]
The population portfolio problem enters the criterion-dependent component
only through the scalar index $c$, while all dependence on the spectral
measure is summarized by $I(m;c)$. Choosing an efficient empirical
criterion therefore reduces to an optimization problem over probability
measures.

For each fixed endpoint restriction $u$, we characterize the unique
efficiency-optimal spectral measure $m_{*,u}^c$. The optimizer has no
singular component in the interior of the admissible interval, so no
single interior CVaR level attains the minimum first-order covariance.
As the endpoint restriction is relaxed, the best covariance bound over
the spectral-risk class is no larger than the sample mean-variance
benchmark. Under Gaussian returns, the optimized spectral criterion
approaches the mean-variance efficiency benchmark; under heavy-tailed
models such as the Laplace example, spectral criterion selection can
strictly reduce the sampling variability of estimated portfolio weights.
Thus, even when mean-variance optimization identifies the correct
population portfolio, its usual sample implementation need not be the
most efficient estimator of that portfolio.

The efficiency-optimal criterion depends on the unknown return
distribution, so an oracle characterization alone would not provide an
implementable procedure. We therefore develop a fully data-adaptive rule
that estimates the radial distribution, learns the efficiency-optimal
spectral measure, and estimates the portfolio weights using the same
observations. Despite this additional layer of adaptation,
\[
\sqrt{T}
\left(
\hat{\ww}_{\hat m}(\mu_0)-\ww_*(\mu_0)
\right)
\]
has the same first-order limiting distribution as the infeasible estimator
that knows $m_{*,u}^c$ in advance. Hence, learning both the
infinite-dimensional nuisance distribution and the estimating criterion
is first-order costless for estimation of the portfolio weights; no sample
splitting is required. The constrained radial NPMLE developed in the paper
provides one concrete implementation satisfying the conditions needed for
this oracle-adaptivity result.

The numerical evidence reflects these theoretical distinctions. The
first-order covariance approximation is accurate at moderate sample
sizes, and the sampling distributions of the oracle and data-adaptive
rules are nearly indistinguishable in the simulations. Optimizing the
spectral criterion offers little gain over sample mean-variance
optimization under Gaussian returns, but materially reduces weight
estimation error in the heavy-tailed designs considered here. In the
empirical application to U.S. industry portfolios, the adaptive rule is
stable and competitive with conventional portfolio estimators, while the
performance of fixed-level CVaR rules is substantially more sensitive to
the chosen confidence level. These empirical results are best interpreted
as evidence for the usefulness of data-adaptive criterion selection rather
than as a claim of universal investment-performance dominance.

Several directions remain open. The present theory keeps the number of
assets fixed while the sample size diverges. When the number of assets is
large relative to the available return history, estimation of the return
distribution, the constraints, and the criterion itself all interact with
high-dimensional regularization, and it is not clear whether the
fixed-dimensional efficiency ordering persists. Outside the elliptical
setting, changing the spectral measure can also change the population
portfolio, so criterion selection must balance the investment objective
against statistical precision rather than exploiting a common estimand.
Extensions to dependent returns and a more systematic finite-sample theory
for data-adaptive criterion selection are additional directions for future
work. More broadly, the results suggest that whenever multiple empirical
criteria identify a common population target, selecting and learning the
criterion itself can provide a useful additional dimension for improving
statistical efficiency.

\bibliographystyle{plainnat}
\bibliography{refs.bib}
	
\newpage

\appendix

\section{Portfolios with a Risk-Free Asset}
\label{app:risk_free}

This appendix shows that the main asymptotic, efficiency, and
data-adaptivity results continue to hold when investors may borrow and
lend freely at the risk-free rate. Because returns are measured in excess
of the risk-free rate, the budget constraint on risky-asset weights is
removed and only the target expected excess-return constraint remains.
The resulting calculations parallel those in the main text, so we collect
them here.

\subsection{General Asymptotic Theory}
\label{app:risk_free_general}

The population problem is
\[
\ww_*^f(\mu_0)
=
\arg\min_{\ww\in\R^N}
\rho_m(-r_{\ww}),
\qquad
\mbox{subject to}
\quad
\ww^\top\bmu=\mu_0,
\]
with empirical counterpart
\[
\hw^f(\mu_0)
=
\arg\min_{\ww\in\R^N}
\rho_m(-\hat r_{\ww,T}),
\qquad
\mbox{subject to}
\quad
\ww^\top\hat{\bmu}=\mu_0.
\]
We assume $\mu_0\neq0$ to exclude the trivial solution
$\ww_*^f=\bzero$.

Write $\ww_*^f=\ww_*^f(\mu_0)$. Let $\eta_s^f\in\R$ denote the
Lagrange multiplier associated with the expected-return constraint, so
that
\[
\nabla_{\ww}\rho_m(-r_{\ww_*^f})
+
\eta_s^f\bmu
=
0.
\]
Define
\[
\begin{split}
	H_*^f
	&=
	\nabla_{\ww}^2\rho_m(-r_{\ww_*^f}),
	\\
	V_*^f(\rr)
	&=
	\nabla_{\ww}U_m(\ww_*^f;\rr)
	+
	\eta_s^f\rr,
	\\
	r_*^f(\rr)
	&=
	(\ww_*^f)^\top\rr,
	\\
	K_*^f
	&=
	\begin{bmatrix}
		H_*^f & \bmu\\
		\bmu^\top & 0
	\end{bmatrix}.
\end{split}
\]

This problem is obtained from
Theorem~\ref{thm:normality_general_risk} by retaining only the estimated
expected-return equality constraint. Since there are no inequality
constraints, the limiting stochastic quadratic program reduces to an
equality-constrained Gaussian problem.

\begin{cor}[Portfolio with a risk-free asset]
	\label{prop:normality_risk_free_constraint}
	Suppose $\ww_*^f$ is unique, Assumptions~\ref{ass:regularity_density} and \ref{ass:measure_support} hold, the population objective is level-bounded on its constraint set, and $K_*^f$ is
	nonsingular. Let $\hw^f(\mu_0)$ be any measurable solution of the
	empirical problem. Then
	\[
	\hw^f(\mu_0)
	\pto
	\ww_*^f(\mu_0).
	\]
	If, in addition,
	\[
	\sup_{(\ww,v)\in K\times\R}
	p_{-r_{\ww}}(v)<\infty
	\]
	for every compact set
	$K\subset\R^N\backslash\{\bzero\}$, then
	\[
	\sqrt{T}
	\left(
	\hw^f(\mu_0)-\ww_*^f(\mu_0)
	\right)
	\wto
	\normal\left(
	0,\Sigma^f(\mu_0,\rho_m)
	\right),
	\]
	where
	\[
	\Sigma^f(\mu_0,\rho_m)
	=
	\var\left(
	\begin{bmatrix}
		\id_N & \bzero
	\end{bmatrix}
	(K_*^f)^{-1}
	\begin{bmatrix}
		V_*^f(\rr)\\
		r_*^f(\rr)
	\end{bmatrix}
	\right).
	\]
\end{cor}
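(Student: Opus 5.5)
The plan is to obtain the corollary as a direct specialization of Theorem~\ref{thm:normality_general_risk}, in the same way as Corollary~\ref{prop:normality_risky_constraints}. Take no deterministic constraints ($k_c^{(e)}=k_c^{(i)}=0$), no stochastic inequalities, and a single stochastic equality $F_s^{(e)}(\rr)=\rr^\top\in\R^{1\times N}$ with $b_s^{(e)}=\mu_0$. Then $\E[F_s^{(e)}(\rr)]=\bmu^\top$, and the empirical constraint is exactly $\ww^\top\hat{\bmu}=\mu_0$.

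\textbf{Verifying the hypotheses.} Assumptions~\ref{ass:regularity_density} and \ref{ass:measure_support} are assumed directly.
\begin{itemize}
\item \emph{Assumption~\ref{ass:identification_constraint}.} Uniqueness of $\ww_*^f$ is assumed. Since $\mu_0\neq0$, the constraint forces $\ww_*^f\neq\bzero$. The level-boundedness hypothesis gives compactness of the near-optimal level set: that set is closed because $\rho_m(-r_{\ww})$ is continuous, by Theorem~\ref{thm:direct_differentiate_cvar} integrated against $m$. The LICQ condition, that $A_*=\bmu^\top$ has full row rank, means $\bmu\neq\bzero$, which again follows from $\mu_0\neq0$.
\item \emph{Assumption~\ref{ass:second_order_regularity}.} Here $\cT_*=\{d:\bmu^\top d=0\}$. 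Positive curvature of $H_*^f$ on $\cT_*$ is equivalent to nonsingularity of $K_*^f$ when $H_*^f\succeq0$. The matrix $H_*^f\succeq0$ because it is an $m$-mixture of the positive-semidefinite conditional covariance Hessians in Theorem~\ref{thm:direct_differentiate_cvar}. Given this, if $d\in\cT_*$, $d\neq0$, and $d^\top H_*^f d=0$, then $H_*^f d=0$, so $(d,0)$ lies in the kernel of $K_*^f$. This contradicts invertibility.
\end{itemize}
Consistency and the weak limit $h_{c,s}$ then follow from the theorem, under the density bound for the second part.

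\textbf{Identifying the limit.} With only an equality constraint, the limiting program in Definition~\ref{defn:asym_dist_low_dim} is to minimize
\[
h^\top\mathbb{G}\bigl(\nabla_{\ww}U_m(\ww_*^f;\rr)+\eta_s^f\rr\bigr)+\tfrac12 h^\top H_*^f h
\quad\text{subject to}\quad
\bmu^\top h=-\mathbb{G}(\rr^\top)\ww_*^f .
\]
Since $F_s^{(e)}(\rr)^\top\eta_s^f=\eta_s^f\rr$, the linear term is $\mathbb{G}(V_*^f)$. The right-hand side of the constraint is $-\mathbb{G}(r_*^f)$. The KKT system for this program reads
\[
K_*^f\begin{bmatrix}h\\ \nu\end{bmatrix}
=-\begin{bmatrix}\mathbb{G}(V_*^f)\\ \mathbb{G}(r_*^f)\end{bmatrix},
\]
so $h_{c,s}=-[\id_N\ \bzero](K_*^f)^{-1}\mathbb{G}\bigl((V_*^f,r_*^f)\bigr)$. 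This is a linear map of a centered Gaussian vector whose covariance is $\var\bigl((V_*^f(\rr),r_*^f(\rr))\bigr)$; the sign is irrelevant. The covariance of $h_{c,s}$ is therefore $\Sigma^f(\mu_0,\rho_m)$.

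\textbf{Main obstacle.} Most steps are bookkeeping; the one real check is the second-order equivalence above. It needs $H_*^f\succeq0$ together with continuity of the Hessian of $\rho_m$ obtained by integrating Theorem~\ref{thm:direct_differentiate_cvar} against $m$ on $[u,1-u]$. That integration uses Assumption~\ref{ass:measure_support} and the continuity and positivity of densities in Assumption~\ref{ass:regularity_density}, so the VaR levels stay in a compact range.
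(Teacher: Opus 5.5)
Your proposal is correct and takes essentially the paper's route. You specialize Theorem~\ref{thm:normality_general_risk} to the single stochastic equality $F_s^{(e)}(\rr)=\rr^\top$, $b_s^{(e)}=\mu_0$; you get identification from $\mu_0\neq0$ and curvature from nonsingularity of $K_*^f$ (a positive-semidefinite Hessian vanishing on a tangent direction would give a null vector of $K_*^f$); and you solve the resulting Gaussian KKT system, which matches the paper's step of deleting the budget row and column from the proof of Corollary~\ref{prop:normality_risky_constraints}.
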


\subsection{Elliptical Returns and Covariance Decomposition}
\label{app:risk_free_elliptic}

We next specialize the preceding result to the elliptical model of
\Cref{sec:elliptic}. The statistical structure is the same as in the
canonical target-return problem: the spectral-risk and sample
mean-variance covariance matrices share a common component and differ
only through the scalar coefficients $I(m;c)$ and $J(c)$.

Define
\[
c^f
:=
\frac{\sgn(\mu_0)}
{\|\bSigma^{-1/2}\bmu\|_2},
\]
and
\[
P_\perp^f
:=
\bSigma^{-1}
-
\frac{
	\bSigma^{-1}\bmu\bmu^\top\bSigma^{-1}
}{
	\bmu^\top\bSigma^{-1}\bmu
}.
\]
The matrix $P_\perp^f$ is positive semidefinite and describes the
covariance geometry in directions orthogonal, in the
$\bSigma$-induced geometry, to the expected-return constraint.

\begin{prop}
	\label{prop:risk_free_lowdim}
	Suppose the conditions of
	\Cref{prop:normality_risk_free_constraint} hold and
	\Cref{ass:elliptic_return} holds. Assume additionally that
	$\mu_0\neq0$ and $\bmu\neq\bzero$. Then
	\begin{equation}
		\label{eq:w_star_risk_free}
		\ww_*^f(\mu_0)
		=
		\frac{\mu_0}
		{\bmu^\top\bSigma^{-1}\bmu}
		\bSigma^{-1}\bmu.
	\end{equation}
	Moreover,
	\[
	\hw^f(\mu_0)
	\xrightarrow{p}
	\ww_*^f(\mu_0),
	\]
	and
	\[
	\sqrt{T}
	\left(
	\hw^f(\mu_0)-\ww_*^f(\mu_0)
	\right)
	\xrightarrow{d}
	\normal\left(
	0,\Sigma^f(\mu_0,\rho_m)
	\right),
	\]
	where
	\[
	\begin{split}
		\Sigma^f(\mu_0,\rho_m)
		=
		&\,
		\frac{
			\E[\lambda^2]\mu_0^2
		}{
			(\bmu^\top\bSigma^{-1}\bmu)^3
		}
		\bSigma^{-1}\bmu\bmu^\top\bSigma^{-1}
		\\
		&+
		\frac{
			\mu_0^2
		}{
			\rho_m(\lambda Z)^2
			(\bmu^\top\bSigma^{-1}\bmu)
		}
		\left(
		\int_{[0,1]^2}
		\frac{
			A^f(\alpha_1,\alpha_2)
		}{
			(1-\alpha_1)(1-\alpha_2)
		}
		\d m(\alpha_1)\d m(\alpha_2)
		\right)
		P_\perp^f,
	\end{split}
	\]
	with
	\[
	\begin{split}
		A^f(\alpha_1,\alpha_2)
		=
		&\,
		\E\left[
		\lambda^2
		\Phi\left(
		-\frac{1}{\lambda}
		q_{\alpha_1\vee\alpha_2}(\lambda Z)
		\right)
		\right]
		\\
		&-
		(1-\alpha_1)\eta_s^f
		\E\left[
		\lambda^2
		\Phi\left(
		-\frac{1}{\lambda}
		q_{\alpha_2}(\lambda Z)
		\right)
		\right]
		\\
		&-
		(1-\alpha_2)\eta_s^f
		\E\left[
		\lambda^2
		\Phi\left(
		-\frac{1}{\lambda}
		q_{\alpha_1}(\lambda Z)
		\right)
		\right]
		\\
		&+
		(1-\alpha_1)(1-\alpha_2)
		(\eta_s^f)^2
		\E[\lambda^2],
	\end{split}
	\]
	where
	\[
	\eta_s^f
	=
	1-c^f\rho_m(\lambda Z).
	\]
	Here $Z\sim\normal(0,1)$ is independent of $\lambda$.
\end{prop}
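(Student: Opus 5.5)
Three facts will be established in turn: the explicit form of $\ww_*^f$; consistency and asymptotic normality, which are already supplied by Corollary~\ref{prop:normality_risk_free_constraint}; and the evaluation of the covariance $\var([\id_N\ \bzero](K_*^f)^{-1}[V_*^f;\,r_*^f])$ under the normal scale-mixture model.

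\textbf{Population optimizer and multiplier.} By \eqref{eq:elliptical_population_equivalence}, on $\{\ww^\top\bmu=\mu_0\}$ we have $\rho_m(-r_{\ww})=-\mu_0+\|\bSigma^{1/2}\ww\|_2\,\rho_m(\lambda Z)$. Since $\rho_m(\lambda Z)>0$ under \Cref{ass:measure_support}, minimizing $\rho_m$ is the same as minimizing $\ww^\top\bSigma\ww$ subject to $\ww^\top\bmu=\mu_0$, which gives \eqref{eq:w_star_risk_free}. Positive homogeneity and translation equivariance extend the identity to all $\ww$: $\rho_m(-r_{\ww})=-\ww^\top\bmu+\|\bSigma^{1/2}\ww\|_2\,\rho_m(\lambda Z)$. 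This yields the closed forms
\[
\nabla\rho_m=-\bmu+\rho_m(\lambda Z)\,\frac{\bSigma\ww}{\|\bSigma^{1/2}\ww\|_2},
\qquad
H_*^f=\frac{\rho_m(\lambda Z)}{\|\bSigma^{1/2}\ww_*^f\|_2}\Bigl(\bSigma-\frac{\bSigma\ww_*^f\ww_*^{f\top}\bSigma}{\|\bSigma^{1/2}\ww_*^f\|_2^2}\Bigr).
\]
Since $\bSigma\ww_*^f\propto\bmu$ and $\|\bSigma^{1/2}\ww_*^f\|_2=|\mu_0|/\|\bSigma^{-1/2}\bmu\|_2$, the stationarity condition gives $\eta_s^f=1-c^f\rho_m(\lambda Z)$.

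\textbf{Block inversion of $K_*^f$.} $H_*^f$ is singular along $\bSigma^{-1}\bmu$, but on $\bmu^\perp$ it agrees with the scaled $\bSigma$. The standard bordered-inverse formula therefore gives
\[
[\id_N\ \bzero](K_*^f)^{-1}=\Bigl[\tfrac{\|\bSigma^{1/2}\ww_*^f\|_2}{\rho_m(\lambda Z)}P_\perp^f\ \ \ \tfrac{\bSigma^{-1}\bmu}{\bmu^\top\bSigma^{-1}\bmu}\Bigr],
\]
which I will check directly using $P_\perp^f\bmu=0$ and $P_\perp^f H_*^f=\frac{\rho_m(\lambda Z)}{\|\bSigma^{1/2}\ww_*^f\|_2}(\id-\bSigma^{-1}\bmu\bmu^\top/\bmu^\top\bSigma^{-1}\bmu)$. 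The asymptotic variable is then
\[
\frac{\|\bSigma^{1/2}\ww_*^f\|_2}{\rho_m(\lambda Z)}P_\perp^f V_*^f(\rr)+\frac{\bSigma^{-1}\bmu}{\bmu^\top\bSigma^{-1}\bmu}\,\ww_*^{f\top}\rr .
\]
Its second term has variance $\E[\lambda^2]\,\ww_*^{f\top}\bSigma\ww_*^f\,\bSigma^{-1}\bmu\bmu^\top\bSigma^{-1}/(\bmu^\top\bSigma^{-1}\bmu)^2$, and substituting $\ww_*^f$ produces the first displayed term.

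\textbf{The main obstacle: the covariance of $P_\perp^f V_*^f$ and the cross term.} Write $\rr-\bmu=\lambda\bSigma^{1/2}\zz$ and decompose $\zz=Z_1 e+\zz_\perp$, where $e=\bSigma^{1/2}\ww_*^f/\|\bSigma^{1/2}\ww_*^f\|_2$, so that $-r_*=-\mu_0-\|\bSigma^{1/2}\ww_*^f\|_2\lambda Z_1$. The score is
\[
\nabla_{\ww}U_m=-\int\frac{\rr\,\bone\{-r_*\ge z(\ww_*,\alpha)\}}{1-\alpha}\,\d m(\alpha).
\]
After applying $P_\perp^f$, the components along $\bmu$ and $\bSigma\ww_*^f$ are annihilated, because $P_\perp^f\bSigma^{1/2}e=0$ and $P_\perp^f\bmu=0$. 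What survives is $P_\perp^f\bSigma^{1/2}\lambda\zz_\perp$ times the scalar $g=-\int(1-\alpha)^{-1}\bone\{\cdots\}\,\d m+\eta_s^f$. Here $\zz_\perp$ is independent of $(\lambda,Z_1)$, and $P_\perp^f\bSigma P_\perp^f=P_\perp^f$. Hence the covariance equals $\E[\lambda^2 g^2]\,P_\perp^f$, and the cross-covariance with $\ww_*^{f\top}\rr$ vanishes because $\E[\zz_\perp]=0$ given $Z_1,\lambda$.

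It remains to expand $\E[\lambda^2 g^2]$ using that the indicator event is $\{\lambda Z_1\le q_{1-\alpha}\}=\{-\lambda Z_1\ge q_\alpha\}$. This gives $\E[\lambda^2\bone_{\alpha_1}\bone_{\alpha_2}]=\E[\lambda^2\Phi(-q_{\alpha_1\vee\alpha_2}/\lambda)]$ and $\E[\lambda^2\bone_{\alpha}]=\E[\lambda^2\Phi(-q_\alpha/\lambda)]$, which produces exactly $A^f$. The prefactor is $\|\bSigma^{1/2}\ww_*^f\|_2^2=\mu_0^2/\bmu^\top\bSigma^{-1}\bmu$. The delicate bookkeeping is verifying the annihilation of the $Z_1$-direction and the sign and scale conventions of the quantiles. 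I will also confirm that Assumptions~\ref{ass:regularity_density}--\ref{ass:second_order_regularity} (with $K_*^f$ nonsingular) hold, which follows from the explicit $H_*^f$ being positive definite on $\bmu^\perp$.
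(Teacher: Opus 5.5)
Your proposal is correct and follows essentially the paper's route. The paper also solves the limiting equality-constrained KKT system explicitly (your bordered inverse of $K_*^f$ is exactly that solution), shows that the $P_\perp^f$-component is uncorrelated with the $\bSigma^{-1}\bmu$-component, and obtains $A^f$ by conditioning on $\lambda$. One small slip: $\nabla_{\ww}U_m(\ww_*^f;\rr)$ also contains the chain-rule term $\int_0^1\bigl(1-\tfrac{1}{1-\alpha}\bone\{-r_{\ww_*^f}\ge z(\ww_*^f,\alpha)\}\bigr)\nabla_{\ww}z(\ww_*^f,\alpha)\,\d m(\alpha)$, which comes from the $\ww$-dependence of $z(\ww,\alpha)$; it is harmless because $\nabla_{\ww}z(\ww_*^f,\alpha)=-\bmu+q_\alpha(\lambda Z)\bSigma\ww_*^f/\|\bSigma^{1/2}\ww_*^f\|_2\in\operatorname{span}\{\bmu\}$ is annihilated by $P_\perp^f$, so your covariance is unchanged.
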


\paragraph{Comparison with sample mean-variance optimization.}

The sample mean-variance estimator is
\[
\hw_{\rm MV}^f(\mu_0)
=
\frac{\mu_0}
{\hat{\bmu}^\top
	\hat{\bS}^{-1}
	\hat{\bmu}}
\hat{\bS}^{-1}\hat{\bmu}.
\]

\begin{prop}
	\label{prop:asymptotic_variance_mv_risk_free}
	Suppose the conditions of
	\Cref{prop:risk_free_lowdim} hold and
	$\E[\lambda^4]<\infty$. Then
	\[
	\hw_{\rm MV}^f(\mu_0)
	\xrightarrow{p}
	\ww_*^f(\mu_0),
	\]
	and
	\[
	\sqrt{T}
	\left(
	\hw_{\rm MV}^f(\mu_0)-\ww_*^f(\mu_0)
	\right)
	\xrightarrow{d}
	\normal\left(
	0,\Sigma_{\rm MV}^f(\mu_0)
	\right),
	\]
	where
	\[
	\begin{split}
		\Sigma_{\rm MV}^f(\mu_0)
		=
		&\,
		\frac{
			\E[\lambda^2]\mu_0^2
		}{
			(\bmu^\top\bSigma^{-1}\bmu)^3
		}
		\bSigma^{-1}\bmu\bmu^\top\bSigma^{-1}
		\\
		&+
		\frac{\mu_0^2}
		{\bmu^\top\bSigma^{-1}\bmu}
		\left(
		\frac{\E[\lambda^4]}
		{\E[\lambda^2]^2}
		+
		\frac{\E[\lambda^2]}
		{\bmu^\top\bSigma^{-1}\bmu}
		\right)
		P_\perp^f.
	\end{split}
	\]
\end{prop}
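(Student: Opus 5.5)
The estimator is an explicit smooth function of the sample moments $(\hat{\bmu},\hat{\bS})$. I will therefore use the delta method, not the M-estimation machinery. Write
\[
g(\bmu,\bS)=\frac{\mu_0}{\bmu^\top\bS^{-1}\bmu}\,\bS^{-1}\bmu,
\]
so that $\hw_{\rm MV}^f(\mu_0)=g(\hat{\bmu},\hat{\bS})$ and $\ww_*^f(\mu_0)=g(\bmu,\E[\lambda^2]\bSigma)$. Note that $g$ is invariant under scaling of $\bS$, so $g(\bmu,\E[\lambda^2]\bSigma)=g(\bmu,\bSigma)$, which is \eqref{eq:w_star_risk_free} from \Cref{prop:risk_free_lowdim}. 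Consistency follows from the law of large numbers and the continuity of $g$ near $(\bmu,\bSigma)$, since $\bmu\neq\bzero$ and $\bSigma\succ\bzero$.

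For the limit law, I first use the multivariate CLT under $\E[\lambda^4]<\infty$. It gives joint normality of
\[
\sqrt T(\hat{\bmu}-\bmu)\wto \normal\bigl(0,\E[\lambda^2]\bSigma\bigr)
\quad\text{and}\quad
\sqrt T\bigl(\hat{\bS}-\E[\lambda^2]\bSigma\bigr),
\]
with the $\hat{\bmu}$ correction in $\hat{\bS}$ being $o_p(1)$. The third moments vanish by the symmetry of $\zz$, so the two blocks are asymptotically independent. The covariance of the $\bS$-block is the elliptical fourth-moment formula. For fixed vectors $a,b,c,d$,
\[
\operatorname{Cov}\bigl(a^\top \mathbb{S}b,\,c^\top\mathbb{S}d\bigr)=\E[\lambda^4]\bigl(a^\top\bSigma c\,b^\top\bSigma d+a^\top\bSigma d\,b^\top\bSigma c\bigr)+(\E[\lambda^4]-\E[\lambda^2]^2)\,a^\top\bSigma b\,c^\top\bSigma d,
\]
where $\mathbb{S}$ denotes the limit of the $\bS$-block. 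This comes from Isserlis' theorem conditionally on $\lambda$.

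Next, I differentiate $g$. With $\ww=\ww_*^f$, $\kappa=\bmu^\top\bSigma^{-1}\bmu$, and $\bS_0=\E[\lambda^2]\bSigma$:
\[
\d g=\frac{\mu_0}{\kappa_0}\Bigl[\bS_0^{-1}\d\bmu-\bS_0^{-1}\d\bS\,\bS_0^{-1}\bmu\Bigr]-\ww\,\frac{2\bmu^\top\bS_0^{-1}\d\bmu-\bmu^\top\bS_0^{-1}\d\bS\,\bS_0^{-1}\bmu}{\kappa_0},
\]
where $\kappa_0=\kappa/\E[\lambda^2]$. Both pieces simplify to a projection form. Set $Q=\id-\bSigma^{-1}\bmu\bmu^\top/\kappa$, so that $Q\bSigma^{-1}=P_\perp^f$.
\begin{itemize}
\item The $\d\bmu$ part becomes $\frac{\mu_0}{\kappa}\bigl(P_\perp^f-\bSigma^{-1}\bmu\bmu^\top\bSigma^{-1}/\kappa\bigr)\d\bmu$.
\item The $\d\bS$ part becomes $-\frac{\mu_0}{\E[\lambda^2]\kappa}P_\perp^f\,\d\bS\,\bSigma^{-1}\bmu$.
\end{itemize}

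Then I compute the variances.
\begin{itemize}
\item The $\d\bmu$ term has variance $\frac{\mu_0^2\E[\lambda^2]}{\kappa^2}\bigl(P_\perp^f+\bSigma^{-1}\bmu\bmu^\top\bSigma^{-1}/\kappa\bigr)$, because $P_\perp^f\bSigma\,\bSigma^{-1}\bmu=0$ kills the cross terms. This yields the $\frac{\E[\lambda^2]\mu_0^2}{\kappa^3}\bSigma^{-1}\bmu\bmu^\top\bSigma^{-1}$ component and the $\frac{\mu_0^2}{\kappa}\frac{\E[\lambda^2]}{\kappa}P_\perp^f$ component.
\item The $\d\bS$ term uses the fourth-moment formula with $b=d=\bSigma^{-1}\bmu$. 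The terms involving $\bSigma b$ are annihilated by $P_\perp^f$, as are the $a^\top\bSigma b$ cross terms. Only $\E[\lambda^4]\,b^\top\bSigma b\;P_\perp^f\bSigma P_\perp^f=\E[\lambda^4]\kappa P_\perp^f$ survives, using $P_\perp^f\bSigma P_\perp^f=P_\perp^f$. Multiplying by $\mu_0^2/(\E[\lambda^2]^2\kappa^2)$ gives $\frac{\mu_0^2}{\kappa}\frac{\E[\lambda^4]}{\E[\lambda^2]^2}P_\perp^f$.
\end{itemize}
Summing these pieces, and using the independence of the two blocks, gives the claimed $\Sigma^f_{\rm MV}$.

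The main obstacle is the bookkeeping in the $\d\bS$ term: the projection identities must be verified carefully. The key one is that $P_\perp^f\bSigma\bSigma^{-1}\bmu=0$, which makes the excess-kurtosis term $(\E[\lambda^4]-\E[\lambda^2]^2)$ and one of the two Isserlis pairings vanish.
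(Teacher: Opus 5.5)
Your proposal is correct and is essentially the paper's argument. The paper also applies the delta method to the closed-form estimator; its influence function, $-b_*^f(\ww_*^f)^\top\varepsilon+\frac{\mu_0}{\bmu^\top\bSigma^{-1}\bmu}P_\perp^f\varepsilon-P_\perp^f(\varepsilon\varepsilon^\top-\E[\lambda^2]\bSigma)\ww_*^f/\E[\lambda^2]$, is exactly the sum of your $\d\bmu$ and $\d\bS$ pieces, and the covariance is then computed as you do: symmetry removes the cross-covariances, $P_\perp^f\bmu=\bzero$ and $P_\perp^f\bSigma P_\perp^f=P_\perp^f$ simplify the projections, and conditional Gaussian fourth moments produce the $\E[\lambda^4]$ term.
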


The two covariance matrices therefore have exactly the same scalar
reduction as in the main text.

\begin{cor}[Scalar efficiency reduction with a risk-free asset]
	\label{cor:compare_variance_risk_free}
	Under the conditions of the preceding two propositions, define
	\[
	\Sigma_{\mathrm{common}}^f(\mu_0)
	:=
	\frac{
		\E[\lambda^2]\mu_0^2
	}{
		(\bmu^\top\bSigma^{-1}\bmu)^3
	}
	\bSigma^{-1}\bmu\bmu^\top\bSigma^{-1}
	\]
	and
	\[
	\Sigma_\perp^f(\mu_0)
	:=
	\frac{\mu_0^2}
	{\bmu^\top\bSigma^{-1}\bmu}
	P_\perp^f.
	\]
	Then
	\[
	\Sigma^f (\mu_0,\rho_m) := \Sigma_{\rm SRM}^f(m)
	=
	\Sigma_{\mathrm{common}}^f(\mu_0)
	+
	I(m;c^f)\Sigma_\perp^f(\mu_0),
	\]
	whereas
	\[
	  \Sigma_{\rm MV}^f (\mu_0) := \Sigma_{\rm MV}^f
	=
	\Sigma_{\mathrm{common}}^f(\mu_0)
	+
	J(c^f)\Sigma_\perp^f(\mu_0).
	\]
	Consequently,
	\[
	I(m_1;c^f)\le I(m_2;c^f)
	\quad\Longrightarrow\quad
	\Sigma_{\rm SRM}^f(m_1)
	\preceq
	\Sigma_{\rm SRM}^f(m_2).
	\]
	Thus, the risk-free formulation leads to the same scalar
	criterion-selection problem as the canonical target-return formulation:
	\[
	\min_m I(m;c^f).
	\]
\end{cor}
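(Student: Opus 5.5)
The argument is a direct comparison of the two explicit covariance formulas in \Cref{prop:risk_free_lowdim,prop:asymptotic_variance_mv_risk_free}. Both already share the same first term, which we name $\Sigma_{\mathrm{common}}^f(\mu_0)$. The remaining work is to identify the coefficients of $P_\perp^f$ with $I(m;c^f)$ and $J(c^f)$ after factoring out $\mu_0^2/(\bmu^\top\bSigma^{-1}\bmu)$.

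\textbf{The MV part.} This is immediate. Since $(c^f)^2=1/(\bmu^\top\bSigma^{-1}\bmu)$, the bracket in \Cref{prop:asymptotic_variance_mv_risk_free} equals
\[
\E[\lambda^4]/\E[\lambda^2]^2+(c^f)^2\E[\lambda^2]=J(c^f).
\]

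\textbf{The SRM part.} We need to show that $\rho_m(\lambda Z)^{-2}\int A^f/((1-\alpha_1)(1-\alpha_2))\,\d m\,\d m$ equals $I(m;c^f)$. This reduces to showing $A^f(\alpha_1,\alpha_2)=A(\alpha_1,\alpha_2;c^f)$ term by term.

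\begin{itemize}
\item The multipliers match because $\eta_s^f=1-c^f\rho_m(\lambda Z)$.
\item Since $\lambda Z$ is symmetric, $q_\alpha(\lambda Z)=-q_{1-\alpha}(\lambda Z)$. Hence
\[
\E\bigl[\lambda^2\Phi(-q_\alpha(\lambda Z)/\lambda)\bigr]=\E\bigl[\lambda^2\Phi(q_{1-\alpha}(\lambda Z)/\lambda)\bigr]=\cT(1-\alpha).
\]
Applying this with $\alpha=\alpha_1\vee\alpha_2$, $\alpha_1$, and $\alpha_2$ converts the three $\Phi$-terms of $A^f$ into $\cT(1-(\alpha_1\vee\alpha_2))$, $\cT(1-\alpha_2)$, and $\cT(1-\alpha_1)$.
\item The last term $(1-\alpha_1)(1-\alpha_2)(\eta_s^f)^2\E[\lambda^2]$ matches directly.
\end{itemize}

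The only point requiring care is that the symmetry identity holds for all $\alpha$ in the support $[u,1-u]$. This follows because $\lambda Z$ has a continuous, strictly increasing distribution function under $\lambda>0$ on a set of positive probability; this is implicit in \Cref{ass:regularity_density}.

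\textbf{The ordering consequence.} Given the two decompositions,
\[
\Sigma_{\rm SRM}^f(m_2)-\Sigma_{\rm SRM}^f(m_1)=\bigl(I(m_2;c^f)-I(m_1;c^f)\bigr)\Sigma_\perp^f(\mu_0).
\]
This is positive semidefinite whenever the scalar difference is nonnegative, since $\Sigma_\perp^f(\mu_0)\succeq\bzero$. That in turn holds because $P_\perp^f$ is positive semidefinite: it equals $\bSigma^{-1/2}(\id_N-\Pi)\bSigma^{-1/2}$, where $\Pi$ is the orthogonal projection onto $\bSigma^{-1/2}\bmu$. The reduction to $\min_m I(m;c^f)$ then follows.

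I expect no real obstacle. The main bookkeeping risk is the sign convention linking the $\Phi(-q_\alpha/\lambda)$ terms to $\cT(1-\alpha)$, together with confirming that the prefactor $\mu_0^2/(\bmu^\top\bSigma^{-1}\bmu)$ absorbs the normalization exactly as written in $\Sigma_\perp^f$.
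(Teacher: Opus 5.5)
Your proposal is correct and follows the paper's proof. The first terms coincide; $\eta_s^f=1-c^f\rho_m(\lambda Z)$ turns $A^f(\alpha_1,\alpha_2)$ into $A(\alpha_1,\alpha_2;c^f)$; $(c^f)^2=1/(\bmu^\top\bSigma^{-1}\bmu)$ identifies the MV coefficient with $J(c^f)$; and $\Sigma_\perp^f(\mu_0)\succeq\bzero$ gives the Loewner ordering. You usefully make explicit the symmetry step $\E[\lambda^2\Phi(-q_\alpha(\lambda Z)/\lambda)]=\cT(1-\alpha)$, which the paper folds into ``substitution''. One small correction: continuity of the law of $\lambda Z$ needs $\lambda>0$ almost surely, not merely on a set of positive probability. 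Assumption~\ref{ass:regularity_density} forces this, so the argument is unaffected.
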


The risk-free extension therefore changes the portfolio geometry, and
hence the scalar index entering the efficiency functional, but not the
form of the criterion-selection problem. Under the normalization
$\E[\lambda^2]=1$ used for the data-adaptive procedure and for a positive
target return,
\[
c^f
=
\frac{1}
{\|\bSigma^{-1/2}\bmu\|_2}
\]
is the reciprocal of the maximum population Sharpe ratio.

\subsection{Data-Adaptive Extension}
\label{app:risk_free_adaptive}

The data-adaptive construction of \Cref{sec:feasible_rule} extends
directly to the risk-free formulation. Let $m_{*,u}^{c^f}$ denote the
fixed-$u$ efficiency-optimal spectral measure corresponding to $c^f$.
Let $\hat c^f$ denote the plug-in estimator of $c^f$, and define
$\hat m^f$ by minimizing the corresponding estimated efficiency
functional over $\mathscr{P}([u,1-u])$.

\begin{cor}[Oracle adaptivity with a risk-free asset]
	\label{cor:risk_free_oracle_adaptivity}
	Suppose the conditions of
	\Cref{prop:normality_risk_free_constraint} hold, suppose
	\Cref{ass:elliptic_return} holds with $\lambda>0$ almost surely and
	$\E[\lambda^2]=1$, and suppose
	\[
	W_2(\hat P_\lambda,P_\lambda)\pto0.
	\]
	Then
	\[
	\hat{\ww}_{\hat m^f}^f(\mu_0)
	\pto
	\ww_*^f(\mu_0),
	\]
	and
	\[
	\sqrt{T}
	\left(
	\hat{\ww}_{\hat m^f}^f(\mu_0)
	-
	\ww_*^f(\mu_0)
	\right)
	\wto
	\normal\left(
	0,
	\Sigma^f\left(
	\mu_0,
	\rho_{m_{*,u}^{c^f}}
	\right)
	\right).
	\]
\end{cor}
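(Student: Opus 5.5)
The plan is to mirror the proof of \Cref{thm:feasible_asymptotic_normality} in the risk-free setting. There are three ingredients: consistency of the plug-in criterion, uniqueness of the oracle measure, and stochastic equicontinuity of the local empirical objective in $m$.

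\textbf{Step 1: consistency of $\hat m^f$.} Write $\hat c^f=\sgn(\mu_0)/\|\hat{\bSigma}^{-1/2}\hat{\bmu}\|_2$. Then $\hat c^f\pto c^f$ by the law of large numbers and the continuous mapping theorem, since $\bmu\neq\bzero$ and $\bSigma\succ\bzero$. The efficiency functional is the same map $I(\cdot;c)$ as in the main text, evaluated at the different scalar $c^f$. Hence the argument of \Cref{prop:plugin_optimal_measure} applies verbatim: $W_2$ convergence of $\hat P_\lambda$, together with $\E[\lambda^2]=1$, gives uniform convergence of $\hat{\cT}$, $\hat{\cU}$, and $\hat q_\alpha$ on $[u,1-u]$. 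This yields
\[
\sup_{m\in\mathscr{P}([u,1-u])}\bigl|\hat I(m;\hat c^f)-I(m;c^f)\bigr|\pto0.
\]
By \Cref{prop:unique_minimizer}, $m_{*,u}^{c^f}$ is unique. Compactness of $\mathscr{P}([u,1-u])$ then gives $d_{\rm BL}(\hat m^f,m_{*,u}^{c^f})\pto0$ through the standard argmin argument.

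\textbf{Step 2: local expansion with random $m$.} I would reparametrize locally as $\ww=\ww_*^f+h/\sqrt T$, on the constraint set $\hat{\bmu}^\top\ww=\mu_0$, and study
\[
\hat M_T(h;m)=T\bigl[\rho_m(-\hat r_{\ww_*^f+h/\sqrt T,T})-\rho_m(-\hat r_{\ww_*^f,T})\bigr]+\eta_s^f(m)\,h^\top\mathbb{G}_T(\rr)\sqrt{T}\cdot\tfrac{1}{\sqrt T}.
\]
The proof of \Cref{thm:normality_general_risk} and \Cref{prop:normality_risk_free_constraint} shows that, for fixed $m$, $\hat M_T(h;m)$ equals $h^\top\mathbb{G}_T(\nabla_{\ww}U_m(\ww_*^f;\cdot))+\tfrac12h^\top H_*^f(m)h+o_p(1)$ uniformly on compacts in $h$. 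The goal is to upgrade this to uniformity over $m\in\mathscr{P}([u,1-u])$. Since $\rho_m=\int\operatorname{CVaR}_\alpha\,\d m$ is linear in $m$, it suffices to show that the CVaR-level remainder is $o_p(1)$ uniformly in $(\alpha,h)\in[u,1-u]\times K$. I would establish this through a Donsker/bracketing argument for the class $\{\rr\mapsto(-\ww^\top\rr-z)_+\}$ over compact $(\ww,z)$. Uniform continuity in $\alpha$ of the population Hessian and VaR, from \Cref{ass:regularity_density} and bounded densities, supplies the deterministic part.

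\textbf{Step 3: replacing $\hat m^f$ by the oracle measure.} The score $\alpha\mapsto\mathbb{G}_T(\nabla_{\ww}L_\alpha(\ww_*^f,z(\ww_*^f,\alpha);\cdot))$ is tight in $\ell^\infty([u,1-u])$ with continuous limit paths. Likewise $\alpha\mapsto H_\alpha$ and $\alpha\mapsto\eta_s^f$-contributions are continuous and bounded. Weak convergence $\hat m^f\to m_{*,u}^{c^f}$ in probability then gives
\[
\int\mathbb{G}_T(\cdot)\,\d\hat m^f-\int\mathbb{G}_T(\cdot)\,\d m_{*,u}^{c^f}=o_p(1)
\]
and $H_*^f(\hat m^f)\pto H_*^f(m_{*,u}^{c^f})$. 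Because $\ww_*^f$ does not depend on $m$ under ellipticity, the centering is common. The limiting strictly convex quadratic program with constraint $\bmu^\top h=-\mathbb{G}(\rr)^\top\ww_*^f$ therefore coincides with the oracle one, and the argmin theorem of Knight (epi-convergence for convex processes) gives the stated normal limit with covariance $\Sigma^f(\mu_0,\rho_{m_{*,u}^{c^f}})$. Consistency follows first from uniform convergence of the convex objective in $(\ww,m)$ and level-boundedness.

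\textbf{Main obstacle.} I expect the main obstacle to be the uniform-in-$\alpha$ stochastic equicontinuity in Step 2 near the kink of $(\cdot)_+$. My first approach is a maximal inequality over the VC class of half-space indicators, combined with the bounded-density hypothesis, to control $\E\sup|\mathbb{G}_T(\bone\{-\ww^\top\rr\ge z\}-\bone\{-\ww_*^{f\top}\rr\ge z'\})|$ over shrinking neighborhoods. This is the same tool used for \Cref{thm:feasible_asymptotic_normality}, so I would cite that proof rather than repeat it.
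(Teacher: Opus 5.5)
Your proposal follows essentially the same route as the paper. The paper's proof reruns the proof of \Cref{thm:feasible_asymptotic_normality} with the budget constraint deleted: plug-in consistency of $\hat m^f$ via uniqueness of $m_{*,u}^{c^f}$, an $\alpha$-indexed uniform local expansion, integration against the weakly convergent random measure, and Knight's epi-convergence theorem, with strict convexity of the limit checked via $\ker H_*^f=\operatorname{span}\{\ww_*^f\}$ and $\bmu^\top\ww_*^f=\mu_0\neq0$, a justification you assert but do not supply.

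One bookkeeping fix is needed in your Step~2. The stated expansion holds only on the rescaled constraint set, after subtracting the $h$-independent, $O_p(\sqrt T)$, $\hat m^f$-dependent constant $\sqrt T\,\eta_s^f(m)\,\ww_*^{f\top}\mathbb{G}_T(\rr)$. The term $\eta_s^f(m)\,h^\top\mathbb{G}_T(\rr)$ then belongs in the score, giving $V_*^f$, rather than being added to $\hat M_T$ as a correction.
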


Thus, estimating the radial distribution and learning the
efficiency-optimal spectral measure from the same observations is also
first-order costless in the risk-free formulation. The adaptive estimator
attains the same first-order covariance as the infeasible fixed-$u$
oracle rule without sample splitting.

\section{Proofs for Section~\ref{sec:low-dim}}\label{sec:proofs_sec3}

\subsection{Proof of \cref{thm:population_derivatives}}
\begin{proof}
	By definition, we have
	\begin{equation*}
		R_{\alpha} (\ww, z) = \, \E [L_{\alpha} (\ww, z; \rr)] = \E \left[ z + \frac{1}{1 - \alpha} ( - \ww^\top \rr - z )_+ \right].
	\end{equation*}
	It is straightforward to see that \cref{eq:pop_gradient} directly follows from \cref{ass:regularity_density} (i) and (ii), Fubini's theorem and Lebesgue's differentiation theorem. We next prove \cref{eq:pop_hessian}, using \cref{lem:general_diff} from \cref{sec:auxiliary_sec3}.
	
	To prove \cref{eq:pop_hessian}, we first note that \cref{ass:regularity_density} (i) directly implies $\partial_z^2 R_{\alpha} (\ww, z) = p_{- r_{\ww}} (z) / (1 - \alpha)$. For the other statements, we apply \cref{lem:general_diff} for $g(\rr) = 1$ or $\rr$, and $h(\rr, \ww) = - r_{\ww} - z$. It is straightforward to verify that the conditions of this lemma are satisfied under \cref{ass:regularity_density}. We thus obtain that
	\begin{equation*}
		\begin{split}
			\nabla_{\ww}^2 R_{\alpha} (\ww, z) = \, & \frac{1}{1 - \alpha} p_{-r_{\ww}-z} (0) \E \left[ \rr \rr^\top \big\vert - r_{\ww} - z = 0 \right], \\
			\nabla_{\ww} \partial_z R_{\alpha} (\ww, z) = \, & \frac{1}{1 - \alpha} p_{-r_{\ww} - z} (0) \E \left[ \rr \vert - r_{\ww} - z = 0 \right],
		\end{split}
	\end{equation*}
	completing the proof.
\end{proof}

\subsection{Proof of \cref{thm:direct_differentiate_cvar}}
\begin{proof}
	By definition and \cref{thm:var_rep}, we know that
	\begin{equation*}
		z (\ww, \alpha) = \arg \min_{z \in \R} R_{\alpha} (\ww, z), \quad \operatorname{CVaR}_{\alpha} (-r_{\ww}) = R_{\alpha} (\ww, z (\ww, \alpha) ).
	\end{equation*}
	By the envelope theorem and \cref{thm:population_derivatives},
	\begin{align*}
		\nabla_{\ww} \operatorname{CVaR}_{\alpha} (-r_{\ww}) = \, \nabla_{\ww} R_{\alpha} (\ww, z (\ww, \alpha) ) = - \frac{1}{1 - \alpha} \E \left[ \rr \cdot \bone \left\{-r_{\ww} \ge z (\ww, \alpha) \right\} \right].
	\end{align*}
	Further, using the implicit function theorem, we get
	\begin{equation*}
		\nabla_{\ww} z (\ww, \alpha) = \, - \left( \partial_z^2 R_{\alpha} (\ww, z(\ww, \alpha)) \right)^{-1} \nabla_{\ww} \partial_z R_{\alpha} (\ww, z (\ww, \alpha)).
	\end{equation*}
	Therefore,
	\begin{align*}
		& \nabla_{\ww}^2 \operatorname{CVaR}_{\alpha} (-r_{\ww}) \\
		= \, & \nabla_{\ww}^2 R_{\alpha} (\ww, z (\ww, \alpha)) + \partial_z \nabla_{\ww} R_{\alpha} (\ww, z (\ww, \alpha)) \nabla_{\ww} z (\ww, \alpha)^\top \\
		= \, & \nabla_{\ww}^2 R_{\alpha} (\ww, z (\ww, \alpha)) - \left( \partial_z^2 R_{\alpha} (\ww, z(\ww, \alpha)) \right)^{-1} \nabla_{\ww} \partial_z R_{\alpha} (\ww, z (\ww, \alpha)) \nabla_{\ww} \partial_z R_{\alpha} (\ww, z (\ww, \alpha))^\top \\
		\stackrel{(i)}{=} \, & \frac{1}{1 - \alpha} p_{-r_{\ww}} (z (\ww, \alpha)) \operatorname{Cov} \left( \rr \big\vert -r_{\ww} = z (\ww, \alpha) \right),
	\end{align*}
	where $(i)$ follows from \cref{thm:population_derivatives}.
	This completes the proof.
\end{proof}

\subsection{Proof of \cref{thm:normality_general_risk}}
\begin{proof}
	We first establish the consistency of $\hw$, by using a proof strategy similar to that of Corollary 3.2.3 (ii) of \cite{wellner2013weak}. We begin with proving that the empirical risk $\rho_m (- \hat{r}_{\ww, T})$ uniformly converges to the population risk $\rho_m (-r_{\ww})$ over any compact set $K$ in probability. To prove this claim, we define $\hat{z}_T (\ww, \alpha)$ as the $\alpha$-quantile of $- \hat{r}_{\ww, T}$. Using Markov's inequality, for any $\alpha \in \supp(m) \subset [u, 1-u]$, we deduce that
	\begin{equation*}
		\vert \hat{z}_T (\ww, \alpha) \vert \le \, \frac{1}{T \cdot \min(\alpha, 1 - \alpha)} \sum_{i=1}^{T} \vert \ww^\top \rr_i \vert \le \frac{\norm{\ww}_2}{Tu} \sum_{i=1}^{T} \norm{\rr_i}_2.
	\end{equation*}
	By the law of large numbers, $(1/T) \sum_{i=1}^{T} \norm{\rr_i}_2 \to \E [\norm{\rr}_2]$ in probability, which means that
	\begin{equation}\label{eq:quantile_uniform_bounded}
		\sup_{(\ww, \alpha) \in K \times \supp(m)} \vert \hat{z}_T (\ww, \alpha) \vert = \, O_p (1)
	\end{equation}
	for any compact set $K \subset \R^N$. Further, for $(\ww, z) \in \R^N \times \R$, define
	\begin{equation*}
		\hat{R}_{\alpha} (\ww, z) = \, \hat{\E}_T [L_{\alpha} (\ww, z; \rr)] = \frac{1}{T} \sum_{i=1}^{T} L_{\alpha} (\ww, z; \rr_i).
	\end{equation*}
	By \cref{thm:var_rep}, we know that
	\begin{equation*}
		\operatorname{CVaR}_{\alpha} (- \hat{r}_{\ww, T}) = \, \hat{R}_{\alpha} \big( \ww, \hat{z}_T (\ww, \alpha) \big) = \min_{z \in \R} \hat{R}_{\alpha} \big( \ww, z \big).
	\end{equation*}
	\cref{eq:quantile_uniform_bounded} then implies that with high probability,
	\begin{equation*}
		\operatorname{CVaR}_{\alpha} (- \hat{r}_{\ww, T}) = \min_{z \in [-M, M]} \hat{R}_{\alpha} ( \ww, z )
	\end{equation*}
	for all $(\ww, \alpha) \in K \times \supp(m)$ and some constant $M > 0$. Recalling that $z(\ww, \alpha)$ is the $\alpha$-quantile of $-r_{\ww} = - \ww^\top \rr$, we similarly deduce that
	\begin{equation*}
		\operatorname{CVaR}_{\alpha} (- r_{\ww}) = \min_{z \in [-M, M]} R_{\alpha} ( \ww, z )
	\end{equation*}
	for all $(\ww, \alpha) \in K \times \supp(m)$. Now since $L_{\alpha}$ is a Lipschitz function of $r_{\ww} = \ww^\top \rr$ and $z$, we can apply \cite[Theorem 3]{van2000preservation} to conclude that the function class
	\begin{equation*}
		\left\{ L_{\alpha} (\ww, z; \rr) \vert (\ww, z, \alpha) \in K \times [-M, M] \times \supp(m) \right\}
	\end{equation*}
	is $P$-Glivenko-Cantelli. Therefore,
	\begin{equation*}
		\sup_{(\ww, z, \alpha) \in K \times [-M, M] \times \supp(m)} \left\vert \hat{R}_{\alpha} ( \ww, z ) - R_{\alpha} ( \ww, z ) \right\vert \stackrel{p}{\to} 0
	\end{equation*}
	as $T \to \infty$, which further implies that
	\begin{equation*}
		\sup_{(\ww, \alpha) \in K \times \supp(m)} \left\vert \operatorname{CVaR}_{\alpha} (- \hat{r}_{\ww, T}) - \operatorname{CVaR}_{\alpha} (- r_{\ww}) \right\vert \stackrel{p}{\to} 0,
	\end{equation*}
	and consequently $\sup_{\ww \in K} \vert \rho_m (- \hat{r}_{\ww, T}) - \rho_m (- r_{\ww}) \vert \stackrel{p}{\to} 0$, which proves our claim. The law of large numbers and the linear independence constraint qualification in \cref{ass:identification_constraint} imply that $\hat{\cC}_T$ converges locally in probability to $\cC$ in the Fell topology (cf. \cite{molchanov2005theory}). The compact near-optimal level set in \cref{ass:identification_constraint} makes the sequence of empirical minimizers tight. Local uniform convergence of the convex objectives, convergence of the feasible sets, and uniqueness of the population minimizer therefore yield $\hw\pto\ww_*$.
	
	We next derive the limiting distribution of $\sqrt{T} (\hw - \ww_*)$. To this end, we apply Theorem 5 in \cite{knight1999epi}, an extension of the argmax continuous mapping theorem \cite[Theorem 3.2.2]{wellner2013weak} that is able to handle random constraints. Define $\hat{h}_{c, s} = \sqrt{T} ( \hw - \ww_* )$, and
	\begin{equation*}
		\hat{M}_T (h) := \, T \Big( \rho_m \big( - \hat{r}_{\ww_* + h /\sqrt{T}, T } \big) - \rho_m \big( - \hat{r}_{\ww_*, T } \big) \Big).
	\end{equation*}
	By definition, $\hat{h}_{c, s}$ is a measurable minimizer of $\hat{M}_T$ over $\hat{\cC}_{T, h}$, or equivalently,
	\begin{equation*}
		\hat{h}_{c, s} = \, \arg \min_{h \in \R^N} \big\{ \hat{M}_T \cdot \bone_{\hat{\cC}_{T, h}} + \infty \cdot \bone_{\hat{\cC}_{T, h}^c} \big\},
	\end{equation*}
	where $\hat{\cC}_{T, h}$ is obtained by rescaling $\hat{\cC}_T$:
	\begin{equation}\label{eq:defn_hat_CTH}
		\begin{split}
			\hat{\cC}_{T, h} := \, \Big\{ & h \in \R^N : \, F_c^{(e)} h = 0, \,\, F_c^{(i)} h \le \sqrt{T} (b_c^{(i)} - F_c^{(i)} \ww_* ), \\
			& \hat{\E}_T [F_s^{(e)} (\rr)] h = - \mathbb{G}_T (F_s^{(e)}) \ww_*, \,\, \hat{\E}_T [F_s^{(i)} (\rr)] h \le \sqrt{T} (b_s^{(i)} - \hat{\E}_T [F_s^{(i)} (\rr)] \ww_* ) \Big\}.
		\end{split}
	\end{equation}
	It then suffices to figure out the limiting process of $\hat{M}_T$ and the limiting set of $\hat{\cC}_{T, h}$, which we proceed one by one.
	
	\paragraph{Reduction from $\hat{M}_T$ to $\widetilde{M}_T$.} We first note that for all $\alpha \in (0, 1)$,
	\begin{align*}
		\operatorname{CVaR}_{\alpha} ( -\hat{r}_{\ww, T} ) = \, & \hat{R}_{\alpha} \big( \ww, \hat{z}_T (\ww, \alpha) \big) = \hat{\E}_T \left[ L_{\alpha} \left( \ww, \hat{z}_T (\ww, \alpha); \rr \right) \right].
	\end{align*}
	Using the spectral representation, we obtain that
	\begin{align*}
		\hat{M}_T (h) = \, \int_{0}^{1} T \, \hat{\E}_T \left[ L_{\alpha} \left( \ww_* + h / \sqrt{T}, \hat{z}_T (\ww_* + h / \sqrt{T}, \alpha); \rr \right) - L_{\alpha} \left( \ww_*, \hat{z}_T (\ww_*, \alpha); \rr \right) \right] \d m (\alpha).
	\end{align*}
	Define
	\begin{align*}
		& \widetilde{M}_T (h) = \, \int_{0}^{1} T \, \hat{\E}_T \left[ L_{\alpha} \left( \ww_* + h / \sqrt{T}, z (\ww_* + h / \sqrt{T}, \alpha); \rr \right) - L_{\alpha} \left( \ww_*, z (\ww_*, \alpha); \rr \right) \right] \d m (\alpha) \\
		= \, & T \, \hat{\E}_T \left[ \int_{0}^{1} L_{\alpha} \left( \ww_* + h / \sqrt{T}, z (\ww_* + h / \sqrt{T}, \alpha); \rr \right) \d m (\alpha) - \int_{0}^{1} L_{\alpha} \left( \ww_*, z (\ww_*, \alpha); \rr \right) \d m (\alpha) \right],
	\end{align*}
	where we recall that
	\begin{equation*}
		z (\ww, \alpha) = \, \arg \min_{z \in \R} \left\{ z + \frac{1}{1 - \alpha} \E \left[ \left( -r_{\ww} - z \right)_+ \right] \right\}
	\end{equation*}
	is the $\alpha$-quantile of $-r_{\ww} = - \ww^\top \rr$. We next prove that for any $R > 0$,
	\begin{equation}\label{eq:first_convergence_M}
		\sup_{\norm{h}_2 \le R} \vert \hat{M}_T (h) - \widetilde{M}_T (h) \vert \to 0 \quad \mbox{in probability}.
	\end{equation}

	To this end, note that for any fixed $\ww$ and $\alpha$, we have
	\begin{align*}
		& \hat{\E}_T \left[ L_{\alpha} \left( \ww, \hat{z}_T (\ww, \alpha); \rr \right) \right] - \hat{\E}_T \left[ L_{\alpha} \left( \ww, z (\ww, \alpha); \rr \right) \right] \\
		= \, & \hat{z}_T (\ww, \alpha) - z (\ww, \alpha) + \frac{1}{1 - \alpha} \int_{\hat{z}_T (\ww, \alpha)}^{z (\ww, \alpha)} \hat{\E}_T \left[ \bone_{-r_{\ww} \ge t} \right] \d t \\
		= \, & \frac{1}{1 - \alpha} \int_{z (\ww, \alpha)}^{\hat{z}_T (\ww, \alpha)} \left( \hat{\E}_T \left[ \bone_{-r_{\ww} \le t} \right] - \alpha \right) \d t.
	\end{align*}
	Define $\mathbb{Q}_T (\ww, t) = \mathbb{G}_T (\bone_{-r_{\ww} \le t}) = \sqrt{T} (\hat{\E}_T [ \bone_{-r_{\ww} \le t} ] - \P (-r_{\ww} \le t) )$. Then, \cref{lem:indicator_donsker} implies that
	\begin{equation*}
		\mathbb{Q}_T (\ww, t) \wto \mathbb{Q} (\ww, t) \quad \mbox{in} \,\, \ell^{\infty} (K \times \R)
	\end{equation*}
	for any compact set $K$, where $\mathbb{Q} (\ww, t) = \mathbb{G} (\bone_{-r_{\ww} \le t})$ is the corresponding $\P$-Brownian bridge indexed by $(\ww, t)$. Since our goal is proving $\sup_{\norm{h}_2 \le R} \vert \hat{M}_T (h) - \widetilde{M}_T (h) \vert \stackrel{p}{\to} 0$, using an extended version of Skorokhod's representation theorem, we may assume without loss of generality that $\mathbb{Q}_T$ almost surely converges to $\mathbb{Q}$ in $\ell^{\infty} (K \times \R)$. By definition, we know that $\hat{z}_T (\ww, \alpha)$ satisfies
	\begin{equation}\label{eq:solve_emp_quantile}
		\mathbb{Q}_T (\ww, \hat{z}_T (\ww, \alpha)) = \, \sqrt{T} \left( \alpha - \P (-r_{\ww} \le \hat{z}_T (\ww, \alpha) ) \right) + O (1/ \sqrt{T}),
	\end{equation}
	which further implies that
	\begin{align*}
		& \P (-r_{\ww} \le z (\ww, \alpha)) - \P (-r_{\ww} \le \hat{z}_T (\ww, \alpha)) \\
		= \, & \alpha - \P (-r_{\ww} \le \hat{z}_T (\ww, \alpha)) = O_p (1 / \sqrt{T}).
	\end{align*}
	Since by our assumption, $p_{-r_{\ww}} (\cdot)$ is continuous and always positive, we deduce that $\hat{z}_T (\ww, \alpha) - z (\ww, \alpha) = O_p (1 / \sqrt{T})$. Combining this estimate with \cref{eq:solve_emp_quantile}, it follows that
	\begin{equation*}
		\hat{z}_T (\ww, \alpha) - z (\ww, \alpha) = \, - \frac{1}{\sqrt{T}} \frac{\mathbb{Q} (\ww, z(\ww, \alpha))}{p_{-r_{\ww}} (z(\ww, \alpha))} + o_p ( 1 / \sqrt{T} ).
	\end{equation*}
	Further, we note that the $o_p ( 1 / \sqrt{T} )$ term is uniformly small in $(\ww, \alpha)$, since $\mathbb{Q}_T$ converges uniformly in $\mathbb{Q}$ and $z (\ww, \alpha)$ is uniformly bounded for $\ww \in K$ and $\alpha \in [u, 1 - u]$. Namely, defining
	\begin{equation*}
		\tilde{z}_T (\ww, \alpha) = \, z (\ww, \alpha) - \frac{1}{\sqrt{T}} \frac{\mathbb{Q} (\ww, z(\ww, \alpha))}{p_{-r_{\ww}} (z(\ww, \alpha))},
	\end{equation*}
	we have
	\begin{equation}\label{eq:unif_conv_emp_quantile}
		\sup_{(\ww, \alpha) \in K \times [u, 1 - u]} \sqrt{T} \left\vert \hat{z}_T (\ww, \alpha) - \tilde{z}_T (\ww, \alpha) \right\vert \stackrel{p}{\to} 0 \quad \mbox{as} \,\, T \to \infty.
	\end{equation}
	Using \cref{eq:unif_conv_emp_quantile}, as well as the facts that $\{ \mathbb{Q}_T \}_{T=1}^{\infty}$ is a tight sequence in $\ell^{\infty} (K \times \R)$, and $ \P ( -r_{\ww} \le t ) = \alpha$ at $t = z (\ww, \alpha)$, we deduce that
	\begin{equation}\label{eq:approximate_emp_quantile}
		\sqrt{T} \int_{\hat{z}_T (\ww, \alpha)}^{\tilde{z}_T (\ww, \alpha)} \mathbb{Q}_T (\ww, t) \d t + T \int_{\hat{z}_T (\ww, \alpha)}^{\tilde{z}_T (\ww, \alpha)} \left( \P \left( -r_{\ww} \le t \right) - \alpha \right) \d t        
	\end{equation}
	converges to $0$ in probability, uniformly over $(\ww, \alpha) \in K \times [u, 1 - u]$. 
	
	In what follows, we show that
	\begin{equation}\label{eq:conv_emp_quantile_process}
		\sup_{(\ww, \alpha) \in K \times [u, 1 - u]} \left\vert T \left( \hat{\E}_T \left[ L_{\alpha} \left( \ww, \hat{z}_T (\ww, \alpha); \rr \right) \right] - \hat{\E}_T \left[ L_{\alpha} \left( \ww, z (\ww, \alpha); \rr \right) \right] \right) - A (\ww, \alpha) \right\vert \stackrel{p}{\to} 0
	\end{equation}
	as $T \to \infty$, where
	\begin{equation*}
		A (\ww, \alpha) = \, - \frac{1}{2 (1 - \alpha)} \frac{\mathbb{Q} (\ww, z(\ww, \alpha))^2}{p_{-r_{\ww}} (z (\ww, \alpha))}.
	\end{equation*}
	To this end, we write
	\begin{align*}
		& T \left( \hat{\E}_T \left[ L_{\alpha} \left( \ww, \hat{z}_T (\ww, \alpha); \rr \right) \right] - \hat{\E}_T \left[ L_{\alpha} \left( \ww, z (\ww, \alpha); \rr \right) \right] \right) \\
		= \, & \frac{T}{1 - \alpha} \int_{z (\ww, \alpha)}^{\hat{z}_T (\ww, \alpha)} \left( \hat{\E}_T \left[ \bone_{-r_{\ww} \le t} \right] - \alpha \right) \d t \\
		= \, & \frac{\sqrt{T}}{1 - \alpha} \int_{z (\ww, \alpha)}^{\hat{z}_T (\ww, \alpha)} \mathbb{Q}_T (\ww, t) \d t + \frac{T}{1 - \alpha} \int_{z (\ww, \alpha)}^{\hat{z}_T (\ww, \alpha)} \left( \P \left( -r_{\ww} \le t \right) - \alpha \right) \d t.
	\end{align*}
	Due to \cref{eq:approximate_emp_quantile}, and the fact that $\mathbb{Q}_T$ converges uniformly to $\mathbb{Q}$, it suffices to consider
	\begin{equation*}
		\frac{\sqrt{T}}{1 - \alpha} \int_{z (\ww, \alpha)}^{\tilde{z}_T (\ww, \alpha)} \mathbb{Q} (\ww, t) \d t + \frac{T}{1 - \alpha} \int_{z (\ww, \alpha)}^{\tilde{z}_T (\ww, \alpha)} \left( \P \left( -r_{\ww} \le t \right) - \alpha \right) \d t,
	\end{equation*}
	which converges to $A(\ww, \alpha)$ since $\mathbb{Q}$ and $p_{-r_{\ww}} (\cdot)$ are continuous. This proves \cref{eq:conv_emp_quantile_process}.
	
	The proof of \cref{eq:first_convergence_M} is then concluded, using \cref{eq:conv_emp_quantile_process} and continuity of $\mathbb{Q}$, $p_{-r_{\ww}} (\cdot)$ and $z (\ww, \alpha)$, which follows from \cref{ass:regularity_density} and the inverse function theorem.
	
	\paragraph{Asymptotics of $\widetilde{M}_T$.} Since $\hat{M}_T$ and $\widetilde{M}_T$ are convex and are asymptotically equivalent uniformly on compact sets by \eqref{eq:first_convergence_M}, it suffices to consider minimizing $\widetilde{M}_T$ over $\hat{\cC}_{T, h}$. Recall the definition of $U_m (\ww; \rr)$ in \cref{sec:low-dim}; then
	\begin{align*}
		\widetilde{M}_T (h) = \, & T \, \hat{\E}_T \left[ U_m \big( \ww_* + h / \sqrt{T}; \rr \big) - U_m \big( \ww_*; \rr \big) \right] \\
		:= \, & \widetilde{M}_T^{(1)} (h) + \sqrt{T} h^\top \E \big[ \nabla_{\ww} U_m(\ww_* ; \rr ) \big],
	\end{align*}
	where we define
	\begin{equation*}
		\widetilde{M}_T^{(1)} (h) = \, T \left( \hat{\E}_T \left[ U_m \big( \ww_* + h / \sqrt{T}; \rr \big) - U_m \big( \ww_*; \rr \big) \right] - \frac{h^\top}{\sqrt{T}} \E \big[ \nabla_{\ww} U_m(\ww_* ; \rr ) \big] \right).
	\end{equation*}

	As for $\widetilde{M}_T^{(1)}$, consider the function class
	\begin{equation*}
		\left\{ \ww \mapsto U_m (\ww; \rr) = \int_{0}^{1} L_{\alpha} \left( \ww, z (\ww, \alpha); \rr \right) \d m (\alpha): \ww \in K \right\}.
	\end{equation*}
	We will show that this mapping is Lipschitz in $\ww$, with an $L^2$-integrable Lipschitz constant (depending on $\rr$). By definition, we have for any $\ww_1, \ww_2 \in K$:
	\begin{align*}
		& \left\vert \int_{0}^{1} L_{\alpha} \left( \ww_1, z (\ww_1, \alpha); \rr \right) \d m (\alpha) - \int_{0}^{1} L_{\alpha} \left( \ww_2, z (\ww_2, \alpha); \rr \right) \d m (\alpha) \right\vert \\
		\le \, & \frac{1 + u}{u} \sup_{\alpha \in [u, 1 - u]} \left\vert z (\ww_1, \alpha) - z (\ww_2, \alpha) \right\vert + \frac{1}{u} \norm{\rr}_2 \cdot \norm{\ww_1 - \ww_2}_2 \\
		\le \, & \left( \frac{1+u}{u} \sup_{(\ww, \alpha) \in K \times [u, 1-u]} \frac{\norm{\nabla_{\ww} \P (-r_{\ww} \le z(\ww, \alpha) )}_2 }{p_{-r_{\ww}} (z (\ww, \alpha))} + \frac{1}{u} \norm{\rr }_2 \right) \cdot \norm{\ww_1 - \ww_2}_2.
	\end{align*}
	Using the local empirical-process expansion in \cite[Section 3.2]{wellner2013weak} (see also \cite[Theorem 12.6]{sen2018gentle}), we know that for any $R > 0$, $\widetilde{M}_T^{(1)} (h) \wto M^{(1)} (h)$ in $\ell^{\infty} (\{ h: \norm{h}_2 \le R \})$, where
	\begin{equation*}
		M^{(1)} (h) = \, h^\top \mathbb{G} \left( \nabla_{\ww} U_m (\ww_*; \rr ) \right) + \frac{1}{2} h^\top \nabla_{\ww}^2 \E \left[ U_m (\ww_*; \rr ) \right] h
	\end{equation*}
	consists of the non-Lagrangian terms in the definition of $M$.
	
	As for the remainder $\sqrt{T} h^\top \E \big[ \nabla_{\ww} U_m(\ww_* ; \rr ) \big]$, we note that the KKT conditions of the convex optimization problem~\eqref{eq:pop_rho_risk_general_linear} imply
	\begin{align*}
		& \E \big[ \nabla_{\ww} U_m(\ww_* ; \rr ) \big] = \, \nabla_{\ww} \E \big[ U_m(\ww_* ; \rr ) \big] = \nabla_{\ww} \rho_m \big( - r_{\ww_*} \big) \\
		= \, & - \left( F_{c}^{(e) \top} \eta_c^{(e)} + F_{c, +}^{(i, *) \top} \eta_{c, +}^{(i, *)} + \E [F_s^{(e)} (\rr)]^\top \eta_s^{(e)} + \E [F_{s, +}^{(i, *)} (\rr)]^\top \eta_{s, +}^{(i, *)} \right).
	\end{align*}
	Recalling the definition of $\hat{\cC}_{T, h}$ from \cref{eq:defn_hat_CTH}, we deduce that
	\begin{align*}
		\sqrt{T} h^\top \E \big[ \nabla_{\ww} U_m(\ww_* ; \rr ) \big] = \, - \sqrt{T} \left( \big( F_{c, +}^{(i, *)} h \big)^\top \eta_{c, +}^{(i, *)} + \big( \E [F_s^{(e)} (\rr)] h \big)^\top \eta_s^{(e)} + \big( \E [F_{s, +}^{(i, *)} (\rr)] h \big)^\top \eta_{s, +}^{(i, *)} \right).
	\end{align*}
	Further, on $\hat{\cC}_{T, h}$ we have $F_{c, +}^{(i, *)} h \le 0$. Since our goal is to minimize $\widetilde{M}_T$, we must enforce the constraint $F_{c, +}^{(i, *)} h = 0$ to avoid blow-up of the target function. For the second term, from the definition of $\hat{\cC}_{T, h}$ we know that
	\begin{align*}
		- \sqrt{T} \big( \E [F_s^{(e)} (\rr)] h \big)^\top \eta_s^{(e)} = \, \sqrt{T} \ww_*^\top \mathbb{G}_T \big( F_s^{(e)} \big)^\top \eta_s^{(e)} + h^\top \mathbb{G}_T \big( F_s^{(e)} \big)^\top \eta_s^{(e)},
	\end{align*}
	where the first term on the right hand side does not depend on $h$, and the limit of the second term corresponds to the term $h^\top \mathbb{G} (F_s^{(e)} (\rr)^\top \eta_s^{(e)})$ in \cref{eq:defn_M_func}. Using the same argument, we can show that the constraints in the definition of $\hat{\cC}_{T, h}$ corresponding to $(F_{s, +}^{(i,*)}, b_{s, +}^{(i,*)})$ must be satisfied with equality, and justify the presence of the term $h^\top \mathbb{G} (F_{s, +}^{(i, *)} (\rr)^\top \eta_{s, +}^{(i, *)})$ in \cref{eq:defn_M_func}.
	
	\paragraph{Asymptotics of $\hat{\cC}_{T, h}$.} Finally, we are in position to derive the limiting set of $\hat{\cC}_{T, h}$. It is straightforward to see that the equality constraints converge in law to their population versions as $T \to \infty$. For the inequality constraints, we note that by \cref{defn:asym_dist_low_dim}, if $(F, b) \in (F_c^{(i, *)}, b_c^{(i, *)})$, then $b - F \ww_* = 0$. Otherwise, $b - F \ww_* > 0$ and consequently $\sqrt{T} (b - F \ww_*) \to \infty$ as $T \to \infty$. Similarly, if $(F, b) \in (F_s^{(i, *)}, b_s^{(i, *)})$, then $b - \E [F(\rr)] \ww_* = 0$ and
	\begin{equation*}
		\sqrt{T} (b - \hat{\E}_T [F(\rr)] \ww_*) = \, -\mathbb{G}_T (F) \ww_* \stackrel{d}{\to} - \mathbb{G} (F) \ww_* 
	\end{equation*}
	as $T \to \infty$. Otherwise, $b - \E [F(\rr)] \ww_* > 0$ and $\sqrt{T} (b - \hat{\E}_T [F(\rr)] \ww_*) \stackrel{p}{\to} \infty$. Recalling that the inequality constraints corresponding to $F_{c, +}^{(i,*)}$ and $F_{s, +}^{(i,*)}$ must be saturated with equalities, we therefore define
	\begin{equation*}
		\begin{split}
			\cC_h := \, \Big\{ h \in \R^N : \, & \, F_c^{(e)} h = 0, \,\, F_{c, +}^{(i, *)} h = 0, \,\, F_{c, 0}^{(i, *)} h \le 0, \,\, \E [F_s^{(e)} (\rr)] h = - \mathbb{G} (F_s^{(e)}) \ww_*, \\ 
			& \E [F_{s, +}^{(i, *)} (\rr)] h = - \mathbb{G} (F_{s, +}^{(i, *)}) \ww_*, \,\, \E [F_{s, 0}^{(i, *)} (\rr)] h \le - \mathbb{G} (F_{s, 0}^{(i, *)}) \ww_* \Big\}.
		\end{split}
	\end{equation*}
	The difference between any two points in $\cC_h$ lies in the subspace $\cT_*$ defined before \Cref{ass:second_order_regularity}. The positive-curvature condition in \Cref{ass:second_order_regularity} therefore makes $M$ strictly convex on $\cC_h$, so $h_{c,s}$ is its unique minimizer, i.e.,
	\begin{equation*}
		h_{c, s} = \, \arg \min_{h \in \R^N} \big\{ M \cdot \bone_{\cC_h} + \infty \cdot \bone_{\cC_h^c} \big\}.
	\end{equation*}
	Combining our arguments, it follows that
	\begin{equation*}
		\hat{M}_T \cdot \bone_{\hat{\cC}_{T, h}} + \infty \cdot \bone_{\hat{\cC}_{T, h}^c} \,\, \mbox{epi-converges to} \,\, M \cdot \bone_{\cC_h} + \infty \cdot \bone_{\cC_h^c}
	\end{equation*}
	in distribution as $T \to \infty$, as per the definition in Sections 2 and 3 of \cite{knight1999epi}. Invoking \cite[Theorem 5(b)]{knight1999epi}, we know that $\hat{h}_{c, s} \wto h_{c, s}$. Finally, the ``Furthermore'' part follows directly from the delta method. This completes the proof of \cref{thm:normality_general_risk}.
\end{proof}

\subsection{Proof of \Cref{prop:normality_risky_constraints}}
\label{app:proof_target_return}

\begin{proof}
	The result is a direct specialization of
	\Cref{thm:normality_general_risk}. Take the deterministic budget
	constraint and stochastic expected-return equality constraint by setting
	\[
	F_c^{(e)}=\bone^\top,
	\qquad
	b_c^{(e)}=1,
	\qquad
	F_s^{(e)}(\rr)=\rr^\top,
	\qquad
	b_s^{(e)}=\mu_0,
	\]
	with no inequality constraints. Convexity gives $H_*\succeq0$,
	and nonsingularity of $K_*$ implies that $\bmu^{(1)}$ has full
	column rank and $H_*$ is positive definite on
	$\{d:\bmu^{(1)\top}d=0\}$. Indeed, a nonzero tangent vector
	with $d^\top H_*d=0$ would satisfy $H_*d=0$ and produce a
	null vector of $K_*$. Together with uniqueness, level-boundedness,
	and $\ww_*\neq\bzero$ from the budget constraint, these facts
	verify Assumptions~\ref{ass:identification_constraint} and
	\ref{ass:second_order_regularity}. The population KKT system is
	\[
	\nabla_{\ww}\rho_m(-r_{\ww_*})
	+
	\eta_s\bmu
	+
	\eta_c\bone
	=
	0,
	\qquad
	\ww_*^\top\bmu=\mu_0,
	\quad
	\ww_*^\top\bone=1.
	\]
	Under the additional density condition, the limiting stochastic
	quadratic program in \Cref{defn:asym_dist_low_dim} is an
	equality-constrained Gaussian quadratic program. Ordering the
	multipliers by expected return and then budget, its KKT system is
	\[
	\begin{bmatrix}
		H_* & \bmu^{(1)}\\
		\bmu^{(1)\top} & \bzero
	\end{bmatrix}
	\begin{bmatrix}
		h\\
		\xi_s\\
		\xi_c
	\end{bmatrix}
	=
	-
	\begin{bmatrix}
		\mathbb{G}\!\left(V_*(\rr)\right)\\
		\mathbb{G}\!\left(r_*(\rr)\right)\\
		0
	\end{bmatrix},
	\]
	where
	\[
	V_*(\rr)
	=
	\nabla_{\ww}U_m(\ww_*;\rr)+\eta_s\rr,
	\qquad
	r_*(\rr)=\ww_*^\top\rr.
	\]
	Nonsingularity of $K_*$ therefore gives
	\[
	h
	=
	-
	\begin{bmatrix}
		\id_N & \bzero
	\end{bmatrix}
	K_*^{-1}
	\begin{bmatrix}
		\mathbb{G}\!\left(V_*(\rr)\right)\\
		\mathbb{G}\!\left(r_*(\rr)\right)\\
		0
	\end{bmatrix}.
	\]
	The sign has no effect on the covariance, yielding
	\[
	\Sigma(\mu_0,\rho_m)
	=
	\var\left(
	\begin{bmatrix}
		\id_N & \bzero
	\end{bmatrix}
	K_*^{-1}
	\begin{bmatrix}
		V_*(\rr)\\
		r_*(\rr)\\
		0
	\end{bmatrix}
	\right).
	\]
	Consistency and the stated limiting distribution now follow directly
	from \Cref{thm:normality_general_risk}.
\end{proof}

\subsection{Auxiliary Lemmas}\label{sec:auxiliary_sec3}
\begin{lem}\label{lem:general_diff}
	Let $g(\rr)$ and $h(\rr, \ww)$ be two measurable functions, such that $g(\rr)$ and $g(\rr) \nabla_{\ww} h(\rr, \ww)$ are integrable, and the mapping $\ww \mapsto \E [ \vert g (\rr) \nabla_{\ww} h(\rr, \ww) \vert ]$ is locally integrable. Further, assume that the mappings
	\begin{equation*}
		(\ww, u) \mapsto p_{h(\rr, \ww)} (u) \,\, \mbox{and} \,\, (\ww, u) \mapsto \E \left[ g(\rr) \nabla_{\ww} h(\rr, \ww) \vert h(\rr, \ww ) = u \right] 
	\end{equation*}
	are continuous. Then, the function
	\begin{equation*}
		\psi(\ww) = \, \E \left[ g(\rr ) \bone \left\{ h(\rr, \ww ) \ge 0 \right\} \right]
	\end{equation*}
	is differentiable, and
	\begin{equation}\label{eq:indicator_gradient}
		\nabla_{\ww} \psi(\ww) = \, p_{h(\rr, \ww)} (0) \E \left[ g(\rr) \nabla_{\ww} h(\rr, \ww) \vert h(\rr, \ww) = 0 \right],
	\end{equation}
	provided that the right hand side is continuous in $\ww$.
\end{lem}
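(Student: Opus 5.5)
The plan is to regularize the indicator, differentiate the smoothed functional under the expectation, and pass to the limit using the assumed joint continuity in $(\ww,u)$. Throughout I take $h(\rr,\cdot)$ to be differentiable in $\ww$ for a.e.\ $\rr$, which is implicit in the appearance of $\nabla_{\ww}h$ in the hypotheses.

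\textbf{Smoothing.} Fix a smooth probability density $k$ supported on $[-1,1]$ and set $K_\veps(x)=\int_{-\infty}^{x/\veps}k(s)\,\d s$. Define
\[
\psi_\veps(\ww)=\E\left[g(\rr)K_\veps\bigl(h(\rr,\ww)\bigr)\right].
\]
Since $h(\rr,\ww)$ has a density, $\P(h(\rr,\ww)=0)=0$. Hence $K_\veps(h)\to\bone\{h\ge0\}$ almost surely, and dominated convergence (with $|g|$ integrable) gives $\psi_\veps(\ww)\to\psi(\ww)$ for every $\ww$.

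\textbf{Differentiating the smoothed functional.} Because $K_\veps'=\veps^{-1}k(\cdot/\veps)$ is bounded by $\|k\|_\infty/\veps$, the derivative can be taken inside the expectation. The justification is that $g\nabla_{\ww}h$ is integrable and $\ww\mapsto\E|g\nabla_{\ww}h|$ is locally integrable, so one can write increments as integrals along segments and apply Fubini, rather than needing a pointwise dominating bound. This gives
\[
\nabla\psi_\veps(\ww)=\E\left[g\nabla_{\ww}h\,\veps^{-1}k(h/\veps)\right]
=\int\veps^{-1}k(u/\veps)\,p_{h(\rr,\ww)}(u)\,\E\left[g\nabla_{\ww}h\,\big\vert\,h=u\right]\d u .
\]
The second equality follows by conditioning on the value of $h$.

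\textbf{Passing to the limit.} Let
\[
G(\ww,u)=p_{h(\rr,\ww)}(u)\,\E\left[g\nabla_{\ww}h\,\big\vert\,h(\rr,\ww)=u\right].
\]
By hypothesis $G$ is jointly continuous, hence uniformly continuous on $B\times[-1,1]$ for any compact ball $B$. Since $\nabla\psi_\veps(\ww)$ is an average of $G(\ww,u)$ over $u$ in $[-\veps,\veps]$, it follows that $\nabla\psi_\veps(\ww)\to G(\ww,0)$ uniformly on $B$. For $\ww,\ww+v\in B$,
\[
\psi_\veps(\ww+v)-\psi_\veps(\ww)=\int_0^1\nabla\psi_\veps(\ww+sv)^\top v\,\d s .
\]
Letting $\veps\to0$ yields
\[
\psi(\ww+v)-\psi(\ww)=\int_0^1 G(\ww+sv,0)^\top v\,\d s .
\]
Continuity of $\ww\mapsto G(\ww,0)$, which is the proviso in the statement, then shows that $\psi$ is differentiable with gradient \eqref{eq:indicator_gradient}, and indeed continuously differentiable.

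\textbf{Main obstacle.} The delicate step is justifying the interchange of $\nabla_{\ww}$ and $\E$ for $\psi_\veps$ together with the conditioning identity. This is where the local integrability of $\ww\mapsto\E|g\nabla_{\ww}h|$ must be used: integrate along segments first, apply Fubini, and only then differentiate, instead of seeking a pointwise dominating bound on the difference quotients.
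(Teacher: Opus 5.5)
Your proposal is correct and follows essentially the same route as the paper. Both arguments smooth the indicator by a kernel CDF, write increments of $\psi_\veps$ as segment integrals via Fubini, and condition on $h$ to express the integrand through $G(\ww,u)=p_{h(\rr,\ww)}(u)\,\E[g\nabla_{\ww}h\mid h=u]$. Both then let $\veps\to0$ to obtain $\psi(\ww_1)-\psi(\ww_0)=\int_0^1 G(\ww_t,0)^\top(\ww_1-\ww_0)\,\d t$. The only difference is that you use a compactly supported mollifier rather than the paper's Gaussian $\Phi(h/\veps)$, so your limit follows from uniform continuity of $G$ on a compact set instead of the paper's continuity-in-$u$ argument against a Gaussian kernel.
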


\begin{proof}
	For any $\veps > 0$, we define
	\begin{equation*}
		\psi_{\veps} (\ww) = \E \left[ g(\rr ) \Phi \left( \frac{h(\rr, \ww)}{\veps} \right) \right],
	\end{equation*}
	where $\Phi$ is the standard Gaussian CDF. Since $g(\rr )$ is integrable, by dominated convergence theorem we know that $\lim_{\veps \to 0} \psi_{\veps} (\ww ) = \psi (\ww)$ for all $\ww \in \R^N$. Note that for any fixed $\veps > 0$ and $\ww_0, \ww_1 \in \R^N $:
	\begin{align*}
		\psi_{\veps} (\ww_1) - \psi_{\veps} (\ww_0) = \, & \E \left[ g(\rr ) \Phi \left( \frac{h(\rr, \ww_1 )}{\veps} \right) - g(\rr ) \Phi \left( \frac{h(\rr, \ww_0 )}{\veps} \right) \right] \\
		= \, & \E \left[ \int_{0}^{1} \left\langle \frac{1}{\veps} \phi \left( \frac{h(\rr, \ww_t)}{\veps} \right) g(\rr ) \nabla_{\ww } h(\rr, \ww_t ), \ww_1 - \ww_0 \right\rangle \d t \right],
	\end{align*}
	where $\phi$ is the standard Gaussian PDF, and $\ww_t = (1 - t) \ww_0 + t \ww_1$ denotes a linear interpolation path between $\ww_0$ and $\ww_1$. Now since $\Phi$ and $\phi$ are uniformly bounded, and by our assumption that $g(\rr ) \nabla_{\ww} h(\rr, \ww )$ is jointly integrable in $(\rr, \ww)$ over a neighborhood of the line segment connecting $\ww_0$ and $\ww_1$, we can apply Fubini's theorem to obtain that
	\begin{align*}
		\psi_{\veps} (\ww_1) - \psi_{\veps} (\ww_0) = \, & \int_{0}^{1} \left\langle \E \left[ \frac{1}{\veps} \phi \left( \frac{h(\rr, \ww_t)}{\veps} \right) g(\rr) \nabla_{\ww} h(\rr, \ww_t ) \right], \ww_1 - \ww_0 \right\rangle \d t.
	\end{align*}
	To compute the $\veps \to 0$ limit of the above term, we notice that for any fixed $\ww$,
	\begin{align*}
		& \E \left[ \frac{1}{\veps} \phi \left( \frac{h(\rr, \ww )}{\veps} \right) g(\rr) \nabla_{\ww} h(\rr, \ww ) \right] \\
		= \, & \E \left[ \frac{1}{\veps} \phi \left( \frac{h(\rr, \ww )}{\veps} \right) \E \left[ g(\rr ) \nabla_{\ww } h(\rr, \ww) \vert h(\rr, \ww) \right] \right] \\
		= \, & \int_{\R} \frac{1}{\veps} \phi \left( \frac{u}{\veps} \right) p_{h(\rr, \ww)} (u) \E \left[ g(\rr) \nabla_{\ww} h(\rr, \ww) \vert h(\rr, \ww) = u \right] \d u,
	\end{align*}
	thus leading to (again by Fubini's theorem)
	\begin{align*}
		& \psi_{\veps} (\ww_1) - \psi_{\veps} (\ww_0) \\
		= \, & \int_{\R} \frac{1}{\veps} \phi \left( \frac{u}{\veps} \right) \times \left\langle \int_{0}^{1} p_{h(\rr, \ww_t)} (u) \E \left[ g(\rr) \nabla_{\ww} h(\rr, \ww_t) \vert h(\rr, \ww_t) = u \right] \d t , \ww_1 - \ww_0 \right\rangle \d u.
	\end{align*}
	Under our assumption, the mapping $(\ww, u) \mapsto p_{h(\rr, \ww )} (u) \E \left[ g(\rr ) \nabla_{\ww} h(\rr, \ww) \vert h(\rr, \ww ) = u \right]$ is continuous. Therefore, the function
	\begin{equation*}
		\left\langle \int_{0}^{1} p_{h(\rr, \ww_t)} (u) \E \left[ g(\rr) \nabla_{\ww} h(\rr, \ww_t) \vert h(\rr, \ww_t) = u \right] \d t , \ww_1 - \ww_0 \right\rangle
	\end{equation*}
	is continuous in $u$. Consequently, as $\veps \to 0$ we have
	\begin{align*}
		\psi(\ww_1) - \psi(\ww_0) = \, \int_{0}^{1} \left\langle p_{h(\rr, \ww_t)} (0) \E \left[ g(\rr ) \nabla_{\ww} h(\rr, \ww_t) \vert h(\rr, \ww_t) = 0 \right], \ww_1 - \ww_0 \right\rangle \d t,
	\end{align*}
	which implies \cref{eq:indicator_gradient}.
	This completes the proof of \cref{lem:general_diff}.
\end{proof}

\begin{lem}\label{lem:indicator_donsker}
	Let $K \subset \R^N \backslash \{ \bzero \}$ be any compact set, and assume that
	\begin{equation}\label{eq:bounded_density_assumption}
		\sup_{(\ww, u) \in K \times \R } p_{-r_{\ww}} (u) < \infty.
	\end{equation}
	Then, the function class
	\begin{equation*}
		\cF_K = \, \left\{ \bone_{-r_{\ww} \le t}: \ww \in K, t \in \R \right\}
	\end{equation*}
	indexed by $(\ww, t) \in K \times \R$ is $P$-Donsker.
\end{lem}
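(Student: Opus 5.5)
We plan to show that $\cF_K$ is a VC-subgraph class, hence uniformly Donsker, so that the bounded density condition \eqref{eq:bounded_density_assumption} is needed only for the continuity of the limit. The sets $\{\rr:-\ww^\top\rr\le t\}$, $(\ww,t)\in\R^N\times\R$, are closed half-spaces in $\R^N$. The collection of all half-spaces in $\R^N$ is a VC class of dimension $N+1$; this is the classical Dudley/Radon result stated in \cite[Section 2.6]{wellner2013weak}. Indicators of a VC class of sets form a VC-subgraph class of functions with envelope $F\equiv1$. The uniform entropy bound of \cite[Theorem 2.6.7]{wellner2013weak} then gives
\[
\sup_Q \log N(\veps,\cF_K,L_2(Q))\lesssim N\log(1/\veps),
\]
so the uniform entropy integral is finite. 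The square-integrable envelope is trivial.

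Next we address measurability. We will show $\cF_K$ is pointwise measurable: for countable dense $(\ww_j,t_j)\in(K\cap\mathbb{Q}^N)\times\mathbb{Q}$, any indicator $\bone\{-\ww^\top\rr\le t\}$ is the pointwise limit of $\bone\{-\ww_j^\top\rr\le t_j\}$. We will approximate from the side with $t_j\downarrow t$ and $\ww_j\to\ww$ chosen so that $-\ww_j^\top\rr-t_j\to -\ww^\top\rr-t$ from below at each fixed $\rr$. Since $K$ may not contain rational points, we will instead use the countable dense subset of $K$ itself. Alternatively, we can appeal to right-continuity in $t$ together with continuity in $\ww$ off the null boundary set. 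Here the density assumption gives $\P(-\ww^\top\rr=t)=0$, so the suprema agree outer-almost surely, which suffices.

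Applying \cite[Theorem 2.5.2]{wellner2013weak} then yields that $\cF_K$ is $P$-Donsker. We will also record that the semimetric $\rho((\ww,t),(\ww',t'))=\|\bone_{-r_{\ww}\le t}-\bone_{-r_{\ww'}\le t'}\|_{L_2(P)}$ is continuous in $(\ww,t)$. The bound
\[
\rho^2\le |F_{\ww}(t)-F_{\ww}(t')|+\P\bigl(|(\ww-\ww')^\top\rr|\ge\delta\bigr)+\sup p\cdot\delta
\]
together with \eqref{eq:bounded_density_assumption} shows this. Continuity of $\rho$ gives the almost surely continuous sample paths of $\mathbb{Q}$ used in the proof of \cref{thm:normality_general_risk}.

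The main obstacle is the measurability step, since $K$ is an arbitrary compact set. We expect to handle it by the density argument above rather than by exact pointwise limits.
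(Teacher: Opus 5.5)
Your argument is correct, but it takes a different route from the paper. The paper proves the lemma with the bracketing Donsker theorem. It takes a $\delta$-net $\{\ww_i\}$ of $K$ and, for each $\ww_i$, a grid of $\lceil\delta^{-1/2}\rceil$ quantiles $t_i(j)$ of $-r_{\ww_i}$ with probability increments at most $\sqrt{\delta}$. It then uses the widened brackets $\bone\{-\rr^\top\ww_i\le t_i(j-1)-\norm{\rr}_2\delta\}\le\bone\{-r_{\ww}\le t\}\le\bone\{-\rr^\top\ww_i\le t_i(j)+\norm{\rr}_2\delta\}$. Markov's inequality with $\E\norm{\rr}_2^2<\infty$, together with the density bound \eqref{eq:bounded_density_assumption}, gives bracket size $O(\delta^{1/4})$, and hence $\log N_{[\,]}(\veps,\cF_K,L^2(P))\lesssim N\log(1/\veps)$.

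The trade-off between the two routes is as follows.
\begin{itemize}
\item The bracketing route needs no measurability hypotheses at all. However, its bracket construction relies on the density bound and the second moment.
\item Your VC/uniform-entropy route proves more: half-space indicators are Donsker for every $P$, with no density or moment assumption. So \eqref{eq:bounded_density_assumption} is needed only for continuity of the $L^2(P)$ semimetric in $(\ww,t)$.
\item Your bound on $\rho^2$ is fine up to a constant factor. It is also a useful addition, because the proof of \cref{thm:normality_general_risk} uses continuity of the limit $\mathbb{Q}$, which the lemma as stated does not assert.
\item The price of your route is the measurability condition in Theorem 2.5.2 of van der Vaart and Wellner.
\end{itemize}

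On that measurability step, commit to the exact pointwise-limit argument; it holds at every $\rr$ and is easy. Let $D$ be a countable dense subset of $K$. Pick $\ww_j\in D$ and $t_j\in\mathbb{Q}$ with $\norm{\ww_j-\ww}_2\le j^{-2}$ and $j^{-1}\le t_j-t\le 2j^{-1}$.
\begin{itemize}
\item For fixed $\rr$ and $j\ge\norm{\rr}_2$ we have $|(\ww_j-\ww)^\top\rr|\le t_j-t$. Hence $-\ww^\top\rr\le t$ implies $-\ww_j^\top\rr\le t_j$.
\item If $-\ww^\top\rr>t$, then $-\ww_j^\top\rr>t_j$ eventually.
\item Therefore $\cF_K$ is pointwise measurable, with a countable subclass contained in $\cF_K$.
\item By bounded convergence, the same holds for the class of differences with $\norm{f-g}_{P,2}<\delta$ and for the class of squared differences.
\end{itemize}

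Drop the fallback you mention. That each boundary event $\{-\ww^\top\rr=t\}$ is $P$-null does not by itself justify replacing the suprema by countable ones, since there are uncountably many such events.
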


\begin{proof}
	We use the bracketing Donsker criterion in \cite[Section 2.5.2]{wellner2013weak} (see also \cite[Theorem 11.3]{sen2018gentle}) to show that $\cF_K$ is $P$-Donsker. First, it is easy to see that the constant function $\bone$ is a square-integrable envelope of $\cF_K$. Further, we define
	\begin{equation*}
		J_{[ \, ]}\left(a, \mathcal{F}_K, L^2 (P)\right):=\int_0^a \sqrt{\log N_{[ \, ]}\left(\veps, \mathcal{F}_K \cup\{0\}, L^2 (P)\right)} \, \d \veps,
	\end{equation*}
	where $N_{[ \, ]}$ denotes the bracketing number. It suffices to show that $J_{[ \, ]} (1, \mathcal{F}_K, L^2 (P) ) < \infty$. We prove this by estimating the bracketing number $N_{[ \, ]} ( \veps, \mathcal{F}_K \cup\{0\}, L^2 (P) )$ for all $\veps > 0$. For some $\delta > 0$ (to be determined later), let $N (\delta, K) = \{ \ww_1, \cdots, \ww_M \}$ be a $\delta$-net of $K$ (in terms of $\ell^2$-norm). Then, we know that
	\begin{equation*}
		M = \left\vert N (\delta, K) \right\vert \le \left( \frac{C_K}{\delta} \right)^N
	\end{equation*}
	for some constant $C_K$ only depending on $K$.
	Denote $J = \lceil 1 / \sqrt{\delta} \rceil$, and for each $i \in [M]$, let 
	\begin{equation*}
		- \infty = t_i (0) < t_i (1) < \cdots < t_i (J) = + \infty
	\end{equation*}
	be a sequence of increasing quantiles such that
	\begin{equation*}
		\P \left( - \rr^\top \ww_i \le t_i (j) \right) - \P \left( - \rr^\top \ww_i \le t_i (j - 1) \right) \le \sqrt{\delta}, \quad \forall j \in [J].
	\end{equation*}
	We are now in position to construct the brackets. For $i \in [M]$ and $j \in [J]$, define
	\begin{equation*}
		l_{i, j} (\rr) = \, \bone_{ - \rr^\top \ww_i \le t_i (j - 1) - \norm{\rr}_2 \delta}, \quad u_{i, j} (\rr) = \, \bone_{ - \rr^\top \ww_i \le t_i (j) + \norm{\rr}_2 \delta}.
	\end{equation*}
	We first show that
	\begin{equation}\label{eq:bracket_inclusion_condition}
		\cF_K \subset \cup_{i=1}^{M} \cup_{j=1}^{J} [l_{i, j} (\rr), u_{i, j} (\rr)]. 
	\end{equation}
	To this end, note that for any $\ww \in K$, there exists a $\ww_i \in N (\delta, K)$ such that $\norm{\ww - \ww_i}_2 \le \delta$. Hence, the Cauchy--Schwarz inequality implies that
	\begin{equation*}
		-\rr^\top \ww_i - \norm{\rr}_2 \delta \le -\rr^\top \ww \le -\rr^\top \ww_i + \norm{\rr}_2 \delta.
	\end{equation*}
	For any $t \in \R$, there exists some $j \in [J]$ such that $t \in [t_i (j-1), t_i (j)]$. Therefore,
	\begin{align*}
		\bone_{-\rr^\top \ww \le t} \le \, & \bone_{-\rr^\top \ww_i - \norm{\rr}_2 \delta \le t_i (j)} = u_{i, j} (\rr), \\
		\bone_{-\rr^\top \ww \le t} \ge \, & \bone_{-\rr^\top \ww_i + \norm{\rr}_2 \delta \le t_i (j - 1)} = l_{i, j} (\rr),
	\end{align*}
	which proves \cref{eq:bracket_inclusion_condition}. Further, we have for all $i, j$:
	\begin{align*}
		\norm{u_{i, j} - l_{i, j}}_{L^2 (P)}^2 = \, & \P \left( t_i (j - 1) - \norm{\rr}_2 \delta < -\rr^\top \ww_i \le t_i (j) + \norm{\rr}_2 \delta \right) \\
		\le \, & \P \left( \norm{\rr}_2 > \frac{1}{\sqrt{\delta}} \right) + \P \left( t_i (j - 1) - \sqrt{\delta} < -\rr^\top \ww_i \le t_i (j) + \sqrt{\delta} \right) \\
		\stackrel{(i)}{\le} \, & \delta \E \left[ \norm{\rr}_2^2 \right] + 2 \bar p_K \sqrt{\delta} + \P \left( t_i (j - 1) < -\rr^\top \ww_i \le t_i (j) \right) = O (\sqrt{\delta}),
	\end{align*}
	where $(i)$ follows from Markov's inequality and our assumption~\eqref{eq:bounded_density_assumption}, and in the above display we denote $\bar p_K = \sup_{(\ww, u) \in K \times \R } p_{-r_{\ww}} (u)$. We thus obtain that
	\begin{equation*}
		\norm{u_{i, j} - l_{i, j}}_{L^2 (P)} = O(\delta^{1/4}).
	\end{equation*}
	Choosing $\delta \propto \veps^4$, it follows that
	\begin{equation*}
		\log N_{[ \, ]} ( \veps, \mathcal{F}_K \cup\{0\}, L^2 (P) ) \le \, \log M + \log J \lesssim N \log \frac{1}{\delta} \lesssim N \log \frac{1}{\veps},
	\end{equation*}
	which implies $J_{[ \, ]} (1, \mathcal{F}_K, L^2 (P) ) < \infty$. This completes the proof of \cref{lem:indicator_donsker}.
\end{proof}

\section{Proofs for Section~\ref{sec:elliptic}}

\subsection{Proof of \Cref{prop:no_risk_free_lowdim}}
\label{app:proof_srm_risky}

\begin{proof}
	We first prove \cref{eq:w_star_no_risk_free}. Recall that
	\begin{equation*}
		\ww_*(\mu_0)
		=\arg\min_{\ww\in\R^N}\ww^\top\bSigma\ww,
		\quad\mbox{subject to}\quad
		\ww^\top\bmu=\mu_0,\quad \ww^\top\bone=1
	\end{equation*}
	does not depend on the choice of spectral measure $m$. Indeed, the
	above quadratic program can be solved explicitly, and we get
	\begin{equation*}
		\ww_*(\mu_0)
		=\bSigma^{-1}\bmu^{(1)}
		\left(\bmu^{(1)\top}\bSigma^{-1}\bmu^{(1)}\right)^{-1}
		\mu_0^{(1)},
	\end{equation*}
	which proves \cref{eq:w_star_no_risk_free}. For future convenience,
	write $\ww_*=\ww_*(\mu_0)$ and denote
	\begin{equation*}
		s_*=\|\bSigma^{1/2}\ww_*\|_2,\qquad
		b_*=
		\bSigma^{-1}\bmu^{(1)}
		\left(\bmu^{(1)\top}\bSigma^{-1}\bmu^{(1)}\right)^{-1}
		\begin{bmatrix}1\\0\end{bmatrix}.
	\end{equation*}
	In particular, $b_*b_*^\top=P_\mu$ and
	$P_\perp\bSigma\ww_*=0$.

	Next, applying \cref{prop:normality_risky_constraints} yields
	consistency and asymptotic normality of $\hw(\mu_0)$.
	To check its density bound, \cref{ass:regularity_density} and the
	mixture representation imply $\lambda>0$ a.s. and
	$\E[1/\lambda]<\infty$: the mixture density at the center of any
	nonzero portfolio is
	$\phi(0)\E[1/\lambda]/\|\bSigma^{1/2}\ww\|_2$.
	Consequently, for every compact $K\subset\R^N\setminus\{\bzero\}$,
	\[
	\sup_{(\ww,t)\in K\times\R}p_{-r_{\ww}}(t)
	\le
	\frac{\phi(0)\E[1/\lambda]}
	{\inf_{\ww\in K}\|\bSigma^{1/2}\ww\|_2}<\infty.
	\]
	To compute the asymptotic variance, it suffices to determine the law
	of $h_{c,s}$, defined in \cref{defn:asym_dist_low_dim}.
	The constraints here are $\hat{\E}_T[\rr]^\top\ww=\mu_0$ and
	$\bone^\top\ww=1$. Therefore, $h_{c,s}$ is the unique minimizer of
	the stochastic convex optimization problem
	\begin{equation}\label{eq:risky_h}
	\begin{split}
		&\mbox{minimize}\quad
		M(h)=h^\top\mathbb G\left(
		\nabla_{\ww}U_m(\ww_*;\rr)+\eta_s\rr\right)
		+\frac12h^\top\nabla_{\ww}^2\rho_m(-r_{\ww_*})h,\\
		&\mbox{subject to}\quad
		\bmu^\top h=-\mathbb G(\ww_*^\top\rr),\qquad
		\bone^\top h=0.
	\end{split}
	\end{equation}
	The Lagrange multipliers $\eta_s$ and $\eta_c$ are determined by
	\begin{equation*}
		0=\nabla_{\ww}\rho_m(-r_{\ww_*})
		+\eta_s\bmu+\eta_c\bone.
	\end{equation*}
	We then compute these related quantities. Observe that
	\begin{align*}
		\rho_m(-r_{\ww})
		&=-\ww^\top\bmu+\|\bSigma^{1/2}\ww\|_2\rho_m(\lambda Z),\\
		\nabla_{\ww}\rho_m(-r_{\ww})
		&=-\bmu+\rho_m(\lambda Z)
		\frac{\bSigma\ww}{\|\bSigma^{1/2}\ww\|_2},\\
		\nabla_{\ww}^2\rho_m(-r_{\ww})
		&=\frac{\rho_m(\lambda Z)}{\sqrt{\ww^\top\bSigma\ww}}
		\left(\bSigma-
		\frac{\bSigma\ww\ww^\top\bSigma}{\ww^\top\bSigma\ww}\right),
	\end{align*}
	where $Z\sim\normal(0,1)$ is independent of $\lambda$. This leads to
	\begin{equation*}
		\eta_s
		=1-\frac{\rho_m(\lambda Z)}{s_*}
		[\,1\ \ 0\,]
		\left(\bmu^{(1)\top}\bSigma^{-1}\bmu^{(1)}\right)^{-1}
		\mu_0^{(1)}.
	\end{equation*}
	Substituting $\ww_*$ into the Hessian formula yields
	\begin{equation*}
		\nabla_{\ww}^2\rho_m(-r_{\ww_*})
		=\frac{\rho_m(\lambda Z)}{s_*}
		\left(\bSigma-
		\frac{\bSigma\ww_*\ww_*^\top\bSigma}{s_*^2}\right).
	\end{equation*}

	Further, by definition of $U_m$, it follows that
	\begin{align*}
		\nabla_{\ww}U_m(\ww;\rr)
		={}&\int_0^1\left(
		\nabla_{\ww}L_\alpha(\ww,z(\ww,\alpha);\rr)
		+\partial_zL_\alpha(\ww,z(\ww,\alpha);\rr)
		\nabla_{\ww}z(\ww,\alpha)\right)\d m(\alpha)\\
		={}&\int_0^1\left(
		-\frac{\rr}{1-\alpha}\bone_{-r_{\ww}\ge z(\ww,\alpha)}
		+\left(1-\frac{\bone_{-r_{\ww}\ge z(\ww,\alpha)}}{1-\alpha}\right)
		\nabla_{\ww}z(\ww,\alpha)\right)\d m(\alpha),
	\end{align*}
	where
	\begin{equation*}
		\nabla_{\ww}z(\ww,\alpha)
		=-\left(\partial_z^2R_\alpha(\ww,z(\ww,\alpha))\right)^{-1}
		\nabla_{\ww}\partial_zR_\alpha(\ww,z(\ww,\alpha))
		=-\E[\rr\mid-r_{\ww}=z(\ww,\alpha)]
	\end{equation*}
	by \cref{thm:population_derivatives}. Denote
	$\vv=\bSigma^{1/2}\ww_*/s_*$, so that $\|\vv\|_2=1$.
	Keeping the mean and scale of the portfolio explicit, we have
	\begin{equation*}
		r_{\ww_*}=\mu_0+s_*\lambda\vv^\top\zz,
		\qquad
		z(\ww_*,\alpha)=-\mu_0+s_* q_\alpha(\lambda Z).
	\end{equation*}
	We thus deduce that
	\begin{align*}
		\nabla_{\ww}z(\ww_*,\alpha)
		&=-\E[\rr\mid-r_{\ww_*}=z(\ww_*,\alpha)]\\
		&=-\bmu-\E\left[
		\lambda\bSigma^{1/2}\zz
		\mid\lambda\vv^\top\zz=-q_\alpha(\lambda Z)\right]\\
		&=-\bmu+q_\alpha(\lambda Z)\bSigma^{1/2}\vv
		=-\bmu+q_\alpha(\lambda Z)\frac{\bSigma\ww_*}{s_*},
	\end{align*}
	where the third equality follows by conditioning on $\lambda$ and
	using Gaussian conditional expectation. It then follows that
	\begin{align*}
		&\mathbb G\left(\nabla_{\ww}U_m(\ww_*;\rr)+\eta_s\rr\right)\\
		={}&\mathbb G\bigg(
		\int_0^1-\frac{\bone_{-\lambda\vv^\top\zz\ge q_\alpha(\lambda Z)}}{1-\alpha}
		\bSigma^{1/2}\big(\lambda\zz+q_\alpha(\lambda Z)\vv\big)
		\d m(\alpha)+\eta_s\lambda\bSigma^{1/2}\zz\bigg)\\
		={}&\mathbb G\bigg(
		\int_0^1-\frac{\bSigma^{1/2}V_\alpha(\lambda\zz,\vv)}{1-\alpha}
		\d m(\alpha)\bigg),
	\end{align*}
	where deterministic terms disappear under the centered Gaussian
	process $\mathbb G$, and we define
	\begin{equation*}
		V_\alpha(\lambda\zz,\vv)
		=\bone_{-\lambda\vv^\top\zz\ge q_\alpha(\lambda Z)}
		\big(\lambda\zz+q_\alpha(\lambda Z)\vv\big)
		-(1-\alpha)\eta_s\lambda\zz.
	\end{equation*}
	Finally, by the preceding expression for $r_{\ww_*}$,
	\begin{equation*}
		\mathbb G(\ww_*^\top\rr)=s_*\mathbb G(\lambda\vv^\top\zz).
	\end{equation*}

	On the constraint set in \cref{eq:risky_h}, the quantity
	$h^\top\bSigma\ww_*\ww_*^\top\bSigma h$ is constant, since
	$\bSigma\ww_*$ lies in the span of $\bmu$ and $\bone$.
	We may therefore drop the corresponding rank-one term from the
	quadratic objective and multiply the objective by the positive
	constant $s_*/\rho_m(\lambda Z)$. Based on the above calculations,
	the quadratic program defining $h$ can thus be equivalently written as
	\begin{equation*}
	\begin{split}
		&\mbox{minimize}\quad
		M(h)=\frac{s_*}{\rho_m(\lambda Z)}h^\top
		\mathbb G\bigg(\int_0^1
		-\frac{\bSigma^{1/2}V_\alpha(\lambda\zz,\vv)}{1-\alpha}
		\d m(\alpha)\bigg)+\frac12h^\top\bSigma h,\\
		&\mbox{subject to}\quad
		\bmu^\top h=-s_*\mathbb G(\lambda\vv^\top\zz),\qquad
		\bone^\top h=0.
	\end{split}
	\end{equation*}
	This optimization problem can be solved explicitly, and we obtain that
	\begin{equation*}
	\begin{split}
		h_{c,s}
		={}&-\frac{s_*}{\rho_m(\lambda Z)}P_\perp
		\mathbb G\bigg(\int_0^1
		-\frac{\bSigma^{1/2}V_\alpha(\lambda\zz,\vv)}{1-\alpha}
		\d m(\alpha)\bigg)\\
		&-s_*b_*\mathbb G(\lambda\vv^\top\zz)
		:=\mathrm I+\mathrm{II}.
	\end{split}
	\end{equation*}

	We are now in position to compute $\Sigma(\mu_0,\rho_m)$, the
	covariance matrix of $h_{c,s}$. To this end, we first show that
	$\mathrm I$ and $\mathrm{II}$ are uncorrelated. It suffices to prove that
	\begin{equation}\label{eq:risky_score_cross_direction}
		\E\big[\lambda\vv^\top\zz V_\alpha(\lambda\zz,\vv)\big]
		\propto\vv,\qquad \alpha\in(0,1).
	\end{equation}
	For any vector $\vv_\perp$ orthogonal to $\vv$, we have
	\begin{align*}
		&\vv_\perp^\top
		\E\big[\lambda\vv^\top\zz V_\alpha(\lambda\zz,\vv)\big]\\
		={}&\E\big[
		\lambda\vv^\top\zz
		\big(\bone_{-\lambda\vv^\top\zz\ge q_\alpha(\lambda Z)}
		-(1-\alpha)\eta_s\big)
		\lambda\vv_\perp^\top\zz\big]=0,
	\end{align*}
	where the last equality follows by conditioning on $\lambda$ and
	using independence of $\vv^\top\zz$ and $\vv_\perp^\top\zz$.
	This proves \eqref{eq:risky_score_cross_direction}. Since
	\begin{equation*}
		P_\perp\bSigma^{1/2}\vv
		=\frac{1}{s_*}P_\perp\bSigma\ww_*=\bzero,
	\end{equation*}
	it follows that $\operatorname{Cov}(\mathrm I,\mathrm{II})=0$, and hence
	\begin{equation*}
		\Sigma(\mu_0,\rho_m)
		=\operatorname{Var}(\mathrm I)+\operatorname{Var}(\mathrm{II}).
	\end{equation*}
	Next, we compute these two variances separately. By straightforward
	calculation,
	\begin{equation*}
		\operatorname{Var}(\mathrm{II})
		=\E[\lambda^2]s_*^2b_*b_*^\top
		=\E[\lambda^2]s_*^2P_\mu.
	\end{equation*}
	To calculate $\operatorname{Var}(\mathrm I)$, note that for any
	$\alpha\in(0,1)$,
	\begin{equation*}
		\E[V_\alpha(\lambda\zz,\vv)]
		=\left((1-\alpha)q_\alpha(\lambda Z)
		-\E\left[\lambda\phi\left(
		\frac{q_\alpha(\lambda Z)}\lambda\right)\right]\right)\vv
		=:V(\alpha)\vv,
	\end{equation*}
	where $\phi$ is the standard normal density. Further, conditional
	Gaussian moments give, for any $\alpha_1,\alpha_2\in(0,1)$,
	\begin{align*}
		\E[V_{\alpha_1}(\lambda\zz,\vv)
		V_{\alpha_2}(\lambda\zz,\vv)^\top]
		&=B(\alpha_1,\alpha_2)\vv\vv^\top
		+A(\alpha_1,\alpha_2)\id_N,\\
		A(\alpha_1,\alpha_2)
		&=\E\left[\lambda^2\Phi\left(
		-\frac{q_{\alpha_1\vee\alpha_2}(\lambda Z)}\lambda\right)\right]\\
		&\quad-(1-\alpha_1)\eta_s
		\E\left[\lambda^2\Phi\left(
		-\frac{q_{\alpha_2}(\lambda Z)}\lambda\right)\right]\\
		&\quad-(1-\alpha_2)\eta_s
		\E\left[\lambda^2\Phi\left(
		-\frac{q_{\alpha_1}(\lambda Z)}\lambda\right)\right]
		+(1-\alpha_1)(1-\alpha_2)\eta_s^2\E[\lambda^2],
	\end{align*}
	where $\Phi$ denotes the standard normal distribution function. The
	exact expression for $B$ is immaterial, as we will soon see.
	Combining these results, we deduce that
	\begin{align*}
		&\operatorname{Cov}\big(V_{\alpha_1}(\lambda\zz,\vv),
		V_{\alpha_2}(\lambda\zz,\vv)\big)\\
		={}&A(\alpha_1,\alpha_2)\id_N
		+\big(B(\alpha_1,\alpha_2)-V(\alpha_1)V(\alpha_2)\big)
		\vv\vv^\top.
	\end{align*}
	It follows that
	\begin{align*}
		&\operatorname{Var}\left(
		\mathbb G\bigg(\int_0^1
		-\frac{\bSigma^{1/2}V_\alpha(\lambda\zz,\vv)}{1-\alpha}
		\d m(\alpha)\bigg)\right)\\
		={}&\left(\int_{[0,1]^2}
		\frac{A(\alpha_1,\alpha_2)}{(1-\alpha_1)(1-\alpha_2)}
		\d m(\alpha_1)\d m(\alpha_2)\right)\bSigma\\
		&+\left(\int_{[0,1]^2}
		\frac{B(\alpha_1,\alpha_2)-V(\alpha_1)V(\alpha_2)}
		{(1-\alpha_1)(1-\alpha_2)}
		\d m(\alpha_1)\d m(\alpha_2)\right)
		\frac{\bSigma\ww_*\ww_*^\top\bSigma}{s_*^2}.
	\end{align*}
	Using $P_\perp\bSigma\ww_*=0$ and
	$P_\perp\bSigma P_\perp=P_\perp$, we finally obtain
	\begin{align*}
		\operatorname{Var}(\mathrm I)
		&=\frac{s_*^2}{\rho_m(\lambda Z)^2}
		\left(\int_{[0,1]^2}
		\frac{A(\alpha_1,\alpha_2)}{(1-\alpha_1)(1-\alpha_2)}
		\d m(\alpha_1)\d m(\alpha_2)\right)
		P_\perp\bSigma P_\perp\\
		&=\frac{s_*^2}{\rho_m(\lambda Z)^2}
		\left(\int_{[0,1]^2}
		\frac{A(\alpha_1,\alpha_2)}{(1-\alpha_1)(1-\alpha_2)}
		\d m(\alpha_1)\d m(\alpha_2)\right)P_\perp.
	\end{align*}
	Adding $\operatorname{Var}(\mathrm I)$ and
	$\operatorname{Var}(\mathrm{II})$, and substituting
	$s_*^2=\ww_*(\mu_0)^\top\bSigma\ww_*(\mu_0)$, gives the stated
	covariance formula.
\end{proof}

\subsection{Proof of \Cref{prop:asymptotic_variance_mv}}

\begin{proof}
The sample mean-variance estimator is
\[
\hw_{\rm MV}(\mu_0)
=
\hat{\bS}^{-1}\hat{\bmu}^{(1)}
\left(\hat{\bmu}^{(1)\top}\hat{\bS}^{-1}\hat{\bmu}^{(1)}\right)^{-1}
\mu_0^{(1)}.
\]
Consistency follows from the laws of large numbers for $\hat{\bmu}$ and
$\hat{\bS}$ and continuity of this mapping at
$(\bmu,\E[\lambda^2]\bSigma)$. The assumed rank condition and positive
definiteness of $\bSigma$ ensure that the required inverses exist in a
neighborhood of these population values. Since $\E[\lambda^4]<\infty$,
the joint central limit theorem applies to $\hat{\bmu}$ and $\hat{\bS}$.
Differentiating the preceding mapping gives the influence function
\[
\psi_{\rm MV}(\rr)
=
-b_*\ww_*^\top\varepsilon
+\gamma_s^{(e)}P_{\perp}\varepsilon
-\frac{P_{\perp}
\left(\varepsilon\varepsilon^\top-\E[\lambda^2]\bSigma\right)\ww_*}
{\E[\lambda^2]},
\]
where $\varepsilon=\rr-\bmu$, $\ww_*=\ww_*(\mu_0)$, and
\[
b_*
=
\bSigma^{-1}\bmu^{(1)}
\left(\bmu^{(1)\top}\bSigma^{-1}\bmu^{(1)}\right)^{-1}
\begin{bmatrix}1\\0\end{bmatrix}.
\]
Here $P_{\perp}$ and $\gamma_s^{(e)}$ are defined in
\Cref{sec:elliptic}. Put $s_*^2=\ww_*^\top\bSigma\ww_*$. The identities
$P_{\perp}\bSigma\ww_*=0$, $P_{\perp}\bSigma P_{\perp}=P_{\perp}$,
and $b_*b_*^\top=P_{\mu}$ imply that the first two terms of
$\psi_{\rm MV}$ have zero cross-covariance. Symmetry makes all third
central moments vanish, so they also have zero cross-covariance with
the final term. Conditional Gaussian fourth moments give
\[
\var\!\left(P_{\perp}\varepsilon\varepsilon^\top\ww_*\right)
=\E[\lambda^4]s_*^2P_{\perp}.
\]
Consequently, the mean-zero influence function has covariance
\[
\var\bigl(\psi_{\rm MV}(\rr)\bigr)
=
\E[\lambda^2]s_*^2P_{\mu}
+\left(
\frac{\E[\lambda^4]}{\E[\lambda^2]^2}s_*^2
+\E[\lambda^2](\gamma_s^{(e)})^2
\right)P_{\perp}.
\]
The delta method therefore yields the asserted asymptotic normality
and covariance formula.
\end{proof}


\section{Proofs for \cref{sec:optimal_srm}}

\subsection{Proof of \cref{prop:unique_minimizer}}
\begin{proof}
	Fix $c\in\R$ and write $K=[u,1-u]$. Recall the change of variable $p=1-\alpha$ used in the proof of \cref{thm:SRM_optimality}. Let $\mu$ be the pushforward of $m$ under this change of variable, and retain the notation $H_\mu$ introduced there. Since $K$ is invariant under the map $\alpha\mapsto1-\alpha$, we have $\mu\in\cuP(K)$. We further write
	\begin{equation*}
		\rho_\mu=\rho_m(\lambda Z)=\int_K\frac{\cU(p)}{p}\d\mu(p).
	\end{equation*}
	Because $\lambda>0$ almost surely, $\cU(p)>0$ for every $p\in(0,1)$, and therefore $\rho_\mu>0$.
	
	We now make a second change of measure. Define $\nu=\mu/\rho_\mu$ and let
	\begin{equation*}
		\cV
		=\left\{\nu\in\mathcal{M}_+(K):
		\int_K\frac{\cU(p)}{p}\d\nu(p)=1\right\},
	\end{equation*}
	where $\mathcal{M}_+(K)$ denotes the set of finite nonnegative Borel measures on $K$. Clearly, $\nu\in\cV$. Moreover, the mapping $\mu\mapsto\nu$ is a bijection from $\cuP(K)$ to $\cV$. Indeed, for any $\nu\in\cV$, setting $\mu=\nu/\nu(K)$ gives
	\begin{equation*}
		\rho_\mu
		=\int_K\frac{\cU(p)}{p}\d\mu(p)
		=\frac{1}{\nu(K)},
	\end{equation*}
	and hence $\mu/\rho_\mu=\nu$.
	
	For a finite signed measure $\nu$ on $K$, define
	\begin{equation*}
		(\mathcal{A}\nu)(t)=H_\nu(t)-\nu(K), \qquad t\in(0,1).
	\end{equation*}
	Since $H_\nu=H_\mu/\rho_\mu$ and $\nu(K)=1/\rho_\mu$, the representation \eqref{eq:rewrite_I} derived in the proof of \cref{thm:SRM_optimality} yields
	\begin{equation*}
		I(m;c)=\mathcal{J}_c(\nu)
		:=\int_0^1\left[c+(\mathcal{A}\nu)(t)\right]^2\d\cT(t).
	\end{equation*}
	The set $\cV$ is convex. It therefore suffices to show that $\mathcal{J}_c$ is strictly convex on $\cV$. To this end, we first prove that $\mathcal{A}$ is injective as a map from finite signed measures on $K$ into $L^2(\d\cT)$.
	
	Using the notation from the proof of \cref{thm:SRM_optimality}, we know that $q(s)<q(t)$ whenever $0<s<t<1$, because the distribution function of $\lambda Z$ is continuous and strictly increasing. It follows that
	\begin{equation*}
		\cT(t)-\cT(s)
		=\E\left[\lambda^2\left\{
		\Phi\left(\frac{q(t)}{\lambda}\right)
		-\Phi\left(\frac{q(s)}{\lambda}\right)
		\right\}\right]>0.
	\end{equation*}
	Thus, the Stieltjes measure $\d\cT$ assigns positive mass to every nonempty open subinterval of $(0,1)$.
	
	Suppose now that $\delta$ is a finite signed measure on $K$ such that $\mathcal{A}\delta=0$ in $L^2(\d\cT)$. For $t\in(1-u,1)$, we have $H_\delta(t)=0$, so that
	\begin{equation*}
		(\mathcal{A}\delta)(t)=-\delta(K).
	\end{equation*}
	Since $\d\cT((1-u,1))>0$, we must have $\delta(K)=0$. Therefore, $H_\delta=0$ $\d\cT$-almost everywhere on $(0,1)$.
	
	Let $\xi$ be the finite signed measure on $K$ defined by $\d\xi(p)=p^{-1}\d\delta(p)$. Then
	\begin{equation*}
		H_\delta(t)=\xi([t,1]), \qquad t\in(0,1).
	\end{equation*}
	The function $t\mapsto\xi([t,1])$ is left-continuous. If it were nonzero at some $t_0\in(0,1)$, it would remain bounded away from zero on an interval immediately to the left of $t_0$. This is impossible because $H_\delta=0$ $\d\cT$-almost everywhere and $\d\cT$ is non-zero on every non-empty open interval. Hence, $H_\delta(t)=0$ for every $t\in(0,1)$. In particular,
	\begin{equation*}
		\xi([a,b))=H_\delta(a)-H_\delta(b)=0,
		\qquad 0<a<b<1,
	\end{equation*}
	and $\xi(\{1-u\})=H_\delta(1-u)=0$. The half-open intervals in the preceding display generate the Borel $\sigma$-field on $[u,1-u)$; together with the last identity, they therefore imply that $\xi=0$ on $K$. Consequently, $\delta=0$, and $\mathcal{A}$ is injective.
	
	Finally, let $\nu_1,\nu_2\in\cV$ be distinct and let $\theta\in(0,1)$. By the linearity of $\mathcal{A}$,
	\begin{align*}
		\mathcal{J}_c\big(\theta\nu_1+(1-\theta)\nu_2\big)
		=\, &\theta\mathcal{J}_c(\nu_1)
		+(1-\theta)\mathcal{J}_c(\nu_2)\\
		&-\theta(1-\theta)
		\int_0^1\left[\mathcal{A}(\nu_1-\nu_2)(t)\right]^2\d\cT(t).
	\end{align*}
	The final integral is strictly positive by the injectivity of $\mathcal{A}$. Hence, $\mathcal{J}_c$ is strictly convex on $\cV$. Since a minimizer of $I(m;c)$ over $\cuP(K)$ exists, the two changes of measure above imply that this minimizer must be unique. This completes the proof of \cref{prop:unique_minimizer}.
\end{proof}

\subsection{Proof of \cref{thm:continuous_optimal_measure}}
\begin{proof}
	Fix $c \in \R$, we already know that a minimizer $m_{*,u}^c$ of $I (m; c)$ exists. Denote
	\begin{equation*}
		m_{\cT} = \int_{0}^{1} \frac{\cT(1 - \alpha)}{1 - \alpha} \d m_{*,u}^c (\alpha), \quad m_{\cU} = \int_{0}^{1} \frac{\cU(1 - \alpha)}{1 - \alpha} \d m_{*,u}^c (\alpha).
	\end{equation*}
	Then, by \cref{eq:simplified_I_m_c} we know that
	\begin{equation*}
		\begin{split}
			& m_{*,u}^c = \arg \min_{m \in \cuP([u, 1-u])} \left\{ \int_{[0, 1]^2} \frac{\cT\bigl(1-(\alpha_1 \vee \alpha_2)\bigr)}{(1 - \alpha_1) (1 - \alpha_2)} \, \d m (\alpha_1) \d m (\alpha_2) \right\}, \\ & \mbox{subject to} \,\, \int_{0}^{1} \frac{\cT(1 - \alpha)}{1 - \alpha} \d m (\alpha) = m_{\cT}, \,\, \int_{0}^{1} \frac{\cU(1 - \alpha)}{1 - \alpha} \d m (\alpha) = m_{\cU}.
		\end{split}
	\end{equation*}
	Using calculus of variations and the method of Lagrange multipliers, we know that there exist constants $\nu_{\cT}, \nu_{\cU}, \nu_0 \in \R$, such that, defining
	\begin{equation*}
		\Gamma_* (x) = \, \int_{0}^{1} \frac{\cT\bigl(1-(x \vee y)\bigr)}{(1 - x) (1 - y)} \, \d m_{*,u}^c (y) + \nu_{\cT} \frac{\cT(1 - x)}{1 - x} + \nu_{\cU} \frac{\cU(1 - x)}{1 - x},
	\end{equation*}
	we have
	\begin{equation}\label{eq:FOC}
		\Gamma_* (x) \ge \, \nu_0, \,\, \forall x \in [u, 1-u], \quad \Gamma_* (x) = \, \nu_0, \,\, \forall x \in \supp (m_{*,u}^c).
	\end{equation}

	We first show that $m_{*,u}^c$ cannot have a point mass in $(u, 1-u)$.
	Assume by contradiction that $m_{*,u}^c (\{ x \}) > 0$ for some $x \in (u, 1-u)$. Then, using dominated convergence theorem, we get that:
	\begin{equation}\label{eq:derivative_Gamma}
		\begin{split}
			\Gamma_*' (x^{-}) = \, & \frac{1}{(1 - x)^2} \left( - (1 - x) \int_{0}^{1} \frac{\cT' (1 - x) \bone_{x > y} }{1 - y} \d m_{*,u}^c (y) + \int_{0}^{1} \frac{\cT\bigl(1-(x \vee y)\bigr)}{1 - y} \d m_{*,u}^c (y) \right) \\
			& + \nu_{\cT} \frac{\d}{\d x} \left( \frac{\cT(1 - x)}{1 - x} \right) + \nu_{\cU} \frac{\d}{\d x} \left( \frac{\cU(1-x)}{1-x} \right), \\
			\Gamma_*' (x^{+}) = \, & \frac{1}{(1 - x)^2} \left( - (1 - x) \int_{0}^{1} \frac{\cT' (1 - x) \bone_{x \ge y} }{1 - y} \d m_{*,u}^c (y) + \int_{0}^{1} \frac{\cT\bigl(1-(x \vee y)\bigr)}{1 - y} \d m_{*,u}^c (y) \right) \\
			& + \nu_{\cT} \frac{\d}{\d x} \left( \frac{\cT(1 - x)}{1 - x} \right) + \nu_{\cU} \frac{\d}{\d x} \left( \frac{\cU(1-x)}{1-x} \right),
		\end{split}
	\end{equation}
	which leads to
	\begin{equation*}
		\Gamma_*' (x^{-}) - \Gamma_*' (x^{+}) = \, m_{*,u}^c \big( \{ x \} \big) \frac{\cT'(1 - x)}{ (1 - x)^2} > 0,
	\end{equation*}
	where $\cT' (1 - x) > 0$ follows directly from our assumption $\E [1 / \lambda] < \infty$ by straightforward calculation.
	However, \cref{eq:FOC} implies that $\Gamma_*' (x^{-}) \le 0 \le \Gamma_*' (x^{+})$, a contradiction. 
	This proves that $m_{*,u}^c$ is nonatomic on $(u, 1-u)$ and can have point masses only at the endpoints $u$ and $1-u$.
	
	We then prove that $m_{*,u}^c$ must be absolutely continuous, i.e., $m_{*,u}^c$ can not have any singular component on $(u, 1-u)$. It suffices to consider any closed subinterval $[a, b] \subset (u, 1-u)$ and the restriction $m_{*,u}^c \vert_{[a, b]}$, which we still denote by $m_{*,u}^c$ for notational simplicity. Since $m_{*,u}^c$ has no atoms in $(u,1-u)$, its interior support has no isolated points: an isolated support point would necessarily carry positive mass. Hence every interior support point is an accumulation point of the support, and \eqref{eq:FOC} implies that $\Gamma_*'(x)=0$ for all $x\in\supp(m_{*,u}^c)\cap(u,1-u)$. By \cref{eq:derivative_Gamma}, we know that
	\begin{equation*}
		\nu_0 = \, \cT' (1 - x) \cdot \int_{0}^{x} \frac{\d m_{*,u}^c (y)}{1 - y} + \nu_{\cT} \cT' (1 - x) + \nu_{\cU} \cU' (1 - x),
	\end{equation*}
	which leads to
	\begin{equation}\label{eq:density_condition}
		\int_{0}^{x} \frac{ \d m_{*,u}^c (y) }{1 - y} = \,  \frac{\nu_0}{\cT' (1 - x)} - \nu_{\cT} - \nu_{\cU} \frac{\cU' (1 - x)}{\cT' (1 - x)} := V (x), \quad \forall x \in \supp \big( m_{*,u}^c \big).
	\end{equation}
	To show that $m_{*,u}^c$ is absolutely continuous with respect to the Lebesgue measure, we will prove that for any $\veps > 0$, there exists some $\delta > 0$, such that for any disjoint intervals $\{ [x_i, y_i] \}_{i=1}^{n}$:
	\begin{equation}\label{eq:def_ac}
		\sum_{i=1}^{n} (y_i - x_i) < \delta \implies \sum_{i=1}^{n} m_{*,u}^c \big( [x_i, y_i] \big) < \veps.
	\end{equation}
	%
	%
	%
	
	By definition, there exists a constant $C_u > 0$, such that the function $V(x)$ is $C_u$-Lipschitz on $[u, 1-u]$. Further, we can assume without loss of generality that $x_i, y_i \in \supp (m_{*,u}^c)$ for each $i \in [n]$. \cref{eq:density_condition} then implies that  
	\begin{equation*}
		C_{u} (y_i - x_i) \ge \, V (y_i) - V (x_i) = \int_{x_i}^{y_i} \frac{ \d m_{*,u}^c (y) }{1 - y} \ge m_{*,u}^c \left( [x_i, y_i] \right).
	\end{equation*}
	The above calculation further implies that
	\begin{equation*}
		\sum_{i=1}^{n} m_{*,u}^c \big( [x_i, y_i] \big) \le C_{u} \sum_{i=1}^{n} (y_i - x_i) < C_{u} \delta.
	\end{equation*}
	Of course, we can choose $\delta$ to be so small that $C_{u} \delta \le \veps$, which finally leads to \cref{eq:def_ac}. This completes the proof of \cref{thm:continuous_optimal_measure}.
\end{proof}

\subsection{Proof of \cref{thm:SRM_optimality}}
\begin{proof}
	We assume without loss of generality that $\lambda > 0$ almost surely, and that $\E [\lambda^4] < \infty$. Otherwise, $J (c) = + \infty$ and our claim automatically holds. For future convenience, we denote
	$$
	L_2 = \E [\lambda^2], \quad L_4 = \E [\lambda^4],
	$$
	and write $q (\alpha)$ as a shorthand for $q_{\alpha} (\lambda Z)$. Denote $ F(x):=\mathbb P(X\le x) $ as the CDF of $X=\lambda Z$. Since $\lambda>0$ almost surely and $Z$ is independent of $\lambda$, we know that
	\[
	F(x)=\E \left[\Phi\left(\frac{x}{\lambda}\right)\right].
	\]
	As a consequence, for any $ \alpha \in (0,1)$, we have
	\begin{equation}\label{eq:F_and_Phi}
		\alpha = F (q(\alpha)) = \E \left[\Phi\left(\frac{q(\alpha)}{\lambda}\right)\right].
	\end{equation}
	By symmetry, we also have $q(1 - \alpha) = - q(\alpha)$.
	
	According to the definition of expectation, we get that
	\begin{equation}\label{eq:property_q}
		\begin{split}
			& \int_0^1 q(\alpha) \d \alpha = \mathbb E[X] = \mathbb E[\lambda]\mathbb E[Z] = 0, \\
			& \int_0^1 q(\alpha)^2 \d \alpha = \mathbb E[X^2] = \mathbb E[\lambda^2]\mathbb E[Z^2] = L_2.
		\end{split}
	\end{equation}
	Further, computing the first-order derivative of $\cU$ and using \cref{eq:F_and_Phi} gives that
	\begin{equation}\label{eq:representation_U}
		\cU(\alpha) = \int_{0}^{\alpha} -q(t) \d t.
	\end{equation}
	By definition of $\cT$, we deduce that for any integrable function $\psi$:
	\[
	\int_0^1 \psi(q(\alpha)) \d\cT(\alpha) = \mathbb E[\lambda^2\psi(X)] = \mathbb E[\lambda^2\psi(\lambda Z)].
	\]
	In particular,
	\begin{align*}
		& \int_0^1 \d\cT(\alpha)=\mathbb E[\lambda^2]=L_2, \\
		& \int_0^1 q(\alpha) \d\cT(\alpha) = \mathbb E[\lambda^2 X] = \mathbb E[\lambda^3 Z] = \mathbb E[\lambda^3]\mathbb E[Z] = 0, \\
		& \int_0^1 q(\alpha)^2 \d\cT(\alpha) = \mathbb E[\lambda^2 X^2] = \mathbb E[\lambda^2(\lambda Z)^2] = \mathbb E[\lambda^4]\mathbb E[Z^2] = L_4.
	\end{align*}
	
	Now for any $m \in \cuP([0,1])$, let $\mu$ be its pushforward under the change of variable $p = 1 - \alpha$, and define for $t \in (0, 1)$:
	\[
	H_\mu(t) :=
	\int_{t}^{1} \frac{1}{p} \d \mu(p).
	\]
	We can therefore recast the three terms appearing in $I(m;c)$ in terms of $\mu$. First, using the representation~\eqref{eq:representation_U} and Fubini's theorem, we get
	\begin{equation*}
		\begin{split}
			& \rho_\mu = \int_0^1 \frac{\cU(p)}{p} \d \mu(p) = \int_0^1 \frac{1}{p}
			\left(\int_0^p -q(t) \d t \right) \d \mu(p) \\
			= \, & \int_0^1 -q(t) \left(
			\int_{t}^{1} \frac{1}{p} \d \mu(p)
			\right) \d t = \int_0^1 -q(t) H_\mu(t) \d t.
		\end{split}
	\end{equation*}
	Second, noting that $\cT(0) = 0$, direct calculation shows that
	\[
	\int_0^1 \frac{\cT(p)}{p} \d \mu(p) = \int_0^1
	\frac{1}{p} \left( \int_{0}^{p} \d\cT(t) \right)
	\d \mu(p) = \int_0^1 H_\mu(t) \d\cT(t).
	\]
	Third, using the identity
	\[
	\cT(p\wedge s)
	=
	\int_0^1 \mathbf 1_{\{t\le p\}}\mathbf 1_{\{t\le s\}} \d\cT(t),
	\]
	we obtain that
	\[
	\iint_{[0,1]^2}
	\frac{\cT(p\wedge s)}{ps}
	\d \mu(p) \d \mu(s)
	=
	\int_0^1
	\left(
	\int_{[t,1]}\frac{1}{p} \d \mu(p)
	\right)^2
	\d\cT(t) = \int_0^1 H_\mu(t)^2 \d\cT(t).
	\]
	Based on these calculations, define $\widetilde I(\mu;c):=I(m;c)$, where $m$ is the inverse pushforward of $\mu$ under $p=1-\alpha$. Then
	\begin{equation}\label{eq:rewrite_I}
		\widetilde I(\mu;c)
		=
		\frac{1}{\rho_\mu^2}
		\int_0^1
		\left[
		H_\mu(t)-\bigl(1-c\rho_\mu\bigr)
		\right]^2
		\d\cT(t) = \int_0^1
		\left[
		c+\frac{H_\mu(t)-1}{\rho_\mu}
		\right]^2
		\d\cT(t).
	\end{equation}
	
	To complete the proof of \cref{thm:SRM_optimality}, it suffices to construct a sequence of probability measures $\mu_n$ such that $\widetilde I(\mu_n;c) \to J(c)$ as $n \to \infty$. To this end, we define for $p \in (0, 1)$:
	\[
	s_0(p):=-\frac{q(p)}{L_2}.
	\]
	For each $n\in\mathbb N$, define the truncated version of $s_0$ on $[-n, n]$ as:
	\[
	a_n(p) := \max\{-n,\min\{s_0(p),n\}\}.
	\]
	Since $q$ is nondecreasing, we know that $s_0=-q/L_2$ is nonincreasing, and hence $a_n$ is also nonincreasing. Define
	\[
	H_n(p):=1+\frac{a_n(p)}{n},
	\]
	we know that $H_n$ is nonincreasing and bounded: $0\le H_n(p)\le 2$. Further since $a_n$ is an odd function, we have $\int_0^1 H_n(p) \d p=1$.
	
	Since $H_n$ is nonnegative and nonincreasing, there exists a finite Borel measure $\xi_n$ on $[0,1]$, such that
	\[
	\xi_n([p,1])=H_n(p),
	\qquad \forall p \in (0,1).
	\]
	Define the measure $\mu_n$ on $[0,1]$ by
	\[
	\d \mu_n(p) := p \d \xi_n (p),
	\]
	we claim that $\mu_n$ is a probability measure. Indeed, by Fubini's theorem,
	\[
	\mu_n([0,1])
	=
	\int_{0}^{1} p \d \xi_n (p)
	=
	\int_{0}^{1}
	\left(\int_0^p \d t\right)
	\d \xi_n(p)
	=
	\int_0^1 \xi_n([t,1]) \d t
	=
	\int_0^1 H_n(t) \d t
	=
	1.
	\]
	Further, it is easy to verify that $H_n = H_{\mu_n}$, as
	\[
	H_{\mu_n} (t) = \int_{t}^{1} \frac{1}{p} \d \mu_n(p)
	=
	\int_{t}^{1} \d \xi_n(p)
	=
	\xi_n([t,1])
	=
	H_n(t).
	\]
	Therefore, the representation of $\widetilde I$ in \cref{eq:rewrite_I} yields that
	\[
	\widetilde I(\mu_n;c)
	=
	\int_0^1
	\left[
	c+\frac{H_n(t)-1}{\rho_n}
	\right]^2
	\d\cT(t), \qquad \rho_n
	:=
	\int_0^1 -q(t)H_n(t) \d t.
	\]
	
	By direct calculation, we know that
	\[
	\widetilde I(\mu_n;c)
	=
	\int_0^1
	\left(
	c+\frac{a_n(t)}{b_n}
	\right)^2
	\d\cT(t), \qquad b_n:=\int_0^1 -q(t)a_n(t) \d t.
	\]
	We now compute the limit of this expression. Since $a_n(t)\to s_0(t)$ pointwise and
	\[
	|a_n(t)|\le |s_0(t)|,
	\]
	applying dominated convergence theorem yields that as $n \to \infty$:
	\begin{align*}
		b_n = \int_0^1 -q(t)a_n(t) \d t = L_2\int_0^1 s_0(t)a_n(t) \d t \to L_2\int_0^1 s_0(t)^2 \d t = \frac{1}{L_2}\int_0^1 q(t)^2 \d t = 1,
	\end{align*}
	where the last equality follows from \cref{eq:property_q}. Using again the dominated convergence theorem, we deduce that as $n \to \infty$:
	\[
	\widetilde I(\mu_n;c)
	=
	\int_0^1
	\left(
	c+\frac{a_n(t)}{b_n}
	\right)^2
	\d\cT(t)
	\to
	\int_0^1
	\left(
	c+s_0(t)
	\right)^2
	\d\cT(t).
	\]
	Since $s_0(t)=-q(t)/L_2$, we obtain
	\begin{align*}
		& \int_0^1 \left( c+s_0(t) \right)^2 \d\cT(t) = \int_0^1 \left( c-\frac{q(t)}{L_2} \right)^2 \d\cT(t) \\
		= \, & c^2\int_0^1 \d\cT(t) - \frac{2c}{L_2}\int_0^1 q(t) \d\cT(t) + \frac{1}{L_2^2}\int_0^1 q(t)^2 \d\cT(t).
	\end{align*}
	Using the identities established above,
	\[
	\int_0^1 \d\cT(t)=L_2,
	\qquad
	\int_0^1 q(t) \d\cT(t)=0,
	\qquad
	\int_0^1 q(t)^2 \d\cT(t)=L_4,
	\]
	we finally conclude that
	\[
	\int_0^1
	\left(
	c-\frac{q(t)}{L_2}
	\right)^2
	\d\cT(t)
	=
	c^2L_2+\frac{L_4}{L_2^2} = J (c).
	\]
	This completes the proof.
\end{proof}

\subsection{Proof of \cref{prop:optimal_measure_gaussian}}
\begin{proof}
	We already know that a minimizer $m_{*,u}$ exists. It suffices to verify the first-order condition for minimizing $I$. Namely, there exists a constant $C > 0$ such that:
	\begin{equation*}
		\int_{[u, 1-u]} \frac{\alpha \wedge \beta}{1-(\alpha \wedge \beta)} \d m_{*,u}(\beta) = C \frac{\cU(1-\alpha)}{1-\alpha}, \quad \forall \alpha \in [u, 1-u].
	\end{equation*}
	To this end, we define the integral transform $\mathcal{I}(\alpha) = \int_{[u, 1-u]} \frac{\alpha \wedge \beta}{1-(\alpha \wedge \beta)} \d m_{*,u}(\beta)$. Splitting the integral at $\alpha$, we have:
	\begin{equation*}
		\mathcal{I}(\alpha) = \int_{[u, \alpha)} \frac{\beta}{1-\beta} \d m_{*,u}(\beta) + \frac{\alpha}{1-\alpha} \int_{[\alpha, 1-u]} \d m_{*,u}(\beta).
	\end{equation*}
	Differentiating with respect to $\alpha$, we obtain that
	\begin{equation*}
		\mathcal{I}'(\alpha) = \frac{\d}{\d\alpha}\left( \frac{\alpha}{1-\alpha} \right) \int_{[\alpha, 1-u]} \d m_{*,u}(\beta) = \frac{1}{(1-\alpha)^2} m_{*,u}([\alpha, 1-u]), \quad \forall \alpha \in (u, 1 - u).
	\end{equation*}
	By definition of $m_{*,u}$, we get that
	\begin{equation*}
		m_{*,u}([\alpha, 1-u]) = \frac{u}{\cU(1-u)} \left[ \cU(1-\alpha) - (1-\alpha)\cU'(1-\alpha) \right].
	\end{equation*}
	Substituting this into our expression for $\mathcal{I}'(\alpha)$ yields:
	\begin{equation*}
		\mathcal{I}'(\alpha) = \frac{u}{\cU(1-u)} \frac{\cU(1-\alpha) - (1-\alpha)\cU'(1-\alpha)}{(1-\alpha)^2} = \frac{u}{\cU(1-u)} \frac{\d}{\d\alpha} \left[ \frac{\cU(1-\alpha)}{1-\alpha} \right].
	\end{equation*}
	Integrating this relation, we thus obtain that
	\begin{equation*}
		\mathcal{I}(\alpha) = \frac{u}{\cU(1-u)} \frac{\cU(1-\alpha)}{1-\alpha} + c_0.
	\end{equation*}
	To determine the integration constant $c_0$, we evaluate $\mathcal{I}(\alpha)$ at the lower boundary $\alpha = u$. Because $m_{*,u}$ is a probability measure, its total mass is 1:
	\begin{equation*}
		\mathcal{I}(u) = \frac{u}{1-u} \int_{[u, 1-u]} \d m_{*,u}(\beta) = \frac{u}{1-u}.
	\end{equation*}
	Simultaneously, evaluating the right-hand side of our integrated expression at $u$ yields:
	\begin{equation*}
		\frac{u}{\cU(1-u)} \frac{\cU(1-u)}{1-u} = \frac{u}{1-u}.
	\end{equation*}
	This implies $c_0 = 0$. Therefore, $\mathcal{I}(\alpha) = \frac{u}{\cU(1-u)} \frac{\cU(1-\alpha)}{1-\alpha}$ for all $\alpha \in [u, 1-u]$, which satisfies the first-order optimality condition with constant $C = \frac{u}{\cU(1-u)} > 0$. Because the functional is strictly convex, this condition is both necessary and sufficient, proving $m_{*,u}$ is the unique minimizer. Finally, the expression for the minimum value $I(m_{*,u}; c)$ follows by direct calculation. This completes the proof.
\end{proof}

\section{Proofs for Section~\ref{sec:feasible_rule}}

\subsection{Proof of \cref{prop:plugin_optimal_measure}}
\begin{proof}
	We first establish the consistency of $\hat{c}$. Under the normalization $\E[\lambda^2]=1$, we have $\E[\rr]=\bmu$ and $\var(\rr)=\bSigma$. Since $\E[\|\rr\|_2^2]<\infty$, the strong law of large numbers gives
	\begin{equation}\label{eq:consistency_mu_sigma_feasible}
		\hat{\bmu}\stackrel{\mathrm{a.s.}}{\longrightarrow}\bmu,
		\qquad
		\hat{\bSigma}\stackrel{\mathrm{a.s.}}{\longrightarrow}\bSigma.
	\end{equation}
	Since $\bSigma$ is positive definite, \eqref{eq:consistency_mu_sigma_feasible} implies that $\hat{\bSigma}$ is positive definite with probability tending to one. As usual, quantities involving its inverse may be defined arbitrarily on the complementary event without affecting any of the conclusions below.
	Write
\begin{equation*}
		\hat{\bmu}^{(1)}=[\hat{\bmu}\,\,\bone].
	\end{equation*}
	Since $\hat{\bSigma}=\hat{\bS}$, the sample mean-variance portfolio $\hw_{\rm MV}(\mu_0)$ defined in \eqref{eq:empirical_mvo_risky} is
	\begin{equation*}
		\hw_{\rm MV}(\mu_0)
		=
		\hat{\bSigma}^{-1}\hat{\bmu}^{(1)}
		\left(
		\hat{\bmu}^{(1)\top}\hat{\bSigma}^{-1}\hat{\bmu}^{(1)}
		\right)^{-1}
		\mu_0^{(1)}.
	\end{equation*}
	The empirical counterpart of the scalar index $c=c(\mu_0)$ is then
	\begin{equation*}
		\hat{c}
		=
		\frac{
			[1\,\,0]
			\left(
			\hat{\bmu}^{(1)\top}\hat{\bSigma}^{-1}\hat{\bmu}^{(1)}
			\right)^{-1}
			\mu_0^{(1)}
		}
		{\|\hat{\bSigma}^{1/2}\hw_{\rm MV}(\mu_0)\|_2}.
	\end{equation*}
	Because $\bSigma$ is positive definite and $\bmu^{(1)}$ has full column rank, the matrix $\bmu^{(1)\top}\bSigma^{-1}\bmu^{(1)}$ is positive definite. Moreover, $\ww_*(\mu_0)\ne\bzero$ because $\bone^\top\ww_*(\mu_0)=1$, so $\|\bSigma^{1/2}\ww_*(\mu_0)\|_2>0$. It follows from \eqref{eq:consistency_mu_sigma_feasible} that the empirical matrices are nonsingular with probability tending to one and that $\hw_{\rm MV}(\mu_0)\pto\ww_*(\mu_0)$. Another application of the continuous mapping theorem therefore gives $\hat{c}\pto c$ in this case as well.
	
	We next study the quantities depending on the radial distribution. We begin with a deterministic continuity argument. Let $Q_n,Q\in\mathscr{P}_2((0,\infty))$ satisfy $W_2(Q_n,Q)\to0$. Let $\Lambda_n\sim Q_n$ and $\Lambda\sim Q$. By the coupling characterization of the $W_2$ metric, these random variables may be defined on a common probability space so that
	\begin{equation}\label{eq:radial_L2_coupling}
		\E\left[|\Lambda_n-\Lambda|^2\right]\to0.
	\end{equation}
	Denote the distribution functions of $\Lambda_n Z$ and $\Lambda Z$ by $F_n$ and $F$, respectively. Thus,
	\begin{equation*}
		F_n(x)=\E\left[\Phi\left(\frac{x}{\Lambda_n}\right)\right],
		\qquad
		F(x)=\E\left[\Phi\left(\frac{x}{\Lambda}\right)\right].
	\end{equation*}
	By \eqref{eq:radial_L2_coupling}, $\Lambda_n\to\Lambda$ in probability. For each fixed $x\in\R$, the function $a\mapsto\Phi(x/a)$ is bounded and continuous on $(0,\infty)$. It follows that $\Phi(x/\Lambda_n)\to\Phi(x/\Lambda)$ in probability; boundedness then implies convergence in $L^1$, and hence $F_n(x)\to F(x)$. Moreover, since $\Lambda>0$ almost surely, $F$ is continuous and strictly increasing. Indeed, for any $x<y$,
	\begin{equation*}
		F(y)-F(x)
		=
		\E\left[
		\Phi\left(\frac{y}{\Lambda}\right)
		-
		\Phi\left(\frac{x}{\Lambda}\right)
		\right]>0.
	\end{equation*}
	Since the pointwise limit $F$ is continuous, the convergence of the distribution functions is uniform:
	\begin{equation}\label{eq:uniform_cdf_radial}
		\sup_{x\in\R}|F_n(x)-F(x)|\to0.
	\end{equation}

	Let $q_n(\alpha)$ and $q(\alpha)$ denote the $\alpha$-quantiles of $\Lambda_nZ$ and $\Lambda Z$, respectively. Fix $K=[u,1-u]$. The function $q$ is continuous on $K$. For any $\veps>0$, strict monotonicity of $F$ and compactness of $K$ imply that
	\begin{equation*}
		\delta_{\veps}
		:=
		\min_{\alpha\in K}
		\min\left\{
		\alpha-F(q(\alpha)-\veps),
		F(q(\alpha)+\veps)-\alpha
		\right\}>0.
	\end{equation*}
	By \eqref{eq:uniform_cdf_radial}, for all sufficiently large $n$ and every $\alpha\in K$,
	\begin{equation*}
		F_n(q(\alpha)-\veps)<\alpha
		<F_n(q(\alpha)+\veps).
	\end{equation*}
	Since $Q_n((0,\infty))=1$, the function $F_n$ is also continuous and strictly increasing. The preceding display therefore implies
	\begin{equation}\label{eq:uniform_quantile_radial}
		\sup_{\alpha\in K}|q_n(\alpha)-q(\alpha)|\to0.
	\end{equation}
	
	Define, for $x\in\R$,
	\begin{equation*}
		\tau_n(x)
		=
		\E\left[\Lambda_n^2\Phi\left(\frac{x}{\Lambda_n}\right)\right],
		\qquad
		\upsilon_n(x)
		=
		\E\left[\Lambda_n\phi\left(\frac{x}{\Lambda_n}\right)\right],
	\end{equation*}
	and let $\tau$ and $\upsilon$ be the corresponding functions defined using $\Lambda$. By the Cauchy--Schwarz inequality,
	\begin{equation*}
		\E\left[|\Lambda_n^2-\Lambda^2|\right]
		\le
		\|\Lambda_n-\Lambda\|_2
		\left(\|\Lambda_n\|_2+\|\Lambda\|_2\right)
		\to0.
	\end{equation*}
	Consequently, $\{\Lambda_n^2:n\ge1\}$ is uniformly integrable and
	\begin{equation*}
		\E[\Lambda_n^2]\to\E[\Lambda^2].
	\end{equation*}
	Since
	\begin{equation*}
		0\le
		\Lambda_n^2\Phi\left(\frac{x}{\Lambda_n}\right)
		\le\Lambda_n^2,
		\qquad
		0\le
		\Lambda_n\phi\left(\frac{x}{\Lambda_n}\right)
		\le\frac{\Lambda_n}{\sqrt{2\pi}},
	\end{equation*}
	the continuous mapping theorem gives convergence in probability of both integrands to their counterparts defined using $\Lambda$. The first envelope above is uniformly integrable, while $\{\Lambda_n:n\ge1\}$ is uniformly integrable because it is bounded in $L^2$. Vitali's convergence theorem therefore yields
	\begin{equation}\label{eq:pointwise_T_U_radial}
		\tau_n(x)\to\tau(x),
		\qquad
		\upsilon_n(x)\to\upsilon(x)
		\qquad
		\mbox{for every }x\in\R.
	\end{equation}
	The convergence is uniform on compact subsets of $\R$. To see this, the mean value theorem gives
	\begin{align*}
		|\tau_n(x)-\tau_n(y)|
		&\le
		\frac{\E[\Lambda_n]}{\sqrt{2\pi}}|x-y|,\\
		|\upsilon_n(x)-\upsilon_n(y)|
		&\le
		\|\phi'\|_{\infty}|x-y|.
	\end{align*}
	The first Lipschitz constants are uniformly bounded by \eqref{eq:radial_L2_coupling}. Hence, \eqref{eq:pointwise_T_U_radial}, together with a finite covering argument on any compact interval, gives uniform convergence on that interval.
	
	By definition,
	\begin{equation*}
		\cT_{Q_n}(\alpha)=\tau_n(q_n(\alpha)),
		\qquad
		\cU_{Q_n}(\alpha)=\upsilon_n(q_n(\alpha)),
	\end{equation*}
	with analogous identities for $\cT_Q$ and $\cU_Q$. The uniform quantile convergence in \eqref{eq:uniform_quantile_radial} places all the relevant quantiles in a common compact interval. Combining the preceding uniform convergence with the Lipschitz bounds therefore yields
	\begin{equation}\label{eq:uniform_T_U_radial}
		\sup_{\alpha\in K}|\cT_{Q_n}(\alpha)-\cT_Q(\alpha)|\to0,
		\qquad
		\sup_{\alpha\in K}|\cU_{Q_n}(\alpha)-\cU_Q(\alpha)|\to0.
	\end{equation}
	
	We now apply this deterministic conclusion to the random measure $\hat{P}_{\lambda}$. To justify this step formally, consider an arbitrary subsequence. Since $W_2(\hat{P}_{\lambda},P_{\lambda})\pto0$, there exists a further subsequence along which $W_2(\hat{P}_{\lambda},P_{\lambda})\to0$ almost surely. Applying \eqref{eq:uniform_quantile_radial} and \eqref{eq:uniform_T_U_radial} pathwise along this further subsequence shows that
	\begin{equation}\label{eq:plugin_T_U_consistency}
		\sup_{\alpha\in K}|\hat{\cT}(\alpha)-\cT(\alpha)|\pto0,
		\qquad
		\sup_{\alpha\in K}|\hat{\cU}(\alpha)-\cU(\alpha)|\pto0,
	\end{equation}
	and
	\begin{equation}\label{eq:plugin_L2_consistency}
		\hat{L}_2
		:=
		\hat{\E}_{\lambda}[\lambda^2]
		\pto
		\E[\lambda^2]
		=:L_2.
	\end{equation}
	The conclusion follows from the subsequence characterization of convergence in probability. Notice that this argument does not require $\hat{P}_{\lambda}$ to be independent of $(\hat{\bmu},\hat{\bSigma})$.
	
	It remains to translate \eqref{eq:plugin_T_U_consistency} into uniform convergence of the objective function. For $\alpha,\beta\in K$, define
	\begin{equation*}
		r(\alpha)
		=
		\frac{\cU(1-\alpha)}{1-\alpha},
		\qquad
		s(\alpha)
		=
		\frac{\cT(1-\alpha)}{1-\alpha},
	\end{equation*}
	and
	\begin{equation*}
		k(\alpha,\beta)
		=
		\frac{\cT(1-\max\{\alpha,\beta\})}
		{(1-\alpha)(1-\beta)}.
	\end{equation*}
	Let $\hat{r},\hat{s}$, and $\hat{k}$ be their plug-in counterparts. Since $1-\alpha\ge u$ on $K$, \eqref{eq:plugin_T_U_consistency} implies
	\begin{equation}\label{eq:plugin_kernel_consistency}
		\|\hat{r}-r\|_{\infty}\pto0,
		\qquad
		\|\hat{s}-s\|_{\infty}\pto0,
		\qquad
		\|\hat{k}-k\|_{\infty}\pto0,
	\end{equation}
	where the last norm is taken over $K^2$.
	
	For $m\in\cuP(K)$, write
	\begin{equation*}
		R(m)=\int_K r(\alpha)\d m(\alpha),
		\qquad
		S(m)=\int_K s(\alpha)\d m(\alpha),
	\end{equation*}
	and
	\begin{equation*}
		H(m)
		=
		\int_{K^2}k(\alpha,\beta)
		\d m(\alpha)\d m(\beta).
	\end{equation*}
	Define $\hat{R}(m),\hat{S}(m)$, and $\hat{H}(m)$ analogously. We then have
	\begin{align}\label{eq:uniform_integrated_kernel_consistency}
		\sup_{m\in\cuP(K)}|\hat{R}(m)-R(m)|
		&\le\|\hat{r}-r\|_{\infty},\nonumber\\
		\sup_{m\in\cuP(K)}|\hat{S}(m)-S(m)|
		&\le\|\hat{s}-s\|_{\infty},\\
		\sup_{m\in\cuP(K)}|\hat{H}(m)-H(m)|
		&\le\|\hat{k}-k\|_{\infty}.\nonumber
	\end{align}
	Moreover, $R(m)=\rho_m(\lambda Z)$. Since $\lambda>0$ almost surely, $\cU(p)>0$ for every $p\in(0,1)$. The continuity of $\cU$ and compactness of $K$ therefore give
	\begin{equation*}
		r_0:=\min_{\alpha\in K}r(\alpha)>0.
	\end{equation*}
	It follows that $R(m)\ge r_0$ for every $m\in\cuP(K)$. By \eqref{eq:plugin_kernel_consistency}, with probability tending to one,
	\begin{equation}\label{eq:uniform_plugin_risk_lower_bound}
		\inf_{m\in\cuP(K)}\hat{R}(m)\ge\frac{r_0}{2}.
	\end{equation}
	
	Recall from \eqref{eq:simplified_I_m_c} that
	\begin{equation*}
		I(m;c)
		=
		\Psi\big(R(m),S(m),H(m),L_2,c\big),
	\end{equation*}
	where
	\begin{equation*}
		\Psi(x,y,z,\ell,a)
		=
		\frac{z-2(1-ax)y+(1-ax)^2\ell}{x^2}.
	\end{equation*}
	The same representation holds for $\hat{I}(m;\hat{c})$ with the plug-in quantities. In view of \eqref{eq:plugin_L2_consistency}, \eqref{eq:uniform_integrated_kernel_consistency}, and \eqref{eq:uniform_plugin_risk_lower_bound}, all the arguments of $\Psi$ lie, with probability tending to one, in a fixed compact subset of its domain on which $x$ is bounded away from zero. The function $\Psi$ is uniformly continuous on this set. Together with $\hat{c}\pto c$, this proves that
	\begin{equation}\label{eq:uniform_plugin_objective_consistency}
		\sup_{m\in\cuP(K)}
		|\hat{I}(m;\hat{c})-I(m;c)|
		\pto0.
	\end{equation}
	
	Finally, we prove the consistency of the minimizer. Endow $\cuP(K)$ with the bounded-Lipschitz metric $d_{\rm BL}$, which metrizes weak convergence. Since $K$ is compact, $\cuP(K)$ is compact under this metric. The functions $r$ and $s$ are continuous on $K$, and $k$ is continuous on $K^2$. It follows from the preceding representation and the lower bound $R(m)\ge r_0$ that $I(\cdot;c)$ is continuous on $\cuP(K)$. By \cref{prop:unique_minimizer}, its unique minimizer is $m_{*,u}^c$. The same argument applies pathwise to $\hat{I}(\cdot;\hat{c})$. Since $\cuP(K)$ is compact and $\hat{I}(\cdot;\hat{c})$ is continuous, its argmin set is nonempty and compact. Measurability of the criterion and a measurable selection theorem allow $\hat{m}$ to be chosen as a measurable minimizer. Finite-sample uniqueness is not required.
	
	Fix $\veps>0$ and define
	\begin{equation*}
		\mathcal{B}_{\veps}
		=
		\left\{
		m\in\cuP(K):d_{\rm BL}(m,m_{*,u}^c)\ge\veps
		\right\}.
	\end{equation*}
	If $\mathcal{B}_{\veps}$ is nonempty, its compactness, continuity of $I(\cdot;c)$, and uniqueness of $m_{*,u}^c$ imply that
	\begin{equation*}
		\zeta_{\veps}
		:=
		\inf_{m\in\mathcal{B}_{\veps}}
		\left\{I(m;c)-I(m_{*,u}^c;c)\right\}>0.
	\end{equation*}
	On the event
	\begin{equation*}
		\sup_{m\in\cuP(K)}
		|\hat{I}(m;\hat{c})-I(m;c)|
		<\frac{\zeta_{\veps}}{3},
	\end{equation*}
	no minimizer of $\hat{I}(\cdot;\hat{c})$ can belong to $\mathcal{B}_{\veps}$. Indeed, for every $m\in\mathcal{B}_{\veps}$,
	\begin{equation*}
		\hat{I}(m;\hat{c})
		>
		I(m_{*,u}^c;c)+\frac{2\zeta_{\veps}}{3}
		>
		\hat{I}(m_{*,u}^c;\hat{c}).
	\end{equation*}
	Consequently, \eqref{eq:uniform_plugin_objective_consistency} gives
	\begin{equation*}
		\P\left(d_{\rm BL}(\hat{m},m_{*,u}^c)\ge\veps\right)\to0.
	\end{equation*}
	This establishes $\hat{m}\stackrel{w}{\to}m_{*,u}^c$ in probability and concludes the proof of \cref{prop:plugin_optimal_measure}.
\end{proof}

\subsection{Proof of \cref{thm:feasible_asymptotic_normality}}
\begin{proof}
	Write
	\begin{equation*}
		m_0=m_{*,u}^c,
		\qquad
		\ww_*=\ww_*(\mu_0).
	\end{equation*}
	By \cref{prop:plugin_optimal_measure}, we know that
	\begin{equation}\label{eq:feasible_measure_weak_convergence}
		\hat{m}\stackrel{w}{\to}m_0
		\qquad\mbox{in probability}.
	\end{equation}
	We first establish the consistency of the empirical estimator. The consistency argument in the proof of \cref{thm:normality_general_risk} applies uniformly over $m\in\cuP([u,1-u])$. Indeed, for any compact set $K\subset\R^N$,
	\begin{align*}
		&\sup_{m\in\cuP([u,1-u])}\sup_{\ww\in K}
		\left|
		\rho_m(-\hat{r}_{\ww,T})-\rho_m(-r_{\ww})
		\right|\\
		&\qquad\le
		\sup_{\alpha\in[u,1-u]}\sup_{\ww\in K}
		\left|
		\operatorname{CVaR}_{\alpha}(-\hat{r}_{\ww,T})
		-
		\operatorname{CVaR}_{\alpha}(-r_{\ww})
		\right|
		\pto0.
	\end{align*}
	Under elliptical returns, we have
	\begin{equation}\label{eq:population_risk_uniform_measure}
		\rho_m(-r_{\ww})
		=
		-\ww^\top\bmu
		+
		\rho_m(\lambda Z)\norm{\bSigma^{1/2}\ww}_2.
	\end{equation}
	Moreover, the argument leading to \eqref{eq:uniform_plugin_risk_lower_bound} shows that
	\begin{equation*}
		\inf_{m\in\cuP([u,1-u])}\rho_m(\lambda Z)>0.
	\end{equation*}
	Thus, on the population constraint set, every objective in \eqref{eq:population_risk_uniform_measure} has the same unique minimizer $\ww_*$. The convergence of the empirical constraint set is the same as in the proof of \cref{thm:normality_general_risk}. Uniform separation of $\ww_*$ on compact sets, together with coercivity of $\norm{\bSigma^{1/2}\ww}_2$ in \eqref{eq:population_risk_uniform_measure}, therefore gives
	\begin{equation}\label{eq:feasible_weight_consistency}
		\hat{\ww}_{\hat{m}}(\mu_0)\pto\ww_*(\mu_0).
	\end{equation}
	This proves consistency.
	
	We next derive the limiting distribution of $\sqrt{T}(\hat{\ww}_{\hat{m}}(\mu_0)-\ww_*)$. As in the proof of \cref{thm:normality_general_risk}, we use the argmax theorem for random convex constraints in \cite[Theorem~5]{knight1999epi}. Define
	\begin{equation*}
		\hat{h}_T
		=
		\sqrt{T}\left(
		\hat{\ww}_{\hat{m}}(\mu_0)-\ww_*
		\right).
	\end{equation*}
	and the local objective
	\begin{equation*}
		\hat{M}_T(h)
		=
		T\left\{
		\rho_{\hat{m}}\left(
		-\hat{r}_{\ww_*+h/\sqrt{T},T}
		\right)
		-
		\rho_{\hat{m}}\left(
		-\hat{r}_{\ww_*,T}
		\right)
		\right\}.
	\end{equation*}
	By definition, $\hat{h}_T$ minimizes $\hat{M}_T$ over the rescaled empirical constraint set
	\begin{equation}\label{eq:feasible_local_constraint_set}
		\hat{\cC}_{T, h}
		=
		\left\{
		h\in\R^N:
		\bone^\top h=0,\quad
		\hat{\E}_T[\rr]^\top h
		=
		-\ww_*^\top\mathbb{G}_T(\rr)
		\right\}.
	\end{equation}
	It therefore suffices to determine the limiting process of $\hat{M}_T$ and the limiting set of $\hat{\cC}_{T, h}$.
	
	\paragraph{Reduction from $\hat{M}_T$ to $\widetilde{M}_T$.}
	We first make the same empirical-quantile reduction as in the proof of \cref{thm:normality_general_risk}. For $\alpha\in[u,1-u]$, let
	\begin{equation*}
		U_{\alpha}(\ww;\rr)
		=
		L_{\alpha}\big(\ww,z(\ww,\alpha);\rr\big),
	\end{equation*}
	and define
	\begin{equation}\label{eq:feasible_tilde_M}
		\widetilde{M}_T(h)
		=
		\int_{[u,1-u]}
		T\hat{\E}_T\left[
		U_{\alpha}\left(\ww_*+\frac{h}{\sqrt{T}};\rr\right)
		-
		U_{\alpha}(\ww_*;\rr)
		\right]
		\d\hat{m}(\alpha).
	\end{equation}
	The proof of \eqref{eq:first_convergence_M} is already uniform in $\alpha\in[u,1-u]$. Since $\hat{m}$ is a probability measure supported on $[u,1-u]$, integrating the uniform remainder against $\hat{m}$ does not increase it. Consequently, for every $R>0$,
	\begin{equation}\label{eq:feasible_quantile_reduction}
		\sup_{\norm{h}_2\le R}
		\left|
		\hat{M}_T(h)-\widetilde{M}_T(h)
		\right|
		\pto0.
	\end{equation}
	Notice that this step neither uses \eqref{eq:feasible_measure_weak_convergence} nor requires $\hat{m}$ to be independent of the observations.
	
	\paragraph{Asymptotics of $\widetilde{M}_T$.}
	Since $\hat{M}_T$ and $\widetilde{M}_T$ are convex and have the same asymptotic local behavior by \eqref{eq:feasible_quantile_reduction}, it suffices to study $\widetilde{M}_T$ over $\hat{\cC}_{T, h}$. This is the only part that does not follow directly from the proof of \cref{thm:normality_general_risk}: after integrating $U_\alpha$ against $\hat{m}$, the resulting function is data-dependent. We therefore retain $\alpha$ as an index until after taking the empirical-process limit.
	
	Let
	\begin{equation*}
		a_{\alpha}
		=
		\operatorname{CVaR}_{\alpha}(\lambda Z),
		\qquad
		s_*=\norm{\bSigma^{1/2}\ww_*}_2,
	\end{equation*}
	and set
	\begin{equation*}
		H_0
		=
		\frac{1}{s_*}
		\left(
		\bSigma
		-
		\frac{\bSigma\ww_*\ww_*^\top\bSigma}{s_*^2}
		\right).
	\end{equation*}
	Ellipticity gives, for every $\alpha\in[u,1-u]$,
	\begin{equation}\label{eq:single_cvar_elliptic_objective}
		\E\left[U_\alpha(\ww;\rr)\right]
		=
		\operatorname{CVaR}_{\alpha}(-r_{\ww})
		=
		-\ww^\top\bmu
		+
		a_\alpha\norm{\bSigma^{1/2}\ww}_2.
	\end{equation}
	It follows that
	\begin{equation*}
		\nabla_{\ww}^2\E\left[U_\alpha(\ww_*;\rr)\right]
		=
		a_\alpha H_0.
	\end{equation*}
	Let $\eta_{s,\alpha}$ denote the multiplier associated with the expected-return constraint. By \eqref{eq:w_star_no_risk_free} and the definition of $c=c(\mu_0)$, there exists a scalar $d$ such that
	\begin{equation*}
		\frac{\bSigma\ww_*}{s_*}
		=
		c\bmu+d\bone.
	\end{equation*}
	It follows from \eqref{eq:single_cvar_elliptic_objective} that
	\begin{equation*}
		\eta_{s,\alpha}=1-ca_\alpha.
	\end{equation*}
	The KKT conditions for the population problem based on $\operatorname{CVaR}_{\alpha}$ can therefore be written as
	\begin{equation}\label{eq:single_cvar_kkt}
		\E\left[\nabla_{\ww}U_\alpha(\ww_*;\rr)\right]
		+
		\eta_{s,\alpha}\bmu
		+
		\eta_{c,\alpha}\bone
		=
		0
	\end{equation}
	for some budget-constraint multiplier $\eta_{c,\alpha}\in\R$.
	
	In analogy with $\widetilde{M}_T^{(1)}$ in the proof of \cref{thm:normality_general_risk}, define its $\alpha$-indexed version by
	\begin{equation*}
		\begin{split}
			\widetilde{M}_T^{(1)}(\alpha,h)
			=
			T\bigg\{
			&\hat{\E}_T\left[
			U_\alpha\left(\ww_*+\frac{h}{\sqrt{T}};\rr\right)
			-
			U_\alpha(\ww_*;\rr)
			\right]\\
			&-
			\frac{1}{\sqrt{T}}
			h^\top\E\left[\nabla_{\ww}U_\alpha(\ww_*;\rr)\right]
			\bigg\}.
		\end{split}
	\end{equation*}
	Thus, \eqref{eq:feasible_tilde_M} can be written as
	\begin{equation}\label{eq:feasible_tilde_M_decomposition}
		\widetilde{M}_T(h)
		=
		\int_{[u,1-u]}
		\left\{
		\widetilde{M}_T^{(1)}(\alpha,h)
		+
		\sqrt{T}h^\top
		\E\left[\nabla_{\ww}U_\alpha(\ww_*;\rr)\right]
		\right\}
		\d\hat{m}(\alpha).
	\end{equation}
	We now establish the local expansion uniformly in $(\alpha,h)$. We claim that, for every $R>0$,
	\begin{equation}\label{eq:feasible_alpha_process_limit}
		\widetilde{M}_T^{(1)}(\alpha,h)
		\wto
		M^{(1)}(\alpha,h)
		:=
		h^\top\mathbb{G}
		\left(
		\nabla_{\ww}U_\alpha(\ww_*;\rr)
		\right)
		+
		\frac{a_\alpha}{2}h^\top H_0h
	\end{equation}
	in $\ell^\infty([u,1-u]\times\{h:\norm{h}_2\le R\})$.
	
	To verify this claim, first note from \eqref{eq:single_cvar_elliptic_objective} that a second-order Taylor expansion, uniformly over $\alpha\in[u,1-u]$ and bounded $h$, gives
	\begin{align*}
		T\E\left[
		U_\alpha\left(\ww_*+\frac{h}{\sqrt{T}};\rr\right)
		-
		U_\alpha(\ww_*;\rr)
		\right]
		=
		\sqrt{T}h^\top\E\left[\nabla_{\ww}U_\alpha(\ww_*;\rr)\right]
		+
		\frac{a_\alpha}{2}h^\top H_0h
		+
		o(1).
	\end{align*}
	For the empirical-process part, write $z_\alpha=z(\ww_*,\alpha)$. As in the proof of \cref{prop:no_risk_free_lowdim},
	\begin{equation}\label{eq:single_cvar_score}
		\nabla_{\ww}U_\alpha(\ww_*;\rr)
		=
		-\frac{\rr}{1-\alpha}
		\bone_{\{-r_{\ww_*}\ge z_\alpha\}}
		+
		\left(
		1-\frac{1}{1-\alpha}
		\bone_{\{-r_{\ww_*}\ge z_\alpha\}}
		\right)
		\nabla_{\ww}z(\ww_*,\alpha).
	\end{equation}
	The functions $\alpha\mapsto z_\alpha$ and $\alpha\mapsto\nabla_{\ww}z(\ww_*,\alpha)$ are continuous on $[u,1-u]$. In particular, they are bounded there. The coordinate classes generated by \eqref{eq:single_cvar_score} are weighted half-line indicator classes with an $L^2(P)$ envelope bounded by $C(1+\norm{\rr}_2)$. They are therefore $P$-Donsker. Moreover, continuity of $z_\alpha$, the absence of atoms in $-r_{\ww_*}$, and dominated convergence imply
	\begin{equation*}
		\norm{
			\nabla_{\ww}U_\alpha(\ww_*;\cdot)
			-
			\nabla_{\ww}U_\beta(\ww_*;\cdot)
		}_{L^2(P)}
		\longrightarrow0
		\qquad\mbox{as }\beta\to\alpha.
	\end{equation*}
	Hence
	\begin{equation}\label{eq:single_cvar_score_donsker}
		\mathbb{G}_T
		\left(
		\nabla_{\ww}U_\alpha(\ww_*;\rr)
		\right)
		\wto
		\mathbb{G}
		\left(
		\nabla_{\ww}U_\alpha(\ww_*;\rr)
		\right)
		\quad\mbox{in }\ell^\infty([u,1-u])^N,
	\end{equation}
	where the limiting Gaussian process has continuous sample paths.
	
	To obtain the local expansion jointly in $\alpha$, fix a compact neighborhood $\cW_*$ of $\ww_*$. By the implicit function theorem and \cref{ass:regularity_density}, the map $(\ww,\alpha)\mapsto z(\ww,\alpha)$ is continuously differentiable on $\cW_*\times[u,1-u]$, after shrinking $\cW_*$ if necessary, and its derivatives are bounded there. Consequently, the class
	\begin{equation*}
		\left\{
		U_\alpha(\ww;\cdot):
		(\ww,\alpha)\in \cW_*\times[u,1-u]
		\right\}
	\end{equation*}
	is Lipschitz in its finite-dimensional index with a square-integrable envelope of the form $C(1+\norm{\rr}_2)$. The same preservation argument used in the proof of \cref{thm:normality_general_risk} therefore shows that this class is $P$-Donsker. Its asymptotic equicontinuity, together with the uniform $L^2(P)$ differentiability of $U_\alpha(\ww;\cdot)$ at $\ww_*$, yields
	\begin{align}\label{eq:uniform_stochastic_differentiability}
		\sup_{\substack{\alpha\in[u,1-u]\\\norm{h}_2\le R}}
		\bigg|
		&\sqrt{T}\mathbb{G}_T\left[
		U_\alpha\left(\ww_*+\frac{h}{\sqrt{T}};\rr\right)
		-
		U_\alpha(\ww_*;\rr)
		\right]\nonumber\\
		&-
		h^\top\mathbb{G}_T
		\left(
		\nabla_{\ww}U_\alpha(\ww_*;\rr)
		\right)
		\bigg|
		\pto0.
	\end{align}
	Indeed, the difference quotients converge to \eqref{eq:single_cvar_score} uniformly in $L^2(P)$. Pointwise differentiability holds outside the events $\{-r_{\ww_*}=z_\alpha\}$, each of which has probability zero. To verify uniformity, first restrict to $\{\norm{\rr}_2\le M\}$. The smooth part of the remainder is uniform by continuous differentiability of $z$. The nonsmooth part can be nonzero only when $-r_{\ww_*}$ lies in an interval of length $O(T^{-1/2})$ around $z_\alpha$; the densities are uniformly bounded on the relevant compact set, so the resulting $L^2(P)$ remainder converges to zero uniformly in $\alpha$. The contribution from $\{\norm{\rr}_2>M\}$ is bounded uniformly in $T$ and $\alpha$ by a constant multiple of $\E[(1+\norm{\rr}_2)^2\bone_{\{\norm{\rr}_2>M\}}]$, which tends to zero as $M\to\infty$. Combining the deterministic Taylor expansion, \eqref{eq:single_cvar_score_donsker}, and \eqref{eq:uniform_stochastic_differentiability} proves \eqref{eq:feasible_alpha_process_limit}.
	
	We next incorporate the estimated expected-return constraint. Since $\hat{\E}_T[\rr]=\bmu+\mathbb{G}_T(\rr)/\sqrt{T}$, every $h\in\hat{\cC}_{T, h}$ satisfies
	\begin{equation}\label{eq:population_local_constraint_identity}
		\bmu^\top h
		=
		-\ww_*^\top\mathbb{G}_T(\rr)
		-
		\frac{1}{\sqrt{T}}
		h^\top\mathbb{G}_T(\rr).
	\end{equation}
	Combining \eqref{eq:single_cvar_kkt}, \eqref{eq:feasible_local_constraint_set}, and \eqref{eq:population_local_constraint_identity}, we obtain, for $h\in\hat{\cC}_{T, h}$,
	\begin{equation}\label{eq:feasible_gradient_reduction}
		\sqrt{T}h^\top
		\E\left[\nabla_{\ww}U_\alpha(\ww_*;\rr)\right]
		=
		C_T(\alpha)
		+
		\eta_{s,\alpha} h^\top\mathbb{G}_T(\rr),
		\qquad
		C_T(\alpha)
		=
		\sqrt{T}\eta_{s,\alpha}
		\ww_*^\top\mathbb{G}_T(\rr).
	\end{equation}
	The term $C_T(\alpha)$ does not depend on $h$ and therefore has no effect on the minimizer. Define
	\begin{equation*}
		\psi_\alpha(\rr)
		=
		\nabla_{\ww}U_\alpha(\ww_*;\rr)
		+
		\eta_{s,\alpha}\rr
	\end{equation*}
	and
	\begin{equation*}
		\overline{M}_T^{(1)}(\alpha,h)
		=
		\widetilde{M}_T^{(1)}(\alpha,h)
		+
		\eta_{s,\alpha} h^\top\mathbb{G}_T(\rr).
	\end{equation*}
	Since $\alpha\mapsto a_\alpha$ is continuous, the class $\{\psi_\alpha:\alpha\in[u,1-u]\}$ is also $P$-Donsker and $L^2(P)$-continuous. More precisely, adjoining the finitely many coordinate functions of $\rr$ to the score class in \eqref{eq:single_cvar_score_donsker}, multiplying them by the bounded continuous coefficients $\eta_{s,\alpha}$, and taking sums preserve the Donsker property. Thus, all the empirical processes appearing below converge jointly, and
	\begin{equation}\label{eq:centered_alpha_process_limit}
		\overline{M}_T^{(1)}(\alpha,h)
		\wto
		\overline{M}^{(1)}(\alpha,h)
		:=
		h^\top\mathbb{G}(\psi_\alpha)
		+
		\frac{a_\alpha}{2}h^\top H_0h
	\end{equation}
	in $\ell^\infty([u,1-u]\times\{h:\norm{h}_2\le R\})$. The limiting process has continuous sample paths.
	
	\paragraph{Integration against the estimated measure.}
	We can now integrate against the random measure. Put
	\begin{equation*}
		\overline{M}_T(h)
		=
		\int_{[u,1-u]}
		\overline{M}_T^{(1)}(\alpha,h)
		\d\hat{m}(\alpha).
	\end{equation*}
	By \eqref{eq:feasible_tilde_M_decomposition} and \eqref{eq:feasible_gradient_reduction}, for every $h\in\hat{\cC}_{T, h}$,
	\begin{equation}\label{eq:centered_random_measure_objective}
		\widetilde{M}_T(h)
		=
		\overline{M}_T(h)
		+
		\int_{[u,1-u]} C_T(\alpha)\d\hat{m}(\alpha).
	\end{equation}
	The last term is independent of $h$. Hence $\widetilde{M}_T$ and $\overline{M}_T$ have the same minimizer over $\hat{\cC}_{T, h}$.
	Because $m_0$ is deterministic, \eqref{eq:feasible_measure_weak_convergence} and \eqref{eq:centered_alpha_process_limit} imply
	\begin{equation*}
		\left(
		\overline{M}_T^{(1)},\hat{m}
		\right)
		\wto
		\left(
		\overline{M}^{(1)},m_0
		\right).
	\end{equation*}
	No independence is needed for this joint convergence. Moreover, integration is continuous at every pair $(f,m)$ for which $f$ is continuous on the compact set $[u,1-u]\times\{h:\norm{h}_2\le R\}$ and $m\in\cuP([u,1-u])$. To see this, suppose that $\norm{f_n-f}_\infty\to0$ and $m_n\stackrel{w}{\to}m$. Then
	\begin{align*}
		&\sup_{\norm{h}_2\le R}
		\left|
		\int_{[u,1-u]} f_n(\alpha,h)\d m_n(\alpha)
		-
		\int_{[u,1-u]} f(\alpha,h)\d m(\alpha)
		\right|\\
		&\qquad\le
		\norm{f_n-f}_\infty\\
		&\qquad\quad+
		\sup_{\norm{h}_2\le R}
		\left|
		\int_{[u,1-u]} f(\alpha,h)\d(m_n-m)(\alpha)
		\right|.
	\end{align*}
	The first term converges to zero. For the second, the collection $\{f(\cdot,h):\norm{h}_2\le R\}$ is compact in $C([u,1-u])$, because $f$ is uniformly continuous on the compact product set. A finite $\veps$-net, followed by weak convergence for each member of the net, shows that the second term also converges to zero. The continuous mapping theorem therefore gives
	\begin{equation*}
		\overline{M}_T(h)
		\wto
		M(h)
		:=
		h^\top\mathbb{G}
		\left(
		\int_{[u,1-u]}\psi_\alpha\,\d m_0(\alpha)
		\right)
		+
		\frac{1}{2}h^\top H_*h
	\end{equation*}
	in $\ell^\infty(\{h:\norm{h}_2\le R\})$. Here $H_*$ is the Hessian defined in \cref{sec:low-dim}. In the present elliptical setting,
	\begin{equation*}
		H_*
		=
		\nabla_{\ww}^2\rho_{m_0}(-r_{\ww_*})
		=
		\left(
		\int_{[u,1-u]}a_\alpha\d m_0(\alpha)
		\right)H_0
		=
		\rho_{m_0}(\lambda Z)H_0.
	\end{equation*}
	
	To identify the limiting process, let $U_{m_0}$ denote the function defined in \cref{sec:low-dim} for the fixed spectral measure $m_0$, and recall that $\eta_s$ denotes the corresponding expected-return multiplier. Differentiation under the integral is justified by the square-integrable envelope above, while integration of \eqref{eq:single_cvar_kkt} identifies the multiplier. Therefore,
	\begin{align*}
		\int_{[u,1-u]}\psi_\alpha(\rr)\d m_0(\alpha)
		&=
		\nabla_{\ww}U_{m_0}(\ww_*;\rr)
		+
		\eta_s\rr,\\
		\eta_s
		&=
		\int_{[u,1-u]}\eta_{s,\alpha}\d m_0(\alpha)
		=
		1-c\rho_{m_0}(\lambda Z),
	\end{align*}
	so the function inside $\mathbb{G}$ is precisely $V_*(\rr)$ in the notation of the canonical target-return problem in \cref{sec:low-dim}. Since $\alpha\mapsto\psi_\alpha$ is continuous in $L^2(P)$, its Bochner integral is well defined. The Brownian bridge is a continuous linear map from $L^2(P)$ into $L^2$ of its underlying probability space, modulo constant functions. It follows that
	\begin{equation*}
		\int_{[u,1-u]}\mathbb{G}(\psi_\alpha)\d m_0(\alpha)
		=
		\mathbb{G}\left(
		\int_{[u,1-u]}\psi_\alpha\d m_0(\alpha)
		\right)
	\end{equation*}
	in $L^2$. Thus, $M$ is exactly the local quadratic process obtained in the proof of \cref{thm:normality_general_risk} for the fixed oracle measure $m_0$.
	
	\paragraph{Asymptotics of $\hat{\cC}_{T, h}$.}
	Finally, we derive the limiting set of $\hat{\cC}_{T, h}$. As in the corresponding step of the proof of \cref{thm:normality_general_risk}, \eqref{eq:feasible_local_constraint_set} and the multivariate central limit theorem imply that $\hat{\cC}_{T, h}$ converges jointly with the preceding processes to
	\begin{equation*}
		\cC_h
		=
		\left\{
		h\in\R^N:
		\bone^\top h=0,\quad
		\bmu^\top h
		=
		-\ww_*^\top\mathbb{G}(\rr)
		\right\}.
	\end{equation*}
	This is the same finite-dimensional constraint-set argument as in the proof of \cref{thm:normality_general_risk}; the full-column-rank assumption on $\bmu^{(1)}$ guarantees that the limiting affine constraint has the required rank. The matrix $H_*$ is positive semidefinite with null space $\operatorname{span}\{\ww_*\}$. Since $\bone^\top\ww_*=1$, it is positive definite on the tangent space
	\begin{equation*}
		\left\{
		h:\bone^\top h=0,\ \bmu^\top h=0
		\right\}.
	\end{equation*}
	Hence the limiting constrained quadratic problem has a unique minimizer and is coercive along the feasible tangent space. To connect the auxiliary process back to the original objective, set
	\begin{equation*}
		D_T
		=
		\int_{[u,1-u]}C_T(\alpha)\d\hat{m}(\alpha).
	\end{equation*}
	Equations~\eqref{eq:feasible_quantile_reduction} and \eqref{eq:centered_random_measure_objective} imply, for every $R>0$,
	\begin{equation*}
		\sup_{\substack{h\in\hat{\cC}_{T, h}\\\norm{h}_2\le R}}
		\left|
		\hat{M}_T(h)-D_T-\overline{M}_T(h)
		\right|
		\pto0.
	\end{equation*}
	The quantity $D_T$ is independent of $h$ and hence does not affect the minimizer. According to the notation of \cref{defn:asym_dist_low_dim}, the unique minimizer of $M$ over $\cC_h$ is
	\begin{equation*}
		h_{c, s}
		=
		\arg\min_{h\in\cC_h}M(h).
	\end{equation*}
	Combining the preceding arguments, we obtain
	\begin{equation*}
		(\hat{M}_T-D_T)\cdot\bone_{\hat{\cC}_{T, h}}
		+
		\infty\cdot\bone_{\hat{\cC}_{T, h}^c}
		\quad\mbox{epi-converges to}\quad
		M\cdot\bone_{\cC_h}
		+
		\infty\cdot\bone_{\cC_h^c}
	\end{equation*}
	in distribution. Invoking \cite[Theorem~5(b)]{knight1999epi}, exactly as in the proof of \cref{thm:normality_general_risk}, gives
	\begin{equation*}
		\hat{h}_T\wto h_{c, s}.
	\end{equation*}
	The expression on the right is the fixed-$m_0$ limit derived in \cref{prop:no_risk_free_lowdim}. Since $m_0=m_{*,u}^c$, it follows that
	\begin{equation*}
		\sqrt{T}\left(
		\hat{\ww}_{\hat{m}}(\mu_0)-\ww_*(\mu_0)
		\right)
		\wto
		\normal\left(
		0,
		\Sigma\left(\mu_0,\rho_{m_{*,u}^c}\right)
		\right).
	\end{equation*}
	This completes the proof.
\end{proof}

\subsection{Proof of \cref{thm:radial_npmle_consistency}}
\begin{proof}
	For a probability measure $P$ supported on $(0,\infty)$, let
	\begin{equation*}
		f_P(x)
		=
		\int_0^\infty k_N(x\mid v)\d P(v),
		\qquad x>0,
	\end{equation*}
	and define
	\begin{equation*}
		L_T(P)
		=
		\frac{1}{T}\sum_{i=1}^T\log f_P(D_i),
		\qquad
		\hat{L}_T(P)
		=
		\frac{1}{T}\sum_{i=1}^T\log f_P(\hat{D}_i).
	\end{equation*}
	Thus, $f_{P_V}$ is the density of $D=V S$, where $V\sim P_V$ and $S\sim\chi_N^2$ are independent.
	We first show that $P_V$ can be arbitrarily approximated by probability measures supported on $[a_T, b_T]$ in $W_1$ metric. Let $V\sim P_V$ and set
	\begin{equation*}
		\bar{V}_T
		=
		\left(V\vee\sqrt{a_T}\right)\wedge\sqrt{b_T},
		\qquad
		\kappa_T=\E[\bar{V}_T],
		\qquad
		V_T=\frac{\bar{V}_T}{\kappa_T}.
	\end{equation*}
	Since $V$ is integrable,
	\begin{equation*}
		\E|\bar{V}_T-V|
		\le
		\sqrt{a_T}
		+
		\E\left[V\bone_{\{V>\sqrt{b_T}\}}\right]
		\longrightarrow0.
	\end{equation*}
	Hence, $\kappa_T\to1$ and $\E[V_T]=1$. The use of the interior truncation
	interval $[\sqrt{a_T},\sqrt{b_T}]$ leaves enough slack to preserve the
	sieve support after normalization. Indeed, since $a_T\to0$, $b_T\to\infty$,
	and $\kappa_T\to1$, for all sufficiently large $T$,
	\[
		\kappa_T\le a_T^{-1/2},
		\qquad
		\kappa_T\ge b_T^{-1/2}.
	\]
	Consequently,
	\[
		a_T
		\le \frac{\sqrt{a_T}}{\kappa_T}
		\le \frac{\sqrt{b_T}}{\kappa_T}
		\le b_T.
	\]
	Thus the support of $V_T$ is contained in $[a_T,b_T]$ for all
	sufficiently large $T$. Therefore, $P_{V,T}:=\operatorname{Law}(V_T)$
	belongs to $\mathcal{P}_T$ eventually, and the coupling above gives
	\begin{equation}\label{eq:radial_sieve_approximation}
		W_1(P_{V,T},P_V)
		\le
		\E|V_T-V|
		\le
		|1-\kappa_T|+\E|\bar{V}_T-V|
		\longrightarrow0.
	\end{equation}

	We next show that
	\begin{equation}\label{eq:radial_benchmark_likelihood}
		L_T(P_{V,T})\pto M(P_V),
		\qquad
		M(P):=\E[\log f_P(D)].
	\end{equation}
	With $y=x/(2v)$, the kernel in \eqref{eq:scaled_chisquare_kernel} satisfies
	\begin{equation*}
		k_N(x\mid v)
		=
		\frac{y^{N/2}e^{-y}}{\Gamma(N/2)x}
		\le
		\frac{C_N}{x}
	\end{equation*}
	for a constant $C_N<\infty$. Choose $0<c<d<\infty$ such that $P_V([c,d])=:\eta>0$. Since $\kappa_T\to1$, both $P_V$ and, eventually, $P_{V,T}$ assign mass at least $\eta$ to $[c/2,2d]$. It follows that, for $P=P_V$ and $P=P_{V,T}$,
	\begin{equation*}
		f_P(x)
		\ge
		\frac{\eta}{2^{N/2}\Gamma(N/2)(2d)^{N/2}}
		x^{N/2-1}\exp\left(-\frac{x}{c}\right).
	\end{equation*}
	Consequently, for some $C<\infty$ independent of $T$,
	\begin{equation}\label{eq:radial_log_likelihood_envelope}
		|\log f_{P_{V,T}}(x)|
		+
		|\log f_{P_V}(x)|
		\le
		G(x)
		:=
		C\left(1+x+|\log x|\right).
	\end{equation}
	Since $D=VU$, the assumptions imply
	\begin{equation*}
		\E[D]=N,
		\qquad
		\E|\log D|
		\le
		\E|\log V|+\E|\log U|
		<\infty.
	\end{equation*}
	Thus, $\E[G(D)]<\infty$. For every $x>0$, the map $v\mapsto k_N(x\mid v)$ is bounded and continuous on $(0,\infty)$ and vanishes at both endpoints. Equation~\eqref{eq:radial_sieve_approximation} therefore gives $f_{P_{V,T}}(x)\to f_{P_V}(x)$. By dominated convergence,
	\begin{equation}\label{eq:radial_expected_likelihood_convergence}
		\E[\log f_{P_{V,T}}(D)]
		\longrightarrow
		M(P_V).
	\end{equation}

	We next apply the triangular-array weak law of large numbers to the quantity $L_T(P_{V,T})$. Let $Y_{T,i}=\log f_{P_{V,T}}(D_i)$ and, for $K>0$, let $Y_{T,i}^{(K)}=Y_{T,i}\bone_{\{|Y_{T,i}|\le K\}}$. The variables in each row are i.i.d. By Chebyshev's and Markov's inequalities and \eqref{eq:radial_log_likelihood_envelope}, for every $\veps>0$,
	\begin{align*}
		&\P\left(
		\left|
		\frac{1}{T}\sum_{i=1}^T
		\{Y_{T,i}-\E[Y_{T,i}]\}
		\right|>\veps
		\right)\nonumber\\
		&\qquad\le
		\frac{4K^2}{T\veps^2}
		+
		\frac{4}{\veps}
		\E\left[G(D)\bone_{\{G(D)>K\}}\right].
	\end{align*}
	Letting first $T\to\infty$ and then $K\to\infty$ proves that $L_T(P_{V,T})-\E[\log f_{P_{V,T}}(D)]\pto0$. This is the standard truncation proof of the weak law for row-wise independent triangular arrays; see also \cite{gut1992weak}. Together with \eqref{eq:radial_expected_likelihood_convergence}, this proves \eqref{eq:radial_benchmark_likelihood}.
	
	We next show that $P_V$ is the unique maximizer of the population log-likelihood $M(P)$.
	Let $\overline{\R}_+=[0,\infty]$ be the compactification induced by $v\mapsto v/(1+v)$, and let $\Theta=\mathscr{P}(\overline{\R}_+)$. Extend the kernel by setting $k_N(x\mid0)=k_N(x\mid\infty)=0$. For each $x>0$, the map $P\mapsto f_P(x)$ is continuous on the compact metrizable space $\Theta$.
	
	The functional $M$ is upper semicontinuous on $\Theta$. Indeed, if
	\begin{equation*}
		B(x)=\max\left\{0,\log\left(\frac{C_N}{x}\right)\right\},
	\end{equation*}
	then $\log f_P(x)\le B(x)$ for every $P\in\Theta$ and $\E[B(D)]<\infty$. If $P_j\stackrel{w}{\to}P$, Fatou's lemma applied to $B(D)-\log f_{P_j}(D)$ yields
	\begin{equation*}
		\limsup_{j\to\infty}M(P_j)\le M(P).
	\end{equation*}

	We now identify the unique maximizer of $M$. For $P\in\Theta$, let $q=P((0,\infty))$. If $q=0$, then $M(P)=-\infty$. If $q>0$, let $\bar{P}$ be the conditional distribution of $P$ on $(0,\infty)$. Since $f_P=qf_{\bar{P}}$,
	\begin{equation*}
		M(P)-M(P_V)
		=
		\log q
		-
		\operatorname{KL}(f_{P_V},f_{\bar{P}})
		\le0.
	\end{equation*}
	Equality requires $q=1$ and $f_{\bar{P}}=f_{P_V}$ almost everywhere. The mixing distribution is identifiable: if $D=VU$, then
	\begin{equation*}
		\varphi_{\log D}(t)
		=
		\varphi_{\log V}(t)
		2^{it}\frac{\Gamma(N/2+it)}{\Gamma(N/2)},
	\end{equation*}
	and the last factor is nonzero for every $t\in\R$. Hence equality of the mixture distributions implies equality of the characteristic functions of $\log V$. It follows that $P_V$ is the unique maximizer of $M$ over $\Theta$.
	
	We next record the uniform separation consequence of this fact. Let $\mathcal{G}$ be an open weak neighborhood of $P_V$ and set $\mathcal{F}=\Theta\setminus\mathcal{G}$. Upper semicontinuity, compactness of $\mathcal{F}$, and uniqueness imply that, for some $\gamma>0$,
	\begin{equation}\label{eq:radial_population_separation}
		\sup_{P\in\mathcal{F}}M(P)
		\le
		M(P_V)-\gamma.
	\end{equation}
	Let $d$ metrize weak convergence on $\Theta$, e.g., we can choose $d = d_{\rm BL}$, the bounded Lipschitz distance. For $P\in\mathcal{F}$, define
	\begin{equation*}
		C_j(P)=\{Q\in\Theta:d(Q,P)\le j^{-1}\},
		\qquad
		s_{P,j}(x)=\sup_{Q\in C_j(P)}f_Q(x).
	\end{equation*}
	Compactness and continuity give $s_{P,j}(x)\downarrow f_P(x)$. For $K>0$, set
	\begin{equation*}
		H_{P,j,K}(x)=\max\{-K,\log s_{P,j}(x)\}.
	\end{equation*}
	Dominated convergence, followed by monotone convergence as $K\to\infty$, gives
	\begin{equation*}
		\lim_{K\to\infty}\lim_{j\to\infty}
		\E[H_{P,j,K}(D)]
		=
		M(P).
	\end{equation*}
	In view of \eqref{eq:radial_population_separation}, for every $P\in\mathcal{F}$ we can choose $j_P$ and $K_P$ such that
	\begin{equation*}
		\E[H_{P,j_P,K_P}(D)]
		\le
		M(P_V)-\frac{3\gamma}{4}.
	\end{equation*}
	The open balls $\{Q:d(Q,P)<j_P^{-1}\}$, $P\in\mathcal{F}$, cover $\mathcal{F}$ and therefore admit a finite subcover. For a measure $Q$ in one of these balls, $\log f_Q\le H_{P,j_P,K_P}$. Applying the weak law to the finitely many integrable envelopes in the subcover yields
	\begin{equation}\label{eq:radial_empirical_separation}
		\P\left(
		\sup_{P\in\mathcal{F}}L_T(P)
		\le
		M(P_V)-\frac{\gamma}{2}
		\right)
		\longrightarrow1.
	\end{equation}

	We next move from $\hat{L}_T$ to $L_T$. Write $X_i=\rr_i-\bmu$ and $\bar{X}_T=T^{-1}\sum_{i=1}^T X_i$. The fourth-moment assumption gives
	\begin{equation*}
		\norm{\bar{X}_T}_2=O_p(T^{-1/2}),
		\qquad
		\norm{\hat{\bSigma}-\bSigma}=O_p(T^{-1/2}),
		\qquad
		\norm{\hat{\bSigma}^{-1}-\bSigma^{-1}}=O_p(T^{-1/2}).
	\end{equation*}
	Expanding $\hat{D}_i-D_i$ and applying the triangle inequality gives
	\begin{align*}
		\frac{1}{T}\sum_{i=1}^T|\hat{D}_i-D_i|
		\le{}&
		\norm{\hat{\bSigma}^{-1}-\bSigma^{-1}}
		\frac{1}{T}\sum_{i=1}^T\norm{X_i}_2^2\\
		&+
		2\norm{\bar{X}_T}_2\norm{\hat{\bSigma}^{-1}}
		\frac{1}{T}\sum_{i=1}^T\norm{X_i}_2
		+
		\norm{\hat{\bSigma}^{-1}}\norm{\bar{X}_T}_2^2.
	\end{align*}
	The sample averages are $O_p(1)$, so
	\begin{equation}\label{eq:estimated_radius_average_rate}
		\frac{1}{T}\sum_{i=1}^T|\hat{D}_i-D_i|
		=
		O_p(T^{-1/2}).
	\end{equation}

	For $P\in\mathcal{P}_T$, differentiation under the integral gives
	\begin{equation*}
		\frac{\d}{\d x}\log f_P(x)
		=
		\frac{N/2-1}{x}
		-
		\frac{1}{2}
		\frac{
			\int v^{-N/2-1}e^{-x/(2v)}\d P(v)
		}{
			\int v^{-N/2}e^{-x/(2v)}\d P(v)
		}.
	\end{equation*}
	For $P,Q\in\mathcal{P}_T$, the first term cancels from the derivative of $\log f_P-\log f_Q$. Each ratio in the second term is a weighted average of $1/v$ and hence belongs to $[1/b_T,1/a_T]$. The mean value theorem and \eqref{eq:estimated_radius_average_rate} therefore imply
	\begin{align}\label{eq:radial_likelihood_contrast}
		\sup_{P,Q\in\mathcal{P}_T}
		\bigg|
		&\{\hat{L}_T(P)-\hat{L}_T(Q)\}
		-
		\{L_T(P)-L_T(Q)\}
		\bigg|
		\nonumber\\
		&\le
		\frac{1}{2a_TT}\sum_{i=1}^T|\hat{D}_i-D_i|
		=
		O_p\left(\frac{1}{\sqrt{T}a_T}\right)
		=o_p(1).
	\end{align}
	Since $\hat{P}_V$ maximizes $\hat{L}_T$ over $\mathcal{P}_T$ and $P_{V,T}\in\mathcal{P}_T$ eventually, \eqref{eq:radial_likelihood_contrast} gives
	\begin{equation}\label{eq:radial_approximate_maximizer}
		L_T(\hat{P}_V)
		\ge
		L_T(P_{V,T})-o_p(1).
	\end{equation}
	Notice that this argument is pathwise and does not require $\hat{D}_1,\ldots,\hat{D}_T$ to be independent.
	
	We are now in position to establish the consistency of $\hat{P}_V$ and $\hat{P}_{\lambda}$. Fix an open weak neighborhood $\mathcal{G}$ of $P_V$. Equations~\eqref{eq:radial_benchmark_likelihood}, \eqref{eq:radial_empirical_separation}, and \eqref{eq:radial_approximate_maximizer} imply
	\begin{equation*}
		\P(\hat{P}_V\notin\mathcal{G})\longrightarrow0.
	\end{equation*}
	Hence, $\hat{P}_V\stackrel{w}{\to}P_V$ in probability on $\Theta$. Since every $\hat{P}_V$ has first moment one, Markov's inequality rules out escape of mass to infinity, so the same convergence holds in the usual weak topology on $[0,\infty)$. Both $\hat{P}_V$ and $P_V$ have first moment one. The characterization of $W_1$ convergence by weak convergence and convergence of first moments, applied pathwise along almost surely convergent subsequences, therefore gives
	\begin{equation*}
		W_1(\hat{P}_V,P_V)\pto0.
	\end{equation*}
	Finally, $\hat{P}_{\lambda}$ and $P_{\lambda}$ are the images of $\hat{P}_V$ and $P_V$ under $v\mapsto\sqrt{v}$. Since
	\begin{equation*}
		|\sqrt{v}-\sqrt{w}|^2\le|v-w|,
		\qquad v,w\ge0,
	\end{equation*}
	the coupling characterization of the Wasserstein metrics yields
	\begin{equation*}
		W_2^2(\hat{P}_{\lambda},P_{\lambda})
		\le
		W_1(\hat{P}_V,P_V)
		\pto0.
	\end{equation*}
	This completes the proof.
\end{proof}

\section{Proofs for Appendix~\ref{app:risk_free}}
\label{app:proofs_risk_free}

This section collects the proofs of the results for portfolios with a
risk-free asset stated in Appendix~\ref{app:risk_free}. The arguments
parallel the corresponding results for the canonical target-return
problem in the main text; we give the details needed to identify the
changes induced by removing the budget constraint.

\subsection{Proof of \Cref{prop:normality_risk_free_constraint}}

\begin{proof}
	The proof follows that of \Cref{prop:normality_risky_constraints}
	by removing the deterministic budget constraint. In
	\Cref{thm:normality_general_risk}, take
	$F_s^{(e)}(\rr)=\rr^\top$ and $b_s^{(e)}=\mu_0$, with no other
	constraints. Since $\mu_0\neq0$, the population optimizer is nonzero.
	The identification and curvature conditions are verified in the same
	way, using the nonsingularity of $K_*^f$.
	The limiting KKT system is obtained from that in the proof of
	\Cref{prop:normality_risky_constraints} by deleting the budget row
	and column and replacing $(H_*,V_*,r_*)$ by $(H_*^f,V_*^f,r_*^f)$.
	Its inverse therefore gives the stated covariance
	$\Sigma^f(\mu_0,\rho_m)$. Consistency and asymptotic normality follow
	directly from \Cref{thm:normality_general_risk}.
\end{proof}

\subsection{Proof of \Cref{prop:risk_free_lowdim}}

\begin{proof}
	The proof follows the argument of \Cref{prop:no_risk_free_lowdim}.
	The only difference is that the budget constraint is absent, so the
	constraint matrix $\bmu^{(1)}$ and its right-hand side
	$\mu_0^{(1)}$ are replaced by $\bmu$ and $\mu_0$, respectively.
	Consequently, the population optimizer and the quantities used in
	the limiting quadratic program become
	\[
	\ww_*^f=\frac{\mu_0\bSigma^{-1}\bmu}{\bmu^\top\bSigma^{-1}\bmu},
	\qquad
	(s_*^f)^2=\frac{\mu_0^2}{\bmu^\top\bSigma^{-1}\bmu},
	\qquad
	b_*^f=\frac{\bSigma^{-1}\bmu}{\bmu^\top\bSigma^{-1}\bmu}.
	\]
	The tangent covariance matrix is $P_\perp^f$, and the multiplier is
	$\eta_s^f=1-c^f\rho_m(\lambda Z)$. Apply
	\Cref{prop:normality_risk_free_constraint} and repeat the same
	Gaussian score calculation with these substitutions. The two
	limiting components remain uncorrelated; their covariances are
	obtained by replacing $s_*^2$, $P_\mu$, $P_\perp$, and $A$ by
	$(s_*^f)^2$, $b_*^f(b_*^f)^\top$, $P_\perp^f$, and $A^f$,
	respectively. This gives the asserted formula.
\end{proof}

\subsection{Proof of \Cref{prop:asymptotic_variance_mv_risk_free}}

\begin{proof}
	The argument is the risk-free specialization of the proof of
	\Cref{prop:asymptotic_variance_mv}. Remove the budget constraint
	and use the estimator
	\[
	\hw_{\rm MV}^f(\mu_0)
	=\frac{\mu_0\hat{\bS}^{-1}\hat{\bmu}}
	{\hat{\bmu}^\top\hat{\bS}^{-1}\hat{\bmu}}.
	\]
	The same joint central limit theorem and differentiation argument
	apply. In the covariance calculation, replace $\ww_*$ and $b_*$
	by $\ww_*^f$ and $b_*^f$ from the preceding proof, replace
	$P_\perp$ by $P_\perp^f$, and replace $\gamma_s^{(e)}$ by
	$\mu_0/(\bmu^\top\bSigma^{-1}\bmu)$.
	Substitution into the covariance expression in
	\Cref{prop:asymptotic_variance_mv} yields exactly
	$\Sigma_{\rm MV}^f(\mu_0)$, completing the proof.
\end{proof}

\subsection{Proof of \Cref{cor:compare_variance_risk_free}}

\begin{proof}
	The first terms in the covariance expressions established in the
	preceding two propositions coincide and equal
	$\Sigma_{\mathrm{common}}^f(\mu_0)$. For the
	criterion-dependent component, note that
	\[
	c^f
	=
	\frac{\sgn(\mu_0)}
	{\|\bSigma^{-1/2}\bmu\|_2}
	\]
	and
	\[
	\eta_s^f
	=
	1-c^f\rho_m(\lambda Z).
	\]
	Substitution into the definition of $A^f$ identifies the scalar
	coefficient of $\Sigma_\perp^f(\mu_0)$ with $I(m;c^f)$. Likewise,
	\[
	J(c^f)
	=
	\frac{\E[\lambda^4]}{\E[\lambda^2]^2}
	+
	(c^f)^2\E[\lambda^2]
	=
	\frac{\E[\lambda^4]}{\E[\lambda^2]^2}
	+
	\frac{\E[\lambda^2]}
	{\bmu^\top\bSigma^{-1}\bmu},
	\]
	which is the scalar multiplying the same matrix in the MVO covariance.
	Hence
	\[
	\Sigma_{\rm SRM}^f(m)
	=
	\Sigma_{\mathrm{common}}^f(\mu_0)
	+
	I(m;c^f)\Sigma_\perp^f(\mu_0),
	\]
	and
	\[
	\Sigma_{\rm MV}^f
	=
	\Sigma_{\mathrm{common}}^f(\mu_0)
	+
	J(c^f)\Sigma_\perp^f(\mu_0).
	\]
	Since $\Sigma_\perp^f(\mu_0)\succeq\bzero$, the stated Loewner-order
	comparison follows.
\end{proof}

\subsection{Proof of \Cref{cor:risk_free_oracle_adaptivity}}

\begin{proof}
	Set
	\[
	m_0=m_{*,u}^{c^f},
	\qquad
	\ww_*^f=\ww_*^f(\mu_0),
	\qquad
	s_*^f=\|\bSigma^{1/2}\ww_*^f\|_2.
	\]
	The proof follows the argument of
	\Cref{thm:feasible_asymptotic_normality}. The empirical-process and
	random-integration steps are unchanged. The only difference is that
	there is no budget constraint, so the local empirical constraint set is
	\[
	\hat{\cC}_{T,h}^f
	=
	\left\{
	h\in\R^N:
	\hat{\E}_T[\rr]^\top h
	=
	-(\ww_*^f)^\top\mathbb{G}_T(\rr)
	\right\}.
	\]
	Its limit is
	\[
	\cC_h^f
	=
	\left\{
	h\in\R^N:
	\bmu^\top h
	=
	-(\ww_*^f)^\top\mathbb{G}(\rr)
	\right\}.
	\]
	The corresponding Hessian $H_*^f$ has null space
	$\operatorname{span}\{\ww_*^f\}$. Since
	$(\ww_*^f)^\top\bmu=\mu_0\neq0$, $H_*^f$ is positive definite on
	the tangent space $\{h:\bmu^\top h=0\}$, and the limiting constrained
	quadratic problem has a unique minimizer.
	
	The same plug-in continuity argument as in
	\Cref{prop:plugin_optimal_measure} gives
	\[
	\hat c^f\pto c^f,
	\qquad
	d_{\rm BL}(\hat m^f,m_{*,u}^{c^f})\pto0.
	\]
	Applying the random-measure integration argument from the proof of
	\Cref{thm:feasible_asymptotic_normality}, together with the fixed-measure
	risk-free limit in \Cref{prop:risk_free_lowdim}, yields
	\[
	\sqrt{T}
	\left(
	\hat{\ww}_{\hat m^f}^f(\mu_0)
	-
	\ww_*^f(\mu_0)
	\right)
	\wto
	\normal\left(
	0,
	\Sigma^f\left(
	\mu_0,
	\rho_{m_{*,u}^{c^f}}
	\right)
	\right).
	\]
	The corresponding consistency statement follows from the same uniform
	objective argument used in the proof of
	\Cref{thm:feasible_asymptotic_normality}.
\end{proof}

\end{document}